\title[Maps between relatively hyperbolic spaces and boundaries]{Maps between relatively hyperbolic spaces and between their boundaries}
\author{John M. Mackay}
\address{School of Mathematics \\ University of Bristol \\ Bristol, UK}
\email{john.mackay@bristol.ac.uk}
\author{Alessandro Sisto}
\address{Department of Mathematics, Heriot-Watt University, Edinburgh, UK}
	\email{a.sisto@hw.ac.uk}
\date{\today}
\keywords{Relatively hyperbolic groups and spaces, boundary at infinity, quasisymmetric map}
\subjclass[2010]{20F65, 20F67, 20F69, 31L10, 51F99}
\numberwithin{equation}{section}
\newtheorem{theorem}[equation]{Theorem}
\newtheorem{proposition}[equation]{Proposition}
\newtheorem{corollary}[equation]{Corollary}
\newtheorem{lemma}[equation]{Lemma}
\newtheorem{example}[equation]{Example}
\newtheorem{question}[equation]{Question}
\newtheorem{definition}[equation]{Definition}
\theoremstyle{definition}
\newtheorem{construction}[equation]{Construction}
\newtheorem{remark}[equation]{Remark}
\newtheorem{conv}[equation]{Convention}
\newtheoremstyle{citing}
  {3pt}
  {3pt}
  {\itshape}
  {}
  {\bfseries}
  {}
  {.5em}
  {\thmnote{#3}}
\theoremstyle{citing}
\newtheorem*{varthm}{}
\DeclareMathOperator{\id}{id}
\DeclareMathOperator{\diam}{diam}
\DeclareMathOperator{\length}{length}
\newcommand{\trans}{\mathit{trans}}
\newcommand{\deep}{\mathit{deep}}
\newcommand{\alp}{\alpha}
\newcommand{\del}{\delta}
\newcommand{\gam}{\gamma}
\newcommand{\eps}{\epsilon}
\newcommand{\bdry}{\partial_\infty}
\newcommand{\cH}{\mathcal{H}}
\newcommand{\cB}{\mathcal{B}}
\newcommand{\cP}{\mathcal{P}}
\newcommand{\cQ}{\mathcal{Q}}
\newcommand{\ra}{\rightarrow}
\newcommand{\R}{\mathbb{R}}
\newcommand{\C}{\mathbb{C}}
\newcommand{\Sph}{\mathbb{S}}
\newcommand{\N}{\mathbb{N}}
\newcommand{\Z}{\mathbb{Z}}
\newcommand{\HH}{\mathbb{H}}
\newcommand{\Bow}{\mathrm{Cusp}}
\newcommand{\qi}{\approxeq}
\newcommand{\grqi}{\succapprox}
\newcommand{\leqi}{\precapprox}
\def\XXint#1#2#3{{\setbox0=\hbox{$#1{#2#3}{\int}$}
\vcenter{\hbox{$#2#3$}}\kern-.5\wd0}}
\providecommand{\phantomsection}{}
\newcommand{\mylabel}[2]{\raisebox{.7\normalbaselineskip}{\phantomsection}#1%
  \def\@currentlabel{#1}\textlabel{#2}}
\numberwithin{equation}{section}
\begin{document}

\begin{abstract}
	We study relations between maps between relatively hyperbolic groups/spaces and quasisymmetric embeddings between their boundaries. More specifically, we establish a correspondence between (not necessarily coarsely surjective) quasi-isometric embeddings between relatively hyperbolic groups/spaces that coarsely respect peripherals, and quasisymmetric embeddings between their boundaries satisfying suitable conditions. Further, we establish a similar correspondence regarding maps with at most polynomial distortion.

	We use this to characterise groups which are hyperbolic relative to some collection of virtually nilpotent subgroups as exactly those groups which admit an embedding into a truncated real hyperbolic space with at most polynomial distortion, generalising a result of Bonk and Schramm for hyperbolic groups.
\end{abstract}

\maketitle
\setcounter{tocdepth}{1}
\tableofcontents

\section{Introduction}\label{sec:intro}

The boundary at infinity of a Gromov hyperbolic group is a crucial invariant for studying the properties of the group.
While its topology is already important, one can say more by looking at its metric properties: a result of Paulin~\cite{Pau-96-qm-qi} shows that two hyperbolic groups $G$ and $H$ are quasi-isometric if and only if their boundaries are quasisymmetric.  This additional quasisymmetric structure on the boundary allows one to distinguish quasi-isometry types of hyperbolic groups which have the same topological boundary, amongst other applications. Bonk and Schramm~\cite{BS-00-gro-hyp-embed} further established a correspondence of quasi-isometric embeddings between (certain) Gromov hyperbolic spaces to quasisymmetric embeddings between their boundaries, with refinements for classes of maps as we shall discuss below.

Our goal is to establish a similar correspondence for relatively hyperbolic groups (Definition~\ref{def:rel-hyp-grp}).  The motivating example of a relatively hyperbolic group $G$ is the fundamental group of a finite volume non-compact hyperbolic manifold; $G$ is a non-uniform lattice in $\mathrm{Isom}(\HH^{n})$.  If $n \geq 3$ then $G$ is not Gromov hyperbolic, as it contains $\Z^{n-1}$ parabolic subgroups $\{H_i\}$ due to the cusps, but one still can say the pair \emph{$(G,\{H_i\})$ is relatively hyperbolic (with peripheral subgroups $\{H_i\}$)}.  That is, roughly, gluing in `horoballs' to the left cosets of the $H_i$ one obtains a \emph{cusped space $\Bow(G,\{H_i\})$} which is Gromov hyperbolic (see Definition~\ref{def:rel-hyp}), and so has a boundary $\bdry (G,\{H_i\})$, which we call its \emph{Bowditch boundary}.  This boundary was defined by Bowditch~\cite{Bow-99-rel-hyp} (see also~\cite{Gro-Man-08-dehn-rel-hyp}) who showed that it depends only on $(G,\{H_i\})$ up to homeomorphism, not on the other choices involved in its construction.  In this example $\bdry (G, \{H_i\})$ is homeomorphic to $\Sph^{n-1}$.

Out of the many relevant works on relatively hyperbolic groups, we point out that the topology of the Bowditch boundary has been well-studied, in particular in connection to splittings, see e.g.~\cite{Bow-MathZ, Bow-99-conn-lim-set}.  We also highlight that maps between Bowditch boundaries have been previously studied by Groff~\cite{Groff-13-QI-JSJ}.
Also, Gerasimov--Potyagailo~\cite{Ger-Pot-13-JEMS-qi-floyd} study metric properties of the related Floyd boundaries, and in previous work we established metric properties of Bowditch boundaries~\cite{MS-12-relhypplane}.

Finally, as this paper was being finalised Healy and Hruska released a preprint with an extensive study of the properties of boundary maps for relatively hyperbolic groups \cite{healy2020cusped}. In Remark~\ref{rmk:healy-hruska}, we elaborate upon how the results in \cite{healy2020cusped} and in this paper complement each other.

Since a given group can have different relatively hyperbolic structures (with different
Bowditch boundaries), to have a sensible theory we have to restrict our attention
to maps which respect the peripheral structures, meaning that images and preimages of cosets of peripheral subgroups are close to cosets of peripheral subgroups.
For a precise statement see Definition~\ref{def:coarse-respect-periph} below.
The following example illustrates why we need this notion.

\begin{example}
	Let $G=\langle x,y \rangle$ be the fundamental group of a punctured torus, which is
	hyperbolic relative to the parabolic subgroup $H = \langle [x,y]\rangle$.
	Then $\bdry (G, \emptyset)$ is a Cantor set, while $\bdry (G, \{H\})$ is a circle.
	So the identity map from $(G,\emptyset) \ra (G,\{H\})$ cannot induce
	a homeomorphism of the boundaries,
	while the identity map from $(G, \{H\}) \ra (G, \emptyset)$ cannot induce anything
	sensible on the boundary at all.
\end{example}

The peripheral structures are often automatically respected.
When $(G, \{H_i\})$ and $(G', \{H_i'\})$ are relatively hyperbolic groups 
and no peripheral group is itself properly relatively hyperbolic, 
any quasi-isometry from $G$ to $G'$ will automatically respect the peripheral structures
(see Drutu~\cite[Theorem 5.7]{Dr-relhyp-qiinv} and also Groff~\cite[Theorem 6.3]{Groff-13-QI-JSJ}).
In this context, Groff shows that the two Bowditch boundaries are homeomorphic~\cite[Corollary 6.5]{Groff-13-QI-JSJ}.

A simplified statement of our first result (Theorem~\ref{thm:inside-to-boundary}) is the following.
A function $f:G\ra G'$ \emph{coarsely respects peripherals} if, roughly, images and preimages of peripheral sets are in bounded neighbourhoods of peripheral sets, see Definition~\ref{def:coarse-respect-periph}.
\begin{corollary}\label{thm:inside-to-boundary-simple}
	Suppose that $(G, \{H_i\})$ and $(G', \{H_i'\})$ are relatively hyperbolic groups,
	and that $f:G \ra G'$ is a quasi-isometric embedding that coarsely respects peripherals.
	Then $f$ extends to a quasi-isometric embedding $f_\Bow:\Bow(G, \{H_i\}) \ra \Bow(G', \{H_i'\})$.
	This map induces a quasisymmetric embedding $\bdry f_\Bow: \bdry(G, \{H_i\}) \ra \bdry(G', \{H_i'\})$.
\end{corollary}
Recall that $\Bow(G,\{H_i\})$ denotes the cusped space where one takes the Cayley
graph of $G$ and glues horoballs to the left cosets of groups in $\{H_i\}$ (see Definition~\ref{def:rel-hyp} and \cite{Gro-Man-08-dehn-rel-hyp}).

We note that if $f$ is a quasi-isometry then a large part of
the proof of this result can be replaced by a much shorter argument; this is done in
the proof of \cite[Theorem 6.3]{Groff-13-QI-JSJ}, which contains a proof of the theorem
in this case. In both cases one first proves that the extension $f_\Bow$
is coarsely Lipschitz, but then the difference is that, when $f$ is a
quasi-isometry, using a quasi-inverse of $f$ one can see that $f_\Bow$
has a quasi-inverse. When $f$ is not coarsely surjective, a more
sophisticated argument is needed, and more specifically we will rely on
detailed understanding of geodesics in relatively hyperbolic spaces,
which is not needed for the quasi-isometry case. 

The quasisymmetric embedding $\bdry f$ is not arbitrary; for example, it has to send
parabolic points in $\bdry(G, \{H_i\})$ to parabolic points in $\bdry(G', \{H_i'\})$. We find conditions on $\bdry f$ which are sufficient to give a converse statement;
i.e., any suitable quasisymmetric embedding $h:\bdry(G, \{H_i\}) \ra \bdry(G', \{H_i'\})$ will
induce a quasi-isometric embedding $\hat{h}:G \ra G'$ so that $\bdry \hat{h} = h$
(see Theorem~\ref{thm:boundary-to-inside}),
but for now we state a simpler result.
\begin{corollary}\label{thm:boundary-to-inside-simple}
	Suppose that $(G, \{H_i\})$ and $(G', \{H_i'\})$ are relatively hyperbolic groups with infinite proper peripheral groups,
	and that $h:\bdry(G,\{H_i\}) \ra \bdry(G',\{H_i'\})$ is a
	shadow-respecting quasisymmetry.
	Then there exists a quasi-isometry $\hat{h}:G \ra G'$
	so that $\hat{h}$ extends to give $\bdry (\hat{h})_\Bow = h$.
\end{corollary}
The `shadow' of a horoball in the boundary is, roughly, the ball in the
boundary which consists of the end points of geodesic rays from 
$e \in G$ which meet the horoball.
Given this, `shadow-respecting' means that parabolic points are mapped to
parabolic points, with the corresponding shadows approximately
preserved, and in addition that near parabolic points the map $\hat{h}$ looks like a `snowflake' map; see Definition~\ref{def:shadow-qs}.

In the case of hyperbolic groups $G, G'$, a quasisymmetric embedding $\bdry G \ra \bdry G'$ induces a quasi-isometric embedding $G \ra G'$ by work of Bonk and Schramm~\cite{BS-00-gro-hyp-embed} (see also Paulin~\cite{Pau-96-qm-qi}).
However, when we weaken $h$ in Corollary~\ref{thm:boundary-to-inside-simple} to be a shadow-respecting quasisymmetric embedding, we will find an induced map $\hat{h}:G \ra G'$ that is \emph{polynomially distorted}:
there exists $C>0, \alpha \in (0,1]$
so that for all $x,y \in G$,
	\[
		\tfrac1C d_G(x,y)^\alp -C \leq d_{G'}(\hat{h}(x),\hat{h}(y)) \leq C d_G(x,y)+C.
	\]
This is reasonable to expect:
Bonk and Schramm~\cite{BS-00-gro-hyp-embed} give us an extension of $h$ to
a quasi-isometric embedding between the cusped spaces.  
The shadow-respecting condition gives that this maps
horoballs to horoballs.  However, for pairs of points in the same left coset
of a peripheral group their distance in the group is essentially the
exponential of their distance in the cusped space.
Thus the multiplicative distortion of the quasi-isometry is
exponentiated to a polynomial distortion in the group.

As we will see in Theorem \ref{thm:boundary-to-inside}, one can correct for this by requiring finer control around parabolic
points.
We also strengthen Corollary~\ref{thm:inside-to-boundary-simple} to 
apply to certain {polynomially distorting maps} (Definition \ref{def:poly-subexp-distortion}), giving a correspondence
between maps on the group and maps on the boundary. Polynomially distorting maps have appeared in work of many authors, for example in the study of embeddings into Hilbert spaces, where they are used to define compression exponents \cite{compr_exp}.

Polynomially distorting maps, while not necessarily quasi-isometric embeddings, do preserve
some geometric properties of a group, and in fact the results below provide examples of this.

If $G$ has a polynomially (or indeed subexponentially) distorted embedding 
$f$ into a hyperbolic group $H$,
then $f$ is in fact a quasi-isometric embedding, and $G$ is hyperbolic.
This follows from easily from standard facts about hyperbolic groups; see
also Proposition~\ref{prop:subexp-into-hyp-is-qi}.
In this paper we show an analogous result for relatively hyperbolic groups and spaces (Theorem \ref{sederh}),
generalising known results about quasi-isometric embeddings. Here we state the version for groups.
\begin{corollary}
	\label{cor:subexpdistort-subgroup-rel-hyp}
	Let $G$ be hyperbolic relative to $\{H_i\}$.
	If $H$ is a subexponentially distorted subgroup of $G$ then $H$ is relatively quasiconvex in $G$, and so $H$ is hyperbolic relative to subgroups $\{K_j\}$ each of which is the intersection of a conjugate (in $G$) of some $H_i$ with $H$.
\end{corollary}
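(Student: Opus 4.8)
The plan is to deduce this from the (more technical) Theorem~\ref{sederh}, which handles subexponentially distorted maps between relatively hyperbolic spaces, combined with a characterisation of relative quasiconvexity in terms of the cusped space. First I would recall that $H$ being a subexponentially distorted subgroup of $G$ means precisely that the inclusion $\iota\colon H \hookrightarrow G$ is a subexponentially distorting map (with respect to word metrics for \emph{some} fixed finite generating set of $H$ and of $G$), and that by \cite{healy2020cusped} (or directly) relative quasiconvexity of $H$ in $G$ is equivalent to the orbit map of $H$ into the cusped space $\Bow(G,\{H_i\})$ having image that is quasiconvex, equivalently a quasi-isometric embedding onto its image of the cusped space $\Bow(H,\{K_j\})$ for an appropriate choice of peripheral structure $\{K_j\}$ on $H$. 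The natural candidate for $\{K_j\}$ is exactly the collection of infinite intersections $H \cap gH_ig^{-1}$ (up to conjugacy in $H$), so the content is to show (i) $H$ is itself relatively hyperbolic relative to this collection, and (ii) the inclusion extends to a quasi-isometric embedding of the corresponding cusped spaces.

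The key steps, in order: (1) Set up the peripheral structure on $H$: the collection $\{K_j\}$ of $H$-conjugacy representatives of the infinite subgroups of the form $H \cap gH_ig^{-1}$. Each $K_j$ is finitely generated (as a subexponentially distorted, hence undistorted after the fact, or via relative quasiconvexity arguments — but to avoid circularity, one uses that $K_j$ is a subgroup of a nilpotent-by-finite or more generally of $H_i$, and that subgroups of finitely presented groups appearing this way are tame here; more carefully, one appeals to Theorem~\ref{sederh} which will supply the needed finiteness). (2) Observe that the inclusion $\iota$ coarsely respects peripherals in the weak sense needed: a left coset of $K_j$ in $H$ sits at bounded Hausdorff distance (inside $G$) from a left coset of some $H_i$, by construction. (3) Apply Theorem~\ref{sederh}: since $\iota$ is subexponentially distorted and coarsely respects peripherals, it induces a map between the cusped spaces which is a quasi-isometric embedding; in particular $\iota$ itself is a quasi-isometric embedding, and the image of $\Bow(H,\{K_j\})$ is quasiconvex in $\Bow(G,\{H_i\})$. (4) Translate back: a subgroup whose orbit in the cusped space is quasiconvex is relatively quasiconvex by the standard equivalence (e.g.\ Hruska's characterisations), and a relatively quasiconvex subgroup is itself relatively hyperbolic relative to the induced peripheral structure $\{K_j\}$, which by construction consists of the claimed intersections.

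The main obstacle I expect is the circularity/bootstrapping issue around finite generation and the precise form of the induced peripheral structure: a priori we only know $H$ is a subgroup, not that it or the $K_j$ are finitely generated or relatively hyperbolic, so one cannot immediately speak of $\Bow(H,\{K_j\})$. The clean way around this is to run Theorem~\ref{sederh} in the setting where the ``domain'' is not assumed relatively hyperbolic a priori but is built from the image: concretely, one shows the $H$-orbit in $\Bow(G,\{H_i\})$, together with the horoballs it meets, is itself (quasi-isometric to) a cusped space, using that subexponential distortion forces the orbit to be quasiconvex via the fellow-travelling properties of geodesics in relatively hyperbolic spaces established earlier in the paper. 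Once quasiconvexity of the orbit is in hand, finite generation of $H$ and of each $K_j$, relative hyperbolicity of $(H,\{K_j\})$, and the identification of the $K_j$ with the intersections $H \cap gH_ig^{-1}$ all follow from the standard structure theory of relatively quasiconvex subgroups, so the real work is entirely contained in the geometric input of Theorem~\ref{sederh}.
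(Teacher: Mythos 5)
The central step of your argument does not hold as stated: Theorem~\ref{sederh} does not produce a quasi-isometric embedding between cusped spaces, and it certainly does not imply that the inclusion $\iota\colon H\to G$ is itself a quasi-isometric embedding --- indeed that claim is false in general (a subexponentially distorted subgroup may be genuinely distorted, e.g.\ a polynomially distorted subgroup sitting inside a peripheral subgroup; the whole point of the corollary is that the distortion is absorbed by the peripheral intersections). All that Theorem~\ref{sederh} gives is that $H$, with its word metric, is \emph{metrically} hyperbolic relative to the subsets $\iota^{-1}(N_\rho(gH_i))$; it says nothing about quasiconvexity of an $H$-orbit or of a cusped space of $H$ inside $\Bow(G,\{H_i\})$. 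So your step (3), on which the quasiconvexity conclusion rests, has no source. Relatedly, in your step (2) the verification that $\iota$ coarsely respects peripherals with respect to your pre-chosen group-theoretic structure $\{K_j\}=\{H\cap gH_ig^{-1}\}$ is not ``by construction'': condition (2) of Definition~\ref{def:coarse-respect-periph} (that $H\cap N_{C'}(gH_i)$ is bounded or lies near a single $K_j$-coset) is essentially a consequence of relative quasiconvexity, which is what you are trying to prove. This is exactly the circularity you flag at the end, and it is resolved in the paper by never introducing $\{K_j\}$ until the last line: one works with the metric peripheral structure supplied by Theorem~\ref{sederh}, for which coarse respect of peripherals is immediate.

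The missing geometric ingredient is Proposition~\ref{prop:transhauss} (the Morse lemma for transient sets), which the paper notes is \emph{not} needed for Theorem~\ref{sederh} but is precisely what drives this corollary: with $H$ metrically relatively hyperbolic (via Theorem~\ref{sederh}) and $\iota$ coarsely respecting the induced peripherals, Proposition~\ref{prop:transhauss} shows that for $x,y\in H$ the transient set of a geodesic $[x,y]$ in $G$ lies in a bounded neighbourhood of $H$. Combined with Hruska's identification of $\trans(x,y)$ with the intersection of a $\Bow(G)$-geodesic with $G$ (\cite[Propositions 7.9 and 8.13]{Hr-relqconv}), this verifies the criterion (QC-3) for relative quasiconvexity, and only then does \cite[Theorem 9.1]{Hr-relqconv} deliver the group-theoretic statement that $H$ is hyperbolic relative to the intersections with conjugates of the $H_i$. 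Your proposal gestures at ``fellow-travelling properties of geodesics'' but does not identify or apply this step, and instead attributes the quasiconvexity to Theorem~\ref{sederh}, where it does not live; as written, the argument has a genuine gap at its key point.
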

For the case of polynomial distortion see also \cite[Theorem C]{Ger-Pot-13-JEMS-qi-floyd}.

The reason for proving Theorem \ref{sederh} here is that we will use it in conjunction with Theorem \ref{thm:boundary-to-inside} to generalise a result of Bonk and Schramm which characterises hyperbolic groups (amongst finitely
generated groups with a word metric) 
as those which admit a quasi-isometric embedding into some $\HH^n$ \cite{BS-00-gro-hyp-embed}.
In an analogous way,
we characterise groups that are hyperbolic relative to virtually nilpotent
subgroups.
A \emph{truncated real hyperbolic space} is the complement of a collection of
(disjoint) horoballs in some $\HH^n$ which is then given the resulting length 
metric~\cite[page 362]{BH-99-Metric-spaces}.
A truncated real hyperbolic space is hyperbolic relative to the collection of boundary horospheres, in the sense of Definition~\ref{def:rel-hyp}.

\begin{theorem}\label{thm:truncated-hyp-poly}
	Suppose $G$ is finitely generated group with a word metric.
	Then the following are equivalent:
	\begin{itemize}
		\item $G$ is hyperbolic relative to some collection of virtually nilpotent subgroups,
		\item $G$ admits a polynomially distorted embedding into some truncated real hyperbolic space, and
		\item $G$ admits a subexponentially distorted embedding into some truncated real hyperbolic space.
	\end{itemize}
\end{theorem}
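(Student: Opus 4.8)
The plan is to prove the cycle of implications (1) $\Rightarrow$ (2) $\Rightarrow$ (3) $\Rightarrow$ (1). The implication (2) $\Rightarrow$ (3) is immediate, since a polynomially distorted embedding is in particular subexponentially distorted (Definition~\ref{def:poly-subexp-distortion}). For the other two implications we repeatedly use that a truncated real hyperbolic space $Y = \HH^n \setminus \bigcup_j B_j$, equipped with its length metric, is hyperbolic relative to the horospheres $\partial B_j$ bounding the removed horoballs, each of which is isometric to $\R^{n-1}$ and hence virtually nilpotent; equivalently, gluing combinatorial horoballs back onto the $\partial B_j$ produces a space quasi-isometric to $\HH^n$, so that $\bdry(Y,\{\partial B_j\}) \cong \Sph^{n-1}$ with the parabolic points being exactly the centres of the $B_j$.

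For (3) $\Rightarrow$ (1), let $f\colon G \ra Y$ be a subexponentially distorted embedding into a truncated real hyperbolic space. By Theorem~\ref{sederh} (the relatively hyperbolic analogue of the fact that subexponentially distorted embeddings into hyperbolic spaces are quasi-isometric embeddings), $f$ is a quasi-isometric embedding that coarsely respects the peripheral structure, and $G$ inherits a relatively hyperbolic structure whose peripheral subgroups $\{K_j\}$ are each quasi-isometric to a subset of one of the $\R^{n-1}$'s --- concretely, to the $f$-preimage of a neighbourhood of a horosphere. (If needed, the passage from the asymptotically tree-graded structure on the Cayley graph of $G$ to a genuine group-theoretic peripheral structure here invokes the characterisation of relatively hyperbolic groups among finitely generated groups admitting such a structure, due to Drutu--Sapir.) Each such $K_j$ then has polynomial growth, and being a finitely generated peripheral subgroup it is virtually nilpotent by Gromov's theorem. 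This parallels Corollary~\ref{cor:subexpdistort-subgroup-rel-hyp}, the new ingredient being that the ambient object is a relatively hyperbolic \emph{space} rather than a group.

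For (1) $\Rightarrow$ (2), let $G$ be hyperbolic relative to virtually nilpotent subgroups $\{H_i\}$. The idea is to construct a shadow-respecting quasisymmetric embedding of $\bdry(G,\{H_i\})$ into a round sphere and then apply Theorem~\ref{thm:boundary-to-inside}. One first checks that $\bdry(G,\{H_i\})$ is doubling (has finite Assouad dimension), the key point being that near a parabolic point it is bi-Lipschitz to an inverted metric on the corresponding peripheral group, which is doubling precisely because that group has polynomial growth, while there are only finitely many conjugacy classes of peripherals. By Assouad's embedding theorem, a snowflaked version $(\bdry(G,\{H_i\}),d^{\eps})$ bi-Lipschitz embeds into some $\R^{n-1} \subset \Sph^{n-1}$, which yields a quasisymmetric embedding $h\colon \bdry(G,\{H_i\}) \ra \Sph^{n-1}$ whose restriction to each peripheral shadow is, after the natural identifications, a snowflake map with constants uniform over the shadows. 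Taking the truncated real hyperbolic space $Y \subset \HH^n$ obtained by removing suitably small disjoint horoballs centred at the $h$-images of the parabolic points makes $h$ a shadow-respecting quasisymmetric embedding $\bdry(G,\{H_i\}) \ra \bdry(Y,\{\partial B_j\})$ in the sense of Definition~\ref{def:shadow-qs}; Theorem~\ref{thm:boundary-to-inside} then produces a polynomially distorted embedding $G \ra Y$ inducing $h$ on the boundary, as required.

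I expect the main obstacle to lie in the last step: arranging the embedding $h$ so that it is genuinely shadow-respecting --- so that parabolic points go to parabolic points with shadows approximately preserved, the restriction to each of the infinitely many peripheral shadows is a snowflake map with \emph{uniform} constants, and $h$ remains globally quasisymmetric. This will likely require patching together an Assouad-type embedding of the \emph{conical} part of $\bdry(G,\{H_i\})$ with explicitly constructed snowflake embeddings of the peripheral pieces into round balls, using quasi-self-similarity of $\bdry(G,\{H_i\})$ to obtain the uniformity, and then verifying the hypotheses of Theorem~\ref{thm:boundary-to-inside}. A secondary technical point, in (3) $\Rightarrow$ (1), is the passage from a quasi-isometric embedding of $G$ into a relatively hyperbolic \emph{space} that coarsely respects peripherals to an honest relatively hyperbolic structure on the group $G$, together with the finite generation of the resulting peripheral subgroups.
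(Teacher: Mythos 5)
Your overall route coincides with the paper's: (3)$\Rightarrow$(1) is Corollary~\ref{cor:polyembed-truncated-rel-hyp} (Theorem~\ref{sederh}, plus Drutu's passage from metric to algebraic relative hyperbolicity, plus Gromov's polynomial growth theorem), and (1)$\Rightarrow$(2) proceeds via doubling of the Bowditch boundary, Assouad's theorem, deletion of horoballs centred at the images of the parabolic points, and Theorem~\ref{thm:boundary-to-inside}. However, the step you yourself flag as the main obstacle --- arranging that $h$ is shadow-respecting --- is genuinely missing, and the patching strategy you propose (separate snowflake embeddings of the peripheral pieces glued to an embedding of the conical part) is neither needed nor how the difficulty is resolved. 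Two ideas close the gap. First, Assouad is applied not to an arbitrary snowflake of the boundary metric but to $\tilde\rho^{1/2}$, where $\tilde\rho$ is a visual metric of parameter $2\eps$: the snowflaked metric $\rho=\tilde\rho^{1/2}$ is again a visual metric (of parameter $\eps$), so the resulting $h$ is globally bi-Lipschitz with respect to a visual metric and condition (3) of Definition~\ref{def:shadow-qs} holds automatically with every $\lambda_a=1$; no uniform per-shadow snowflake control has to be engineered. Second, the removed horoballs in $\HH^{N+1}$ cannot just be ``suitably small'': conditions (1) and (2) of Definition~\ref{def:shadow-qs} force their shadows to satisfy $r'_P\asymp r_P$, and with these prescribed radii their disjointness must be proved. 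That is exactly Lemma~\ref{lem:separationhoroballs}: in the source one has $\rho(a_P,a_Q)\succeq\sqrt{r_Pr_Q}$, this is preserved by the bi-Lipschitz map $h$, and the converse direction of the lemma then yields pairwise separated Busemann horoballs $H_P$ with $r'_P\asymp r_P$. Shrinking horoballs ad hoc to force disjointness would destroy the comparability $r'_P\asymp r_P$, hence the shadow-respecting property; the two-way separation criterion is the missing ingredient in your sketch.

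A secondary inaccuracy occurs in (3)$\Rightarrow$(1): Theorem~\ref{sederh} does not upgrade $f$ to a quasi-isometric embedding, and that claim is false in general --- polynomially distorted (snowflake-on-peripheral) behaviour along horospheres is precisely what the rest of the paper allows. Fortunately you do not need it: metric relative hyperbolicity of $G$ with respect to the pullbacks $f^{-1}(N_\rho(P))$, Drutu's theorem, and the fact that the resulting peripheral subgroups embed uniformly properly into horospheres of polynomial growth already give polynomial growth, hence virtual nilpotence by Gromov; this is exactly the argument of Corollary~\ref{cor:polyembed-truncated-rel-hyp}. Note also that applying Theorem~\ref{thm:boundary-to-inside} requires visual completeness of both sides; for $\Bow(X')=\HH^{N+1}$ this is immediate, and for the source it is Proposition~\ref{prop:bowditch-visually-complete}.
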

The inclusion $X \to \HH^n$ of a truncated real hyperbolic space $X$ into its ambient hyperbolic space is a coarse embedding (that is, a uniformly proper coarse Lipschitz map, Definition~\ref{def:poly-subexp-distortion}), but with exponential distortion.  
So if $G$ is hyperbolic relative to virtually nilpotent groups then the above theorem gives a coarse embedding of $G \to \HH^n$, which, as already mentioned, cannot be subexponentially distorted unless $G$ is hyperbolic.
The general problem of which groups admit coarse embeddings into real hyperbolic spaces (with any distortion function) is also of interest, see work of Hume and the second author~\cite{Hume-Sisto-17-fat-bigons} and of Tessera~\cite{Tess-20-coarse-emb-amenable-hyp}; in the latter paper it is shown that the only amenable groups which coarsely embed into real hyperbolic spaces are the virtually nilpotent ones.

The embedding in Theorem~\ref{thm:truncated-hyp-poly} is built in roughly the same way as 
Bonk--Schramm's result: one applies Assouad's embedding theorem to embed the boundary
into some Euclidean space, and then extends the map inside using 
Theorem~\ref{thm:boundary-to-inside} (the stronger version of Corollary~\ref{thm:boundary-to-inside-simple} which applies to relatively hyperbolic spaces such as a truncated real hyperbolic space).
Given such an embedding, to find the relatively hyperbolic structure on $G$ we use that a truncated real hyperbolic space is hyperbolic relative to the horospheres of the removed horoballs, and pull back the relatively hyperbolic structure to the embedded group (via Corollary~\ref{cor:polyembed-truncated-rel-hyp}, a consequence of Theorem \ref{sederh} and work of Dru\c tu~\cite{Dr-relhyp-qiinv} relating `metric' relative hyperbolicity, `asymptotic tree-gradedness' in her terminology, to `algebraic' relative hyperbolicity).

Theorem~\ref{thm:truncated-hyp-poly} cannot be extended to quasi-isometric embeddings: if $G$ is a non-uniform lattice in $\text{Isom}(\C\HH^2)$ then a quasi-isometric embedding into a truncated real hyperbolic space would induce a quasi-isometric embedding of the real Heisenberg group with its Carnot metric into a Euclidean space, which is impossible.
However, Theorem~\ref{thm:truncated-hyp-poly} and some consideration of the geometry of the horoballs motivates the following.
\begin{question}
  Is a finitely generated group hyperbolic relative to some collection of virtually abelian subgroups if and only if the group admits a quasi-isometric embedding into some truncated real hyperbolic space?	
\end{question}

\begin{remark}
	\label{rmk:healy-hruska}
	As this paper was being finished, in independent work Healy--Hruska released a preprint studying metric properties of boundaries of relatively hyperbolic groups~\cite{healy2020cusped}.  
	Similar to us, they show that quasi-isometries of relatively hyperbolic groups extend to quasi-isometries of the cusped spaces and quasisymmetries of the Bowditch boundary \cite[Theorem 1.2]{healy2020cusped}.

	While on the surface the results look similar to results in this paper, we would like to point out that the contents of the papers are actually rather different. In fact, the main point of \cite{healy2020cusped} is to show, roughly, that using different models of horoballs does not affect the cusped space, and they dedicate a lot of work to the study of different types of horoballs in order to do so. They are then able to extend maps on peripheral sets to horoballs, which, as indicated below Corollary \ref{thm:inside-to-boundary-simple}, is basically all one needs to extend quasi-isometries of relatively hyperbolic groups to quasi-isometries of cusped spaces. In contrast, we regard as our main contribution in Corollary \ref{thm:inside-to-boundary-simple} to be the case of non-coarsely-surjective quasi-isometric embeddings, while we only use horoballs very similar to one fixed model, so that our extensions come with less work.

	The papers complement each other in that, roughly, thanks to \cite{healy2020cusped} we can apply our results to different models of cusped spaces, while we strengthen certain results of \cite{healy2020cusped}, like \cite[Theorem 1.2]{healy2020cusped}, in the case of our particular cusped spaces. 
\end{remark}

\subsection{Outline}
In Section~\ref{sec:results} we give the full statements of our results, with definitions of the classes of maps involved on the spaces and their boundaries.  In particular, we highlight the ``shadow-respecting quasisymmetric embeddings'' in Definition~\ref{def:shadow-qs}, which is the appropriate class of maps on boundaries in our context.  The main results are the following.  First, Theorem~\ref{thm:inside-to-boundary} gives extensions of maps between relatively hyperbolic spaces to maps between their Bowditch boundaries, which leads to Corollary~\ref{thm:inside-to-boundary-simple}.  Second, Theorem~\ref{thm:boundary-to-inside} gives the converse, extending maps between the boundaries to maps between the spaces, which leads to Corollary~\ref{thm:boundary-to-inside-simple}.

Section~\ref{sec:rel-hyp-defs} contains preliminary material on the metric geometry of cusped spaces and their boundaries, and basic properties of the classes of quasisymmetric maps introduced in Section~\ref{sec:results}.
In Section~\ref{sec:subexp-distort} we prove Corollary~\ref{cor:subexpdistort-subgroup-rel-hyp} and Theorem~\ref{thm:truncated-hyp-poly}, assuming Theorem~\ref{thm:boundary-to-inside}.

The proofs of Theorems \ref{thm:inside-to-boundary} and \ref{thm:boundary-to-inside} are contained in Sections \ref{sec:extend-qi-to-bdry} and \ref{sec:extend-bdry-to-qi} respectively.

\subsection{Notation}

$A \preceq B$ means $A \leq \lambda B$ for some $\lambda \geq 1$;
$A \asymp B$ means $A \preceq B$ and $B \preceq A$.

$A \lesssim B$ means $A \leq B+C$ for some $C$;
$A \approx B$ means $A \lesssim B$ and $B \lesssim A$.

$A \leqi B$ means $A \leq \lambda B+\mu$ for some $\lambda \geq 1$ and $\mu \geq 0$; $A \qi B$ means $A \leqi B$ and $B \leqi A$.
Moreover $A\qi_{\lambda,\mu} B$ means that $A/\lambda-\mu\leq B\leq \lambda A+\mu$. 

For $U$ a subset of a metric space $X$, and $C >0$,
we denote the $C$-neighbourhood of $U$ by
$N_C(U) = \{x \in X : d(x,U) \leq C \}$.

\subsection{Acknowledgements}
We thank Christopher Hruska, David Hume and referee(s) for helpful comments.
The first author was partially supported by EPSRC grant EP/K032208/1.

\section{Statement of results}\label{sec:results}

There are various characterisations of relatively hyperbolic spaces $(X, \cP)$, where $X$ is a geodesic metric space and $\cP$ is a collection of (coarsely connected) subspaces of $X$ called `peripheral' subsets.  (Note that we allow peripheral sets to be bounded, which is not always the case in the literature. Also, we always assume $X$ is geodesic.)
In this paper, the main characterisation we use is that, roughly, $X$ is hyperbolic relative to $\cP$ (i.e.\ $(X,\cP)$ is relatively hyperbolic) if the cusped space $\Bow(X,\cP)$ formed by gluing in horoballs to each $P \in \cP$ is Gromov hyperbolic.
We allow (and later need) different models for these horoballs, entailing some preliminary discussion, so we defer the full definition to Section~\ref{sec:rel-hyp-defs}, see Definition~\ref{def:rel-hyp}.
In a few papers, including~\cite{DSp-05-asymp-cones, Dr-relhyp-qiinv}, relative hyperbolicity is referred to as `asymptotic tree-gradedness'.

Given a finitely generated group $G$ and subgroups $\{H_i\}$ we say that the pair $(G, \{H_i\})$ is relatively hyperbolic if any Cayley graph $X$ of $G$ is relatively hyperbolic with peripheral sets $\cP$ given by all left cosets of all $\{H_i\}$.
We give the precise definition in the next section (see Definition~\ref{def:rel-hyp-grp}).

To state our results, we have to first define the varieties of coarse maps and of quasisymmetric maps that we work with.

\subsection{Coarse maps}\label{ssec:coarse-defs}
We denote the metric on a metric space $X$ by $d_X$, or just $d$ if there is no possibility of confusion.
\begin{definition}\label{def:poly-subexp-distortion}
	A map $f:X\ra X'$ between metric spaces is \emph{coarsely Lipschitz} if there exists $C > 0$ so that for all $x,y \in X$,
	\[
		d_{X'}(f(x),f(y)) \leq C d_X(x,y)+C.
	\]		
	
	We say $f$ is \emph{$\tau$-uniformly proper},
	for some $\tau:[0,\infty) \ra \R$ with $\lim_{t \ra \infty}\tau(t)=\infty$,
	if it is coarsely Lipschitz and if for all $x,y \in X$,
	\[
		\tau(d_X(x,y)) \leq d_{X'}(f(x),f(y)).
	\]

	We say a $\tau$-uniformly proper map $f$ is 
	\begin{enumerate}
		\item \emph{a quasi-isometric embedding} if we can take $\tau$ linear,
		\item \emph{polynomially distorted} if we can take $\tau(t)=t^\alpha-C$ for
			some $\alp\in (0,1]$ and $C>0$, or
		\item \emph{subexponentially distorted} if we can take $\tau$ so that we have
			$\lim_{t\to\infty} \tau (t) / \log t=\infty$.
	\end{enumerate}
\end{definition}
These notions are progressively weaker: 
$(1) \Rightarrow (2) \Rightarrow (3)$.

When mapping between relatively hyperbolic spaces, our maps should interact well with
the peripheral sets.
\begin{definition}\label{def:coarse-respect-periph}
	A map $f:X \ra X'$ between relatively hyperbolic spaces $(X, \cP), (X', \cP')$
	\emph{coarsely respects peripherals} if
	\begin{enumerate}
		\item $\exists C>0$ so that
			for all $P \in \cP$ there exists $P' \in \cP'$ so that $f(P) \subset N_C(P')$,
		\item $\forall C'>0 \ \exists C>0$ so that
			for all $P' \in \cP'$ 
			either $\diam f^{-1}(N_{C'}(P')) \leq C$, or
			there exists $P \in \cP$ so that
			$f^{-1}(N_{C'}(P')) \subset N_C(P)$,
	\end{enumerate}
\end{definition}

We are also interested in polynomially distorting maps which have better control on peripheral sets, namely they behave like coarse snow\-flake maps on each.
\begin{definition}\label{def:snowflake-on-peripherals}
	A polynomially distorted map $f:X \ra X'$ between relatively hyperbolic spaces 
	$(X, \cP), (X', \cP')$
	which coarsely respects peripherals is a \emph{snowflake on peripherals}
	if $\exists C>0$ so that for every $P \in \cP$ there exists $\lambda \in (0,1]$ so that 
	for all $x,y \in P$,
	\[
		\frac{1}{C} d_X(x,y)^\lambda -C \leq d_{X'}(f(x),f(y)) \leq C d_X(x,y)^\lambda+C.
	\]	 
\end{definition}
Assuming (as we often will) that each $P \in \cP$ is unbounded, $\lambda$ is uniformly bounded away
from zero by a constant depending on the polynomial distortion of $f$.
A natural example of a snowflake on peripherals map is a quasi-isometric embedding which coarsely respects peripherals; in this case each $\lambda=1$.  
Fairly immediately, relatively hyperbolic spaces with unbounded peripherals and snowflake on peripheral maps form a category, see Proposition~\ref{prop:category-snowflake-periph}.

\subsection{Quasisymmetric maps}\label{ssec:qs-defs}
We now consider boundaries and the maps between them.
As discussed in Section~\ref{sec:rel-hyp-defs}, gluing a horoball $\cH(P)$ to each $P \in \cP$
gives a Gromov hyperbolic space $\Bow(X, \cP)$.
This space has Gromov boundary $\bdry \Bow(X, \cP) = \bdry(X, \cP)$ which comes
decorated with the following data:
\[
	\big( \bdry(X,\cP), \rho, \{(a_P,r_P)\}_{P \in \cP} \big),
\]
where $\rho$ is a visual metric with visual parameter $\eps>0$ with respect to a
basepoint $o \in X$,
each $a_P  = \bdry P \in \bdry(X, \cP)$ is the parabolic point associated to $\cH(P)$,
and each $r_P = e^{-\eps d(o, P)}$ is the radius of the ``shadow'' of $P$ in $\bdry(X, \cP)$.
We formalise this as follows, considering $\cP$ as an abstract index set for points $a_P$ and radii $r_P$ of balls in a given metric space $(Z,\rho)$.

\begin{definition}\label{def:shadow-mtc-spc}
	A \emph{shadow decorated metric space} is a tuple
	\[ \big(Z, \rho, \cB=\{(a_P,r_P)\}_{P\in \cP}\big), \]
	where $(Z,\rho)$ is a metric space, and for $P \in \cP$ we have
	$a_P \in Z, r_P >0$, with all $a_P$ distinct and $\sup r_P \preceq \diam Z$.
\end{definition}
The notion of quasisymmetric mappings was generalised to metric spaces by Tukia and V\"ais\"al\"a~\cite{TV-80-qs}.
\begin{definition}\label{def:qs}
	A \emph{distortion function} is a homeomorphism $\eta:[0,\infty) \ra [0,\infty)$.
	A topological embedding (respectively homeomorphism) $h:(Z,\rho)\ra (Z',\rho')$ between metric spaces $(Z,\rho)$ and $(Z',\rho')$ is called an \emph{$\eta$-quasi\-sym\-met\-ric embedding (resp.\ homeomorphism)}, for some distortion function $\eta$, if for every triple of points $x,y,z \in Z$ and $t \geq 0$, 
	\[
		\rho(x,y) \leq t \rho(x,z) \ \Longrightarrow \ 
		\rho'(h(x),h(y)) \leq \eta(t) \rho'(h(x),h(z)) .
	\]
	If $\eta$ is not specified we just call $h$ a \emph{quasisymmetric embedding (resp.\ homeomorphism)}.
\end{definition}
For maps between shadow decorated metric spaces, we refine this definition
as follows.  
The reader should have in mind boundaries of relatively hyperbolic spaces, and as we will see the definition characterises the extension to the boundary of a quasi-isometric embedding between spaces coarsely respecting peripherals, in particular the behaviour of such extensions around parabolic points.
To explain the terminology we use in the last property below, in analysis on metric spaces, for a metric space $(Z, \rho)$ and $\epsilon>0$ such that $(Z,\rho^\epsilon)$ is also a metric space (for example, $\epsilon \in (0,1]$), the identity map $(Z,\rho) \to (Z, \rho^\epsilon)$ is called a `snowflake' transformation.  
\begin{definition}\label{def:shadow-qs}
	Suppose $(Z,\rho,\cB), (Z', \rho', \cB')$ are shadow decorated metric spaces.
	A \emph{shadow-respecting $\eta$-quasisymmetric embedding $h$}
	is an $\eta$-quasisymmetric embedding $h:(Z, \rho) \ra (Z', \rho')$ so that, for a constant $C\geq 1$,
	\begin{enumerate}
		\item for all $(a, r) \in \cB$, there exists $(a',r') \in \cB'$
		where $a'=h(a)$ and for all $b \in Z$,
			\[ \rho(a,b) \leq r \ \Longrightarrow \ \rho'(a',h(b)) \leq C r'; \]
		\item for all $(a',r') \in \cB'$ if $a'=h(a), (a,r) \in \cB$ then for all $b \in Z$,
			\[ r \leq \rho(a,b) \ \Longrightarrow \ r' \leq C \rho'(a',h(b)), \]
			and if no such $(a,r)\in \cB$ exists, $\rho'(a',h(Z)) \geq r'/C$;
		\item for all $(a,r) \in \cB$, there exists $\lambda_{a}\in (0,\infty)$ so that,
		writing $(a',r')$ as in (1),
		for all $b,c \in Z$ if 
		\[
			\left(\frac{\rho(a, b)}{r}\right) \, \rho(a, b) 
			\leq \rho(b, c) \leq \rho(a, b) \leq r,
		\]
		then
		\[
			\frac{\rho'(h(b),h(c))}{r'}
			\asymp_{C} \left(\frac{\rho(b,c)}{r} \right)^{\lambda_a}.
		\]
	\end{enumerate}
	If $h$ is a homeomorphism we say that it is a
	\emph{shadow-respecting $\eta$-quasisymmetry}.
	
	If $\lambda_a$ equals a fixed $\lambda$ for all $(a,r)\in \cB$,
	we say $h$ \emph{asymptotically $\lambda$-snowflakes}.
\end{definition}
The first two properties here say that $h$ should respect the sizes of shadows,
while the third says that as we get close to the centre of a shadow the map should look
more and more like a suitably scaled snowflake embedding.
A basic example of a shadow-respecting quasisymmetry that asymptotically snowflakes is given by a change of visual metric on the boundary of a fixed relatively hyperbolic space: if $\rho$ and $\rho'$ are visual metrics with parameters $\epsilon$ and $\epsilon'$ respectively, then $\rho' \asymp \rho^{\epsilon'/\epsilon}$, and the radii $r_P = e^{-\epsilon d(o,P)}$ and $r_P' = e^{-\epsilon' d(o,P)}$ also satisfy $r_P' = (r_P)^{\epsilon'/\epsilon}$. Definition~\ref{def:shadow-qs} then holds with each $\lambda_a = \epsilon'/\epsilon$; note that the hypothesis in property (3) is superfluous.

\begin{remark}\label{rmk:shadow-qs}
	In Definition~\ref{def:shadow-qs}, if $h$ is a homeomorphism, we can state (1), (2) more simply as:
$h$ induces a bijection $\cB \ra \cB', (a,r) \mapsto (h(a), r'_{h(a)}) \in \cB'$,
and for all $(a,r) \in \cB$
\[
	B(a',r'_{h(a)}/C) \subset h( B(a,r)) \subset B(a',Cr'_{h(a)}).
\]
\end{remark}

It is not clear that shadow-respecting quasisymmetric maps form a category, however they do when the spaces involved are ``uniformly perfect'', see Proposition~\ref{prop:category-shadow-resp}.

\subsection{A correspondence between maps}

We can now state our two main theorems.
\begin{theorem}\label{thm:inside-to-boundary}
	Suppose $(X,\cP), (X', \cP')$ are relatively hyperbolic spaces, where each $P\in\cP$ is unbounded.
	
	Suppose $f:X \ra X'$ is a map which is a snowflake on peripherals
	(and so also polynomially distorting and coarsely respecting peripherals).
	Then $f$ extends to a quasi-isometric embedding $f_\Bow:\Bow(X,\cP)\ra \Bow(X',\cP')$,
	quantitatively.
	
	The boundary map $\bdry f_\Bow: \bdry (X, \cP) \ra \bdry (X', \cP')$ is
	a shadow-respecting quasisymmetric embedding, quantitatively.
	
	Moreover, if $f$ is a quasi-isometric embedding, then $\bdry f_\Bow$ 
	asymptotically $\frac{\eps'}{\eps}$-snowflakes, where $\eps, \eps'$ denote the visual
	parameters of the boundary metrics on $\bdry(X,\cP), \bdry(X',\cP')$, respectively.

	Finally, if $(X'',\cP'')$ is also a relatively hyperbolic space and $g:X'\to X''$ is a snowflake on peripherals, then $(g\circ f)_\Bow$ is bounded distance to $g_\Bow \circ f_\Bow$, and $\bdry (g\circ f)_\Bow = (\bdry g_\Bow) \circ (\bdry f_\Bow)$.
	That is, $(X,\cP) \mapsto \bdry \Bow(X,\cP), f\mapsto \bdry f_\Bow$ is a functor from relatively hyperbolic spaces having unbounded peripherals with snowflake on peripheral maps to shadow decorated metric spaces with shadow-respecting quasisymmetric embeddings.
\end{theorem}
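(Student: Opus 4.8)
The plan is to build the extension $f_\Bow$ by hand, horoball by horoball, and then verify the three claimed properties in turn, with the functoriality at the end following from uniqueness considerations on the boundary. First I would set up the extension: on the original space $X \subset \Bow(X)$ one takes $f_\Bow = f$; inside a horoball $\cH(P)$ one uses that $\cH(P)$ is (roughly) $P$ with a metric $d_P$ satisfying $d_{\cH(P)} \approx 2\log(1 + d_P)$ plus a vertical coordinate, so a point of $\cH(P)$ is coarsely a pair $(p, t)$ with $p \in P$ and $t \geq 0$ the depth. Since $f$ coarsely respects peripherals, $f(P) \subset N_C(P')$ for a unique $P' \in \cP'$, and the snowflake-on-peripherals hypothesis gives $d_{P'}(f(p),f(q)) \asymp d_P(p,q)^{\lambda_P}$. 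So I would define $f_\Bow(p,t)$ to be (coarsely) $(f(p), \lambda_P t + c_P)$ for a suitable additive constant $c_P$ chosen to match depths on the boundary horosphere $P \approx \partial_0 \cH(P)$. The key computation is that the horoball metric formula $d_{\cH(P)}((p,s),(q,t)) \approx s + t + 2\log\!\big(1 + e^{-\min(s,t)} d_P(p,q)\big)$ is compatible, up to bounded error, with this rescaling: applying $f$ contracts the $\log$-term by the factor $\lambda_P$ while the depths are scaled by $\lambda_P$ as well, so the identity $2\log(1+e^{-\lambda_P m} d_P(p,q)^{\lambda_P}) \approx \lambda_P \cdot 2\log(1 + e^{-m}d_P(p,q))$ (valid up to an additive constant depending only on $\lambda_P$, hence uniformly bounded below away from zero) shows $f_\Bow$ restricted to each horoball is a quasi-isometric embedding with uniform constants. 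This is the main obstacle: one must check the estimate is uniform across all horoballs, using that each $P$ is unbounded so $\lambda_P$ is bounded away from $0$, and that the constants in the snowflake condition are uniform.

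Next I would show $f_\Bow$ is globally a quasi-isometric embedding on $\Bow(X)$. The upper (coarse Lipschitz) bound is straightforward from coarse Lipschitzness of $f$ on $X$ together with the per-horoball estimate. For the lower bound, the essential point is to understand geodesics in $\Bow(X)$: a geodesic between two points either stays in the thick part $X$ up to bounded error, or passes through a bounded number of horoballs, entering and leaving each one. I would use the standard distance formula for relatively hyperbolic (cusped) spaces --- $d_{\Bow(X)}(x,y) \approx d_X(\pi(x),\pi(y)) + \sum (\text{penetration depths})$ where the sum is over horoballs deeply penetrated --- combined with the facts that $f$ is $\tau$-uniformly proper on $X$ and coarsely respects peripherals (so deep penetration of a horoball by a geodesic in $X$ is detected, and carried to deep penetration of the image horoball via part (2) of Definition~\ref{def:coarse-respect-periph}), and the per-horoball quasi-isometric embedding estimate, to get a matching lower bound for $d_{\Bow(X')}(f_\Bow(x), f_\Bow(y))$. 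Here the subtlety is the interplay between "polynomial distortion on $X$" and "quasi-isometric on $\Bow(X)$": polynomial distortion on the thick part is exactly what exponentiates to the logarithmically-shrunk horoball picture, so the distance-formula contributions from $X$ and from the horoballs become comparable; the uniform properness of $\tau$ (even if only polynomial) suffices because the cusped-space distance between two points of $X$ is coarsely $2\log$ of their $X$-distance whenever they are in a common peripheral, matching the $\lambda_P$-rescaling.

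Then the boundary statements. Since $f_\Bow$ is a quasi-isometric embedding between Gromov hyperbolic spaces, it induces a quasisymmetric embedding $\partial f_\Bow : \partial(X,\cP) \to \partial(X',\cP')$ for the visual metrics --- this is a standard fact (Bonk--Schramm), giving an $\eta$ depending only on the quasi-isometry constants and the visual parameters $\eps, \eps'$. To see it is shadow-respecting I would identify the shadow of $P$ in $\partial(X,\cP)$ with a ball of radius $r_P \asymp e^{-\eps d(o,P)}$ centred at the parabolic point $a_P = \partial P$; since $f_\Bow$ sends $\cH(P)$ into a bounded neighbourhood of $\cH(P')$ and is a quasi-isometric embedding, it sends $a_P \mapsto a_{P'}$ and the shadow of $P$ into a controlled neighbourhood of the shadow of $P'$, giving properties (1) and (2) of Definition~\ref{def:shadow-qs} (the "no such $(a,r)$" clause being vacuous when $f$ genuinely respects peripherals, or handled by the diameter-bound alternative in Definition~\ref{def:coarse-respect-periph}(2)). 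For property (3): near $a_P$, the visual metric $\rho$ restricted to the shadow is comparable to $e^{-\eps d(o,P)} \big( d_P(\cdot,\cdot)\big)^{?}$ --- more precisely the Gromov-product computation inside the horoball shows that two endpoints landing in the shadow of $P$ at "scale" $d_P(p,q)$ have $\rho$-distance $\asymp r_P \cdot (d_P(p,q))^{-\eps'/\eps}$-type expressions; pushing through $f_\Bow$, which rescales the horoball depth by $\lambda_P$, converts the exponent by the factor $\lambda_P$, so the ratio $\rho'(h(b),h(c))/r'$ is comparable to $(\rho(b,c)/r)^{\lambda_a}$ with $\lambda_a = \lambda_P \cdot \eps'/\eps$ (wait --- I would track the exact normalisation so that, when $f$ is a quasi-isometric embedding, every $\lambda_P = 1$ and hence $\lambda_a = \eps'/\eps$, recovering the "asymptotically $\frac{\eps'}{\eps}$-snowflakes" assertion). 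Finally, functoriality: $(g \circ f)_\Bow$ and $g_\Bow \circ f_\Bow$ are two quasi-isometric embeddings $\Bow(X) \to \Bow(X'')$ that agree up to bounded error on $X$ (since $g \circ f$ and $g_\Bow \circ f_\Bow$ agree there) and on each horoball (since the depth-rescaling factors multiply: $\lambda_{P}^{g \circ f} = \lambda_{P'}^g \cdot \lambda_P^f$, and the additive constants match up to bounded error), hence are at bounded distance; two quasi-isometric embeddings at bounded distance induce the same boundary map, so $\partial (g \circ f)_\Bow = \partial g_\Bow \circ \partial f_\Bow$, and the functoriality statement follows.
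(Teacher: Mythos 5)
Your construction of $f_\Bow$ (rescaling the depth coordinate in each horoball by $\lambda_P$, with $\lambda_P$ uniformly bounded away from $0$ because each $P$ is unbounded) is essentially the paper's Lemma~\ref{lem:snowflake-into-horoball} and Construction~\ref{constr:bowditch-extension}, and your identification of the exponent $\lambda_a=\lambda_P\,\eps'/\eps$ in property (3) matches the paper. The genuine gap is in the step where you claim $f_\Bow$ is a quasi-isometric \emph{embedding}. The lower bound does not follow from ``uniform properness $+$ coarse respect of peripherals $+$ per-horoball estimates $+$ a distance formula''. What is needed, and what occupies the technical heart of the paper (Propositions~\ref{bndhauss} and~\ref{prop:transhauss}), is a Morse-type statement: for a subexponentially (in particular polynomially) distorted map, every transient point of a geodesic $[f(x),f(y)]$ lies within bounded distance of the image path $f([x,y])$, and consequently $f(\trans(x,y))$ and $\trans(f(x),f(y))$ are at bounded Hausdorff distance. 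Without this you have no control on \emph{where} the geodesic $[f(x),f(y)]$ (equivalently, the cusped geodesic between $f_\Bow(x)$ and $f_\Bow(y)$) goes: a long transient stretch of $[x,y]$ could in principle be short-cut in $\Bow(X')$, and uniform properness only yields properness of $f_\Bow$ (the paper's Lemma~\ref{lem:fbow:unifprop}), not a linear lower bound; the upgrade to a quasi-isometric embedding in the paper goes through Lemma~\ref{lem:unifprop-coarsesurj} applied to a map \emph{onto} $[f(x),f(y)]$, which is only available once the tracking statement is known. Moreover, your claim that Definition~\ref{def:coarse-respect-periph}(2) ``detects'' deep penetration and carries it to the image with matching entry/exit data is not justified by that definition: matching the deep components (and hence the $\theta$-type terms you want to compare) requires knowing $f(\pi_P(x))$ is uniformly close to $\pi_{P'}(f(x))$, which is the paper's Lemma~\ref{lem:subexp-proj}, itself proved \emph{using} Proposition~\ref{bndhauss}. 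So the distance-formula comparison you propose presupposes exactly the estimates whose proofs are the substance of the theorem; this is where the polynomial-distortion hypothesis actually does its work, and your sketch contains no argument for it.

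A secondary but real issue is on the boundary side: the ``no such $(a,r)$ exists'' clause in Definition~\ref{def:shadow-qs}(2) is not vacuous, since $f$ need not be coarsely surjective and there will generally be peripherals $P'\in\cP'$ whose parabolic point $a_{P'}$ is not in the image of the boundary map; one must show $\rho'(a_{P'},\bdry f_\Bow(Z))\succeq r_{P'}$, which the paper does via Lemma~\ref{lem:bdry-dist-to-parabolic} together with uniform properness and the peripheral-respecting hypothesis (if the image came too close to $a_{P'}$, a large piece of $f(X)$ would lie near $P'$, forcing a corresponding $P\in\cP$ with $h(a_P)=a_{P'}$). Your parenthetical dismissal of this case needs to be replaced by such an argument. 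The remaining parts of your outline (Bonk--Schramm for quasisymmetry of the boundary map, the snowflake computation giving $\lambda_a=\lambda_P\,\eps'/\eps$ and hence the asymptotic $\eps'/\eps$-snowflake claim when $f$ is a quasi-isometric embedding, and functoriality via agreement up to bounded distance on $X$ and on horoballs with multiplied exponents) do follow the paper's route and are fine modulo the two points above.
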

Corollary~\ref{thm:inside-to-boundary-simple} is an immediate consequence, since a quasi-isometry that coarsely respects peripherals is automatically a snowflake on peripherals (with each $\lambda=1$), and we may discard any finite peripheral groups from $\{H_i\}$.
(As an aside, the assumption above on unbounded peripherals is necessary in order to get the uniqueness of extensions required for functoriality.)

The proof of Theorem \ref{thm:inside-to-boundary} is in Section \ref{sec:extend-qi-to-bdry}.

\begin{theorem}\label{thm:boundary-to-inside}
	Suppose $(X, \cP)$, $(X', \cP')$ are two relatively hyperbolic spaces
	that are visually complete (Definition~\ref{def:visually-complete}),
	and $h: \bdry(X,\cP) \ra \bdry(X',\cP')$
	is a shadow-respecting, $\eta$-quasisymmetric embedding.
	Then there exists a polynomially distorted embedding $\hat{h}:X \ra X'$, which is a snowflake on
	peripherals, so that $h=\bdry (\hat{h})_\Bow$, and $\hat{h}$ is unique up to bounded distance. 
	
	Moreover, if $h$ also asymptotically 
	$\frac{\eps'}{\eps}$-snowflakes then $\hat{h}$ is a quasi-isometric embedding, where $\eps, \eps'$ denote the visual
	parameters of the boundary metrics on $\bdry(X,\cP), \bdry(X',\cP')$, respectively.

	Finally, if $(X'',\cP'')$ is also a relatively hyperbolic space that is visually complete, and $j:\bdry(X',\cP')\to\bdry(X'',\cP')$ is a shadow-respecting, quasisymmetric embedding, then $\widehat{j\circ h}$ and $\hat{j}\circ \hat{h}$ are at bounded distance.
	That is, $\bdry(X,\cP) \mapsto X, h \mapsto \hat{h}$ is a functor from shadow decorated boundaries of relatively hyperbolic spaces with shadow-respecting quasisymmetric embeddings, to relatively hyperbolic spaces with equivalence classes of snowflake on peripheral maps, where two such maps are considered equivalent if they are bounded distance apart.
\end{theorem}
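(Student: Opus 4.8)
The plan is to build $\hat h$ in two stages. First, I would use the Bonk--Schramm extension theorem: since $\Bow(X)$ and $\Bow(X')$ are Gromov hyperbolic geodesic spaces and $h$ is an $\eta$-quasisymmetric embedding of their boundaries (with the visual metrics $\rho,\rho'$), there is a quasi-isometric embedding $H_\Bow:\Bow(X)\to\Bow(X')$ whose boundary extension is $h$. This map is unique up to bounded distance among maps inducing $h$, by the standard hyperbolic-geometry argument (a map inducing the identity on the boundary of a visual hyperbolic space is bounded distance from the identity). Here is where visual completeness is used: it guarantees that points of $X\subset\Bow(X)$ are uniformly close to geodesic rays from $o$, so that the decoration data $(a_P,r_P)$ genuinely records $d(o,P)$ and $H_\Bow$ is pinned down on all of $\Bow(X)$, not just on the boundary.

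Second, I would promote $H_\Bow$ to a map $\hat h:X\to X'$ and check it has the claimed distortion. The conditions (1) and (2) in Definition~\ref{def:shadow-qs} are exactly what forces $H_\Bow$ to send the horoball $\cH(P)$ (up to bounded Hausdorff distance) to a horoball $\cH(P')$: a shadow of radius $r_P$ in the boundary corresponds to a horoball whose tip is at distance $\approx\frac1\eps\log(1/r_P)$ from $o$, and $\rho(a_P,\cdot)\le r_P \Rightarrow \rho'(a_{P'},h(\cdot))\le Cr_{P'}$ translates into ``deep points of $\cH(P)$ map into a bounded neighbourhood of $\cH(P')$''. Thus $H_\Bow$ coarsely respects peripherals in the cusped sense, and restricting and coarsely projecting gives $\hat h:X\to X'$ that coarsely respects peripherals in the sense of Definition~\ref{def:coarse-respect-periph}. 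For the distortion: distances in $X$ between points of a common $P$ are recovered from distances in $\Bow(X)$ by the horoball formula $d_X\asymp \exp(d_{\cH(P)})$ (up to the usual logarithmic corrections), and condition (3) of Definition~\ref{def:shadow-qs} is precisely the statement that, in the horoball, $H_\Bow$ distorts the relevant distances linearly with slope $\lambda_{a_P}$ — conjugating by the exponential then yields the snowflake relation $d_{X'}(\hat h(x),\hat h(y))\asymp_C d_X(x,y)^{\lambda_{a_P}}$ on each $P$. Away from the peripherals, hyperbolicity plus the quasi-isometric embedding property of $H_\Bow$ gives that $\hat h$ is a quasi-isometric embedding on the ``thick part'', and combining the two regimes along a geodesic (which alternates between thick pieces and horoball excursions, by the structure of geodesics in relatively hyperbolic spaces) yields the global polynomial lower bound $\frac1C d_X(x,y)^\alpha - C \le d_{X'}(\hat h(x),\hat h(y))$, with $\alpha=\min$ of the exponents. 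If $h$ asymptotically $\frac{\eps'}{\eps}$-snowflakes, then every $\lambda_{a_P}$ equals $1$ after the exponential conjugation (the factor $\eps'/\eps$ being exactly the ratio needed to cancel the two visual parameters), so $\hat h$ is a genuine quasi-isometric embedding.

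For functoriality, I would argue: $\widehat{j\circ h}$ and $\hat j\circ\hat h$ both induce $j\circ h$ on the boundary after passing to the cusped spaces (using Theorem~\ref{thm:inside-to-boundary} to see that $\bdry(\hat j\circ\hat h)_\Bow = (\bdry\hat j_\Bow)\circ(\bdry\hat h_\Bow) = j\circ h$), and the composite $\hat j\circ\hat h$ is itself a snowflake on peripherals by Proposition~\ref{prop:category-snowflake-periph}, hence has a well-defined cusped extension $(\hat j\circ\hat h)_\Bow$ which is bounded distance from $\hat j_\Bow\circ\hat h_\Bow$; uniqueness up to bounded distance of the extension inducing a given boundary map then gives $\widehat{j\circ h}\approx\hat j\circ\hat h$.

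I expect the main obstacle to be the lower distortion bound for $\hat h$ on pairs of points $x,y$ whose connecting geodesic passes through many horoballs of very different depths: one must track how the snowflake exponents on the individual peripherals interact with the linear behaviour on the thick part so that the errors (the additive $C$'s and the logarithmic horoball corrections) do not accumulate, and verify that condition (3)'s rather delicate hypothesis ``$\bigl(\rho(a,b)/r\bigr)\rho(a,b)\le\rho(b,c)\le\rho(a,b)\le r$'' covers exactly the pairs of points one needs — namely pairs lying on a common shadow at comparable scales — so that no pair relevant to the distance estimate falls outside its scope. Making this bookkeeping uniform (quantitative in $\eta$, $C$, the hyperbolicity and relative-hyperbolicity constants, and the visual parameters) is the technical heart of the argument.
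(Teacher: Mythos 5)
Your overall architecture matches the paper's: extend $h$ via Bonk--Schramm to $\hat f:\Bow(X)\to\Bow(X')$ (visual completeness entering through uniform perfectness of the boundary, which is what lets Tukia--V\"ais\"al\"a upgrade $h$ to a power quasisymmetry, and later through the quasi-centre argument for uniqueness), use Definition~\ref{def:shadow-qs}(1,2) to see that horoballs go to horoballs, use (3) to get a rough $(\epsilon\lambda_P/\epsilon')$-similarity on each horoball and hence a coarse snowflake on each $P$, and deduce functoriality from uniqueness. But the step you yourself flag as ``the technical heart'' --- turning the per-peripheral snowflake control plus thick-part control into a \emph{global} polynomial lower bound on $d_{X'}(\hat h(x),\hat h(y))$ --- is exactly what is missing, and ``combining the two regimes along a geodesic'' is not an argument: the additive errors on each horoball excursion are not summable along a geodesic meeting arbitrarily many horoballs, and it is not clear how to prevent them accumulating. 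The paper's missing idea here is the distance formula for relatively hyperbolic spaces (Theorem~\ref{distform}): $d_X(x,y)$ is comparable to $d_{\Bow(X)}(x,y)$ plus the sum of sufficiently large projection terms $\theta_P(x,y)$, and Lemma~\ref{termscomp} shows $\theta_{h_\#(P)}(\hat f(x),\hat f(y))\qi \theta_P(x,y)^{\epsilon\lambda_P/\epsilon'}$ term by term; the lower bound then follows from the elementary inequality $\sum_i r_i^{\lambda_i}\geq(\sum_i r_i)^{\alpha}$ for $r_i\geq 1$, $\lambda_i\geq\alpha$, after raising the cutoff threshold. Without some device of this kind (or an equivalent bookkeeping along geodesics) the polynomial lower bound is not established.

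Three subsidiary points you raise or gloss over also need genuine arguments rather than hopes. First, the applicability of Definition~\ref{def:shadow-qs}(3): to convert it into the statement that $\hat f$ is a rough similarity on $\cH(P)$ one must, for an \emph{arbitrary} point $x$ of (a neighbourhood of) $\cH(P)$, produce boundary points $a,b$ with $x$ coarsely the quasi-centre of $o,a,b$ and with $\rho(a,a_P)^2/r_P\leq\rho(a,b)\leq\rho(a,a_P)\leq r_P$; this is the content of Lemma~\ref{anypointissplit} (feeding into Lemma~\ref{bilipinhor} and Corollary~\ref{roughsimonO}) and uses visual completeness in a nontrivial way --- you flag the worry but do not resolve it. Second, your exponent ``$\alpha=\min$ of the exponents'' is only meaningful if the $\lambda_{a_P}$ are uniformly bounded away from $0$ (and above, so the maps are coarsely Lipschitz on peripherals); with infinitely many peripherals this requires Proposition~\ref{prop:bounded-snowflake-param}, which again rests on uniform perfectness and the bi-H\"older bound from Tukia--V\"ais\"al\"a. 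Third, ``restricting and coarsely projecting gives $\hat h:X\to X'$'' needs the second clause of Definition~\ref{def:shadow-qs}(2): one must rule out that points of $X$ are sent deep into horoballs $\cH(P')$ whose parabolic point $a_{P'}$ is not in the image of $h$ (Corollary~\ref{GtoG'} via Lemma~\ref{insidehor}); otherwise the projection back to $X'$ destroys the distance estimates. So the proposal is the right skeleton, but the quantitative core of the theorem is not proved.
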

The property of being `visually complete' (Definition~\ref{def:visually-complete}) is satisfied by relatively hyperbolic groups having proper infinite peripheral groups (Proposition~\ref{prop:bowditch-visually-complete}), and is equivalent to the Bowditch boundary being uniformly perfect (Lemma~\ref{lem:bowditch-unif-pfct}).
The importance of this condition is two-fold: first, by work of Tukia--V\"ais\"al\"a it allows us to upgrade quasisymmetric maps to ``power'' quasisymmetric maps, which have quasi-isometric extensions by Bonk--Schramm, see Theorem~\ref{thm:bonk-schramm} and discussion.
Second, without it we cannot hope for uniqueness of extensions $\hat{h}$: consider the many quasi-isometries of $\Z$ which induce the same boundary maps.
Healy--Hruska also show that boundaries of relatively hyperbolic groups are uniformly perfect, see~\cite[Section 1.1]{healy2020cusped} and references within, particularly to work of Meyer~\cite{Meyer-09-diss-unif-perfect}.

Corollary~\ref{thm:boundary-to-inside-simple} now follows from Theorem~\ref{thm:boundary-to-inside}.
\begin{proof}[Proof of Corollary~\ref{thm:boundary-to-inside-simple}]
	If $h:\bdry(G,\{H_i\})\to\bdry(G',\{H_i'\})$ is a shadow-respecting quasisymmetry, then by Proposition~\ref{prop:category-shadow-resp} $h^{-1}$ is also a shadow-respecting quasisymmetry, and so $\id_G \approx \widehat{\id_{\bdry(G,\{H_i\})}} = \widehat{h \circ h^{-1}} \approx \hat{h} \circ \widehat{h^{-1}}$, where $\approx$ means the maps agree up to bounded error.
	Thus as $\hat{h}, \widehat{h^{-1}}$ are both coarsely Lipschitz, they must be quasi-isometries.
\end{proof}

The proof of Theorem \ref{thm:boundary-to-inside} is in Section \ref{sec:extend-bdry-to-qi} (with the map $\hat h$ being $\hat f|_X$ where $\hat f$ is a map between cusped spaces constructed via Theorem \ref{thm:bonk-schramm} due to Bonk and Schramm).

\section{Preliminaries on horoballs and relative hyperbolicity}\label{sec:rel-hyp-defs}

In this section we define models of horoballs and the notions of relatively hyperbolic groups and spaces, and their (Bow\-ditch) boundaries.  We then collect some preliminary results on quasi-centres in hyperbolic spaces, separation of horoballs, visual completeness and transient sets.

\subsection{Horoballs}
 There are many definitions of relatively hyperbolic groups and metric spaces.
 We will give one in terms of cusped spaces defined in a way that allows us to consider both real hyperbolic horoballs and the following combinatorial horoballs.
 
 \begin{definition}\label{def:graph-horoball}
 	Suppose $\Gamma$ is a connected graph with vertex set $V$ and edge set $E$,
 	where every edge has length one.
 	The \emph{horoball} $\cH(\Gamma)$ is defined to be the graph with
 	vertex set $V \times \N$ and
 	edges $((v, n), (v, n+1))$ of length $1$, for all $v \in V$, $n \in \N$,
 	and edges $((v, n), (v', n))$ of length $e^{-n}$, for all $(v, v') \in E$.	
 \end{definition}
 
 Note that $\cH(\Gamma)$ is quasi-isometric to the metric space
 constructed from $\Gamma$ by gluing to each edge in $E$ a copy of the strip
 $[0,1] \times [1, \infty)$ in the upper half-plane model of $\HH^2$,
 where the strips are attached to each other along $v \times [1,\infty)$.
 
 As is well known, these horoballs are hyperbolic with boundary a single point.
 Moreover, it is easy to estimate distances in horoballs. 
 (Recall that we write $A\approx B$ if $|A-B|$ is bounded by some constant.)
 \begin{lemma}\label{distestim}
 	Suppose $\Gamma$ and $\cH(\Gamma)$ are defined as above.
 	Let $d_\Gamma$ and $d_\cH$ denote the corresponding path metrics.
 	Then for each $(x,m), (y,n) \in \cH(\Gamma)$, we have 
 	\[
 		d_\cH((x,m),(y,n)) \approx 2 \log(d_\Gamma(x,y)e^{-\max\{m,n\}}+1)+|m-n|.
 	\]
 \end{lemma}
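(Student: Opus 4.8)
The plan is to prove the two inequalities hidden in $\approx$ directly from the combinatorial description of $\cH(\Gamma)$. Write $D=d_\Gamma(x,y)$, a non-negative integer; since the right-hand side is symmetric in $(x,m)$ and $(y,n)$, I assume without loss of generality that $m\le n$, so $\max\{m,n\}=n$ and $|m-n|=n-m$. The one quantitative fact used throughout is that $\min_{t\ge n}\bigl(2t+De^{-t}\bigr)$ differs from $2\log(De^{-n}+1)+2n$ by at most a universal additive constant: the function $t\mapsto 2t+De^{-t}$ is decreasing for $t<\log(D/2)$ and increasing for $t>\log(D/2)$, so if $D\le 2e^n$ its minimum on $[n,\infty)$ is at $t=n$ with value $2n+De^{-n}$, where $0\le De^{-n}\le 2$ and $|t-2\log(1+t)|$ is bounded on $[0,2]$; whereas if $D> 2e^n$ the minimum is $2\log(D/2)+2$, and then $De^{-n}+1$ and $De^{-n}$ agree up to a factor less than $3/2$, so $2\log(De^{-n}+1)+2n$ lies within an additive constant of $2\log D$, hence of $2\log(D/2)+2$.

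For the upper bound I would exhibit a single explicit edge-path. Fix the integer $k=\max\{n,\lceil\log(1+D)\rceil\}\ge n$, go vertically from $(x,m)$ up to $(x,k)$, then horizontally from $(x,k)$ to $(y,k)$ along the lift of a $\Gamma$-geodesic from $x$ to $y$, then vertically down to $(y,n)$; its length is $(k-m)+De^{-k}+(k-n)=2k-m-n+De^{-k}$. When $\lceil\log(1+D)\rceil\le n$ we have $k=n$, $De^{-n}<1$, and the elementary bound $t\le 2\log(1+t)$ on $[0,1]$ finishes the estimate. When $\lceil\log(1+D)\rceil> n$ we have $De^{-k}<1$, $2k<2\log(1+D)+2$, and $\log(De^{-n}+1)=\log(D+e^n)-n\ge \log(1+D)-n$ since $e^n\ge 1$; combining these gives $2k-m-n+De^{-k}\le 2\log(De^{-n}+1)+(n-m)+C$ for a universal $C$. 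So $d_\cH((x,m),(y,n))$ is at most the right-hand side plus $C$.

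For the lower bound, let $\gamma$ be any edge-path from $(x,m)$ to $(y,n)$ (edge-paths realise vertex distances in a metric graph) and let $k$ be the largest value of the $\N$-coordinate attained along $\gamma$, so $k\ge n$. The $\N$-coordinate changes by $\pm1$ across each unit-length vertical edge and is constant across horizontal edges, and along $\gamma$ it runs from $m$ up to $k$ and down to $n$, so $\gamma$ contains at least $(k-m)+(k-n)=2k-m-n$ vertical edges, contributing that much to $\length(\gamma)$. Projecting by $(v,\ell)\mapsto v$, the horizontal edges of $\gamma$ trace a walk in $\Gamma$ from $x$ to $y$, so there are at least $D$ of them, each at a level $\le k$ and hence of length $\ge e^{-k}$, contributing at least $De^{-k}$. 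Therefore $\length(\gamma)\ge 2k-m-n+De^{-k}\ge \min_{t\ge n}\bigl(2t-m-n+De^{-t}\bigr)$, and taking the infimum over $\gamma$ and invoking the first paragraph gives $d_\cH((x,m),(y,n))\ge 2\log(De^{-n}+1)+(n-m)-C$.

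The only point that needs genuine (if elementary) care is the estimate in the first paragraph, namely matching the regime where it pays to travel deep into the horoball — where the optimal level is $\log(D/2)$ and the cost is essentially $2\log D$ minus the vertical savings — against the regime where level $\max\{m,n\}$ is already deep enough, and where the comparison $\log(1+t)\approx t$ for bounded $t$ patches the two together. Beyond this small optimisation the argument is just bookkeeping with the two coordinate projections, and there is no real obstacle.
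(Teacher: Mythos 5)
Your proof is correct and follows essentially the same route as the paper: both reduce the estimate to minimising $2t-m-n+d_\Gamma(x,y)e^{-t}$ over levels $t\ge\max\{m,n\}$ and then compare the optimal value with $2\log(d_\Gamma(x,y)e^{-\max\{m,n\}}+1)+|m-n|$. The only (welcome) difference is that you justify the lower bound by counting vertical and horizontal edges of an arbitrary edge-path via the two coordinate projections, where the paper instead asserts the normal form of geodesics in $\cH(\Gamma)$, so your write-up is slightly more self-contained on that point.
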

 \begin{proof}
	We may assume that $m \leq n$.
 	We can suppose $d_\Gamma(x,y)\geq 2 e^n$, as if not $|m-n| \leq d_\cH((x,m),(y,n))\leq 2+|m-n|$. 
	Consequently
	 $\log(d_\Gamma(x,y)e^{-n}+1)\approx \log d_\Gamma(x,y)-n$. By construction of $\cH(\Gamma)$, any geodesic 
 	$\gamma$ between $x$ and $y$ in $\cH(\Gamma)$ must, for some $t\geq n$, go from $(x,m)$ to $(x,t)$ changing
 	only the second coordinate,
 	then follow a geodesic $\gamma' \subset \Gamma \times \{t\} \subset \cH(\Gamma)$
 	to $(y,t)$, then back to $(y,n)$.	
 	Thus
 	\begin{equation}\label{eq-horoball-distance-estimate}
 		d_\cH((x,m),(y,n)) \leq 2(t-n)+|n-m|+ e^{-t} d_\Gamma(x,y),
 	\end{equation}
 	with equality for the best choice of $t$. It is readily seen that this value is attained for the least $t$ so that $l_t=e^{-t} d_\Gamma(x,y)$ satisfies $l_t/e+2\geq l_t$, that is, $l_t\leq C:=2e/(e-1)$.
 	So the best choice of $t$ is $t = \lceil \log (d_\Gamma(x,y)/C)\rceil$ which is $\geq n$, 
 	and the right hand side of \eqref{eq-horoball-distance-estimate}
	 is $\approx_{2+C} 2(\log d_\Gamma(x,y)-n)-2\log C + |n-m|$.
 \end{proof}

This metric distance estimate is all we really need in our results, so we abstract it as follows.
A metric space $P$ is \emph{($\epsilon$-)coarsely connected} if there exists $\epsilon>0$ so that for any $x,y\in P$ there exists $x=x_0, x_1,\ldots, x_n=y$ in $P$ with $d(x_{i-1},x_{i}) \leq \epsilon$ for each $i=1,\ldots,n$.
\begin{definition}
	\label{def:admissible-horoballs}
	Let $(P,d_P)$ be a coarsely connected metric space. 
	An \emph{admissible horoball} is a geodesic metric space $(\cH(P),d_{\cH(P)})$ which is $\delta$-hyperbolic with a single boundary point $\{a_P\}=\bdry\cH(P)$, and for which there exists $C>0, L \geq 1$ satisfying the following properties.
	There exists a subset $\hat{P}\subset P$ with $d_P(x,\hat{P})\leq C$ for all $x \in P$, and $\hat{P}$ is identified with a topologically embedded subset of $\cH(P)$.  Moreover we require:
	\begin{enumerate}
		\item for each $x \in \hat P\subset \cH(P)$ there exists a geodesic ray $\gamma_x:[0,\infty)\to\cH(P)$ with $\gamma_x(0)=x$ and (necessarily) $\gamma_x(\infty)=a_P$.
		\item for every $p\in\cH(P)$ there exists $x \in \hat P$ with $d_{\cH(P)}(p,\gamma_x)\leq C$, 
		\item for any $x,y\in \hat P$ with $d_{\cH(P)}(x,y)\leq 1$ we have $d_{\cH(P)}(x,y) \geq d_P(x,y)/L$,
		\item and for any $x,y \in \hat P$ and $m,n \geq 0$,
 	\[
		d_{\cH(P)}(\gamma_x(m),\gamma_y(n)) \approx_C 2 \log(d_P(x,y)e^{-\max\{m,n\}}+1)+|m-n|.
 	\]
	\end{enumerate}

	A collection of admissible horoballs on a collection of spaces is \emph{uniform} if the constants $\delta, C, L$ may be chosen uniformly.
\end{definition}
In our definition of a relatively hyperbolic space below, and in all our results, we require uniformly admissible horoballs in order to control the horoball geometry up to uniformly bounded errors.
\begin{example}[{\cite[Definitions 3.1,3.2]{Si-metrrh}}]
	\label{ex:graph-approx-horoballs}
	For a $k$-coarsely connected metric space $(P,d_P)$, define a graph approximation $\Gamma_P$ by choosing as vertex set a maximal $\epsilon$-separated subset $\hat{P}$ in $(P,d)$, and connecting by an edge any $x,y \in \hat{P}$ with $d_P(x,y) \leq R$ for a suitable $R$ chosen to make $\Gamma_P$ connected.  Then let $\cH(P)$ be the graph horoball (Definition~\ref{def:graph-horoball}) on $\Gamma_P$.
	Properties (1,2,3) of Definition~\ref{def:admissible-horoballs} follow by construction, and Definition~\ref{def:admissible-horoballs}(4) by Lemma~\ref{distestim}.
\end{example}

Other than graph horoballs, the only other example we work with directly is the following.
\begin{proposition}\label{prop:real-horoballs-admissible}
	Let $H$ be a horoball in $\HH^N$; in the upper half plane model $\R^N_+$ we may take $H = \{x = (x_i) \in \R^N_+ : x_N \geq 1\}$.
	Then $H$ is an admissible horoball for $\bdry H \cong \R^{N-1} \subset \overline{\R^N_+ \setminus H}$.
\end{proposition}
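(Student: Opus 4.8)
The plan is to verify the four numbered conditions of Definition~\ref{def:admissible-horoballs} by direct computation in the upper half-space model, taking $P=\bdry H$ to be the bounding horosphere $\{x_N=1\}$ with its induced length metric (which is the Euclidean metric on $\R^{N-1}$), taking $a_P$ to be the point $\infty$ of $\bdry\HH^N$, and taking the ``net'' $\hat P$ to be all of $P$, identified with $\{x_N=1\}\subset H$ via $x\mapsto(x,1)$. With $\hat P=P$ the density requirement $d(x,\hat P)\le C$ holds with $C=0$, and $\hat P\hookrightarrow\cH(P)$ is a topological embedding. Throughout one uses that $H$ is convex in $\HH^N$, so that the induced length metric on $H$ is just the restriction of $d_{\HH^N}$, making $\cH(P)=H$ a geodesic metric space, and that a convex subset of a CAT$(-1)$ space is CAT$(-1)$, hence $\delta$-hyperbolic for a universal $\delta$ \cite{BH-99-Metric-spaces}.

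First I would establish the single-boundary-point property and condition (1): geodesics of $\HH^N$ in this model are vertical rays/lines and semicircles meeting $\R^{N-1}$ orthogonally, and such a semicircle meets $\{x_N\ge1\}$ in at most a bounded sub-arc, so any geodesic ray lying inside $H$ is a sub-ray of a vertical line and hence converges to $\infty$; thus $\bdry H=\{\infty\}=\{a_P\}$, and for $x\in\R^{N-1}$ the unit-speed vertical ray $\gamma_x(t)=(x,e^t)$ satisfies $\gamma_x(0)=(x,1)\in\hat P$ and $\gamma_x(\infty)=a_P$. For condition (2), any $p=(p',p_N)\in H$ equals $\gamma_{p'}(\log p_N)$, so it lies on the ray $\gamma_{p'}$ and condition (2) holds with $C=0$. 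For condition (3), if $d_H((x,1),(y,1))\le1$ then from $\cosh d_H((x,1),(y,1))=1+\tfrac12|x-y|^2$ and the elementary inequality $\cosh t-1\le\tfrac{t^2}{2}\cosh t$ one gets $|x-y|\le\sqrt{\cosh1}\,d_H((x,1),(y,1))$, so $L=\sqrt{\cosh1}$ works.

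The only substantial computation is condition (4), which mirrors Lemma~\ref{distestim}. Using $\cosh d_{\HH^N}(p,q)=1+\tfrac{|p-q|^2}{2p_Nq_N}$ with $p=\gamma_x(m)=(x,e^m)$, $q=\gamma_y(n)=(y,e^n)$ and $r=|x-y|=d_P(x,y)$, one computes, assuming $m\le n$ and writing $s=n-m$, $u=re^{-n}$,
\[
	\cosh d_H\big(\gamma_x(m),\gamma_y(n)\big)=\frac{r^2}{2e^{m+n}}+\cosh s=\tfrac12u^2e^s+\cosh s.
\]
Since $\tfrac12e^s\le\cosh s\le e^s$ and $\tfrac12(u+1)^2\le u^2+1\le(u+1)^2$, the right-hand side lies between $\tfrac14e^s(u+1)^2$ and $e^s(u+1)^2$; comparing with $e^{d_H}$, which lies between $\cosh d_H$ and $2\cosh d_H$, yields $\big|d_H(\gamma_x(m),\gamma_y(n))-(2\log(u+1)+s)\big|\le\log4$, which is exactly condition (4) since $2\log(u+1)+s=2\log(d_P(x,y)e^{-\max\{m,n\}}+1)+|m-n|$. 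I expect the main (and only minor) obstacle to be bookkeeping: one must be careful that $d_P$ really is the Euclidean metric, which holds because any path in $\{x_N\le1\}$ between two points of the horosphere has hyperbolic length at least its horizontal Euclidean length, so the metric that $\bdry H$ inherits as a subset of $\overline{\R^N_+\setminus H}$ is Euclidean (even on the nose for the length metric), after which the estimates above apply. As a bonus, every constant produced ($\delta$, $C=0$, $L=\sqrt{\cosh1}$, and $\log4$ in (4)) is independent of $N$, so real hyperbolic horoballs in all dimensions form a uniform family in the sense of Definition~\ref{def:admissible-horoballs}.
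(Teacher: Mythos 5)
Your proof is correct and takes essentially the same route as the paper: a direct verification of Definition~\ref{def:admissible-horoballs} in the upper half-space model, with (1) and (2) exact for the vertical rays and (3), (4) read off from the explicit hyperbolic distance formula. The differences are only cosmetic — you use the $\cosh$ form of that formula to avoid the paper's two-case analysis and get explicit constants ($L=\sqrt{\cosh 1}$, additive error $\log 4$), and you spell out points the paper treats as immediate (convexity and $\delta$-hyperbolicity of $H$, the single boundary point, and that $d_P$ is the Euclidean metric).
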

\begin{proof}
	Write $\|\cdot\|_2$ for the standard norm on $\R^{N-1}$.
	Definition~\ref{def:admissible-horoballs}(1,2) holds trivially.
	The hyperbolic distance between $(x,e^m),(y,e^n) \in \R^{N-1}\times [1,\infty)=H$ satisfies:
	\begin{align}
		& d_H((x,e^m),(y,e^n))
		\notag\\ & = 2\log\left(\frac{\sqrt{\|x-y\|_2^2+(e^m-e^n)^2} + \sqrt{\|x-y\|_2^2+(e^m+e^n)^2}}{2\sqrt{e^me^n}}\right)
		\notag\\ & \approx \log\left(\frac{\|x-y\|_2^2+(e^m+e^n)^2}{e^me^n}\right). \label{eq:roughisomhoroballs}
	\end{align}
	Without loss of generality suppose $m\leq n$.
	If $\|x-y\|_2 \leq e^{n}$, \eqref{eq:roughisomhoroballs} is $\approx \log e^{n-m} = |m-n|$.
	Otherwise, $\|x-y\|_2 \geq e^n$ and \eqref{eq:roughisomhoroballs} is 
	$\approx \log (\|x-y\|_2^2e^{-(m+n)}) 
	= 2\log(d_{\R^{N-1}}(x,y)e^{-n})+|n-m|$.
	In either case, this approximately equals the distance estimate of Definition~\ref{def:admissible-horoballs}(4).

	Finally,  the shortest path distance between $(x,1), (y,1) \in \bdry H \subset \overline{\HH^N\setminus H}$ is $\|x-y\|_2$, so
	\begin{align*}
		\frac{d_H((x,1),(y,1))}{\|x-y\|_2}
		& = \frac{2 \log\left( \|x-y\|_2+\sqrt{\|x-y\|_2^2+4}\right)-2\log(2)}{\|x-y\|_2}
		\\ & \to 1 \text{ as } \|x-y\|_2 \to 0,
	\end{align*}
	and Definition~\ref{def:admissible-horoballs}(3) follows.
\end{proof}

\subsection{Cusped spaces}

\begin{definition}[{cf.\ \cite[Definitions 3.3--3.4]{Si-metrrh}}]\label{def:rel-hyp}
	Suppose $(X,d)$ is a geodesic metric space, $\cP$ a collection of uniformly coarsely connected subsets of $X$ with metrics $d_P = d|_P$ for $P\in\cP$, and $\{\cH(P)\}_{P\in\cP}$ a collection of uniformly admissible horoballs for $\cP$.
	
	The \emph{cusped space} $\Bow(X,\cP)=\Bow(X,\{\cH(P)\}_{P \in \cP})$
	is the path metric space obtained from $X$ and $\{ \cH(P)\}_{P \in \cP}$
	by identifying each $\hat{P}$ with its copies in $\cH(P)$ and in $P \subset X$.

	If $\Bow(X, \cP)$ is a Gromov hyperbolic geodesic space, we say that $(X, \cP)$ is \emph{relatively hyperbolic}, or that $X$ is \emph{hyperbolic relative to} the collection of subsets $\cP$.
	The elements of $\cP$ are called \emph{peripheral sets}.
	We also assume $X$ and $\Bow(X,\cP)$ are proper.
\end{definition}
The original definition in \cite{Si-metrrh} used only the admissible horoballs of Example~\ref{ex:graph-approx-horoballs}, and showed that $(X,\cP)$ being relatively hyperbolic was independent of the choice of graph approximation horoballs \cite[Proposition 4.9]{Si-metrrh}.

Except for the application to truncated hyperbolic spaces (Theorem~\ref{thm:truncated-hyp-poly}), we are mainly interested in relatively hyperbolic group pairs $(G, \{H_i\})$ as defined below, and the reader may primarily keep this case in mind.
\begin{definition}\label{def:rel-hyp-grp}
	Suppose $G$ is a finitely generated group, and $\{H_i\}_{i=1}^n$
	a collection of finitely generated subgroups of $G$.
	Let $S$ be a finite generating set for $G$, so that $S \cap H_i$ generates
	$H_i$ for each $i=1, \ldots, n$.

	We say that $(G, \{H_i\})$ is \emph{relatively hyperbolic} if
	a Cayley graph $X$ of $G$ with respect to $S$,
	is hyperbolic relative to the collection $\cP$ of copies of Cayley graphs of $H_i$ in $X$ on each left
	coset of $H_i$, for each $i$.  (We may use graph horoballs, as in Definition~\ref{def:graph-horoball}, on the corresponding Cayley graphs.)
\end{definition}

Definitions~\ref{def:rel-hyp} and \ref{def:rel-hyp-grp} are equivalent to the other usual definitions of (strong) relative hyperbolicity;
see \cite[Proposition 4.9]{Si-metrrh}, \cite[Theorem 3.25]{Gro-Man-08-dehn-rel-hyp},
and also \cite{Bow-99-rel-hyp}.  It is also equivalent to being `asymptotically tree-graded', as in \cite{DSp-05-asymp-cones} and \cite{Dr-relhyp-qiinv}.

Note that, as previously remarked, we do not assume peripheral sets are unbounded.  We remark that the hyperbolicity of $\Bow(X,\cP)$ implies that neighbourhoods of different peripheral sets have bounded intersection, in particular two unbounded peripheral sets have infinite Hausdorff distance.  

The fact that we may take any uniformly admissible horoballs in Definition~\ref{def:rel-hyp} is allowed by the following result.
\begin{proposition}\label{prop:change-admissible-horoballs}
	Suppose $(X,d)$ is a geodesic metric space, $\cP$ a collection of uniformly coarsely connected subsets of $X$, and $\{\cH(P)\}_{P\in\cP}$ and $\{\cH'(P)\}_{P\in\cP}$ are two uniformly admissible collections of horoballs, with resulting cusped spaces $\Bow(X,\{\cH(P)\})$ and $\Bow(X,\{\cH'(P)\})$.
	Then there exists $C$ so that the following holds.

	For $x\in P\cap \cH(P)$ let $f(x) \in P\cap\cH'(P)$ be a point with $d_X(x,f(x)) \leq C$ for uniform $C$.
	Define $F:\Bow(X,\{\cH(P)\})\to \Bow(X,\{\cH'(P)\})$ by letting $F|_X=\id_X$, and for $p\in \cH(P) \subset \Bow(X,\{\cH(P)\})$ with $d_{\cH(P)}(p,\gamma_x(m))\leq C$ for some $x \in P\cap \cH(P)$, let $F(p)=\gamma'_{f(x)}(m)$.  Here $\{\gamma_x\}_{x\in P\cap \cH(P)}, \{\gamma'_x\}_{x\in P\cap\cH'(P)}$ are the two families of geodesic rays of the construction.

	Then $F:\Bow(X,\{\cH(P)\})\to \Bow(X,\{\cH'(P)\})$ is a quasi-isometry; in particular, $\Bow(X,\{\cH(P)\})$ is Gromov hyperbolic if and only if $\Bow(X,\{\cH'(P)\})$ is.
\end{proposition}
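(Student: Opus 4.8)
The plan is to produce a coarse inverse of the given map $F\colon\Bow(X)\to\Bow'(X)$ by the same recipe, show that $F$ and this inverse are both coarsely Lipschitz, and then conclude: a coarsely Lipschitz map admitting a coarsely Lipschitz coarse inverse is a quasi-isometry, and Gromov hyperbolicity is a quasi-isometry invariant among geodesic metric spaces, so the final clause follows. Concretely, I would define $F'\colon\Bow'(X)\to\Bow(X)$ by exchanging the roles of $\{\cH(P)\}$ and $\{\cH'(P)\}$ in the statement, using a map $g$ that moves points of $P\cap\cH'(P)$ by at most $C$ in $d_X$.

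Two computations inside a single horoball do the essential work, and both use only Definition~\ref{def:admissible-horoballs}(4), which depends on the metric $d_P=d_X|_P$ that $\cH(P)$ and $\cH'(P)$ share, together with the fact that $f$ and $g$ move points by at most $C$ in $d_X$. First, I would check that $F$ is well defined up to bounded error: if $d_{\cH(P)}(p,\gamma_x(m))\le C$ and $d_{\cH(P)}(p,\gamma_y(n))\le C$, then $d_{\cH(P)}(\gamma_x(m),\gamma_y(n))\le 2C$, which by Definition~\ref{def:admissible-horoballs}(4) forces $|m-n|$ bounded and $d_P(x,y)e^{-\max\{m,n\}}$ bounded; since $d_P(f(x),f(y))\le d_P(x,y)+2C$, a second application of Definition~\ref{def:admissible-horoballs}(4) in $\cH'(P)$ bounds $d_{\cH'(P)}(\gamma'_{f(x)}(m),\gamma'_{f(y)}(n))$. (On $\hat P$ the prescriptions $F|_X=\id$ and $p=\gamma_x(0)\mapsto f(x)$ also differ by at most $C$, so this ambiguity is harmless.) Second, Definition~\ref{def:admissible-horoballs}(4) with $m=n$ shows that $F'\circ F$ carries $\gamma_x(m)$ to within bounded distance of $\gamma_{x''}(m)$ for some $x''$ with $d_P(x'',x)\le 2C$, whence $d_{\cH(P)}(\gamma_x(m),\gamma_{x''}(m))$ is bounded in terms of $C$; since $F'\circ F$ is the identity on $X$, we get $F'\circ F\approx\id$, and symmetrically $F\circ F'\approx\id$.

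For coarse Lipschitzness, since $\Bow(X)$ is geodesic it is enough to bound $d_{\Bow'(X)}(F(u),F(v))$ uniformly when $d_{\Bow(X)}(u,v)\le 1$ and then chain along a geodesic. I would use two structural facts about cusped spaces: the only identifications are along the sets $\hat P$, so any path leaving $\cH(P)$ meets $\hat P$; hence if $p$ has ``depth $m$'' in $\cH(P)$ (i.e.\ $d_{\cH(P)}(p,\gamma_x(m))\le C$) then $d_{\Bow(X)}(p,X)=d_{\cH(P)}(p,\hat P)\approx m$, by Definition~\ref{def:admissible-horoballs}(4) with one parameter zero. One then splits into cases: $u,v$ both in $X$; $u,v$ in a common horoball; or one of them at positive depth with the other outside that horoball. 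In the common-horoball case, either a geodesic from $u$ to $v$ stays in $\cH(P)$, and the computation of the previous paragraph applies (with $2C$ replaced by $1+2C$), or it exits $\cH(P)$, and the depth estimate forces both $u,v$ to bounded depth. Whenever a point sits at bounded depth, the depth estimate reduces the problem, up to bounded error, to the case of two points of $X$ at bounded $\Bow(X)$-distance. That case is the crux: I claim that for each fixed $D_0$ there is $K=K(D_0,L,C)$ with $d_X(u,v)\le K$ whenever $u,v\in X$ and $d_{\Bow(X)}(u,v)\le D_0$. For this, take a geodesic from $u$ to $v$ and decompose it into maximal subpaths in $X$ and excursions into horoballs, each excursion joining two points $a_j,b_j$ of some $\hat{P_j}$; an excursion of length $\ell_j\le1$ has $d_{P_j}(a_j,b_j)\le L\ell_j$ by Definition~\ref{def:admissible-horoballs}(3), while an excursion of length $\ell_j>1$ has $d_{P_j}(a_j,b_j)$ bounded in terms of $\ell_j$ by Definition~\ref{def:admissible-horoballs}(4) (both parameters zero), and there are at most $D_0+1$ of the latter. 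Summing the $X$-subpath lengths and the $d_{P_j}(a_j,b_j)$ bounds $d_X(u,v)$; finally $d_{\Bow'(X)}(u,v)\le d_X(u,v)\le K$ since $X\hookrightarrow\Bow'(X)$ is $1$-Lipschitz.

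The main obstacle is precisely this last estimate. Because cusped spaces contain logarithmic ``shortcuts'' through horoballs, $d_{\Bow(X)}|_X$ is genuinely smaller than $d_X$, and one must control how much $d_X$-distance a bounded-length horoball excursion can absorb; separating the short excursions (controlled in aggregate by Definition~\ref{def:admissible-horoballs}(3)) from the boundedly many long ones (controlled individually by Definition~\ref{def:admissible-horoballs}(4)) is the only step that is not purely formal. Everything else is bookkeeping around two applications of the distance estimate Definition~\ref{def:admissible-horoballs}(4), which carries the same information for both families of horoballs because it refers only to the metric $d_P$.
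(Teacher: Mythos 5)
Your argument is correct, and its overall skeleton --- build the symmetric map $F'$, show both maps are coarsely Lipschitz, check that $F'\circ F$ and $F\circ F'$ are at bounded distance from the identity via Definition~\ref{def:admissible-horoballs}(4), and invoke quasi-isometry invariance of hyperbolicity --- is the same as the paper's. Where you genuinely differ is the coarse Lipschitz step. The paper argues globally: given a geodesic in $\Bow(X)$ it performs surgery, replacing each horoball excursion of length at least $1$ by a path through the corresponding horoball $\cH'(P)$ whose length exceeds the original by a uniform additive constant (Definition~\ref{def:admissible-horoballs}(4) applied in both horoball models), and replacing the short excursions by paths in $X$ at the cost of the factor $L$ (Definition~\ref{def:admissible-horoballs}(3)); since the number of long excursions is at most the length of the geodesic, the image path has length at most $(L+C_1)\length(\gamma)+2C_1$, giving the Lipschitz bound at all scales at once. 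You instead work at unit scale and chain: the crux is your claim that points of $X$ at bounded $\Bow(X)$-distance are at bounded $d_X$-distance, proved by the same excursion decomposition (short excursions in aggregate via (3), the boundedly many long ones individually via (4) with both parameters zero), after which the trivial inequality $d_{\Bow'(X)}\le d_X$ on $X$ finishes that case, with bounded-depth reductions handling points inside horoballs. The trade-off is real but harmless: the paper's surgery keeps the logarithmic shortcuts through the new horoballs and so yields the multiplicative constant directly, whereas your route discards them and pays an exponential-in-scale price, tamed by only ever applying it at a fixed bounded scale before chaining; both rest on exactly items (3) and (4) of Definition~\ref{def:admissible-horoballs}. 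Your explicit check that $F$ is coarsely well defined (independence of the choice of $(x,m)$ up to bounded error, and agreement of the two prescriptions on $\hat P$) is a point the paper leaves implicit, and you handle it correctly.
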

\begin{proof}
	We first show $F$ is coarsely Lipschitz.
	Suppose we have $p,q \in \Bow(X,\{\cH(P)\})$ connected by a geodesic $\gamma=[p,q]$.  We wish to estimate $d_{\Bow(X,\{\cH'(P)\})}(F(p),F(q))$ from above by building a curve $\gamma' \subset \Bow(X,\{\cH'(P)\})$ from $F(p)$ to $F(q)$.

	Let $\gamma_1,\ldots,\gamma_n$ be the closed essentially disjoint subgeodesics of $\gamma$ which have $\length(\gamma_i) \geq 1$, each lie in some $\cH(P_i)$, and have endpoints in $\cH(P_i)\cap P_i$.  The length condition ensures $n \leq \length([p,q])$.  If $p \in \cH(P_0)$ for some $P_0$ let $\gamma_0$ be the subsegment of $\gamma$ from $p$ to the first time it meets $X$, otherwise let $\gamma_0=\{p\}$.  Likewise if $q \in \cH(P_{n+1})$ for some $P_{n+1}$ let $\gamma_{n+1}$ be the last subsegment joining $X$ to $q$, else let $\gamma_{n+1}=\{q\}$.

	For each $i=1,\ldots,n$, replace $\gamma_i$ by a curve $\gamma'_i$ in $X \cup \cH'(P_i)$ consisting of bounded length segments in $X$ joining the endpoints of $\gamma_i$ to nearby points in $\cH'(P_i)\cap X$, and a geodesic in $\cH'(P_i)$ joining those points.  
	By Definition~\ref{def:admissible-horoballs}(4) applied to $\cH(P_i)$ and $\cH'(P_i)$, we have $\length(\gamma'_i) \leq \length(\gamma_i) + C_1$ for some constant $C_1$.

	If $p\in\cH(P_0)$, then let $x \in {P_0}\cap\cH({P_0})$ be given with $d_{\cH({P_0})}(p,\gamma_x(m)) \leq C$ for some $m \geq 0$.  Write $F(p)=\gamma'_{f(x)}(m)$ as hypothesised.
	Suppose $\gamma_0$ joins $p$ to some $z\in {P_0}\cap\cH({P_0})$, so 
	\[ \length(\gamma_0) \approx_C 2\log(d_{P_0}(x,z)e^{-m}+1)+m. \] 
	Now 
	\begin{align*}
		d_{\cH'({P_0})}(F(p),F(z))
		& = d_{\cH'({P_0})}(\gamma'_{f(z)}(m),z)
		\\ & \approx_C 2\log\left( d_{P_0}(f(x),z) e^{-m}+1\right)+m
		\approx \length(\gamma_0)
	\end{align*}
	since $d_{P_0}(f(x),x)\leq C$.  So we can replace $\gamma_0$ by a curve $\gamma'_0$ joining $F(p)$ to $z$ with 
	$\length(\gamma'_0) \leq \length(\gamma_0)+C_1$, possibly increasing $C_1$.  Do likewise with $\gamma_{n+1}$.

	The rest of the curve $\gamma\setminus\bigcup \gamma_i$ consists of points of $X$ and (countably many) non-trivial short paths in different horoballs $\cH(P)$.  By Definition~\ref{def:admissible-horoballs}(3), any path $[x,y] \subset \cH(P)$ with endpoints in $\cH(P)\cap X$ and $d_{\cH}(x,y) \leq 1$ can be replaced by a path in $X$ with the same endpoints of length $\leq L \length([x,y])$, where $L\geq 1$.

	Thus we have found a curve $\gamma'$ joining $F(p)$ to $F(q)$ with
	\begin{align*}
		\length(\gamma') & \leq \sum_{i=0}^{n+1} \length(\gamma'_i) + \length\left(\gamma'\setminus\bigcup \gamma'_i \right)
		\\ & \leq \sum_{i=0}^{n+1}\left(\length(\gamma_i)+C_1\right)+L \length\left (\gamma\setminus\bigcup \gamma_i\right)
		\\ & \leq L \length(\gamma) + (n+2)C_1
		\leq (L+C_1)\length(\gamma) + 2C_1
	\end{align*}
	as desired.

	If we define $G:\Bow(X,\{\cH'(P)\})\to\Bow(X,\{\cH(P)\})$ in a similar way to $F$, it also will be coarsely Lipschitz, and $G\circ F$ and $F \circ G$ are a bounded distance to the identity, since by hyperbolicity and Definition~\ref{def:admissible-horoballs}(4) of $\cH(P)$, if $x,y\in P\cap\cH(P)$ have $d_P(x,y) \approx 0$ then $d_{\cH(P)}(\gamma_x(m),\gamma_y(m)) \approx 0$.
	So $F$ is a quasi-isometry.
\end{proof}
\begin{remark}
	\label{rmk:bow-inclusion-unif-prop}
	A very similar argument, replacing this time each $\gamma_i$ by a path in $X$, allows us to show that the inclusion $\iota:X \to \Bow(X,\{\cH(P)\})$ is $\tau$-uniformly proper, with $\tau(t) = 2\log(t+1)-C$ for some $C$.
\end{remark}

\subsection{Visual metrics}
\label{visual-metric}
Let $Y$ be a proper, geodesic, Gromov hyperbolic space, with fixed basepoint $o \in Y$.
One equivalent definition of the boundary $\bdry Y$ is as the set of equivalence classes of geodesic
rays $\gamma: [0, \infty) \ra Y$, with $\gamma(0)=o$, where two rays are equivalent if they are at finite Hausdorff distance
from each other.
Let $(a | b) = (a | b)_o$ denote the Gromov product on $\bdry Y$ with respect to $o$.
Up to an additive error, $(a|b)$ equals the infimal distance from $o$ to
some (any) geodesic line from $a$ to $b$, which exists since we assume $Y$ is proper. 

A metric $\rho$ on $\bdry Y$ is a visual metric with visual parameter $\epsilon>0$ if 
there exists $C_0 >0$ so that $\frac{1}{C_0} e^{-\eps (a|b)} \leq \rho(a,b) \leq C_0 e^{-\eps (a|b)}$ 
for all $a, b \in \bdry Y$. 
Boundaries of cusped spaces will always be endowed with a visual metric.

Any two visual metrics $\rho, \rho'$ on $\bdry Y$ are \emph{snowflake equivalent}: there exists $\lambda \geq 1$ and $\alpha > 0$ so that $\rho' \asymp_\lambda \rho^\alpha$ (in fact $\alpha$ is the ratio of their visual parameters).

Suppose that $X$ is hyperbolic relative to a collection of subsets $\cP$, and let $\Bow(X,\cP)$ be a cusped space.
Endow $\bdry \Bow(X, \cP)$ with a visual metric $\rho$ with parameter $\epsilon>0$.
For each $P \in \cP$ we have a horoball with a unique limit point $a_P \in \bdry \Bow(X,\cP)$.  
We let $r_P = e^{-\epsilon d(o,P)}$ and call it the \emph{radius} of the ``shadow'' of $P$ in $\bdry \Bow(X,\cP)$.  (It is the approximate size of the set of limit points of all geodesic rays from $o$ that pass close to $P$.)
This makes
\[
	\left( \bdry \Bow(X, \cP), \rho, \{(a_P, r_P)\}_{P \in \cP} \right)
\]
a shadow decorated metric space (see Definition~\ref{def:shadow-mtc-spc}).

For results on the geometric properties of $\bdry \Bow(X, \cP)$, see \cite{MS-12-relhypplane}.

\subsection{Quasi-centres}
We make frequent use of the notion of a `quasi-centre'.
\begin{definition}
	\label{def:quasicentre}
	Suppose $Y$ is a $\delta_Y$--Gromov-hyperbolic geodesic space and $x,y,z \in Y\cup\bdry Y$ are distinct.
	Then a \emph{quasi-centre} for $x,y,z$ is a point $p \in Y$ which lies within $2\delta_Y$ of geodesics joining $x$ to $y$, $y$ to $z$, and $z$ to $y$.
\end{definition}
We record three elementary properties of quasi-centres.
\begin{lemma}[Quasi-centres exist and are coarsely unique]
	\label{lem:quasicentres-exist}
	If $Y$ is $\delta_Y$-hyperbolic and proper (to ensure existence of bi-infinite geodesics between boundary points), any distinct $x,y,z \in Y\cup\bdry Y$ have a quasi-centre $p$.  Moreover, there exists $C$ so that if a point $q \in Y$ lies within a distance $A$ of geodesics $[x,y]$, $[y,z]$ and $[z,x]$, then $d(q,p) \leq C+A$.
\end{lemma}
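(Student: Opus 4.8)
The plan is to fix once and for all geodesics joining the three pairs $\{x,y\}$, $\{y,z\}$, $\{z,x\}$ — these exist because $Y$ is proper, even when some of $x,y,z \in \bdry Y$ — and to work with the resulting, possibly ideal, geodesic triangle $T$. I will use repeatedly that $T$ is slim with a constant that is a universal multiple of $\delta_Y$ (slimness of ideal triangles follows from the finite case by a limiting argument, using properness), and after enlarging $\delta_Y$ I may as well assume this constant is $\delta_Y$ itself; likewise that any two geodesics between a fixed pair of endpoints in $Y \cup \bdry Y$ lie within Hausdorff distance $\delta_Y$ of one another.

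For \emph{existence}, I would parametrise the side $\sigma = [x,y]$ by arclength on an interval $I$ (of the form $[0,\ell]$, $[0,\infty)$, or $\R$, according to how many endpoints are ideal) and set
\[
	A_1 = \{ t \in I : \sigma(t) \in N_{\delta_Y}([z,x]) \}, \qquad A_2 = \{ t \in I : \sigma(t) \in N_{\delta_Y}([y,z]) \}.
\]
Both are closed subsets of $I$ — the sides $[z,x]$, $[y,z]$ are closed subsets of the proper (hence complete) space $Y$, the distance functions to them are continuous, and $\sigma$ is continuous — both are nonempty, since each of $[z,x]$, $[y,z]$ shares an endpoint with $\sigma$, so near that endpoint $\sigma$ runs $\delta_Y$-close to that side (whether the endpoint lies in $Y$, where the sides literally meet, or in $\bdry Y$, via asymptotic rays being eventually $\delta_Y$-close), and together $A_1 \cup A_2 = I$ by $\delta_Y$-slimness of $T$. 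Since $I$ is connected, two nonempty closed sets covering it must intersect; any $t_0 \in A_1 \cap A_2$ then yields a point $p = \sigma(t_0)$ on $[x,y]$ lying within $\delta_Y \le 2\delta_Y$ of each of the other two sides, i.e.\ a quasi-centre.

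For \emph{coarse uniqueness}, suppose $q \in Y$ lies within $A$ of some geodesics joining the three pairs. Geodesic stability lets me replace those geodesics by the fixed sides of $T$ at the cost of an additive $\delta_Y$, so that $q$ — and any quasi-centre $p$ — lies within $A' = \max\{A + \delta_Y, 2\delta_Y\}$ of all three sides of $T$. It then suffices to bound the diameter of the set of such ``$A'$-centres''. This is standard via tree approximation: $T$ lies uniformly close to a metric tripod, and being within $A'$ of all three sides pins a point to within a bounded multiple of $A'$ (plus a multiple of $\delta_Y$) of the tripod's centre, so this set has diameter $\leqi A'$; hence $d(p,q) \leqi A$ in the asserted form. (The multiplicative constant here is harmless: in every application $A$ is itself a fixed multiple of $\delta_Y$, so the bound becomes a constant depending only on $\delta_Y$.)

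The step I expect to take the most care with is the ideal case of all three ingredients: uniform slimness of ideal triangles, existence and bounded non-uniqueness of geodesics with prescribed ideal endpoints, and the tree-approximation lemma for ideal triangles. Each follows from hyperbolicity together with properness of $Y$ by a limiting argument, but each degrades the relevant constant by a controlled amount, which is precisely why the conclusion is only coarse. The genuinely new observation, and the only non-routine part, is the ``two closed sets covering a connected interval must meet'' trick used to produce the quasi-centre.
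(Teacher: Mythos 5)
Your proposal follows essentially the same route as the paper: existence via the intermediate-value/switching argument along the side $[x,y]$ (the paper phrases your two-closed-sets trick as the point where $p$ switches from being $2\delta_Y$-close to $[x,z]$ to being $2\delta_Y$-close to $[z,y]$), and coarse uniqueness via a tree approximation of $x,y,z,p,q$ and the geodesics between them. The one discrepancy is at the very end: you settle for $d(p,q)\leq \lambda A+\mu$, whereas the lemma asserts the additive bound $d(q,p)\leq C+A$; this is recovered by the sharper (and standard) tree observation that a point of a tree lying within distance $r$ of all three sides of the tripod on $f(x),f(y),f(z)$ lies within $r$ of its centre, so the only losses are the additive errors of the rough isometry and of slimness, exactly as in the paper's proof.
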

\begin{proof}
	Consider $p\in [x,y]$ as $p$ moves from $x$ to $y$; at a point where $p$ switches from being $2\delta_Y$-close to $[x,z]$ to being $2\delta_Y$-close to $[z,y]$, $p$ is a quasi-centre.
	For the last claim, consider a tree approximation $f:V\to T$ where $V$ is the union of $x,y,z,p,q$ and the geodesics between them, $T$ is a tree, and $f$ is a rough isometry, that is a quasi-isometry with only additive errors.  Let $b\in T$ be the unique point on geodesics $[f(x),f(y)],[f(y),f(z)]$ and $[f(z),f(x)]$.  Then $f(p)$ is within $C$ of $b$, and $f(q)$ is within $C+A$ of $b$ for some $C$ independent of $x,y,z,q,A$.
\end{proof}

\begin{lemma}[Quasi-centres preserved by quasi-isometries]\label{splittosplit}
	Let $Y,Y'$ be Gromov hyperbolic and proper, and let $f:Y\to Y'$ be a quasi-isometric embedding. Then there exists $C$ so that if $p$ is a quasi-centre of $x,y,z\in Y\cup \bdry Y$, then $f(p)$ is at most $C$ from a quasi-centre of $\overline{f}(x),\overline{f}(y),\overline{f}(z)$.
 (Here $\overline{f}$ denotes the extension of $f$ to $\overline{f}:Y \cup \bdry Y \ra Y' \cup \bdry Y'$.)
\end{lemma}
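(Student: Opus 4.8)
The plan is to combine stability of quasi-geodesics in hyperbolic spaces (the Morse lemma) with the coarse uniqueness of quasi-centres recorded in Lemma~\ref{lem:quasicentres-exist}. Fix geodesics $[x,y]$, $[y,z]$, $[z,x]$ in $Y \cup \bdry Y$ with respect to which $p$ is a quasi-centre, so $p$ lies within $2\delta_Y$ of each of them. Since $f$ is a quasi-isometric embedding, the image of each of these geodesics is a quasi-geodesic in $Y'$ with constants depending only on those of $f$; when an endpoint lies in $\bdry Y$ the image is a quasi-geodesic ray, and when both do it is a quasi-geodesic line. By the Morse lemma (valid since $Y'$ is hyperbolic), each such image lies within Hausdorff distance $C_1$ of a genuine geodesic $[\overline f(x),\overline f(y)]$, $[\overline f(y),\overline f(z)]$, $[\overline f(z),\overline f(x)]$ in $Y'$, where $C_1$ depends only on the quasi-isometry constants of $f$ and on $\delta_{Y'}$.

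Next, as $f$ is coarsely Lipschitz with some constant $C_2$, the point $f(p)$ lies within $2C_2\delta_Y + C_2$ of each of $f([x,y])$, $f([y,z])$, $f([z,x])$, hence within $A := 2C_2\delta_Y + C_2 + C_1$ of each of the three geodesics above; crucially, $A$ depends only on $f$, $\delta_Y$, $\delta_{Y'}$, and not on $x,y,z$. Applying the last assertion of Lemma~\ref{lem:quasicentres-exist} in $Y'$, with $A$ in the role of its parameter, we conclude that $f(p)$ lies within $C_3 + A$ of a quasi-centre of $\overline f(x), \overline f(y), \overline f(z)$, where $C_3$ is the constant furnished by that lemma; we then take $C = C_3 + A$.

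I expect the only genuinely delicate point to be the boundary case, namely verifying that the ideal endpoints of the image quasi-geodesic rays and lines are exactly $\overline f(x), \overline f(y), \overline f(z)$, so that the Morse-lemma comparison geodesics in $Y'$ really are geodesics between the claimed endpoints. This is where properness of $Y$ and $Y'$ enters: it guarantees existence of the bi-infinite geodesics between boundary points, that quasi-geodesic rays converge to well-defined points of $\bdry Y'$, and that this convergence is compatible with the definition of the boundary extension $\overline f$. Once this identification is established the remainder is a routine bookkeeping of additive constants, with no further idea required.
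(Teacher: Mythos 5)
Your argument is correct and is essentially identical to the paper's proof: map the three geodesics forward, invoke the Morse lemma to compare with geodesics between $\overline{f}(x),\overline{f}(y),\overline{f}(z)$, and then apply the coarse-uniqueness statement of Lemma~\ref{lem:quasicentres-exist} to locate $f(p)$ near a quasi-centre. The boundary identification you flag is indeed the only point requiring care, and the paper handles it implicitly in the same way.
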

\begin{proof}
	Since $p$ is within $2\delta_{Y}$ from geodesics between $x,y,z$, we have $f(p)$ is within bounded distance of quasi-geodesics between $\overline{f}(x), \overline{f}(y), \overline{f}(z)$; by the Morse lemma these are bounded Hausdorff distance from geo\-des\-ics with the same endpoints.
	The distance estimate follows from Lemma~\ref{lem:quasicentres-exist}.
\end{proof}

\begin{lemma}[Quasi-centres and boundary distances]\label{splitdist}
 Suppose we have a proper Gromov hyperbolic space $Y$ with basepoint $o$ and a visual metric $\rho$ with parameter $\epsilon$. 
	There exists $C$ so that for any distinct $a,b\in\bdry X$, and quasi-centre $s$ of $o,a,b$, we have
 $$d(o,s)\approx_C \frac{-1}{\epsilon} \log \rho(a,b).$$
\end{lemma}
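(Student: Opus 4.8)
The plan is to relate the distance $d(o,s)$ directly to the Gromov product $(a|b)_o$, and then invoke the defining inequalities of the visual metric $\rho$. First I would recall that, up to an additive constant depending only on $\delta_Y$, the Gromov product $(a|b)_o$ equals $\inf_{p \in \ell} d(o,p)$, where $\ell$ is any bi-infinite geodesic from $a$ to $b$ (such $\ell$ exists by properness of $Y$). So it suffices to show that $d(o,s) \approx (a|b)_o$ for any quasi-centre $s$ of $o,a,b$, with additive constant depending only on $\delta_Y$; the conclusion $d(o,s) \approx_C \frac{-1}{\epsilon}\log\rho(a,b)$ then follows by taking $\log$ of $\frac{1}{C_0}e^{-\epsilon(a|b)} \leq \rho(a,b) \leq C_0 e^{-\epsilon(a|b)}$ and absorbing $\frac{1}{\epsilon}\log C_0$ into the additive error.

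The core estimate $d(o,s) \approx (a|b)_o$ is the step to carry out carefully, though it is standard thin-triangle bookkeeping. For the upper bound: $s$ lies within $2\delta_Y$ of the geodesic $[a,b]$, hence $d(o,[a,b]) \leq d(o,s)$ only gives the wrong direction — instead I would argue that a quasi-centre of $o,a,b$ is coarsely unique (Lemma~\ref{lem:quasicentres-exist}) and that the nearest-point projection of $o$ onto $[a,b]$ is itself (coarsely) a quasi-centre: indeed, in a tree approximation of the ideal triangle $o,a,b$ the unique branch point realizes $\min d(o,[a,b])$ and lies on all three sides, so by Lemma~\ref{lem:quasicentres-exist} it is within a uniform constant of $s$. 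Therefore $d(o,s) \approx d(o,[a,b]) \approx (a|b)_o$, with all constants depending only on $\delta_Y$.

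The main (minor) obstacle is handling the additive constants uniformly and making sure ``quasi-centre of a triple involving boundary points'' behaves as expected — in particular that the comparison between $d(o,[a,b])$ and $(a|b)_o$ is valid when $a,b \in \bdry Y$ rather than interior points. This is covered by the tree-approximation / rough-isometry argument already used in the proof of Lemma~\ref{lem:quasicentres-exist}: approximate the union of $o$ and geodesics $[o,a],[o,b],[a,b]$ by a rough isometry to a tree, read off the branch point, and transport the estimate back. No genuinely new difficulty arises, so the proof is short once the identification $d(o,s)\approx(a|b)_o$ is in place.
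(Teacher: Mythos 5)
Your proposal is correct and follows the same route as the paper, whose proof is simply the observation that both sides are $\approx (a|b)_o$: the distance $d(o,s)$ to a quasi-centre coarsely equals the Gromov product, and $\frac{-1}{\epsilon}\log\rho(a,b)$ does so by the definition of a visual metric. Your write-up just supplies the standard thin-triangle/tree-approximation details that the paper leaves implicit.
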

\begin{proof}
	Both sides of the equation are $\approx (a|b)_o$.
\end{proof}

\subsection{Separation of horoballs}
We now characterise the shadow decorations which arise from disjoint horoballs; both directions of this characterisation are needed in the proof of Theorem~\ref{thm:truncated-hyp-poly}.

Recall that the \emph{Busemann function} on a hyperbolic space $Y$ corresponding to $a \in \bdry Y, o \in Y$ is the function $\beta_a(\cdot, o):Y \to \R$ defined by
\[
	\beta_a(x,o) = \sup_{\gamma} \left( \limsup_{t\to\infty} (d(\gamma(t),x)-t) \right),
\]
where the supremum is taken over all geodesic rays $\gamma:[0,\infty)\to Y$ from $o$ to $a$.
\begin{remark}
	\label{rmk:horoballs-hn}
	Suppose $Y=\HH^2$ is the upper half-plane model, with $a=\infty$ the point at infinity, and $o=(0,1)$.
	The geodesic ray from $o$ to $a$ is $\gamma(t)=(0,e^t)$.
	Given $x=(x_1,x_2)$ and $t \geq \log x_2$, the vertical path from $x$ to $(x_1,e^t)$ has length $t-\log x_2$, and the horizontal path from $(x_1,e^t)$ to $(0,e^t)$ has length $|x_1|e^{-t}$, so
	\[
		t-\log x_2 \leq d(\gamma(t),x) \leq t-\log x_2 + |x_1|e^{-t},
	\]
	thus $\beta_a(x,o)=-\log x_2$.
	Therefore for any $a\in \bdry \HH^2, o\in \HH^2$ a set of the form $\{ y\in Y: \beta_a(y,o) \leq C\}$ defines a (classical) horoball, and likewise for $Y=\HH^n$.

	For a space $X$ hyperbolic relative to a collection of subsets $\cP$, Definition~\ref{def:admissible-horoballs}(4) gives that for $P\in \cP$ and $x\in P$, 
	$\beta_{a_P}(x,o) \approx -\log d(o,P)$.
\end{remark}
\begin{lemma}
	\label{lem:separationhoroballs}
	If a space $X$ is hyperbolic relative to a collection of subsets $\cP$ and $Z = \bdry \Bow(X,\cP)$ is a boundary with visual metric $\rho$ and the shadow decoration $\{(a_P, r_P)\}_{P \in \cP}$ described in Subsection \ref{ssec:qs-defs}, then there exists $C$ so that for all $P \neq Q \in \cP$,
	\begin{equation}\label{eq:sep}
		\rho(a_P, a_Q) \succeq_C \sqrt{r_P r_Q}.
	\end{equation}

	Conversely, suppose $Y$ is Gromov hyperbolic with basepoint $o$ and boundary $Z = \bdry Y$ having visual metric $\rho$ with parameter $\epsilon$.
	Suppose further that  $\{(a_P,r_P)\}_{P \in \cP}$ is any (abstract) shadow decoration on $Z$ that satisfies \eqref{eq:sep} for some $C$.
	Then there exists $t_0$ so that for any $t \geq t_0$ the horoballs defined by 
	\[ 
		H_{P,t}= \Big\{y \in Y : \beta_{a_P}(y,o) \leq -t+\frac{1}{\epsilon} \log r_P \Big\}, \quad P \in \cP
	\]
	are pairwise separated by a distance $\geq 2(t-t_0)$.
\end{lemma}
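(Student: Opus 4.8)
The plan is to handle the two halves of the statement separately.

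For the \emph{forward inequality}: since $\rho$ is a visual metric we have $\rho(a_P,a_Q)\succeq_{C_0}e^{-\eps(a_P|a_Q)_o}$, while $r_Pr_Q=e^{-\eps(d(o,P)+d(o,Q))}$; so it is enough to prove $(a_P|a_Q)_o\leq\tfrac12(d(o,P)+d(o,Q))+O(1)$ with a uniform additive error. First I would set up a geodesic normal form near a horoball: if $p_0\in P$ realises $d(o,P)$, then Definition~\ref{def:admissible-horoballs}(4) shows that any path from $o$ to a level-$t$ point of $\cH(P)$ must pass through level $0$, so has length $\geq d(o,P)+t-O(1)$; hence $(o|a_P)_{p_0}$ is bounded, $[o,p_0]\cdot\gamma_{p_0}$ is a $(1,O(1))$-quasigeodesic ray to $a_P$, and by the Morse lemma a geodesic ray $[o,a_P]$ enters $\cH(P)$ within $O(1)$ of $p_0$, hence at distance $\approx d(o,P)$ from $o$, and ascends afterwards. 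The same applies to $a_Q$ and a point $q_0\in Q$ realising $d(o,Q)$.

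Now write $T=(a_P|a_Q)_o$, and use the standard fact that the rays $[o,a_P]$ and $[o,a_Q]$ stay $O(\delta)$-close out to distance $\approx T$ and diverge afterwards. If $T\leq\min(d(o,P),d(o,Q))+O(1)$ we are done. Otherwise, say $T>d(o,P)+O(1)$ (the case $T>d(o,Q)+O(1)$ is symmetric): then $[o,a_Q]$, being still $O(\delta)$-close to $[o,a_P]$ where the latter enters $\cH(P)$, also enters $\cH(P)$ near $p_0$ and ascends alongside $[o,a_P]$ to a level-$m$ point, $m:=T-d(o,P)+O(\delta)$, reached at distance $\approx T$ from $o$. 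Since $a_Q\neq a_P$, the ray $[o,a_Q]$ must leave $\cH(P)$, descending from level $m$ to level $0$ (a sub-path of length $\geq m-O(1)$, since the level function is coarsely $1$-Lipschitz by Definition~\ref{def:admissible-horoballs}(4)), before it reaches, within $O(1)$ of $q_0$ and at distance $\approx d(o,Q)$ from $o$, the horoball $\cH(Q)$. Comparing lengths along the geodesic ray $[o,a_Q]$ gives $d(o,Q)\geq d(o,P)+2m-O(1)=2T-d(o,P)-O(1)$, that is $T\leq\tfrac12(d(o,P)+d(o,Q))+O(1)$, as wanted.

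For the \emph{converse}: recall that in a $\delta$-hyperbolic space $\beta_a(y,o)=d(o,y)-2(y|a)_o$ up to an additive $O(\delta)$, so with $R_P:=-\tfrac1\eps\log r_P$ we get $y\in H_{P,t}\iff 2(y|a_P)_o\geq d(o,y)+t+R_P$ (up to $O(\delta)$); in particular, since $(y|a_P)_o\leq d(o,y)$, every $y\in H_{P,t}$ has $d(o,y)\geq t+R_P-O(\delta)$. The hypothesis $\rho(a_P,a_Q)\succeq_C\sqrt{r_Pr_Q}$ together with $\rho(a_P,a_Q)\preceq_{C_0}e^{-\eps(a_P|a_Q)_o}$ rearranges to
\[
	(a_P|a_Q)_o\ \leq\ C'+\tfrac12(R_P+R_Q),\qquad C':=\tfrac1\eps\log(CC_0).
\]
I would then take $t_0$ to be $C'$ plus a suitable multiple of $\delta$, and first show $H_{P,t_0}\cap H_{Q,t_0}=\emptyset$ for $P\neq Q$: if $z$ lay in both, then $d(o,z)\geq t_0+\max(R_P,R_Q)-O(\delta)$, and the hyperbolic four-point inequality gives
\[
	(a_P|a_Q)_o\ \geq\ \min\{(a_P|z)_o,(z|a_Q)_o\}-\delta\ \geq\ \tfrac12\bigl(d(o,z)+t_0+\min(R_P,R_Q)\bigr)-O(\delta)\ \geq\ t_0+\tfrac12(R_P+R_Q)-O(\delta),
\]
contradicting the previous display once $t_0$ exceeds $C'+O(\delta)$. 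Finally, for $t\geq t_0$ and $x\in H_{P,t}$, $y\in H_{Q,t}$: since $\beta_{a_P}(\cdot,o)$ is $1$-Lipschitz and $\leq-t-R_P$ at $x$, the geodesic $[x,y]$ has a first point $m$ with $\beta_{a_P}(m,o)=-t_0-R_P$, so $d(x,m)\geq t-t_0$; symmetrically there is a last point $m'$ on $[x,y]$ with $\beta_{a_Q}(m',o)=-t_0-R_Q$, so $d(m',y)\geq t-t_0$. The arc of $[x,y]$ up to $m$ lies in $H_{P,t_0}$ and the arc from $m'$ onward lies in $H_{Q,t_0}$; by disjointness $m$ precedes $m'$ on $[x,y]$, whence $d(x,y)\geq d(x,m)+d(m',y)\geq 2(t-t_0)$.

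I expect the forward direction to be the main obstacle: it requires a genuinely careful---though standard---analysis of how geodesics and geodesic rays enter, pass through, and leave the horoballs $\cH(P)$, which is exactly what the distance estimate Definition~\ref{def:admissible-horoballs}(4) and the Morse lemma are for. The converse is softer, using only that Busemann functions are $1$-Lipschitz together with the basic hyperbolic inequality, and it pins down $t_0$ explicitly in terms of $C$, $\eps$, and $\delta$.
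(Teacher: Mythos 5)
Your argument is correct and essentially reproduces the paper's proof: the forward bound is reduced to $(a_P|a_Q)_o\leq\tfrac12(d(o,P)+d(o,Q))+O(1)$ and established by the same enter/ascend/descend analysis of $[o,a_Q)$ relative to $\cH(P)$ that the paper carries out in its two cases (Figure 1), while the converse follows the paper's scheme of reducing to disjointness via $1$-Lipschitz Busemann functions and a splitting point on $[x,y]$, with your four-point inequality at a common point of the two horoballs playing the role of the paper's quasi-centre computation along $(a_P,a_Q)$. The one assertion worth spelling out --- that the passage of $[o,a_Q)$ near $q_0$ cannot precede its deep excursion into $\cH(P)$ --- does follow from your own normal form (beyond parameter $\approx d(o,Q)$ the ray ascends in $\cH(Q)$, and distinct horoballs meet only along $X$), and the paper is no more explicit at the corresponding point.
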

\begin{proof}
	First, suppose $P\neq Q \in \cP$, so by construction the horoballs $\cH(P), \cH(Q) \subset X$ are essentially disjoint (being combinatorial horoballs attached to distinct peripheral cosets).
	Let $p \in [o,a_P)$, $q \in [o,a_Q)$ be points with $r_P = e^{-\epsilon d(o,p)}, r_Q=e^{-\epsilon d(o,q)}$, so they are roughly the closest points to $o$ in $\cH(P), \cH(Q)$ respectively.
	Suppose $r_Q \leq r_P$.

	If $[o,a_Q)$ passes through $\cH(P)$ then the geodesics $[o,a_P), [o,a_Q)$ branch in $\cH(P)$ and
	$d(o,q) \gtrsim d(o,p)+2\left( (a_P|a_Q) - d(o,p) \right)$ (see Figure~\ref{fig:horoballsep1}).
	Thus $(a_P|a_Q) \lesssim \frac{1}{2}\left(d(o,p)+d(o,q)\right)$ and \eqref{eq:sep} follows.
\begin{figure}[h]
   \def\svgwidth{0.7\textwidth}
   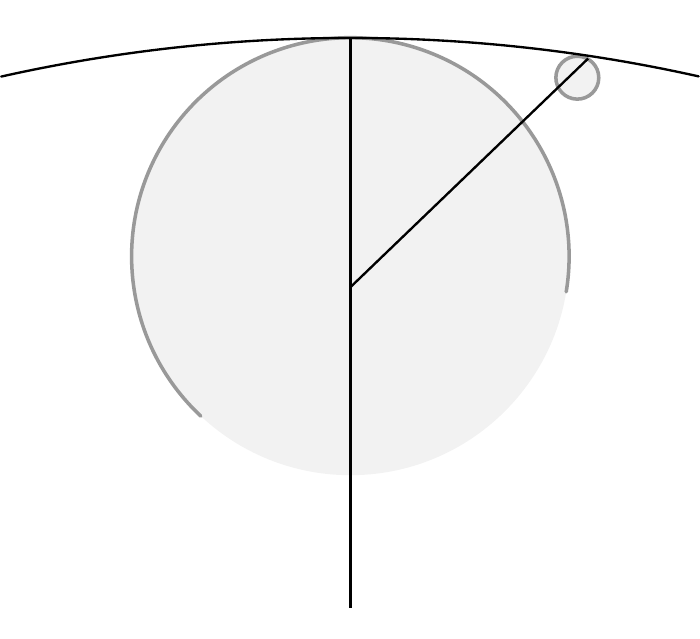
	\caption{Disjoint horoballs}
	\label{fig:horoballsep1}
\end{figure}

	If $[o,a_Q)$ does not pass through $\cH(P)$ then $(a_P|a_Q) \lesssim d(o,p)$ and moreover
	$0 \leq d(p,q) \approx (d(o,p)-(a_P|a_Q))+(d(o,q)-(a_P|a_Q))$
	so $(a_P|a_Q) \lesssim \frac{1}{2}\left(d(o,p)+d(o,q)\right)$
	and \eqref{eq:sep} follows.

	For the converse, 
	it suffices to find $t_0$ so that the horoballs are pairwise disjoint, since in that case for any $t \geq t_0$, as $\beta_{a_P}(\cdot,o)$ is $1$-Lipschitz $d(H_{P,t}, Y\setminus H_{P,t_0}) \geq t-t_0$, and likewise for $H_{Q,t}, H_{Q,t_0}$.  Considering a geodesic from $H_{P,t}$ to $H_{Q,t}$ we have $d(H_{P,t},H_{Q,t}) \geq 2(t-t_0)$. 

	Suppose for some $t_0$ we can find $P, Q \in \cP$ so that $H_{P,t_0}\cap H_{Q,t_0} \neq \emptyset$.
	By the quasiconvexity of the horoballs, there must be a point $z$ along the geodesic $(a_{P},a_{Q})$ almost in the horoballs, i.e., with 	
	$\beta_{a_{P}}(z,o) \lesssim -t_0+\frac{1}{\epsilon}\log r_{P}$ 
	and $\beta_{a_{Q}}(z,o) \lesssim -t_0+\frac{1}{\epsilon}\log r_{Q}$.  
	Let $y \in (a_{P},a_{Q})$ be a quasi-centre for $o,a_{P},a_{Q}$; without loss of generality, $a_{P}, z, y, a_{Q}$ appear in this order.
	Then
	$\beta_{a_{P}}(z,o) \approx -d(o,y)-d(y,z)$
	and
	$\beta_{a_{Q}}(z,o) \approx -d(o,y)+d(y,z)$
	so by \eqref{eq:sep}
	\begin{align*}
		-2(a_{P}|a_{Q})
		& \approx -2d(o,y)
		\approx \beta_{a_{P}}(z,o) + \beta_{a_{Q}}(z,o) 
		\\ & \lesssim -2t_0+\frac{1}{\epsilon}(\log r_{P}+\log r_{Q})
		\lesssim -2t_0-2(a_{P}|a_{Q}),
	\end{align*}
	thus $t_0 \lesssim 0$.  So choosing $t_0$ large enough we have a contradiction. 
\end{proof}

\subsection{Uniform perfectness and visual completeness}

When studying quasisymmetric maps it is useful to have, near any point, points of approximately any possible distance away.
\begin{definition}\label{def:unif-perfect}
	A metric space $(Z,\rho)$ is \emph{uniformly perfect} if for some $\lambda > 1$, for any $z \in Z$ and $0 < r \leq \diam Z$ we have that $B(z,r) \setminus B(z,r/\lambda) \neq \emptyset$.
\end{definition}
An easy observation is that
\begin{lemma}
	$(Z,\rho)$ is uniformly perfect if and only if there exists $\lambda \geq 1$ so that for any $B(z,r) \subset Z, r \leq \diam(Z)$, we have $\diam B(z,r) \asymp_\lambda r$.
\end{lemma}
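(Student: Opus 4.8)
The plan is to note first that the upper bound $\diam B(z,r) \leq \lambda r$ in the claimed equivalence is automatic: by the triangle inequality $\diam B(z,r) \leq 2r$, so any $\lambda \geq 2$ handles that half. Thus the real content is the equivalence of uniform perfectness with the lower bound $\diam B(z,r) \geq r/\lambda$, and both implications are one-line translations between ``there is a point at an intermediate distance from $z$'' and ``the diameter of $B(z,r)$ is comparable to $r$''. The degenerate case $\diam Z = 0$ makes both conditions vacuous, so we may assume $\diam Z > 0$.

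For the forward implication I would take $(Z,\rho)$ uniformly perfect with constant $\lambda_0 > 1$. Given $z \in Z$ and $0 < r \leq \diam Z$, the hypothesis provides $w \in B(z,r) \setminus B(z,r/\lambda_0)$, whence $\diam B(z,r) \geq \rho(z,w) \geq r/\lambda_0$. Setting $\lambda = \max\{2,\lambda_0\}$ then gives $r/\lambda \leq \diam B(z,r) \leq \lambda r$ for all admissible $z,r$.

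For the converse I would assume $\diam B(z,r) \geq r/\lambda$ for all $z$ and all $0 < r \leq \diam Z$, with $\lambda \geq 1$. Fixing such $z,r$ and using that the diameter is a supremum, I can find $x,y \in B(z,r)$ with $\rho(x,y) \geq r/\lambda - r/(2\lambda) = r/(2\lambda)$. By the triangle inequality at $z$, one of $\rho(z,x),\rho(z,y)$ is at least $r/(4\lambda)$; say $\rho(z,x) \geq r/(4\lambda)$. Since also $\rho(z,x) < r$, we get $x \in B(z,r) \setminus B(z,r/(4\lambda))$, so $\lambda_0 = 4\lambda > 1$ witnesses uniform perfectness.

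I expect no genuine obstacle here; the only things needing a moment's care are the open/closed-ball convention (all inequalities above are robust to either choice), the trivial $\diam Z = 0$ case, and checking $\lambda_0 > 1$, which is immediate from $\lambda \geq 1$.
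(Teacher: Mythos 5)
Your proof is correct; the paper states this lemma without proof (introducing it only as an ``easy observation''), and your argument --- the automatic upper bound $\diam B(z,r)\leq 2r$ together with the two one-line translations between ``a point at intermediate distance exists'' and ``$\diam B(z,r)\succeq r$'' --- is exactly the intended one. The only microscopic point is the borderline case $\rho(z,x)=r/(4\lambda)$ under a closed-ball convention, which your own slack already absorbs (the supremum argument in fact yields strict inequalities, or one may simply take $\lambda_0=8\lambda$).
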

For cusped spaces, there is the following related characterisation.
\begin{definition}\label{def:visually-complete}
 Suppose a space $X$ is hyperbolic relative to $\cP$.  We will say that $X$ is \emph{visually complete} if for some (hence, any) $w\in \Bow(X,\cP)$ there exists $C\geq 0$ so that 
 for any $x\in \Bow(X,\cP)$ there exist geodesic rays $[w,a)$, $[w,b)$, for some distinct $a,b\in \bdry \Bow(X,\cP)$, so that $x$ is within distance $C$ of $[w,a)$, $[w,b)$ and some bi-infinite geodesic from $a$ to $b$.
\end{definition}

Recall that $\Bow(X,\cP)$ is \emph{visual} if for some (hence any) basepoint $w$ there exists $C \geq 0$ so that every point $x \in \Bow(X,\cP)$ is within distance $C$ from some infinite geodesic ray from $w$.
\begin{lemma}\label{lem:bowditch-unif-pfct}
	Suppose a space $X$ is hyperbolic relative to $\cP$.
	Then $X$ is visually complete if and only if $\Bow(X,\cP)$ is visual and $\bdry\Bow(X,\cP)$ is uniformly perfect.
\end{lemma}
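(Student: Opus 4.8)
The plan is to reformulate visual completeness in terms of quasi-centres and then establish the two implications. First I would fix a basepoint $o\in X\subset\Bow(X)$, to be used simultaneously for ``visual'', ``visually complete'' and the visual metric $\rho$ (all basepoint-independent notions). Using the coarse uniqueness of quasi-centres (Lemma~\ref{lem:quasicentres-exist}), I would record that $X$ is visually complete if and only if there is a constant $C$ so that every $x\in\Bow(X)$ lies within $C$ of a quasi-centre of $o,a,b$ for some distinct $a,b\in\bdry\Bow(X)$: a quasi-centre of $o,a,b$ is automatically $2\delta$-close to $[o,a)$, $[o,b)$ and to any bi-infinite geodesic $(a,b)$, and conversely a point close to all three such geodesics is close to the quasi-centre. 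I would also use throughout Lemma~\ref{splitdist}, which gives $d(o,s)\approx -\tfrac1\eps\log\rho(a,b)\approx(a|b)_o$ for $s$ a quasi-centre of $o,a,b$. (If $\bdry\Bow(X)$ has at most one point both conditions degenerate --- such a space is never uniformly perfect, and $X$ cannot be visually complete --- so I may assume $\bdry\Bow(X)$ is infinite, in which case also $\diam\bdry\Bow(X)\asymp 1$.)

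For ($\Leftarrow$) I would assume $\Bow(X)$ is visual and $\bdry\Bow(X)$ is $\lambda$-uniformly perfect. Given $x\in\Bow(X)$, visuality provides a geodesic ray $[o,a)$ with $d(x,[o,a))\leq C$; set $R=d(o,x)$. Uniform perfectness then supplies $b\in\bdry\Bow(X)$ with $\rho(a,b)$ comparable to $e^{-\eps R}$ (if $e^{-\eps R}$ exceeds $\diam\bdry\Bow(X)$ then $R$ is bounded and one argues directly). Then $b\neq a$, and by Lemma~\ref{splitdist} the quasi-centre $s$ of $o,a,b$ has $d(o,s)\approx(a|b)_o\approx R$. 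Since $s$ is $2\delta$-close to $[o,a)$ and $x$ is $C$-close to $[o,a)$ while both are at distance $\approx R$ from $o$, $d(x,s)$ is bounded; so $x$ lies within a bounded distance of the quasi-centre $s$, and the reformulation gives visual completeness.

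For ($\Rightarrow$) I would assume $X$ is visually complete with constant $C$. That $\Bow(X)$ is visual is immediate, since every $x$ is within $C$ of a ray from $o$. For uniform perfectness of $\bdry\Bow(X)$, fix $a\in\bdry\Bow(X)$ and $0<r\leq\diam\bdry\Bow(X)$, pick $R\geq 0$ with $e^{-\eps R}$ comparable to $r$ (possible as $\diam\bdry\Bow(X)\asymp 1$), and let $x$ be the point of $[o,a)$ at distance $R$ from $o$. Visual completeness gives distinct $a',b'\in\bdry\Bow(X)$ with $[o,a')$, $[o,b')$, $(a',b')$ all within $C$ of $x$; hence $x$ is within a bounded distance of the quasi-centre of $o,a',b'$, so $(a'|b')_o\approx R$. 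Since $[o,a)$ passes through $x$, it stays uniformly close to $[o,a')$ and to $[o,b')$ up to distance $\approx R$ from $o$, so $(a|a')_o,(a|b')_o\gtrsim R$. Applying $(u|v)_o\geq\min\{(u|w)_o,(w|v)_o\}-\delta$ with $w=a$ gives $\min\{(a|a')_o,(a|b')_o\}\leq(a'|b')_o+\delta\approx R$, so one of $a',b'$, say $c$, has $(a|c)_o\asymp R$; thus $c\neq a$ and $\rho(a,c)\asymp e^{-\eps R}\asymp r$. Choosing the comparison constants appropriately places $c$ in $B(a,r)\setminus B(a,r/\lambda)$ for a uniform $\lambda$, so $\bdry\Bow(X)$ is uniformly perfect.

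I expect the only real obstacle to be the last estimate in ($\Rightarrow$): producing, out of a point $x$ that is merely close to three geodesics, a single boundary point at exactly the right distance from $a$. That is where the thin-triangles inequality for Gromov products does the essential work; everything else is a routine translation between distances in $\Bow(X)$, Gromov products at $o$, and the visual metric $\rho$.
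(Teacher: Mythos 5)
Your proof is correct and follows essentially the same route as the paper: in both directions you translate between distances along $[o,a)$, Gromov products at $o$, and the visual metric, using visual completeness (via quasi-centres) to produce the required boundary points, and conversely visuality plus uniform perfectness to place a quasi-centre near a given point. The only cosmetic difference is in the forward direction, where the paper concludes via $\diam B(a,r)\asymp r$ using both points $b,c$ at once, while you extract a single point $c$ with $\rho(a,c)\asymp r$ from the Gromov-product inequality; both versions work.
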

\begin{proof}
	We may assume that $o$ is the basepoint of $\Bow(X,\cP)$, and that the visual metric on $Z = \bdry \Bow(X,\cP)$
	satisfies $\rho(a,b) \asymp e^{-\eps (a|b)}$.

	If $X$ is visually complete, $\Bow(X,\cP)$ is certainly visual.
	Let $t$ be a constant to be determined.
	Given $a \in \bdry \Bow(X,\cP)$ and $r \leq \diam(Z)$, let $x$ be a point on $[o,a)$ at distance $-\frac{1}{\epsilon}\log(r)+t$ from $o$; this is always possible for $t \geq \frac{1}{\epsilon}\log(\diam(Z))$.
	Then by visual completeness there exist $b,c \in \bdry \Bow(X,\cP)$ so that $(b|a)$ and $(c|a)$ are $\gtrsim t-\frac{1}{\epsilon}\log(r)$ and $(b|c) \approx t-\frac{1}{\epsilon}\log(r)$.
	Therefore $\rho(a,b)$ and $\rho(a,c)$ are $\preceq e^{-\eps t} r$.
	We now fix $t$ large enough to ensure that $b,c \in B(a,r)$.
	Since $\rho(b,c) \asymp e^{-\eps t} r$ we have $\diam B(a,r) \asymp_\lambda r$ for $\lambda$ independent of $a,r$ as desired.
	
	Conversely, given $x \in \Bow(X,\cP)$, find a geodesic ray $[o,a)$ within $C$ of $x$.
	By uniform perfectness, there exists $b \in \bdry \Bow(X,\cP)$ with $\rho(a,b) \asymp e^{-\epsilon d_{\Bow}(o,x)}$
	thus $(a|b) \approx d_{\Bow}(o,x)$ and so $a,b$ satisfy the desired condition.
\end{proof}

Relatively hyperbolic groups are always visually complete, and hence have uniformly perfect boundaries (for the latter fact and a variation on visual completeness, see also \cite[Proposition 6.3]{healy2020cusped}).

\begin{proposition}\label{prop:bowditch-visually-complete}
 Let $G$ be a group hyperbolic relative to its proper, infinite subgroups $H_1,\dots,H_n$. Then if $G$ is not virtually cyclic it is visually complete.
\end{proposition}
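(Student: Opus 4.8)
The plan is to deduce this from Lemma~\ref{lem:bowditch-unif-pfct}: it suffices to show that $\Bow(G)$ is visual and that $Z:=\bdry\Bow(G)$ is uniformly perfect. Since $G$ is not virtually cyclic and the $H_i$ are proper and infinite, $\cP$ contains at least two cosets, hence $Z$ contains at least two parabolic points and so $\diam Z>0$; fix $o\in X\subset\Bow(G)$ and a visual metric $\rho$ with parameter $\eps$. First I would check that $\Bow(G)$ is visual. Given $p=g\cdot o\in X$ we want $\xi\in Z$ with $p$ within bounded distance of $[o,\xi)$, i.e.\ with $(g^{-1}o|g^{-1}\xi)_o$ bounded; but $\{\zeta\in Z:(g^{-1}o|\zeta)_o\ge T\}$ has visual diameter $\preceq e^{-\eps T}$, so for $T$ fixed large enough (depending only on $\diam Z$) it is not all of $Z$ and a suitable $\xi$ exists. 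Given $p$ in a horoball $\cH(P)$, Definition~\ref{def:admissible-horoballs}(4) (or Lemma~\ref{distestim}) shows $p$ is either within bounded distance of $[o,a_P)$, or within bounded distance of a geodesic segment $[o,y]$ with $y\in X$; in the latter case $[o,y]$ fellow-travels a ray from $o$ through $y$ (produced above) until boundedly near $y$, and $p$ lies in that portion, so $p$ is again boundedly close to a ray from $o$. Thus $\Bow(G)$ is visual.

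For uniform perfectness of $Z$, by Lemma~\ref{splitdist} it is enough to find, for each $a\in Z$ and $0<r\le\diam Z$, a point $b\in Z$ with $(a|b)_o=-\tfrac1\eps\log r+O(1)$, uniformly. Fix $a,r$, set $d:=-\tfrac1\eps\log r$, and let $x\in[o,a)$ with $d(o,x)=d$; I seek $b$ so that $[o,b)$ remains boundedly close to $[o,a)$ until boundedly near $x$ and then leaves it (so $(a|b)_o=d+O(1)$, and $x$ is boundedly near a quasi-centre of $o,a,b$). If $x\in X$, then $x$ is within $1$ of some $g\cdot o$, and, exactly as in the visuality argument, we take $b=g\eta$ where $\eta$ lies outside the (bounded visual diameter) shadows from $o$ of the point $g^{-1}o$ and of the boundary point $g^{-1}a$: then $[o,b)$ passes boundedly close to $g\cdot o\approx x$ while $[o,a)$ and $[o,b)$ diverge boundedly near $x$.

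The main case is $x$ lying deep in a horoball $\cH(P)$, where $P$ is a coset of some $H_i$; this is where infiniteness of peripherals enters. We may assume $a=a_P$ (if instead $[o,a)$ traverses $\cH(P)$ and exits, an analogous branching move based at the exit point works), so $[o,a)$ descends the column through $u_0:=$ (approximately) the closest point of $P$ to $o$, and $x$ sits at depth $n:=d(o,x)-d(o,P)$ in that column; if $n$ is bounded then $x$ is boundedly close to $X$ and we are in the previous case, so assume $n$ large. As $H_i$ is infinite, $P$ is unbounded, hence (being coarsely connected) contains a point $v$ with $d_P(u_0,v)\approx e^n$; since $e^n$ dominates the relevant additive constants, the geodesic $[o,v]$ in $\Bow(G)$ is forced to run through $\cH(P)$, descending the column through $u_0$ to depth $\approx n$, so it passes boundedly close to $x$. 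Using that $\Bow(G)$ is visual, pick a geodesic ray $[o,b)$ passing boundedly close to $v$; then $[o,b)$ fellow-travels $[o,v]$ until boundedly near $v$, hence (as $x$ lies at $\Bow$-distance $\approx n\gg1$ from $v$ along $[o,v]$) passes boundedly close to $x$. Past $x$, the ray $[o,a)=[o,a_P)$ descends further into $\cH(P)$ while $[o,b)$ returns towards $v$, so the two diverge boundedly near $x$; hence $(a|b)_o=d+O(1)$ with $b\ne a_P$ (since $v$ is at distance $\approx n$ from $[o,a_P)$). Lemma~\ref{lem:bowditch-unif-pfct} then gives that $G$ is visually complete.

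I expect the main difficulty to be this last case: checking, via the (elementary but delicate) distance estimates for admissible horoballs, that the ray through the far peripheral vertex $v$ genuinely passes near the deep point $x$ and diverges from $[o,a_P)$ there, and --- crucially --- carrying this out with all constants uniform in $a$, $r$ and $P$, so that a single constant $\lambda$ works in Definition~\ref{def:unif-perfect}. The hypotheses then play exactly the expected roles: ``proper'' peripherals guarantee at least two cosets and hence $\diam Z>0$; ``not virtually cyclic'' keeps us in the genuinely non-elementary setting where that positivity, and the whole argument, is uniform; and ``infinite'' peripherals are precisely what make the branching vertex $v$ available --- the proposition fails for finite peripherals.
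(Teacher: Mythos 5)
Your route is genuinely different from the paper's. You reduce to Lemma~\ref{lem:bowditch-unif-pfct} and verify separately that $\Bow(G)$ is visual and that $\bdry\Bow(G)$ is uniformly perfect, working from the basepoint $o$ with shadow/visual-metric estimates, the $G$-action for points of $X$, and an explicit branching construction (the far peripheral point $v$ with $d_P(u_0,v)\approx e^n$) for points deep in horoballs. The paper instead verifies Definition~\ref{def:visually-complete} directly: for every $x\in\Bow(G)$ it produces three (quasi-)geodesic rays from $x$ to distinct boundary points with uniformly bounded pairwise Gromov products at $x$ (for $x\in G$, rays towards points at infinity of distinct nearby horoballs, or, in the hyperbolic case $n=0$, by translating a quasi-centre of three boundary points; for $x=(g,m)$ in a graph horoball, one ray down the horoball and two rays back out towards other horoballs), and then concatenates with $[o,x]$. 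The paper's argument is shorter and gives the ``bi-infinite geodesic through $x$'' part of visual completeness immediately; yours buys the same conclusion through the equivalence of Lemma~\ref{lem:bowditch-unif-pfct}, at the cost of the quantitative horoball estimates you correctly identify as the delicate part (and which do go through: the branch point of $[o,a_P)$ and a ray through $v$ is at depth $\approx n$, with constants independent of $a$, $r$, $P$).

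There is, however, one concrete gap as written. In the uniform perfectness step for $x$ near $X$ you choose $\eta$ ``outside the shadows from $o$ of the point $g^{-1}o$ and of the boundary point $g^{-1}a$''. Each of these two bad sets has visual diameter $\preceq e^{-\eps T}$, but the only lower bound on $Z$ you have established is $\diam Z>0$ coming from \emph{two} parabolic points, and a space of positive diameter can perfectly well be covered by two sets of small diameter (two points, or two tight clusters). So the existence of $\eta$, with a uniform $T$, does not follow from what you proved; you need three points of $Z$ whose pairwise distances are bounded below (then take $T$ with $Ce^{-\eps T}$ smaller than half that bound and apply pigeonhole). This is true in the present setting but requires an argument: for $n\geq 1$ the peripheral subgroups have infinite index (an infinite peripheral of finite index would violate almost malnormality / bounded intersection of distinct horoballs), so there are at least three distinct parabolic points; for $n=0$, $G$ is a non-elementary hyperbolic group and $\bdry G$ has at least three points. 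With that sentence added (and noting that your $n=0$ case otherwise reduces entirely to this $x\in X$ argument), the proof is complete; note also that the single-shadow avoidance in your visuality step is unaffected, since there two boundary points suffice.
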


\begin{proof}
	We will show that there exists $D$ so that for any $x\in \Bow(G,\{H_i\})$ there exist three geodesic rays $\gamma_i$ towards $a_i\in\bdry(G,\{H_i\})$ emanating from $x$ so that $(a_i|a_j)_x\leq D$ for any $i\neq j$.  To check visual completeness for given $o,x$, observe that for (at least) two of the rays as above, say $\gamma_1,\gamma_2$, we have $(a_1|o)_x, (a_2|o)_x \leq D'$, for a suitable $D'$. Setting $a_1=a,a_2=b$, visual completeness easily follows from the fact that concatenating geodesics in a hyperbolic space yields a quasi-geodesic with constants only depending on the hyperbolicity constant and the Gromov product at the concatenation point.

 The rays can be chosen as follows. 
 If $n=0$ then, as is well known, as $G$ is not virtually cyclic the boundary at infinity has at least three points, and we can move the quasi-centre of these three points to a bounded distance from any $x$ using the action of $G$.

	If $n\geq 1$ then for $x\in G$, we can take 3 rays towards the points at infinity of distinct horoballs each within uniformly bounded distance of $x$.
 
 For $x=(g,m)$ in a graph horoball $\cH(P)$, for convenience we describe 3 quasi-geodesic rays instead of 3 geodesic rays, and they are the following:
 \begin{itemize}
	 \item a ray $\{g\}\times[m,\infty)$ towards the point at infinity of the horoball,
  \item the concatenation of a geodesic from $(g,m)$ to $g$ and a ray from $g$ towards the point at infinity of some horoball $\cH(P')\neq \cH(P)$ within uniformly bounded distance of $g$, and
  \item the concatenation of a horizontal geodesic of length $l$ with $1\leq l \leq e$ connecting $(g,m)$ to, say, $(g',m)$, a geodesic from $(g',m)$ to $g'$ and a ray from $g'$ towards the point at infinity of some horoball $\cH(P'')\neq \cH(P),\cH(P')$ within uniformly bounded distance of $g'$.
	  \qedhere
 \end{itemize}
\end{proof}

\subsection{Uniform perfectness and shadow decorations}
There are consequences of uniform perfectness for shadow decorated spaces.
In the proof of Theorem~\ref{thm:boundary-to-inside}, we use the lower bound in the following proposition to get polynomial distortion of the extension of the boundary map to the inside.
\begin{proposition}
	\label{prop:bounded-snowflake-param}
	If $(Z,\rho,\cB), (Z',\rho',\cB')$ are shadow decorated metric spaces, and $(Z,\rho)$ is bounded and uniformly perfect, then any shadow-respecting quasisymmetric embedding $h:Z\to Z'$ will have the exponents $\lambda_a, (a,r)\in \cB$, uniformly bounded away from zero and infinity.
\end{proposition}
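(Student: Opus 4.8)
The plan is to work inside a single shadow: use uniform perfectness to manufacture, near a given parabolic point $a$, triples of points whose mutual distances are controlled, and then play the snowflake relation of Definition~\ref{def:shadow-qs}(3) off against the quasisymmetry inequality for $h$. Only these two properties of $h$ are needed — conditions (1)--(2) of Definition~\ref{def:shadow-qs}, and the existence of points of $Z$ lying outside a given shadow, are never used (which matters, since $B(a,r)$ may be all of $Z$).

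Fix $(a,r)\in\cB$ with corresponding $(a',r')\in\cB'$, $a'=h(a)$. Let $\lambda_0\geq 1$ be the uniform perfectness constant of $(Z,\rho)$, let $K\geq 1$ satisfy $\sup_P r_P\leq K\diam Z$, and let $C\geq 1$, $\eta$ be as in Definition~\ref{def:shadow-qs}; note $\diam Z\geq r/K$. I would fix once and for all constants $0<\theta<v_0<1$ depending only on $\lambda_0,K,C,\eta$, the crucial requirements being $v_0\leq 1/K$, $\lambda_0^2 v_0<1$, $\theta\lambda_0^2<1$, and $C^2\eta(\lambda_0^2 v_0)\leq\tfrac12$. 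Using uniform perfectness at scales $\leq\diam Z$, I would choose $b,b''\in Z$ with $\rho(a,b)\asymp v_0 r$ and $\rho(a,b'')\asymp\theta v_0 r$ (so both lie in $B(a,r)$, $b''$ is ``deeper'' than $b$, and $\rho(b,b'')\asymp v_0 r$), and then companion points: $c$ with $\rho(b,c)\asymp\rho(a,b)$, $c''$ with $\rho(b'',c'')\asymp\rho(a,b'')$, and $c_2$ with $\rho(b,c_2)\asymp\tfrac{\rho(a,b)}{r}\rho(a,b)$. The size choices are exactly what makes $(b,c)$, $(b'',c'')$ and $(b,c_2)$ valid pairs for Definition~\ref{def:shadow-qs}(3), so that
\[ \rho'(h(x),h(y))\asymp_C r'\big(\rho(x,y)/r\big)^{\lambda_a}\qquad\text{for } (x,y)\in\{(b,c),\ (b'',c''),\ (b,c_2)\}. \]

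For the \emph{lower bound} on $\lambda_a$ I would apply quasisymmetry with base point $b$ to the points $c_2,c$: writing $t:=\rho(b,c_2)/\rho(b,c)$, one has $t<\lambda_0^2 v_0<1$ and $\rho'(h(b),h(c_2))\leq\eta(t)\,\rho'(h(b),h(c))$. Substituting the two relevant snowflake estimates, and keeping the arguments of the $\lambda_a$-th powers as exact distance ratios until the very end (so that no powers of $\lambda_0$ are incurred), this collapses to the clean inequality $t^{\lambda_a}\leq C^2\eta(t)$. Since $t\leq\lambda_0^2 v_0$ and $\eta$ is increasing, $t^{\lambda_a}\leq C^2\eta(\lambda_0^2 v_0)\leq\tfrac12$, and as $t\geq v_0/\lambda_0$ this forces $\lambda_a\geq\log 2/\log(\lambda_0/v_0)>0$, a bound independent of $a$. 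This step is precisely where one uses that $\eta(t)\to 0$ as $t\to 0$.

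For the \emph{upper bound} I would instead compare the snowflake behaviour at the two depth scales. Quasisymmetry with base point $b$ applied to $c,b''$ gives $\rho'(h(b),h(c))\asymp\rho'(h(b),h(b''))$ (all three mutual distances are $\asymp v_0 r$, with a constant depending only on $\lambda_0,\eta$); quasisymmetry with base point $b''$ applied to $b,c''$ gives $\rho'(h(b''),h(b))\lesssim\rho'(h(b''),h(c''))$ (now $\rho(b'',b)\asymp v_0 r$ greatly exceeds $\rho(b'',c'')\asymp\theta v_0 r$). Chaining these with the snowflake estimates for $(b,c)$ and $(b'',c'')$, and again propagating the ratios exactly, yields $\big(\rho(b,c)/\rho(b'',c'')\big)^{\lambda_a}\leq M$ for some $M=M(\lambda_0,\theta,C,\eta)$; but $\rho(b,c)/\rho(b'',c'')\geq 1/(\theta\lambda_0^2)>1$, so $\lambda_a\leq\log M/\log(1/(\theta\lambda_0^2))$, again independent of $a$, completing the proof. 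The only genuine difficulty throughout is bookkeeping: $\lambda_a$ is a priori unbounded, so applying Definition~\ref{def:shadow-qs}(3) at points whose distances are pinned down only up to a factor $\lambda_0$ would spawn factors $\lambda_0^{\lambda_a}$ and ruin the estimates; the remedy is to use only one side of each snowflake estimate, to carry the ratios $\rho(x,y)/r$ symbolically and estimate them numerically only in the last line, and to check that every argument of $\eta$ is bounded by a constant independent of $a$ and of $\lambda_a$.
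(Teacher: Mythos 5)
Your argument is correct, but it follows a genuinely different route from the paper. The paper's proof is a two-line reduction: boundedness plus uniform perfectness of $(Z,\rho)$ let it invoke Tukia--V\"ais\"al\"a \cite[Theorem 3.14]{TV-80-qs} to conclude that $h$ is bi-H\"older with some exponent $\alpha\geq 1$, and then it applies Definition~\ref{def:shadow-qs}(3) with $c=a$ and $b=b_i\to a$ (possible since perfectness gives such a sequence), so that comparing $r'r^{-\lambda_a}\rho(b_i,a)^{\lambda_a}$ with $\rho(b_i,a)^{\alpha}$ and $\rho(b_i,a)^{1/\alpha}$ in the limit forces $1/\alpha\leq\lambda_a\leq\alpha$. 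You instead avoid the Tukia--V\"ais\"al\"a input entirely and extract both bounds directly from the $\eta$-quasisymmetry inequality and uniform perfectness, by building explicit test configurations at two depths inside the shadow and using only one side of each snowflake estimate so that the $\lambda_a$-th power only ever acts on exact distance ratios; this is more elementary and self-contained (indeed it never really uses boundedness of $Z$, only $r\preceq\diam Z$ from Definition~\ref{def:shadow-mtc-spc} and uniform perfectness), at the cost of being longer and requiring the bookkeeping you flag. The one point you should make fully explicit is that the hypotheses of Definition~\ref{def:shadow-qs}(3) are exact inequalities, not up-to-constant ones: e.g.\ $c_2$ must be taken in an annulus $B\bigl(b,\lambda_0\rho(a,b)^2/r\bigr)\setminus B\bigl(b,\rho(a,b)^2/r\bigr)$ so that $\rho(b,c_2)\geq\rho(a,b)^2/r$ genuinely holds, and similarly $\rho(b,c)\leq\rho(a,b)$ and $\rho(a,b)\leq r$ must be arranged on the nose; with the annulus choices indicated this is immediate, and then your computations ($t^{\lambda_a}\leq C^2\eta(t)$ with $t\in[v_0/\lambda_0,\lambda_0^2v_0]$ for the lower bound, and $\bigl(\rho(b,c)/\rho(b'',c'')\bigr)^{\lambda_a}\leq M$ with $\rho(b,c)/\rho(b'',c'')\geq 1/(\theta\lambda_0^2)>1$ for the upper bound) go through and give uniform bounds depending only on $\lambda_0$, $K$, $C$ and $\eta$.
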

\begin{proof}
	The uniform perfectness and boundedness of $Z$ implies $h$ is bi-H\"older~\cite[Theorem 3.14]{TV-80-qs}: there exists $\alpha \geq 1$ so that for all $x,y \in Z$, 
	\[
		\rho(x,y)^\alpha \preceq \rho'(h(x),h(y)) \preceq \rho(x,y)^{1/\alpha}.
	\]

	Take $(a,r) \in \cB$, with corresponding $(a',r') \in \cB'$.
	By (uniform) perfectness there exists a sequence $(b_i)$ in $Z\setminus \{a\}$ with $b_i \to a$.
	Applying Definition~\ref{def:shadow-qs}(3) with $b=b_i, c=a$ for all large $i$ we have
	$\rho'(h(b_i),h(a)) \asymp r' r^{-\lambda_a} \rho(b_i,a)^{\lambda_a}$,
	so
	\[
	\rho(b_i,a)^{\alpha} \preceq r' r^{-\lambda_a} \rho(b_i,a)^{\lambda_a} \preceq \rho(b_i,a)^{1/\alpha}.
	\]
	Letting $i\to\infty$, we deduce that $\alpha \geq \lambda_a \geq 1/\alpha$.
\end{proof}

The following proposition shows that shadow-respecting quasisymmetric embeddings between uniformly perfect, shadow decorated spaces form a category, motivating the definition of such maps.
The only places we use this proposition are in the statement of Theorem~\ref{thm:boundary-to-inside} and its application to Corollary~\ref{thm:boundary-to-inside-simple} (if $h$ is a shadow-respecting quasisymmetry then so is $h^{-1}$ and $\widehat{h \circ h^{-1}} \approx \hat{h} \circ \widehat{h^{-1}}$).
Therefore, as the proof is somewhat tedious, we suggest the reader skips it on a first reading.

\begin{proposition}
	\label{prop:category-shadow-resp}
	Let $(Z_i,\rho_i,\cB_i)$ for $i=1,2,3$ be uniformly perfect, shadow decorated metric spaces.

	If $h_i:Z_i\to Z_{i+1}$ are shadow-respecting quasisymmetric embeddings for $i=1,2$, then $h_2\circ h_1:Z_1\to Z_3$ is a shadow-respecting quasisymmetric embedding, quantitatively.
	The identity map is a shadow-respecting quasisymmetric embedding, so uniformly perfect shadow decorated spaces and shadow-respecting quasisymmetric embeddings form a category.

	If $h:Z_1\to Z_2$ is a shadow-respecting quasisymmetric embedding that is a quasisymmetry, then $h^{-1}:Z_2\to Z_1$ is a shadow-respecting quasisymmetry.
\end{proposition}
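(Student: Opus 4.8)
The plan is to verify in turn that shadow-respecting quasisymmetric embeddings are closed under composition, that the identity qualifies, and that inverses of shadow-respecting quasisymmetries are again shadow-respecting; associativity being automatic, this gives the category structure together with its isomorphisms. Throughout, uniform perfectness of the $Z_i$ is used to produce reference points at prescribed scales.

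\emph{Composition.} Let $h_i\colon Z_i\to Z_{i+1}$ ($i=1,2$) be shadow-respecting $\eta_i$-quasisymmetric embeddings with shadow constants $C_i$. That $h_2\circ h_1$ is $(\eta_2\circ\eta_1)$-quasisymmetric is immediate from Definition~\ref{def:qs}. For Definition~\ref{def:shadow-qs}(1): given $(a,r)\in\cB_1$, pass along the associated decoration points to $(a',r')\in\cB_2$ and $(a'',r'')\in\cB_3$; if $\rho_1(a,b)\le r$ then $\rho_2(a',h_1(b))\le C_1 r'$, and since this is only $\le C_1 r'$ we pick via uniform perfectness a reference point $w\in Z_2$ with $\rho_2(a',w)\asymp r'$ and $\rho_2(a',w)\le r'$, compare $h_2(h_1(b))$ with $h_2(w)$ using that $h_2$ is $\eta_2$-quasisymmetric, and apply (1) for $h_2$ to $w$; this bounds $\rho_3\bigl(a'',(h_2\circ h_1)(b)\bigr)$ by a multiple of $r''$ depending only on $C_1,C_2,\eta_2$ and the uniform perfectness constant. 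Definition~\ref{def:shadow-qs}(2) is handled in the same manner after a short case analysis of $(a'',r'')\in\cB_3$ according to whether $a''$ is $h_2$, respectively $h_2\circ h_1$, of a decoration point; injectivity of $h_1$ and $h_2$ makes the cases exhaustive and disjoint, and the ``no preimage'' clause for $h_2\circ h_1$ reduces to that for $h_2$ because $(h_2\circ h_1)(Z_1)\subseteq h_2(Z_2)$.

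\emph{Condition (3), the crux.} Fix $(a,r)\in\cB_1$ and $b,c\in Z_1$ in the hypothesis region of Definition~\ref{def:shadow-qs}(3). Applying (3) for $h_1$ both to $(b,c)$ and to $(b,a)$ (the latter always lies in the region) gives $\rho_2(h_1(b),h_1(c))\asymp_{C_1} r'\bigl(\rho_1(b,c)/r\bigr)^{\lambda_a}$ and $\rho_2(a',h_1(b))\asymp_{C_1} r'\bigl(\rho_1(a,b)/r\bigr)^{\lambda_a}$. A short computation shows that $h_1(b),h_1(c)$ then satisfy the hypothesis region of (3) for $h_2$ at $(a',r')$ \emph{up to a bounded multiplicative factor} depending on $C_1$. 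Hence the essential ingredient is a \emph{robustness statement}: condition (3), although stated for the exact region, persists---with worse constants---on the region enlarged by any fixed factor. Granting this, applying the robust form of (3) for $h_2$ and multiplying the two power laws yields $\rho_3\bigl((h_2\circ h_1)(b),(h_2\circ h_1)(c)\bigr)\asymp r''\bigl(\rho_1(b,c)/r\bigr)^{\lambda_a\lambda'_{a'}}$, where $\lambda'$ denotes the snowflake exponents of $h_2$; thus $h_2\circ h_1$ is shadow-respecting, with snowflake exponents the products of those of $h_1$ and $h_2$. The robustness statement is itself proved by using uniform perfectness to replace a point lying just outside the exact region by a nearby point inside it at a comparable scale, and transferring the power-law estimate across by quasisymmetry; I expect this to be the main obstacle.

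\emph{Identity and inverse.} The identity is shadow-respecting with $C=1$ and all snowflake exponents $1$, so we obtain a category. If $h\colon Z_1\to Z_2$ is a shadow-respecting quasisymmetry, it is a homeomorphism, so $h^{-1}$ is quasisymmetric by the standard fact (\cite{TV-80-qs}) that the inverse of a quasisymmetric homeomorphism is quasisymmetric. By Remark~\ref{rmk:shadow-qs}, conditions (1) and (2) for $h$ say that $h$ restricts to a bijection $\cB\to\cB'$ with $B(h(a),r'/C)\subset h(B(a,r))\subset B(h(a),Cr')$; inverting this (using that $h^{-1}$ is quasisymmetric and uniform perfectness, so that $h^{-1}$ of a shadow ball is a rough ball of comparable radius) gives conditions (1) and (2) for $h^{-1}$. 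For condition (3), applying (3) for $h$ with $c=a$ yields the radial power law $\rho'(h(a),h(b))\asymp r'\bigl(\rho(a,b)/r\bigr)^{\lambda_a}$ on $B(a,r)$; inverting it places $h^{-1}$ of any point of the $h^{-1}$-hypothesis region inside the enlarged $h$-hypothesis region, whereupon the robust form of (3) for $h$, followed by inverting the power law, gives (3) for $h^{-1}$ with exponents $1/\lambda_a$. Therefore $h^{-1}$ is a shadow-respecting quasisymmetry.
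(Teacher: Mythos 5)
Your strategy is, at its core, the same as the paper's: conditions (1) and (2) for the composition are checked using reference points at scale $\asymp r'$ supplied by uniform perfectness together with quasisymmetry of $h_2$; condition (3) is pushed through by noting that $h_1(b),h_1(c)$ satisfy the hypothesis region for $h_2$ only up to a bounded factor and then repairing this with uniformly-perfect surrogate points and quasisymmetric transfer; and the inverse is handled with the same toolkit plus Remark~\ref{rmk:shadow-qs}. The only organisational difference is that you factor the repair step out as a single ``robustness'' lemma, whereas the paper carries out the corresponding case analysis inline, separately for the composition and for the inverse.

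The issue is that this robustness lemma is precisely where all the work lies, and you have only sketched it (and say so yourself). The sketch covers two of the three situations correctly: when $\rho(b,c)$ exceeds $\rho(a,b)$ or drops below $\rho(a,b)^2/r$ by a bounded factor, one places a surrogate $e$ with $\rho(b,e)\asymp\rho(b,c)$ inside the exact region, applies (3) to $(b,e)$, and transfers by quasisymmetry. But the remaining boundary case $\rho(a,b)\asymp r$ (including $\rho(a,b)$ slightly larger than $r$) is not of this form: there the asserted power law degenerates to the claim that both $\rho'(h(b),h(c))/r'$ and $(\rho(b,c)/r)^{\lambda_a}$ are $\asymp 1$, and proving the first of these forces you to use conditions (1) and (2) (with perfectness points at scale $\asymp r$ around $a$) to pin $\rho'(a',h(b))\asymp r'$, then quasisymmetry to get $\rho'(h(b),h(c))\asymp\rho'(a',h(b))$; for the second you also need the exponents $\lambda_a$ to be uniformly bounded, which is Proposition~\ref{prop:bounded-snowflake-param}, not part of Definition~\ref{def:shadow-qs}. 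Moreover, the surrogate construction in the other two cases can fail exactly when $\rho(a,b)\succeq r$ (one cannot fit $e$ with $\rho(a,b)^2/r\le\rho(b,e)\le\rho(a,b)$ at a comparable scale), so the boundary case must be dispatched first — this is the ``claim'' in the paper's proof. A similar caveat applies to your inverse argument: the lower bound $\rho_1(b,c)\succeq\rho_1(a_1,b)^2/r_1$, needed to place $(b,c)$ in the enlarged region for $h$, does not follow from inverting the radial power law alone; it again requires a surrogate point in $Z_1$ at scale $\rho_1(a_1,b)^2/r_1$, condition (3) for $h$, and quasisymmetry (this is the chain \eqref{eq:cat-sr5} in the paper). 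With these points supplied your argument closes and agrees with the paper's.
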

\begin{proof}
	We first show $h_2 \circ h_1$ is shadow-respecting.  If $h_i$ is an $\eta_i$-quasisymmetric embedding for $i=1,2$ then $h_2 \circ h_1$ is an $\eta_2 \circ \eta_1$-quasisymmetric embedding.

	(1,2) Given $(a_1,r_1) \in \cB_1$, there exists $(a_2,r_2)\in \cB_2$ and $(a_3,r_3)\in \cB_3$ with $h_1(a_1)=a_2$ and $h_2(a_2)=a_3$.
	By uniform perfectness, there exists $p,q \in Z_2$ with $\rho_2(p,a_2) \leq r_2 \leq \rho_2(q,a_2) \asymp \rho_2(p,a_2)$.
	For (1), if $\rho_1(a_1,b) \leq r_1$, then $\rho_2(a_2,h_1(b))\preceq r_2 \asymp \rho_2(a_2,p)$, so by quasisymmetry and (1) for $h_2$,
	\[
		\rho_3(a_3,h_2h_1(b)) \preceq \rho_3(a_3,h_2(p)) \preceq r_3.
	\]
	Likewise for (2), if $\rho_1(a_1,b) \geq r_1$, then $\rho_2(a_2,h_1(b)) \succeq r_2 \asymp \rho_2(a_2,q)$, so by quasisymmetry and (2) for $h_2$,
	\[
		\rho_3(a_3,h_2h_1(b)) \succeq \rho_3(a_3,h_2(q)) \succeq r_3.
	\]
	If $(a_3,r_3) \in \cB_3$ has no $(a_1,r_1) \in \cB_1$ with $h_2  h_1(a_1)=a_3$, then either $a_3 \notin h_2(Z_2)$ so $\rho_3(a_3,h_2  h_1(Z_1)) \geq \rho_3(a_3,h_2(Z_2)) \succeq r_3$,
	or $a_3=h_2(a_2)$ for some $(a_2,r_2) \in \cB_2$ but $a_2 \notin h_1(Z_1)$, in which case $\rho_2(a_2,h_1(Z_1)) \succeq r_2$, which again implies $\rho_3(a_3,h_2h_1(Z_1)) \succeq r_2$.

	(3) For $(a_i,r_i)\in \cB_i$, $i=1,2,3$ as in (1), suppose $b,c\in Z_1$ satisfy
	\begin{equation}
		\label{eq:cat-sr0}
		\frac{\rho_1(a_1,b)^2}{r_1} \leq \rho_1(b,c) \leq \rho_1(a_1,b) \leq r_1.
	\end{equation}
	If 
	\begin{equation}
		\label{eq:cat-sr3}
		\frac{\rho_2(a_2,h_1(b))^2}{r_2} \leq \rho_2(h_1(b),h_1(c)) \leq \rho_2(a_2,h_1(b)) \leq r_2 
	\end{equation}
	we are done, since
	\begin{equation}
		\label{eq:cat-sr2}
		\frac{\rho_3(h_2h_1(b),h_2h_1(c))}{r_3}
		\asymp \left( \frac{\rho_2(h_1(b),h_1(c))}{r_2}\right)^{\lambda_{a_2}}
		\asymp \left(  \frac{\rho_1(b,c)}{r_1}\right)^{\lambda_{a_1}\lambda_{a_2}}.
	\end{equation}
	So it remains to consider when \eqref{eq:cat-sr3} fails.

	We claim that if $\rho_1(a_1,b) \asymp r_1$ we are done.
	Indeed, if $\rho_1(a_1,b) \asymp r_1$ we can find $e\in Z_1$ with $\rho_1(a_1,e) \geq r_1$ and $\rho_1(a_1,e)\asymp r_1 \asymp \rho_1(a_1,b)$, so by (1,2) and quasisymmetry we have 
	\[
		\rho_2(a_2,h_1(e)) \succeq r_2 \succeq \rho_2(a_2,h_1(b)) \asymp \rho_2(a_2,h_1(e)),
	\]
	thus $\rho_2(a_2,h_1(b)) \asymp r_2$.
	Moreover, $\rho_1(a_1,b)\asymp r_1$ and \eqref{eq:cat-sr0} imply that $\rho_1(b,c) \asymp \rho_1(a_1,b)$ so by quasisymmetry,
	\[
		\rho_2(h_1(b),h_1(c)) \asymp \rho_2(a_2,h_1(b)) \asymp r_2.
	\]
	The same argument applied to $h_2$ then gives that $\rho_3(h_2h_1(b),h_2h_1(c)) \asymp \rho_3(a_3,h_2h_1(b)) \asymp r_3$.
	So both $\rho_3(h_2h_1(b),h_2h_1(c))/r_3$ and $\rho_1(b,c)/r_1$ are $\asymp 1$ giving \eqref{eq:cat-sr2}, proving the claim that $\rho_1(a_1,b)\asymp r_1$ suffices.
	
	Thus if $\rho_2(a_2,h_1(b)) > r_2$ by (1) we have $\rho_1(a_1,b) \asymp r_1$ and by the claim we are done.  So suppose $\rho_2(a_2,h_1(b)) \leq r_2$.  In particular, applying (3) to $h_1(b),a_2$ and $b,a_1$ we have
	\[ 
	\frac{\rho_3(h_2h_1(b),a_3)}{r_3} \asymp \left(\frac{\rho_2(h_1(b),a_2)}{r_2}\right)^{\lambda_{a_2}} \asymp \left(\frac{\rho_1(b,a_1)}{r_1}\right)^{\lambda_{a_1}\lambda_{a_2}}. \]
	Assume from now on that $\rho_2(a_2,h_1(b))\leq r_2$.

	If $\rho_2(h_1(b),h_1(c)) > \rho_2(a_2,h_1(b))$ then by quasisymmetry and \eqref{eq:cat-sr0} we have $\rho_2(h_1(b),h_1(c)) \asymp \rho_2(a_2,h_1(b))$.
	Thus by uniform perfectness there exists $e \in Z_2$ so that $\rho_2(h_1(b),h_1(c)) \asymp \rho_2(h_1(b),e) \leq \rho_2(a_2,h_1(b))$.
	If $\rho_2(a_2,h_1(b))^2/r_2 > \rho_2(h_1(b),e)$ then $\rho_2(a_2,h_1(b)) /r_2 \succeq 1$ and again the claim above implies we are done.
	So by quasisymmetry and (3) applied to $h_1(b),e$ and $b,c$ we have
	\begin{equation}\label{eq:cat-sr1}\begin{split}
		& \frac{\rho_3(h_2h_1(b),h_2h_1(c))}{r_3}
		\asymp \frac{\rho_3(h_2h_1(b),h_2(e))}{r_3}
		\asymp \left( \frac{\rho_2(h_1(b),e)}{r_2} \right)^{\lambda_{a_2}}
		\\ & \asymp \left( \frac{\rho_2(h_1(b),h_1(c))}{r_2}\right)^{\lambda_{a_2}}
		\asymp \left( \frac{\rho_1(b,c)}{r_1}\right)^{\lambda_{a_1}\lambda_{a_2}}.
	\end{split}\end{equation}
	Assume from now on that $\rho_2(h_1(b),h_1(c)) \leq \rho_2(a_2,h_1(b))$.

	Finally, if $\rho_2(h_1(b),h_1(c)) < \rho_2(h_1(b),a_2)^2/r_2$ then since by \eqref{eq:cat-sr0}
	\[
		\frac{\rho_2(h_1(b),h_1(c))}{r_2} 
		\asymp \left( \frac{\rho_1(b,c)}{r_1} \right)^{\lambda_{a_1}} 
		\geq \left( \frac{\rho_1(a_1,b)}{r_1}\right)^{2\lambda_{a_1}}
		\asymp \left( \frac{\rho_2(a_2,h_1(b))}{r_2} \right)^2
	\]
	we have $\rho_2(h_1(b),h_1(c)) \asymp \rho_2(a_2,h_1(b))^2/r_2$. So again we can find $e \in Z_2$ with $\rho_2(h_1(b),e) \asymp \rho_2(h_1(b),h_1(c))$ and $\rho_2(a_2,h_1(b))^2/r_2 \leq \rho_2(h_1(b),e) \leq \rho_2(a_2,h_1(b)) \leq r_2$, and a similar argument to \eqref{eq:cat-sr1} shows \eqref{eq:cat-sr2} holds.  This completes the proof of property (3) for $h_2   h_1$.

	Now suppose $h:Z_1\to Z_2$ is a shadow-respecting quasisymmetric embedding that is a quasisymmetry.  
	As is standard, if $h$ is an $\eta$-quasisymmetry, then $h^{-1}$ is a $\eta'$-quasisymmetry, where we set $\eta'(t) = 1/\eta^{-1}(1/t)$.
	It remains to check (1,2,3).

	(1,2) For $(a_1,r_1) \in \cB_1, (a_2,r_2) \in \cB_2$ with $h(a_1)=a_2$, by uniform perfectness pick $p,q \in Z_1$ with $\rho_1(p,a_1) \leq r_1 \leq \rho_1(q,a_1)$ and $\rho_1(p,a_1) \asymp \rho_1(q,a_1)$.
	Then (1,2) for $h$ and quasisymmetry give
	\[
		\rho_2(a_2,h(q))\asymp \rho_2(a_2,h(p)) \preceq r_2 \preceq \rho_2(a_2,h(q)).
	\]
	To see (1) for $h^{-1}$, given $b \in Z_2$, if 
	$\rho_2(a_2,b)\leq r_2$ then $\rho_2(a_2,b) \preceq \rho_2(a_2,h(q))$ so by the quasisymmetry of $h^{-1}$ we have $\rho_1(a_1,h^{-1}(b)) \preceq \rho_1(a_1,q) \asymp r_1$ as desired.  A similar proof gives (2).

	(3) Suppose $\rho_2(a_2,h(b))^2/r_2 \leq \rho_2(h(b),h(c)) \leq \rho_2(a_2,h(b)) \leq r_2$.
	If $\rho_1(a_1,b)^2/r_1 \leq \rho_1(b,c) \leq \rho_1(a_1,b) \leq r_1$ then by (3) for $h$ we have
		\begin{equation}\label{eq:cat-sr4}
			\frac{\rho_1(b,c)}{r_1} \asymp \left(\frac{\rho_2(h(b),h(c))}{r_2}\right)^{1/\lambda_{a_1}}.
		\end{equation}
	If $r_1 \preceq \rho_1(a_1,b)$, then we have $r_2 \preceq \rho_2(a_2,h(b)) \leq r_2$, so $\rho_2(h(b),h(c)) \asymp r_2 \asymp \rho_2(a_2,h(b))$, and by quasisymmetry $\rho_1(b,c) \asymp \rho_1(a_1,b)$ and again \eqref{eq:cat-sr4} holds with both sides $\asymp 1$ so we are done.  

	Assume from now on that $\rho_1(a_1,b) \leq r_1$ so by (3) applied to $b,a_1$ we have $\rho_2(h(b),a_2)/r_2 \asymp (\rho_1(b,a_1)/r_1)^{\lambda_{a_1}}$.

	If $\rho_1(b,c) > \rho_1(a_1,b)$ then by quasisymmetry $\rho_1(b,c) \asymp \rho_1(a_1,b)$ and $\rho_2(h(b),h(c)) \asymp \rho_2(a_2,h(b))$, so
	\[
		\frac{\rho_1(b,c)}{r_1} \asymp \frac{\rho_1(b,a_1)}{r_1}
		\asymp \left( \frac{\rho_2(h(b),a_2)}{r_2}\right)^{1/\lambda_{a_1}}
		\asymp \left( \frac{\rho_2(h(b),h(c))}{r_2}\right)^{1/\lambda_{a_1}}.
	\]
	Finally, suppose $\rho_1(b,c) < \rho_1(a_1,b)^2/r_1$.
	By uniform perfectness we can find $e\in Z_1$ with
	$\rho_1(b,a_1)^2/r_1 \leq \rho_1(b,e) \leq \rho_1(b,a_1)$ and $\rho_1(b,a_1)^2/r_1 \asymp \rho_1(b,e)$,
	unless $\rho_1(b,a_1) \succeq r_1$ which as we have seen suffices to give \eqref{eq:cat-sr4}.
	Since $\rho_1(b,c) < \rho_1(b,e)$, quasisymmetry and (3) for $h$ applied to $b,e$ gives:
	\begin{equation}\label{eq:cat-sr5}\begin{split}
		\left(\frac{\rho_2(h(b),a_2)}{r_2}\right)^2
		& \leq \frac{\rho_2(h(b),h(c))}{r_2}
		\preceq \frac{\rho_2(h(b),h(e))}{r_2}
		\asymp \left(\frac{\rho_1(b,e)}{r_1}\right)^{\lambda_{a_1}}
		\\ & \asymp \left(\frac{\rho_1(b,a_1)}{r_1}\right)^{2\lambda_{a_1}}
		\asymp \left(\frac{\rho_2(h(b),a_2)}{r_2}\right)^2.
	\end{split}\end{equation}
	Thus all $\preceq$ are $\asymp$, and $\rho_2(h(b),h(c))\asymp \rho_2(h(b),h(e))$, which by quasisymmetry gives $\rho_1(b,c)\asymp \rho_1(b,e)$.
	Then \eqref{eq:cat-sr5} implies \eqref{eq:cat-sr4} and we are done.
\end{proof}

\subsection{Transient sets}
We consider again a space $X$ hyperbolic relative to a collection of subsets $\cP$.

Let $\mu,R$ be constants and $\alpha$ a geodesic in $X$. Denote by $deep_{\mu,R}(\alpha)$ the set of points $p$ of $\alpha$ that belong to some subgeodesic $[x,y]$ of $\alpha$ with endpoints in $N_{\mu}(P)$ for some $P\in\cP$ and so that $d(p,x),d(p,y)>R$. Denote by $\trans_{\mu,R}(\alpha)=\alpha\backslash deep_{\mu,R}(\alpha)$ the set of \emph{transient points} \cite[Definition~8.9]{Hr-relqconv}, \cite[Definition~3.9]{Si-metrrh}. 

We collect the following properties of transient and deep sets from \cite{Hr-relqconv,Si-metrrh,Si-projrelhyp}, using as well results in \cite{DSp-05-asymp-cones} which are however not phrased in terms of these notions.

\begin{lemma}
Suppose a space $X$ is hyperbolic relative to $\cP$. For each $P\in\cP$ denote by $\pi_P:X\to P$ a coarse closest point projection, i.e. a function so that $d(x,\pi_P(x))\leq d(x,P)+1$.
\label{transprop}
 There exist $\mu,R,D,t,C$ with the following properties: For all $x,y,z \in X$,
\begin{enumerate}
 \item $[$Relative Rips condition$]$ We have
$$\trans_{\mu,R}[x,y]\subseteq N_{D}(\trans_{\mu,R}[x,z]\cup \trans_{\mu,R}[z,y]).$$
 \item $deep_{\mu,R}[x,y]$ is contained in a disjoint union of subgeodesics of $[x,y]$ each contained in $N_{t\mu}(P)$ for some $P\in\cP$ and called a \emph{deep component along} $P$.
	 Moreover, any geodesic connecting points of $N_L(P),L \geq 1$ is contained in
	 $N_{tL}(P)$.
 \item The endpoints of the deep component of $[x,y]$ along $P\in\cP$ (if it exists) are $C$-close to $\pi_P(x),\pi_P(y)$.
 \item If for some $P\in\cP$ we have $d(\pi_P(x),\pi_P(y))>C$, then $[x,y]$ has a deep component along $P$ of length at least $d(\pi_P(x),\pi_P(y))-C$.
\end{enumerate}
\end{lemma}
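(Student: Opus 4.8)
The plan is to assemble the four items from the known analysis of geodesics in relatively hyperbolic spaces contained in \cite{DSp-05-asymp-cones, Hr-relqconv, Si-metrrh, Si-projrelhyp}; none of the statements is new, but they must be phrased uniformly in terms of transient and deep sets. First I would fix constants: using Definition~\ref{def:rel-hyp}, let $\delta$ be a hyperbolicity constant for $\Bow(X)$ and let $t_0$ absorb the admissibility constants of the horoballs $\{\cH(P)\}$; choose $\mu$, then $R$, large compared to $\delta, t_0$, and produce $D, t, C$ afterwards. The single geometric engine behind all four points is the \emph{bounded penetration} behaviour of geodesics: a geodesic of $X$ which fellow-travels a single peripheral $P$ for a long stretch corresponds, in $\Bow(X)$, to a geodesic dipping into the horoball $\cH(P)$, and by $\delta$-hyperbolicity of $\Bow(X)$ together with the horoball distance estimate of Definition~\ref{def:admissible-horoballs}(4), such a dip is essentially unique per horoball and its two ends are pinned near the closest-point projections.

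For (1), I would invoke the relative thin-triangles property of Drutu--Sapir \cite{DSp-05-asymp-cones} (see also \cite[\S8]{Hr-relqconv}), which already yields the relative Rips condition in \cite{Si-metrrh}: for a geodesic triangle $xyz$, any $p\in[x,y]$ is within a bounded distance of $[x,z]\cup[z,y]$ unless $p$ lies in a maximal region along which $[x,y]$ remains in some $N_\mu(P)$, and those regions coincide with the deep components up to bounded error. Hence a transient $p\in[x,y]$ is within $D$ of $[x,z]\cup[z,y]$; a further standard argument, using the quasiconvexity of peripheral neighbourhoods from (2) to rule out $p$ being close only to a deep point of the other two sides, shows it is within $D$ of the \emph{transient} parts, which is the asserted inclusion.

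For (2), the quasiconvexity statement — a geodesic between two points of $N_L(P)$ stays in $N_{tL}(P)$ — is standard from $\Bow(X)$-hyperbolicity and horoball geometry and is proved in \cite{Si-metrrh}. Given it, the maximal subsegments of $[x,y]$ lying in $N_\mu(P)$ for a fixed $P$ merge (up to bounded error) into one deep component, while for $P\neq Q$ such subsegments cannot be boundedly close, since $N_\mu(P)\cap N_\mu(Q)$ has bounded diameter (the horoballs $\cH(P),\cH(Q)$ being essentially disjoint; compare Lemma~\ref{lem:separationhoroballs}); this gives the disjoint-union structure. For (3), if $u,v$ are the endpoints of the deep component along $P$ with $u$ nearer $x$, then $[x,u]$ leaves $N_\mu(P)$ at once, so $d(x,P)\approx d(x,u)$ with $d(u,P)$ bounded, which says precisely that $u$ is a coarse closest-point projection of $x$ to $P$, i.e.\ $d(u,\pi_P(x))\leq C$; symmetrically for $v$ and $y$ — this is the setup of \cite[Def.~3.9 and the following discussion]{Si-metrrh}. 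For (4), if $d(\pi_P(x),\pi_P(y))>C$ then $[x,y]$ cannot avoid $N_\mu(P)$: otherwise the quadrilateral with sides $[x,y], [y,\pi_P(y)], [\pi_P(y),\pi_P(x)], [\pi_P(x),x]$ is uniformly thin and forces $\pi_P(x),\pi_P(y)$ close, a contradiction. So $[x,y]$ penetrates $N_\mu(P)$ deeply, and by (2)--(3) this penetration is a genuine deep component with endpoints $C$-close to $\pi_P(x),\pi_P(y)$; hence its length is at least $d(\pi_P(x),\pi_P(y))-C$ after enlarging $C$. This is the strong-contraction / Behrstock-type estimate of \cite{Si-projrelhyp}.

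The main obstacle here is bookkeeping rather than a single hard idea: one must pin down one set of constants $\mu, R, D, t, C$ serving (1)--(4) simultaneously, and reconcile the differing (but equivalent) frameworks — fine hyperbolic graphs in \cite{Hr-relqconv}, cusped spaces in \cite{Si-metrrh, Si-projrelhyp}, asymptotic cones in \cite{DSp-05-asymp-cones} — using Definition~\ref{def:rel-hyp} and Proposition~\ref{prop:change-admissible-horoballs} as the bridge between horoball models. Once that is set up, each item reduces to a citation; the substantive geometric input, already carried out in those references, is the bounded-penetration analysis underlying (2)--(4).
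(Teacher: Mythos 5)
Your proposal is correct and follows essentially the same route as the paper: the paper's proof of Lemma~\ref{transprop} is likewise a citation-based assembly, deducing (1) from the relative Rips condition in \cite{Hr-relqconv,Si-metrrh,DSp-05-asymp-cones}, (2) from peripheral quasiconvexity and bounded coset penetration in \cite{DSp-05-asymp-cones,Si-metrrh}, and (3)--(4) from the projection lemmas of \cite{Si-projrelhyp}. The only caveat is that your ad hoc justification of (3) (``$d(x,u)\approx d(x,P)$ forces $u$ close to $\pi_P(x)$'') needs the contraction property of peripherals rather than plain metric reasoning, but this is exactly what the cited lemmas supply.
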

\begin{proof}
	(1) For groups, this follows combining \cite[Proposition 8.13]{Hr-relqconv} and \cite[Theorem 3.26]{Osin-rel-hyp}, while for general spaces this is \cite[Proposition 4.6(3)]{Si-metrrh}, see also \cite[Corollary 4.27]{DSp-05-asymp-cones}.

	(2) This follows from uniform quasiconvexity of peripheral sets \cite[Lemma 4.15]{DSp-05-asymp-cones} and the fact that neighbourhoods of distinct peripheral sets have bounded intersection \cite[Theorem 4.1$(\alpha_1)$]{DSp-05-asymp-cones}, see \cite[Proposition 5.7(1)]{Si-metrrh}.
	
	(3) By \cite[Lemma 1.13(1)]{Si-projrelhyp} a geodesic from $x$ to $y$ that has points near $P$ must travel from $x$, enter $N_\mu(P)$ near $\pi_P(x)$ and leave $N_\mu(P)$ near $\pi_P(y)$.

	(4) By \cite[Lemma 1.15(1)]{Si-projrelhyp} if $d(\pi_P(x),\pi_P(y))$ is large enough, any geodesic from $x$ to $y$ must pass close to $\pi_P(x)$ and $\pi_P(y)$, which implies the existence of a suitable deep component.
\end{proof}

\begin{conv}
 When we refer to transient points without explicit mention of the constants we always imply a choice of constants as in the lemma.
\end{conv}

\section{Subexponential distortion and (relative) hyperbolicity}
	\label{sec:subexp-distort}

 The main goal of this section is to prove the following.

	\begin{theorem}
\label{sederh}
 Let $f:X\to Y$ be a subexponentially distorted embedding, and suppose that $Y$ is hyperbolic relative to a collection of subsets $\cP$. Then there exists $\rho_0\geq 0$ so that for all $\rho\geq \rho_0$ we have that $X$ is hyperbolic relative to $\{f^{-1}(N_\rho(P))\}_{P\in\cP}$.
\end{theorem}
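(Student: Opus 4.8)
The plan is to verify directly that $X$, equipped with the collection $\cP_X=\{f^{-1}(N_\rho(P))\}_{P\in\cP}$, satisfies the geodesic conditions characterising a relatively hyperbolic space — the conditions of Lemma~\ref{transprop} are in fact also sufficient (cf.~\cite{Si-metrrh} and the equivalences quoted after Definition~\ref{def:rel-hyp}) — by transporting the structure of geodesics in $Y$ back through $f$. One takes $\rho_0$ larger than the constants $\mu,R,D,t,C$ of Lemma~\ref{transprop} applied to $(Y,\cP)$, rescaled by the coarse-Lipschitz constant of $f$ and with room for the logarithmic errors below; fix $\rho\geq\rho_0$. Since $f$ is coarsely Lipschitz and uniformly proper, the sets $f^{-1}(N_\rho(P))$ are uniformly coarsely connected (a deep component of a geodesic $[f(x),f(y)]$ along $P$ pulls back to a coarse chain in $f^{-1}(N_\rho(P))$) and have uniformly bounded coarse intersection, inherited from $\cP$ in $Y$.

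The heart of the argument is a comparison lemma: for $x,y\in X$, a geodesic $[x,y]$ in $X$ and a geodesic $[f(x),f(y)]$ in $Y$ run close together \emph{away from the peripherals}. Decompose $[f(x),f(y)]$ into its transient and deep parts as in Lemma~\ref{transprop}. The path $f([x,y])$ has length $\preceq d_X(x,y)$, while a transient point of a geodesic in a relatively hyperbolic space has exponential divergence (following \cite{DSp-05-asymp-cones,Si-projrelhyp}: a path avoiding its $r$-ball has length $\succeq e^{cr}$), so every transient point of $[f(x),f(y)]$ lies within $O(\log d_X(x,y))$ of $f([x,y])$; the same estimate bounds detours of $f([x,y])$ off the peripherals along deep components. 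Hence $[x,y]$ decomposes as an alternating concatenation of ``transient'' subpaths, along which $f$ follows the transient part of $[f(x),f(y)]$, and ``deep'' subpaths, each lying close to a single $f^{-1}(N_\rho(P))$ with endpoints coarsely at the closest-point projections. On a transient subpath $f$ is a quasi-isometric embedding: the lower bound is uniform properness, $d_Y(f(\cdot),f(\cdot))\succeq\tau$, and since also $d_Y(f(\cdot),f(\cdot))\preceq d_X$ and $\tau(t)/\log t\to\infty$, the $O(\log)$ slop is dominated by $\tau$, so distances there are genuinely comparable. (Equivalently, in any asymptotic cone the logarithmic error is $o(s_n)$ and disappears, so $f$ induces a map of cones which is bi-Lipschitz off the peripheral pieces — a convenient way to organise the verification.)

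Granting the comparison lemma, the conditions of Lemma~\ref{transprop} for $(X,\cP_X)$ follow by pull-back: the relative Rips condition for the transient parts of a geodesic triangle in $X$ comes from applying Lemma~\ref{transprop}(1) to the image triangle and using the comparison lemma on each side; the structure of deep components of geodesics in $X$ — each in a bounded neighbourhood of a single $f^{-1}(N_\rho(P))$, with endpoints near the projections, and of length $\succeq\tau(\text{projection distance})-C$ when that is large — pulls back from Lemma~\ref{transprop}(2,3,4). Together with the preliminary observations this gives that $(X,\cP_X)$ is relatively hyperbolic. The main obstacle is the comparison lemma, specifically keeping the peripheral-neighbourhood constant uniform: one controls $f([x,y])$ only to within the growing error $O(\log d_X(x,y))$ of $[f(x),f(y)]$, and must show this error never escapes the fixed $\rho$-neighbourhoods of $\cP_X$ — i.e., that whenever $f([x,y])$ comes within $\rho$ of a peripheral $P$, the geodesic $[f(x),f(y)]$ already develops a genuine deep component along $P$ whose length dominates the logarithmic error, so that the pulled-back deep subpaths lie honestly near single members of $\cP_X$ (a point at which passing to asymptotic cones, where the error vanishes and $N_\rho(P)$ and $P$ have the same limit, is especially convenient). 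This is precisely where \emph{subexponential} distortion — not merely uniform properness — is needed: it is exactly strong enough to absorb the logarithmic error coming from exponential divergence, leaving $f$ undistorted away from the peripherals, so that the standard relatively-hyperbolic arguments go through.
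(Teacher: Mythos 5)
There is a genuine gap, and it sits exactly at the point you flag as ``the main obstacle'' but then dispose of by assertion. Your comparison lemma only gives that transient points of $[f(x),f(y)]$ lie within $O(\log d_X(x,y))$ of $f([x,y])$ (this is the paper's Lemma~\ref{sublin}). To verify any of the standard criteria for relative hyperbolicity of $(X,\{f^{-1}(N_\rho(P))\})$ you need \emph{uniform} constants: a fixed thinness constant for (quasi-)geodesic triangles, a fixed neighbourhood constant for the peripheral pieces, entrance/exit points matched up to a fixed error. A bound growing like $\log d_X(x,y)$ does not supply these, and your two attempts to absorb it do not work as stated. First, the claim that on transient subpaths $f$ is a quasi-isometric embedding because ``the lower bound is uniform properness $\tau$ and $\tau$ dominates $\log$'' is circular: uniform properness gives only the sublinear lower bound $\tau(d_X)$, which is not a QI bound, and the fact that $\tau(t)/\log t\to\infty$ does not by itself convert the growing additive slop into a constant. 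Second, the asymptotic-cone remark begs the question: since $f$ is only subexponentially distorted, the induced map on cones at scale $s_n$ is Lipschitz but may collapse distances (e.g.\ $\tau(t)=t^{1/2}$), so ``bi-Lipschitz off the peripheral pieces'' is precisely the statement you are trying to prove, not something the cones hand you for free. What is actually needed is a bootstrap: starting from the $O(\log)$ bound, one takes a worst transient point at distance $\eta$ from $f([x,y])$, builds a shortcut path out of a subarc of $f([x,y])$ and two short geodesics, reapplies the logarithmic estimate to that path, and plays the resulting inequality $\eta\preceq K\log|t^+-t^-|$ against $\tau(|t^+-t^-|)\preceq \eta$ to force $\eta$ to be bounded by a constant depending only on $\tau$ and the space (with a separate case handling deep components). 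This is the paper's Proposition~\ref{bndhauss}, the technical heart of the theorem, and it is absent from your proposal; without it the pulled-back ``deep subpaths'' of $[x,y]$ are only within an unbounded (logarithmically growing) error of the sets $f^{-1}(N_\rho(P))$, and no fixed $\rho_0$ works.

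Two secondary points. You propose to conclude by verifying ``the conditions of Lemma~\ref{transprop}'', asserting they are also sufficient; the paper states those properties only as consequences of relative hyperbolicity, and no sufficiency statement is proved or cited precisely, so as written your endgame rests on an unestablished criterion. The paper instead verifies Dru\c{t}u's characterisation (Theorem~\ref{thm:rel-hyp-quasi-triangles}): the quasi-geodesic triangle dichotomy is pulled back using the uniform bound of Proposition~\ref{bndhauss} together with a triangle dichotomy for transient/deep sets in $Y$, and the remaining two conditions use a projection estimate (images of points in $X$ project, via $\pi_P$, close to where entrance points must land). Relatedly, even granting closeness of transient points, your step ``deep subpaths lie close to a single $f^{-1}(N_\rho(P))$'' needs an argument through projections: the interior of a deep component of $[f(x),f(y)]$ need not be near $f(X)$ at all, so closeness in $Y$ to $N_\rho(P)$ cannot be transferred to closeness in $X$ to $f^{-1}(N_\rho(P))$ without controlling where $f$ of the corresponding subpath of $[x,y]$ actually goes; this is handled in the paper by Lemma~\ref{lem:subexp-proj} and the entrance/exit analysis, and is skipped in your outline.
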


As we will see at the end of this section, this statement about relative hyperbolicity of metric spaces implies the group case stated in Corollary~\ref{cor:subexpdistort-subgroup-rel-hyp}.
It also gives one part of Theorem~\ref{thm:truncated-hyp-poly}.
\begin{corollary}
	\label{cor:polyembed-truncated-rel-hyp}
	Let $Y$ be a truncated real hyperbolic space, that is, $\mathbb H^n$ with a family of open disjoint horoballs removed, endowed with its path metric.
	If a group $G$ admits a subexponentially 
	distorted embedding into $Y$ then $G$ is hyperbolic relative to virtually nilpotent subgroups.
\end{corollary}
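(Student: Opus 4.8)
The plan is to combine Theorem~\ref{sederh}, applied to a Cayley graph of $G$, with two facts: a truncated real hyperbolic space is relatively hyperbolic with Euclidean peripherals, and a coarse embedding of a group into Euclidean space with subexponential distortion forces polynomial growth. First I would record that $Y=\HH^n\setminus\bigcup_i H_i$ is relatively hyperbolic with peripheral collection $\cP=\{\partial H_i\}$ and cusped space $\HH^n$. Indeed, by Proposition~\ref{prop:real-horoballs-admissible} each $H_i$ is an admissible horoball for the horosphere $\partial H_i$, uniformly over $i$ since the horoballs of $\HH^n$ are pairwise isometric, so gluing the $H_i$ back onto $Y$ along the $\partial H_i$ recovers the proper, Gromov hyperbolic space $\HH^n$. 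I would also note that the metric $Y$ induces on $\partial H_i$ is uniformly quasi-isometric to the flat metric of $\R^{n-1}$: it is bounded below by the flat metric (a geodesic of $\HH^n$ between two points of $\partial H_i$ enters $H_i$, so within $Y$ one cannot do better than travelling along the horosphere) and above by a multiple of it, since by Remark~\ref{rmk:bow-inclusion-unif-prop} the inclusion $Y\hookrightarrow\HH^n$ is logarithmically proper. Consequently each neighbourhood $N_\rho(\partial H_i)\subset Y$ is uniformly quasi-isometric to Euclidean $\R^{n-1}$.

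Now let $X$ be a Cayley graph of $G$ and apply Theorem~\ref{sederh} to the subexponentially distorted embedding $f\colon X\to Y$: for all $\rho$ large enough, $X$ is hyperbolic relative to $\cQ=\{f^{-1}(N_\rho(\partial H_i))\}_i$. Since $G$ acts geometrically on $X$, the reduction from metric relative hyperbolicity of a Cayley graph to relative hyperbolicity of the group (carried out at the end of this section in the proof of Corollary~\ref{cor:subexpdistort-subgroup-rel-hyp}; see also \cite{Si-metrrh, DSp-05-asymp-cones}) shows that $G$ is hyperbolic relative to a finite family of subgroups $\{K_1,\dots,K_m\}$, each of which, with its intrinsic (word or proper left-invariant) metric, is quasi-isometric to one of the peripheral sets $Q=f^{-1}(N_\rho(\partial H_i))$.

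It remains to see that each $K_j$ is virtually nilpotent. Fix such a $K_j$ and $Q$. The restriction $f|_Q$ is still $\tau$-uniformly proper into $(N_\rho(\partial H_i),d_Y)$, so composing with the quasi-isometries $K_j\to Q$ and $N_\rho(\partial H_i)\to\R^{n-1}$ — which merely replaces the lower control $\tau(t)$ by something of the form $c\,\tau(t/\lambda-\mu)-\mu'$, still growing faster than $\log t$ — yields a subexponentially distorted embedding $g\colon K_j\to\R^{n-1}$. A volume count then bounds the growth of $K_j$: choosing $D$ with $\tau(D)\geq 1$, a maximal $D$-separated subset $S$ of a ball $B_{K_j}(r)$ satisfies $|B_{K_j}(r)|\leq |S|\cdot|B_{K_j}(D)|$ by homogeneity of the metric, while $g(S)$ is a $1$-separated subset of a Euclidean ball of radius $Cr+C$, so $|S|\preceq r^{n-1}$, giving $|B_{K_j}(r)|\preceq r^{n-1}$. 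As $K_j$ is finitely generated (being a coarsely connected, proper, homogeneous discrete metric space, it is generated by a finite ball), Gromov's polynomial growth theorem applies and $K_j$ is virtually nilpotent, which completes the proof.

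The step I expect to be the main obstacle is the one invoked in the second paragraph: upgrading the a priori unstructured, non-$G$-invariant metric peripheral collection $\cQ$ supplied by Theorem~\ref{sederh} to an honest relatively hyperbolic \emph{group} structure on $G$ whose peripheral subgroups are genuinely quasi-isometric to the members of $\cQ$, and not merely coarsely comparable to them. This relies on the machinery showing that a cocompact isometric action on a metrically relatively hyperbolic space descends to a group relatively hyperbolic structure with the expected peripherals; quasiconvexity of peripheral neighbourhoods (Lemma~\ref{transprop}(2)) is what lets one identify the subspace and induced-length metrics on the peripherals up to quasi-isometry. The remaining work is bookkeeping with distortion functions together with the elementary Euclidean volume estimate.
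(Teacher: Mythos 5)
Your proposal is correct and follows essentially the same route as the paper: apply Theorem~\ref{sederh} to the truncated space (which is relatively hyperbolic with respect to its horospheres, with cusped space $\HH^n$), upgrade the resulting metric relative hyperbolicity of the Cayley graph to an algebraic relatively hyperbolic structure, and conclude that the peripheral subgroups have polynomial growth via uniformly proper maps into the Euclidean horospheres, hence are virtually nilpotent by Gromov's theorem. The only correction needed is the pointer for the metric-to-algebraic step: this reduction is not carried out in the proof of Corollary~\ref{cor:subexpdistort-subgroup-rel-hyp}, but is exactly \cite[Theorem 1.5]{Dr-relhyp-qiinv}, which is what the paper invokes at this point.
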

\begin{proof}
	Recall that \cite[Theorem 1.5]{Dr-relhyp-qiinv} says that if a group $G$ has a Cayley graph which is, as a space, hyperbolic relative to some collection of subsets $\cP$ then $G$ is also hyperbolic relative to a collection of subgroups, each of which is contained in a uniform neighbourhood of one of the subsets in $\cP$.  (Recall that Dru\c tu's terminology of being `asymptotically tree-graded' is equivalent to being relatively hyperbolic.)
	
	Thus by Theorem \ref{sederh} we get that $G$ is hyperbolic relative to subgroups each of which admits a uniformly proper embedding into some (polynomially growing) horospheres of the truncated space.
In particular, each peripheral subgroup has (at most) polynomial growth, and is therefore virtually nilpotent \cite{Gr-polgrowth, vdDW-ascones}.  
\end{proof}

\subsection{Uniformly proper maps into hyperbolic spaces}

Towards proving Theorem \ref{sederh}, we begin with two essentially standard facts.
\begin{lemma}\label{lem:unifprop-coarsesurj}
	Suppose $Y'$ is a geodesic metric space.
	Then any uniformly proper, coarsely Lipschitz and coarsely surjective
	map $f:Y \ra Y'$ is a quasi-isometry, quantitatively.
\end{lemma}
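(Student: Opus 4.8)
The plan is to show that a uniformly proper, coarsely Lipschitz, coarsely surjective map $f:Y\to Y'$ is a quasi-isometry by producing the quasi-inverse and checking the two required inequalities, the only nontrivial point being the \emph{lower} bound $d_Y(x,y)\leq \lambda d_{Y'}(f(x),f(y))+\mu$; since $f$ is coarsely Lipschitz we already have the upper bound. The subtlety is that uniform properness only gives $\tau(d_Y(x,y))\leq d_{Y'}(f(x),f(y))$ for a function $\tau$ with $\tau(t)\to\infty$, which by itself does not bound $d_Y(x,y)$ linearly; we need to use coarse surjectivity (equivalently, that $Y'$ is geodesic) to ``chain'' estimates along a geodesic.

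First I would fix constants: let $C$ be such that $f$ is $C$-coarsely Lipschitz and such that $N_C(f(Y))=Y'$, and let $\tau$ be the uniform properness function. Choose $R>0$ with $\tau(R)>2C+1$, say. Now given $x,y\in Y$, take a geodesic $[f(x),f(y)]$ in $Y'$ and sample points $z_0=f(x),z_1,\dots,z_n=f(y)$ along it with consecutive distances $\leq 1$ and $n\leq d_{Y'}(f(x),f(y))+1$. By coarse surjectivity pick $w_i\in Y$ with $d_{Y'}(f(w_i),z_i)\leq C$, taking $w_0=x$, $w_n=y$. Then $d_{Y'}(f(w_{i-1}),f(w_i))\leq 2C+1$, so by the contrapositive of uniform properness (i.e.\ $\tau(d_Y(w_{i-1},w_i))\leq 2C+1<\tau(R)$ forces $d_Y(w_{i-1},w_i)< R$, using that $\tau$ is eventually increasing, or more carefully: $d_Y(w_{i-1},w_i)\leq \tau^{-1}$ of a bounded quantity, which is a bounded number $R'$) we get $d_Y(w_{i-1},w_i)\leq R'$ for a uniform $R'$. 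Summing along the chain, $d_Y(x,y)\leq \sum_i d_Y(w_{i-1},w_i)\leq nR'\leq R'\,d_{Y'}(f(x),f(y))+R'$, which is the desired linear lower bound.

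Having both inequalities, $f$ is a quasi-isometric embedding, and together with coarse surjectivity it is a quasi-isometry; all constants depend only on $C$ and $\tau$, so the conclusion is quantitative. The main obstacle is the one indicated above: turning the purely qualitative uniform properness hypothesis into a \emph{uniform} bound $R'$ on the $Y$-distance between points whose images are within a bounded $Y'$-distance. This is handled cleanly by the observation that if $d_Y(u,v)$ were large then $\tau(d_Y(u,v))$ would be large (since $\tau$ is proper, i.e.\ $\{t:\tau(t)\leq s\}$ is bounded for each $s$), contradicting $\tau(d_Y(u,v))\leq d_{Y'}(f(u),f(v))\leq 2C+1$; so $d_Y(u,v)\leq \sup\{t:\tau(t)\leq 2C+1\}=:R'<\infty$. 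Everything else is the routine geodesic-chaining argument.
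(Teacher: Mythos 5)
Your argument is correct and is essentially the same as the paper's: both chain unit-spaced points along a geodesic in $Y'$, lift them to $Y$ via coarse surjectivity, bound each step using uniform properness (your $R'=\sup\{t:\tau(t)\leq 2C+1\}$ plays the role of the paper's $D$), and sum. The only cosmetic difference is that the paper packages the lifts as an explicit coarse inverse $g$ (picking up an extra additive $2D$), whereas you take the lifts directly with $w_0=x$, $w_n=y$.
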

A proof of this may be found, for example, in~\cite[Lemma 8.29]{DrKap-18-GGT-book}.
%
%
%
This has the following well-known consequence
(cf.\ \cite[Proposition 2.6]{Kap-01-Comb-thm-qcvx}), which we include as a warm-up to Proposition \ref{bndhauss}.

\begin{proposition}\label{prop:subexp-into-hyp-is-qi}
	Suppose $Y$ and $Y'$ are geodesic metric spaces, with $Y'$ Gromov hyperbolic.
	If $f:Y \ra Y'$ is a subexponentially distorted map, then $f$ is
	a quasi-isometric embedding.
\end{proposition}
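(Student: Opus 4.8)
The plan is to supply the lower bound matching coarse Lipschitzness: since a subexponentially distorted map is in particular coarsely Lipschitz, $d_{Y'}(f(x),f(y)) \leq C_1 d_Y(x,y) + C_1$, it remains to find $\lambda,\mu$ with $d_Y(x,y) \leq \lambda\,d_{Y'}(f(x),f(y)) + \mu$ for all $x,y$. Write $\psi(D) = \sup\{d_Y(a,b) : d_{Y'}(f(a),f(b)) \leq D\}$ for the inverse distortion. Because $f$ is $\tau$-uniformly proper with $\tau(t)\to\infty$, $\psi(D)$ is finite for every $D$; because $\tau(t)/\log t\to\infty$, the inverse gauge $\tau^{-1}(r) := \sup\{t : \tau(t)\leq r\}$ is subexponential, $\tau^{-1}(r)=2^{o(r)}$, and $\psi(D)\leq\tau^{-1}(D)$. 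The goal is to bootstrap this to $\psi(D)\lesssim D$.

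The engine is the logarithmic divergence estimate in the $\delta$-hyperbolic space $Y'$: every point of a geodesic $[p,q]$ lies within $\delta\log_2(\length\sigma)+O(\delta)$ of any path $\sigma$ from $p$ to $q$ (proved in one line by repeatedly bisecting $\sigma$ and using thinness of triangles, cf.\ \cite{BH-99-Metric-spaces}). Fix $x,y\in Y$, a geodesic $\gamma\colon[0,L]\to Y$ from $x$ to $y$ with $L=d_Y(x,y)$, and let $\sigma$ join the points $f(\gamma(0)),f(\gamma(1)),\dots,f(\gamma(\lceil L\rceil))$ by geodesics, so $\length\sigma\leq 2C_1(L+1)$. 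Put $D=d_{Y'}(f(x),f(y))$ and $E=\delta\log_2(2C_1(L+1))+O(\delta)+2C_1$. For a scale $s\geq E$, sample $[f(x),f(y)]$ at spacing $s$ (at most $D/s+1$ points); each sample point lies within $E$ of $\sigma$, hence within $E$ of some vertex $f(\gamma(u_j))$, with $u_0=0$ and $u_{\mathrm{last}}=L$. Consecutive vertices then satisfy $d_{Y'}(f(\gamma(u_j)),f(\gamma(u_{j+1})))\leq s+2E$, so $|u_j-u_{j+1}|\leq\psi(s+2E)$ by uniform properness, and summing the chain,
\[
  L \;=\; \sum_j |u_j-u_{j+1}| \;\leq\; \Big(\tfrac{D}{s}+1\Big)\,\psi(s+2E).
\]
This is the basic recursion: any bound $\psi\leq g$ with $g$ subexponential forces $E\lesssim\log g(D)$, so the right-hand side becomes $\lesssim\tfrac{D}{s}\,g(s+2E)$, which — after optimising $s$ — is a strict improvement on $g(D)$ precisely because $g$ is subexponential.

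Finally, iterate. Feeding $\psi(D)\leq\tau^{-1}(D)=2^{o(D)}$ into the recursion with unit spacing $s=1$ already yields a polynomial bound $\psi(D)\lesssim D^{1+\epsilon}$; one further round (optimising $s\asymp E$) improves this to $\psi(D)\lesssim D(\log D)^{\beta}$ for some $\beta<1$; and from then on each round, run with a large spacing $s=E^2$, peels off one more iterated logarithm, so after $k$ rounds $\psi(D)\lesssim D\,(\log^{(k)}D)^{1/3}$ (say). Letting $k$ grow with $D$ collapses this to $\psi(D)\lesssim D$, i.e.\ $f$ is a quasi-isometric embedding, with $\lambda,\mu$ depending only on $C_1$, on $\delta$, and on the gauge $\tau$. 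The delicate point — and the main obstacle — is keeping the multiplicative constants uniform through infinitely many rounds: this is exactly why the large spacing $s=E^2$ is used, since then the factor $(1+\tfrac{2E}{s})=1+o(1)$ lost in each of the (infinitely many) later rounds has a convergent infinite product, so the constant remains bounded despite being degraded a bounded number of times in the first, polynomial-to-polylogarithmic, rounds.
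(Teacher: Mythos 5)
Your basic chaining inequality is sound, and it rests on exactly the ingredient the paper uses, namely the logarithmic divergence estimate \cite[Proposition III.H.1.6]{BH-99-Metric-spaces}: sampling $[f(x),f(y)]$ at spacing $s$ and pulling back to parameters along $\gamma$ does give
$L \leq \bigl(\tfrac{D}{s}+1\bigr)\,\psi(s+2E)$ with $E \preceq \delta \log L$ (up to additive constants). The problem is that in your scheme the entire proof lives in the bootstrap, and the bootstrap is neither carried out nor correct as described. The first milestone is false: one application of the recursion with $s=1$ does \emph{not} upgrade a general subexponential bound to a polynomial one. For instance, if $\tau^{-1}(D)=e^{D/\log\log D}$, then $E \preceq \log\tau^{-1}(D) \asymp D/\log\log D$ and one round yields only $\psi(D)\leq (D+1)\,\tau^{-1}\bigl(O(D/\log\log D)\bigr)$, which is still superpolynomial; in general the number of rounds needed to reach the polynomial regime is unbounded and depends on the gauge $\tau$. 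Consequently your argument needs a number of iterations growing with $D$, and the uniformity of the multiplicative constants over those iterations --- which you yourself identify as the main obstacle --- is never established. The remark about a convergent infinite product with $s=E^2$ does not settle it: the losses are incurred when the previously obtained (non-linear) bound is invoked at intermediate scales $s+2E$, where $E$ is small and the factors are not close to $1$, and each intermediate bound also carries its own threshold below which it says nothing, so "letting $k$ grow with $D$" is precisely the step that requires a proof.

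The paper avoids iteration altogether: following \cite[Theorem III.H.1.7]{BH-99-Metric-spaces}, it takes the point of $[f(p),f(q)]$ farthest (distance $D$) from the image path, builds a detour through nearby image points, applies the divergence estimate once, and obtains the self-bounding inequality $\tau'(|s-t|)\leq 7\delta_{Y'}\log_2(2C_2|s-t|)$, which bounds $|s-t|$ and hence $D$ in one stroke because $\tau'(t)/\log t\to\infty$; it then shows the image path stays near the geodesic and concludes via Lemma~\ref{lem:unifprop-coarsesurj}. If you want to keep your chaining inequality instead, the repairable version is not a cascade of polynomial/polylog bounds at a fixed $D$, but an induction on exponentially growing ranges $D\in(R_n,R_{n+1}]$ with $R_{n+1}\asymp e^{cR_n}$: on each range choose $s\asymp\sqrt{ED}$ and invoke only the already-proved \emph{linear} bound at the scale $s+2E\leq R_n$ (which is logarithmic in the current scale, using the a priori bound $\psi\leq\tau^{-1}$ to control $E$), so the constant degrades by a factor $1+O(\sqrt{R_n/R_{n+1}})$ whose infinite product converges. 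As written, though, the proposal has a genuine gap at its central step.
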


\begin{proof}
	It suffices to show that $f$ restricted to any geodesic $[p,q] \subset Y$
	is a quasi-isometric embedding, quantitatively.
	By the definition of subexponential distortion (Definition~\ref{def:poly-subexp-distortion}),
	$f$ is coarsely Lipschitz with some constant $C_1$,
	and there exists $\tau:[0,\infty) \ra \R$ with
	$\lim_{t \ra \infty} \tau(t)/\log(t)= \infty$ 
	and for any $x,y \in X$,
	\[
		\tau(d_Y(x,y)) \leq d_{Y'}(f(x),f(y)).
	\]
	
	We now follow the proof of \cite[Theorem III.H.1.7]{BH-99-Metric-spaces}
	to show that the Hausdorff distance between $f([p,q])$ and a 
	geodesic $[f(p),f(q)]\subset Y'$ is at most $R$, for some $R$ depending only
	on $C_1, \tau$ and the hyperbolicity constant $\delta_{Y'}$ of $Y'$.
	Let $c:[0,b]\ra Y'$, where $b=d_Y(p,q)$,
	be the composition of the unit-speed parametrisation
	of $[p,q] \subset Y$ and $f$.
	For each interval $I$ in $[0,1], [1,2], \ldots, [\lfloor b \rfloor, b]$,
	replace $c|_I$ with a geodesic segment in $Y'$ with the same endpoints,
	to find a continuous rectifiable path $c':[0,b] \ra Y'$.
	This path $c'$ satisfies $d_{Y'}(c(t),c'(t)) \leq C_2$ for all $t$,
	and is $\tau'$-uniformly proper for $\tau'(t)=\tau(t)-C_2$,
	where $C_2$ is a constant which only depends on $C_1$ and $\tau$.
	Moreover, for any $s,t \in [0,b]$, we have the length bound
	$l(c'|_{[s,t]}) \leq C_2 |s-t|$.
	
	Suppose $x_0 \in [f(p),f(q)]$ is a point which maximises $d_{Y'}(x, \mathrm{im}(c'))$ for
	$x \in [f(p),f(q)]$, and let $D$ be this maximal distance.
	Let $y \in [f(p),x_0] \subset [f(p),f(q)]$ be the point at distance $2D$ from $x$ 
	(if $d_{Y'}(f(p),x_0)<2D$, set $y=f(p)$), and choose $z \in [x_0, f(q)]$ likewise.
	Now choose points $y' = c'(s), z'=c'(t)$ with $d_{Y'}(y,y'),d_{Y'}(z,z') \leq D$.
	
	Let $\gamma$ be the path obtained concatenating a geodesic $[y,y']$, the path $c'|_{[s,t]}$ 
	and a geodesic $[z',z]$.  This path has length
	$l(\gamma) \leq 2D+l(c'|_{[s,t]}) \leq 2D+C_2 |s-t|$.
	Observe that 
	\[ \tau'(|s-t|) \leq d_{Y'}(y',z') \leq d_{Y'}(y',y)+d_{Y'}(y,z)+d_{Y'}(z,z') \leq 6D, \]
	while 
	\[ C_2 |s-t| \geq l(c'|_{[s,t]}) \geq d_{Y'}(y',z') \geq 2D. \]
	By \cite[Proposition III.H.1.6]{BH-99-Metric-spaces} we have
	\begin{align*}
		D-1 \leq \del_{Y'} | \log_2 l(\gamma) |.
	\end{align*}
	So for large $D$ (and hence large $|s-t|$) these bounds combine to get
	\begin{align*}
			\tau'(|s-t|) 
			\leq 6D \leq 7(D-1)
			\leq 7\del_{Y'}  \log_2 (2C_2 |s-t| )
	\end{align*}
	which implies that $D$ is bounded by some $D_0$ 
	depending only on $\tau'$, $C_2$ and $\del_{Y'}$.
	
	To bound the Hausdorff distance between $f([p,q])$ and $[f(p),f(q)]$,
	it remains to show that $c'$ lies in a $D_1$-neighbourhood of $[f(p),f(q)]$
	for some $D_1$.
	Suppose that some $x=c'(u)$ satisfies $d_{Y'}(x,[f(p),f(q)])>D_0$.
	Every point in $[f(p),f(q)]$ is within a distance of $D_0$ from $c'$,
	so by connectedness there exists a point $w \in [f(p),f(q)]$ and values $s<u$ and $t>u$ 
	so that $d_{Y'}(w,c'(s)), d_{Y'}(w,c'(t)) \leq D_0$.
	
	Therefore, $d_{Y'}(c'(s), c'(t)) \leq 2D_0$, and so by uniform properness
	$|s-t| \leq C_3$ for some $C_3$ depending only on $D_0$ and $\tau'$.
	Because $s<u<t$, we have $|s-u| \leq C_3$ also, thus
	the distance between $x$ and $[f(p),f(q)]$ is at most
	\[
		d_{Y'}(x,w) \leq d_{Y'}(x,c'(s)) + d_{Y'}(c'(s),w)
		\leq C_2|u-s|+D_0 \leq C_2 C_3 +D_0;
	\]	
	setting $D_1 = C_2C_3+D_0$, we are done.
	
	Since $c'$ and $[f(p),f(q)]$ are at Hausdorff distance at most $D_1$,
	we can adjust the values $c'$ by at most $D_1$ to find $c'':[0,d_Y(p,q)] \ra [f(p),f(q)]$.
	This map is uniformly proper and coarsely Lipschitz.
	Since it maps endpoints to endpoints it is coarsely surjective,
	so by Lemma~\ref{lem:unifprop-coarsesurj} it is a 
	quasi-isometry, quantitatively.
	Because $c''$ is within finite distance of $c$, $f|_{[x,y]}$ is also a quasi-isometry,
	quantitatively.
\end{proof}

\subsection{Transient points are close to image paths}
\label{core}
We fix the notation of Theorem \ref{sederh} from now on. In particular, $Y$ denotes a fixed relatively hyperbolic space with fixed constants for transient/deep points as in Lemma~\ref{transprop}.

The basic observation that we will use to prove Theorem \ref{sederh} is contained in the following lemma.

\begin{lemma}
\label{sublin}
For every $L$ there exists a constant $K$ with the following property.
 Let $\gamma$ be a geodesic in $Y$ and let $\alpha:([a,b]\subseteq \R)\to Y$ be a coarsely $L$-Lipschitz path with the same endpoints $x,y$ as $\gamma$. Then every transient point $p\in\gamma$ satisfies $d(p,\alpha)\leq K\log(|b-a|+1)+K$. 
\end{lemma}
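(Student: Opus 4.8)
The plan is to bound the distance from a transient point $p\in\gamma$ to the path $\alpha$ by splitting $\gamma$ into its transient part near $p$ and a controlled amount of ``deep'' geodesic on either side, and then using the coarse Lipschitz bound on $\alpha$ together with the logarithmic distance estimate for horoballs. First I would fix the transient constants $\mu,R,D,t,C$ from Lemma~\ref{transprop}. Given a transient point $p\in\gamma=[x,y]$, I want to find a point $q$ on $\gamma$ that is (i) close to $p$ in $Y$ and (ii) close to $\alpha$. The subtlety is that $\alpha$ need only approximate $\gamma$ in a coarse sense; I cannot say $\alpha$ passes close to $p$ directly. Instead, I would look at where $\alpha$ crosses from ``one side of $p$ along $\gamma$'' to ``the other side''.

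More precisely, the core trick is the following. Since $\alpha$ shares endpoints $x,y$ with $\gamma$, if $p$ were a midpoint-like transient point then a coarse intermediate-value argument along $\alpha$ produces a point $w=\alpha(s)$ which is ``balanced'' with respect to $p$: its closest-point projections to the two sides $[x,p]$ and $[p,y]$ (or more robustly, comparing the two Gromov products) are roughly equal. In a hyperbolic-relative-to-$\cP$ space, being balanced between the two sides of $p$ and not too far from $\gamma$ forces $w$ to be close to $p$ \emph{unless} $p$ lies deep in a peripheral neighbourhood $N_{t\mu}(P)$ and $w$ is on the ``other side'' of that horoball-like region $N_{t\mu}(P)$. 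But $p$ is a \emph{transient} point, so by definition of $\deep_{\mu,R}$ any maximal peripheral subgeodesic of $\gamma$ through $p$ has one of its endpoints within distance $R$ of $p$. So after traveling a bounded-in-terms-of-$R$ distance along $\gamma$ from $p$ we exit the deep component, and on the transient parts the relative Rips condition (Lemma~\ref{transprop}(1)) makes $\gamma$ behave like an honest hyperbolic geodesic up to additive error $D$.

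The remaining issue is the horoball case: $p$ transient, but $w$ lies beyond a deep component of $\gamma$ along some $P$. Here $\alpha$ must still ``get across'' $N_{t\mu}(P)$, and I would use the admissible-horoball distance estimate (Definition~\ref{def:admissible-horoballs}(4), via Lemma~\ref{distestim}): crossing a deep component of geodesic-length $\ell$ in $Y$ requires a path in $Y$ of length at least comparable to $e^{\ell}$ near the relevant scale --- equivalently, a path of length $N$ can only shortcut a deep segment of length $\approx 2\log N$. Since $\alpha$ has length $\leq L|b-a|+L$, it can only skip peripheral excursions of $\gamma$ of total length $\lesssim \log(|b-a|+1)$. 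Combining: the transient point $p$ is within $O(R)=O(1)$ along $\gamma$ of the point $q$ where $\alpha$ ``balances'', and $q$ is within $O(\log(|b-a|+1))$ of $\alpha$, giving $d(p,\alpha)\leq K\log(|b-a|+1)+K$ for $K=K(L,\mu,R,D,t,C,\delta_Y)$, which depends only on $L$ once the ambient constants are fixed. The main obstacle I expect is making the ``balanced point $w$ is close to the transient point $p$'' step precise and quantitative: one has to run the intermediate-value / bisection argument on $\alpha$ carefully, track how peripheral excursions of $\gamma$ (not of $\alpha$) enter, and invoke the horoball length estimate exactly where $\alpha$ crosses a deep component, while everywhere else using the relative Rips condition to reduce to the standard hyperbolic-geodesic statement (as in the proof of Proposition~\ref{prop:subexp-into-hyp-is-qi}).
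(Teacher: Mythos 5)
There are two genuine gaps here. The step that is supposed to produce the logarithm --- that crossing a deep component of $\gamma$ of length $\ell$ forces a path in $Y$ to have length comparable to $e^{\ell}$ --- is false: a deep component is itself a geodesic \emph{of $Y$}, so a path in $Y$ joining its endpoints needs length at least $\ell$ and no more (indeed $\alpha$ may simply run along it). The exponential/logarithmic relation of Lemma~\ref{distestim} and Definition~\ref{def:admissible-horoballs}(4) compares distances in $P$ with distances in the horoball $\cH(P)$ of the \emph{cusped} space; since both $\gamma$ and $\alpha$ live in $Y$, not in $\Bow(Y)$, no such penalty is available, and your conclusion that $\alpha$ ``can only skip peripheral excursions of total length $\lesssim \log(|b-a|+1)$'' does not follow. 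Second, in the non-peripheral regime the claim that a balanced point $w\in\alpha$ must lie at bounded distance from the transient point $p$ fails already when $\cP=\emptyset$, i.e.\ when $Y$ is hyperbolic and every point of $\gamma$ is transient: a path of length $N$ with the same endpoints as $\gamma$ can avoid the ball of radius roughly $\log N$ about the midpoint of $\gamma$, so the logarithm is genuinely needed away from peripherals as well, and it can only come from an exponential-divergence (equivalently, iterated bisection) argument that your sketch gestures at (``reduce to the standard hyperbolic-geodesic statement'') but never runs --- and cannot run directly, since $Y$ itself is not hyperbolic. Your balancing step also tacitly assumes $w$ is ``not too far from $\gamma$'', for which there is no a priori control.

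The intended proof is more direct and is exactly the mechanism your sketch omits: repeat the bisection argument of \cite[Proposition III.H.1.6]{BH-99-Metric-spaces}, with the relative Rips condition of Lemma~\ref{transprop}(1) playing the role of thin triangles. Split $\alpha$ at its ``middle'' point $m$ and consider the triangle with vertices $x,y,m$; the transient point $p$ of $[x,y]$ is $D$-close to a transient point of $[x,m]$ or of $[m,y]$, and one recurses on the corresponding half of $\alpha$. After about $\log_2(|b-a|+1)$ halvings the remaining subpath of $\alpha$ has domain of length at most $1$, hence (being coarsely $L$-Lipschitz) endpoints at distance at most $2L$, so the transient point produced at the last stage is within a constant of $\alpha$; summing the error $D$ incurred at each step gives $d(p,\alpha)\leq K\log(|b-a|+1)+K$. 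Once this is set up, neither the balancing device nor any horoball length estimate is needed.
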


The proof is a minor variation on that of \cite[Proposition III.H.1.6]{BH-99-Metric-spaces}: $\alpha$ splits into two subpaths with domain half the size of that of $\alpha$ and one can consider the triangle with endpoints the ``middle'' point of $\alpha$ and the endpoints of $\gamma$, apply the relative Rips condition, and then repeat. The details are left to the reader.

In this subsection we improve the simple bound one obtains by applying Lemma \ref{sublin} to images in $Y$ of quasi-geodesics in $X$. The idea is to reapply the lemma to subpaths.

\begin{proposition}
\label{bndhauss}
 Let $\gamma$ be a geodesic in $Y$ and let $\alpha$ be the image of 
 a subexponentially distorted coarsely Lipschitz map $[0,T]\ra Y$
 with the same endpoints as $\gamma$. 
 Then every transient point $p\in\gamma$ satisfies $d(p,\alpha)\leq E$,
 where $E$ is a constant depending only on the distortion of $\alpha$ and on $Y$.
\end{proposition}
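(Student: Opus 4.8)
The plan is to bootstrap the estimate of Lemma~\ref{sublin}. Applied to $\gamma$ and to $\alpha$ itself, that lemma already gives $d(p,\alpha)\leq K\log(T+1)+K$, but with a useless dependence on the domain length $T$. The extra ingredient of subexponential distortion is that any \emph{subpath} of $\alpha$ whose two endpoints lie within distance $M$ of each other has parameter length at most $\tau^{-1}(M)$, by $\tau$-uniform properness (Definition~\ref{def:poly-subexp-distortion}); so if we can locate a \emph{short} subpath of $\alpha$ that still witnesses the detour past $p$, feeding it into Lemma~\ref{sublin} gives a much better bound, and since $\tau(r)/\log r\to\infty$ forces $\log\tau^{-1}(w)=o(w)$, the resulting inequality is self-improving and pins $d(p,\alpha)$ down to a constant.

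Concretely, write $R=d(p,\alpha)$ and fix the transient-structure constants $\mu,R',D$ of Lemma~\ref{transprop}; we may assume $R$ exceeds any threshold that arises, and also that $d(\phi(0),\phi(T))$ is large (otherwise $T\leq\tau^{-1}(d(\phi(0),\phi(T)))$ is bounded and Lemma~\ref{sublin} already finishes). The key step is to extract $0\leq s<t\leq T$ and a transient point $q$ of the geodesic $\sigma=[\phi(s),\phi(t)]$ with (i) $d(\phi(s),\phi(t))\leq cR$ for a constant $c=c(Y)$, and (ii) $d(q,p)\leq D_1$ for a constant $D_1=D_1(Y)$. To do this one lets $\phi(t^*)$ realise $d(p,\alpha)$, splits the domain at $t^*$, and uses the relative Rips condition (Lemma~\ref{transprop}(1)) on the triangle $\phi(0),\phi(t^*),\phi(T)$ to track a transient point of one of the halves to within $D$ of $p$; one then repeats, splitting each time at a suitable point of the current subpath and applying the relative Rips condition again, noting that the endpoint $\phi(t^*)$ (closest point of $\alpha$ to $p$) keeps the new subgeodesic endpoint within $O(R)$ of $p$, while the tracked transient point drifts by only an additive constant per step. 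One continues until \emph{both} endpoints of the current subgeodesic lie within $O(R)$ of $p$ --- the obstruction to this being the far endpoint $\phi(0)$ or $\phi(T)$, which is what drives the iteration --- at which point (i) and (ii) hold. Arranging this iteration so that it terminates, and checking that the reconstructed geodesic $\sigma$ really does carry a transient point close to $p$ (which uses that geodesics of $Y$ joining points close to $\gamma$ stay close to $\gamma$ near its transient points, again via the relative Rips condition), is the main obstacle and requires some bookkeeping.

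Granting the extraction, we conclude as follows. By (i) and $\tau$-uniform properness, $t-s\leq\tau^{-1}(cR)$. Since $\phi([s,t])\subseteq\alpha$ and, by (ii), $d(q,p)\leq D_1$, we get $d(q,\phi([s,t]))\geq d(q,\alpha)\geq R-D_1$. Lemma~\ref{sublin}, applied to $\sigma$ and the coarsely Lipschitz path $\phi|_{[s,t]}$ (same endpoints, domain of length $\leq\tau^{-1}(cR)$), then gives at the transient point $q$ of $\sigma$
\[
	R-D_1\ \leq\ d\big(q,\phi([s,t])\big)\ \leq\ K\log\!\big(\tau^{-1}(cR)+1\big)+K .
\]
As $\tau(r)/\log r\to\infty$, for every $\epsilon>0$ we have $\tau^{-1}(w)\leq e^{\epsilon w}$ for all large $w$, hence $\log(\tau^{-1}(cR)+1)\leq\epsilon cR+\log 2$ once $R$ is large; taking $\epsilon=1/(2Kc)$ yields $R-D_1\leq R/2+K\log 2+K$, so $R\leq 2(D_1+K\log 2+K)$. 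Together with the thresholds set aside above, this bounds $R=d(p,\alpha)$ by a constant $E$ depending only on $Y$ and on the distortion data of $\alpha$, as required.
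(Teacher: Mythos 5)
Your overall strategy is the right one, and your concluding computation is correct: it is essentially the paper's ``good case'' argument, where one applies Lemma~\ref{sublin} to a short detour built from a subpath of $\alpha$ (the paper uses the subpath between points $q^\pm\in\alpha$ lying within $\eta+1$ of transient points $p^\pm\in\gamma$ on either side of $p$, plus two short geodesic legs) and then lets the subexponential bound $\tau(r)/\log r\to\infty$ self-improve the estimate. The problem is that the ``extraction'' step you explicitly defer as the main obstacle is in fact the whole content of the proposition in the relative setting, and the iteration you sketch for it does not go through. First, each application of the relative Rips condition (Lemma~\ref{transprop}(1)) loses an additive constant $D$, and nothing bounds the number of splittings needed to bring \emph{both} endpoints of the current subgeodesic within $O(R)$ of $p$ (generically on the order of $\log T$ bisections), so the tracked transient point drifts by an unbounded amount and the constant $D_1$ in your condition (ii) is not achieved; conversely, with only boundedly many steps you cannot guarantee (i).

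Second, and more fundamentally, the peripheral structure can block the extraction outright, and this is exactly what the paper's Case 2 is for. If $\gamma$ has a deep component along some $P\in\cP$ adjacent to $p$ whose length is much larger than $R$, then on that side of $p$ there are simply no transient points of $\gamma$ at scale comparable to $R$; moreover deep points of $\gamma$ are \emph{not} controlled by the proposition (only transient points are close to $\alpha$), so that portion of $\gamma$ may be far from $\alpha$, there is no evident place to cut $\alpha$ on that side, and no reason a geodesic between two nearby points of $\alpha$ passes near $p$ as a transient point -- your parenthetical appeal to ``geodesics joining points close to $\gamma$ stay close to $\gamma$ near its transient points'' does not address this configuration. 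The paper handles it by using the coarse closest-point projection $\pi_P$ along $\alpha$ (together with Lemma~\ref{transprop}(3),(4)) to find $x'\in\alpha$ with $d(p,\pi_P(x'))\asymp M\eta$, replacing $\gamma$ by an auxiliary geodesic through a transient point near $\pi_P(x')$, and -- crucially -- working with an extremal (``minimal'') configuration so that transient points of such auxiliary geodesics with endpoints on $\alpha$ are still within $\eta+1$ of $\alpha$. Without an argument of this kind (peripheral projections plus the extremality device), your proposal has a genuine gap precisely at its load-bearing step.
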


\begin{proof}
Let $p\in\gamma$ be ``the worst'' transient point, meaning one maximising the distance from $\alpha$. Set $\eta=d(p,\alpha)$. We can assume $\eta\geq 1$. Also, we assume that $\alpha$ is minimal in the sense that for any transient point $p'$ on a geodesic with endpoints on $\alpha$ there is a point $q'$ on $\alpha$ with $d(p',q')\leq\eta+1$. Let $x^\pm$ be the endpoints of $\gamma$.

Because $\alpha$ is subexponentially distorted, there exists $\tau:[0,\infty)\to\R$ so that for all $s,t \in [0,T]$ we have 
$\tau(|s-t|)\leq d(\alpha(s),\alpha(t)) \leq L|s-t|+L$,
where $\tau(t)/ \log t\to\infty$ as $t\to\infty$.

We will consider two cases (the second one of which is irrelevant to the special case
of Theorem \ref{sederh} for $Y$ hyperbolic). The constant $M\geq 1$ is a large enough
constant, depending on $Y,\cP,\tau,L$ only, to allow us enough ``space'' for the arguments in Case 2 to go through.

\medskip
\emph{Case 1.} There exist transient points $p^{\pm}\in\gamma$ respectively preceding and following $p$ so that $d(p,p^{\pm})\in [\min\{d(x^{\pm},p),10\eta\}, M\eta]$.
\medskip

In this case we choose $q^{\pm}$ on $\alpha$ at distance at most $\eta+1$ from $p^\pm$ (choose $q^\pm=p^\pm$ if $p^\pm=x^\pm$). Consider the path $\beta$ obtained concatenating in the suitable order geodesics from $p^\pm$ to $q^\pm$ and a subpath of $\alpha$ between $q^-$ and $q^+$.

Suppose $q^{\pm} = \alpha(t^\pm)$.
Assuming $\gamma$ is at least $20\eta$-long, at least one of $p^-,p^+$ is at distance
at least $10\eta$ from $p$.  This combines with the fact that $\alpha$ is coarsely
$L$-Lipschitz to give that
\[
	L|t^+-t^-|+L \geq d(q^-,q^+) \geq 10\eta-2(\eta+1) \geq 6\eta,
\]
so for large $\eta$ we have $|t^+-t^-| \geq (5/L)\eta$.
On the other hand, certainly $d(p,\beta) \geq \eta$, and we can apply
Lemma~\ref{sublin} to $\beta$ to see that
\begin{equation}
	\label{eq:sublin-distort}
	\eta \leq d(p,\beta) \leq K\log(|t^+-t^-|+2\eta+2+1)+K
	\leq K \log(|t^+-t^-|)+K',
\end{equation}
for constants $K,K'$.
Using the distortion bound on $\alpha$ and the estimate $d(p^-,p^+) \leq 2M\eta$, we have
\[
	\tau(|t^+-t^-|)\leq d(q^-,q^+) \leq d(p^-,p^+)+2\eta+2 \leq 6M\eta.
\]
Our hypothesis on $\tau$ gives that $\log|t^+-t^-| \leq o_{\eta\ra\infty}(1) \tau(|t^+-t^-|)$, where
$o_{\eta\ra\infty}(1)$ is a function that goes to zero as $\eta$ (and hence
$|t^+-t^-|$) go to infinity.
Combined with \eqref{eq:sublin-distort} this gives
\[
	\eta \leq K o_{\eta\ra\infty}(1) \tau(|t^+-t^-|)+K'
	\leq 6MK \eta o_{\eta\ra\infty}(1)+K',
\]
which in turn implies that $\eta$ is bounded depending only on required data.

\medskip
\emph{Case 2.} $\gamma$ contains a deep component along, say, $P\in\cP$ on one side of $p$ with one endpoint at distance $>M\eta$ from $p$ and the other one at distance $<10\eta$ from $p$.
\medskip

Suppose that the deep component in the statement lies before $p$.
We would like to reduce to Case 1 by suitably changing $\alpha$ and $\gamma$. As $\pi_P$ is coarsely Lipschitz there exists $x'\in \alpha$ so that $d(p,\pi_P(x'))\in [M\eta/4,M\eta/2]$ (in fact, the values of $\pi_P$ along $\alpha$ vary between $\pi_P(x^-)$ and $\pi_P(x^+)$, and the latter coarsely coincides with $\pi_P(p)$, which in turn coarsely coincides with $p$). Choose $q\in\alpha$ so that $d(p,q)\leq \eta+1$. Any geodesic $[x',q]$ contains a transient point $p^-$ close to $\pi_P(x')$, and $p^-$ is within distance $\eta+1$ from some $x\in \alpha$ (recall that we chose a minimal $\alpha$).

Also, any geodesic $\gamma'$ from $p^-$ to the final point of $\gamma$ contains some transient point $p'$ close to $p$. Notice that the hypotheses of Case 1 are now satisfied by $p$ and $\gamma'$ ``on one side'' of $p'$.
Up to possibly reapplying the argument above to ``the other side'' of $\gamma'$ to obtain a new geodesic $\gamma''$ and point $p''$ on it, we end up in the situation of Case 1.
\end{proof}

\subsection{Morse lemma for transient sets}

As seen in Proposition~\ref{prop:subexp-into-hyp-is-qi}, the image of a
subexponentially distorting map from an interval into a hyperbolic space lies within a
bounded Hausdorff distance of a geodesic with the same endpoints.
The following proposition shows an analogous property for
the transient sets of geodesics. We will not need this to prove Theorem \ref{sederh} (but we will use it later). The proof builds on Proposition~\ref{bndhauss}.
Recall Definition~\ref{def:coarse-respect-periph} for the notion of a map coarsely respecting peripherals.
\begin{proposition}\label{prop:transhauss}
	Suppose $f:X \ra X'$ is a subexponentially distorting map 
	between relatively hyperbolic spaces $(X,\cP), (X',\cP')$ that coarsely respects peripherals, and fixed deep/transient set constants as in Lemma~\ref{transprop}.
	Then for any $x,y \in X$, we have that
	$f(\trans(x,y))$ and $\trans(f(x),f(y)) \subset X'$
	are at Hausdorff distance $\leq C$, where $C$ is independent of $x,y \in X$, and depends only on the
	data of relative hyperbolicity, subexponential distortion, coarsely respecting peripherals and deep/transient constants.
\end{proposition}

We require the following estimate on projections.
\begin{lemma}\label{lem:subexp-proj}
 Suppose we are in the situation of Proposition~\ref{prop:transhauss}. 
	Then there exists $D$ with the following property: for each $x\in X$ and $P\in\mathcal P$, we have $d(f(\pi_P(x)),\pi_{P'}(f(x)))\leq D$, where $P'\in\cP'$ is given by Definition~\ref{def:coarse-respect-periph}(1).
\end{lemma}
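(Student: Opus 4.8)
The plan is to read off the estimate from the structure of the geodesic $[f(x),f(p)]$ in $X'$, where $p=\pi_P(x)$: I want to show its \emph{deep component} along $P'$ is boundedly short, which forces $\pi_{P'}(f(x))$ to lie close to $\pi_{P'}(f(p))$, and $\pi_{P'}(f(p))$ is within $C+1$ of $f(p)$ by definition of the coarse projection. We may assume (as in all our applications) that the peripherals of $X$ are unbounded.

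\emph{Preliminary geometry.} First I would record that, since $p=\pi_P(x)$, for every $L$ one has $[x,p]\cap N_L(P)\subseteq N_L(p)$: if $w\in[x,p]$ and $d(w,P)\le L$ then $d(x,P)\le d(x,w)+L=d(x,p)-d(w,p)+L=d(x,P)-d(w,p)+L$, so $d(w,p)\le L$. Then I would upgrade this to a statement about $f([x,p])$: there are constants $C_2\ge t\mu$ and $C_4$ with $f([x,p])\cap N_{C_2}(P')\subseteq N_{C_4}(f(p))$. Indeed, by Definition~\ref{def:coarse-respect-periph}(2) the set $f^{-1}(N_{C_2}(P'))$ is either bounded or contained in $N_{C_3}(Q)$ for a single $Q\in\cP$; it contains $P$ by Definition~\ref{def:coarse-respect-periph}(1) (as $C\le C_2$), so the bounded alternative is excluded, and $Q=P$ since distinct peripherals have bounded coarse intersection (the fact used in Lemma~\ref{transprop}(2)). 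Combining with the previous display and coarse Lipschitzness of $f$ gives the claim.

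\emph{Main argument.} If $[f(x),f(p)]$ has no deep component along $P'$ of length more than the constant $C$ of Lemma~\ref{transprop}, then $d(\pi_{P'}(f(x)),\pi_{P'}(f(p)))\le C$ by Lemma~\ref{transprop}(4) and we are done. Otherwise let $\cD=[q,q']$ be its deep component along $P'$; by Lemma~\ref{transprop}(2,3), $q\in N_{t\mu}(P')$, $q$ is within a bounded distance of $\pi_{P'}(f(x))$, and $q'$ is within a bounded distance of $\pi_{P'}(f(p))$, so it remains to bound $\length(\cD)$. The point is that if $\cD$ were long there would be a transient point $s$ of $[f(x),f(p)]$ lying just before $q$, hence close to $q\in N_{t\mu}(P')$ (so $s\in N_{C_2}(P')$), while $d(s,f(p))\approx\length(\cD)$ would be large. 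By Proposition~\ref{bndhauss} applied to the geodesic $[f(x),f(p)]$ and the path $f([x,p])$ (with the same endpoints), $s$ is within $E$ of $f([x,p])$, so the nearby point of $f([x,p])$ lies in $N_{C_2}(P')$ and is therefore within $C_4$ of $f(p)$ — contradicting $d(s,f(p))$ being large.

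\emph{Main obstacle.} The delicate point is producing the transient point $s$ close to $q$: the segment of $[f(x),f(p)]$ immediately preceding $q$ need not be transient, since another deep component, necessarily along some $P^\sharp\neq P'$ (there being at most one deep component per peripheral), may intervene. If that component is short, bounded coarse intersection of $N_{t\mu}(P^\sharp)$ with $N_{t\mu}(P')$ near $q$ handles it. If it is long, one reruns the analysis with $P^\sharp$ in place of $P'$: its near endpoint is within a bounded distance of $\pi_{P^\sharp}(f(x))$, and Proposition~\ref{bndhauss} together with the avoidance property applied to the peripheral $P^\flat\in\cP$ matched to $P^\sharp$ forces it close either to $f(\pi_{P^\flat}(x))$ or to the (coarsely single) projection of $P$ to $P^\flat$. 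Since $[x,p]$ has only finitely many deep components and each such reduction passes to one strictly preceding the last, the process terminates; packaging this cleanly — by inducting on the deep components of $[x,\pi_P(x)]$, or by proving the estimate for all pairs $(x,P)$ simultaneously and examining a worst case — is the step that requires genuine care.
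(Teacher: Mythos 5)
Your proof follows essentially the same route as the paper's: produce a transient point of $[f(x),f(p)]$ near $\pi_{P'}(f(x))$, pull it back to a point of $f([x,p])$ via Proposition~\ref{bndhauss}, and combine Definition~\ref{def:coarse-respect-periph} with the fact that $p=\pi_P(x)$ is a coarse closest point; your preliminary containment $f([x,p])\cap N_{C_2}(P')\subseteq N_{C_4}(f(p))$ is just the paper's closing contradiction packaged as a lemma. The only genuine divergence is your final ``main obstacle'' paragraph, and the obstacle it tries to handle is not there: with the paper's conventions the endpoints of deep components are themselves (coarsely) transient points, whether or not another deep component abuts them --- the deep set is an open subset of the geodesic (the defining condition $d(p,u),d(p,v)>R$ is open), so the endpoints of its components are not deep along any peripheral; this is precisely what the paper reads off from Lemma~\ref{transprop}(3)--(4), and it is stated and used verbatim in the proof of Theorem~\ref{sederh} (``the endpoints of a deep component are transient points''). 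So the sketched induction over the deep components of $[x,\pi_P(x)]$ --- the step you concede is incomplete --- should simply be deleted and replaced by that observation, after which your argument is complete and coincides with the paper's proof. Two minor points: you do not need to assume the peripherals of $X$ are unbounded, since if the bounded alternative of Definition~\ref{def:coarse-respect-periph}(2) holds then $f^{-1}(N_{C_2}(P'))$ has bounded diameter and contains $p$, making your containment claim immediate; and in your first case Lemma~\ref{transprop}(4) gives $d(\pi_{P'}(f(x)),\pi_{P'}(f(p)))\le 2C$ rather than $C$, which changes nothing.
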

\begin{proof}
	Denote $y=\pi_P(x)$, and consider a geodesic $[x,y]$. Then $f\circ[x,y]$ connects $f(x)$ to $f(y)$, and the latter lies in the $C$-neighbourhood of $P'$, where $C$ is as in the definition of $f$ coarsely respecting peripherals. For later purposes, we increase $C$ so that Lemma \ref{transprop} applies with constant $C$. Suppose by contradiction that we have $d(f(y),\pi_{P'}(f(x)))> D$, where $D$ is a sufficiently large constant to be determined below (we note the order of choice of constants is $C, E, C', D$). Whenever $D>C$, any geodesic $[f(x),f(y)]$ contains a transient point $C$-close to $\pi_{P'}(f(x))$ by Lemma \ref{transprop}-(3)-(4). In particular, $f\circ[x,y]$ contains a point $f(z)$ which is $(E+C)$-close to $\pi_{P'}(f(x))$, where $E$ is as in Proposition~\ref{bndhauss}. Note that $f(z)$ is $(D-E-C)$-far from $f(y)$.

	Now, since $f$ coarsely respects peripherals we have $f^{-1}(N_{E+C}(P'))\subseteq N_{C'}(P)$, for some $C'$. Therefore, we have that $z$ lies in the neighbourhood $N_{C'}(P)$, and since $f$ is uniformly proper, for $D$ sufficiently large we have $d(z,y)\geq C'+3$. But then we see that $d(x,\pi_P(z))<d(x,y)-1$, which contradicts $y=\pi_P(x)$.
\end{proof}
\begin{proof}[Proof of Proposition~\ref{prop:transhauss}]
	The argument is a variation on the proof of
	Proposition~\ref{prop:subexp-into-hyp-is-qi}.
	
	Given $x,y \in X$, fix geodesics $[x,y]\subset X$ and $[f(x),f(y)]\subset X'$.
	Let $\alp = f([x,y])$; then $\alp$ is a subexponentially-distorted, coarse
	Lipschitz `quasi'-geodesic.
	By Proposition~\ref{bndhauss}
	every transient point of $[f(x),f(y)]$ lies at distance of at most $E$ from
	$\alp$.

	Let $\Pi:\trans(f(x),f(y))\ra \alp$ be a map which displaces each point by at 
	most $E$.
	Suppose $[p^-,p^+]$ is a (maximal) connected interval of $\trans(f(x),f(y))$.
	We claim that $[p^-,p^+]$ and $\alp[\Pi(p^-),\Pi(p^+)]$
	are at bounded Hausdorff distance from each other,
	where $\alp[\Pi(p^-),\Pi(p^+)]$ denotes the subpath of $\alp$ from $\Pi(p^-)$ to $\Pi(p^+)$.
	Indeed, the same argument as in the proof of 
	Proposition~\ref{prop:subexp-into-hyp-is-qi} shows that as we follow $z$ along
	$[p^-,p^+]$, the point $\Pi(z)$ has to follow $\alp$ along $\alp[\Pi(p^-),\Pi(p^+)]$ 
	only omitting subpaths of controlled size.

	What happens as we cross large deep components of $[f(x),f(y)]$?
	Suppose $[q^-,q^+]$ is a connected component of $\deep(f(x),f(y))$ of length
	$\geq N$, where $N$ is sufficiently large as determined below.
	Let $x^-,x^+ \in [x,y]$ be points with $f(x^\pm)=\Pi(q^\pm)$.
	Since $f$ is coarsely Lipschitz, we have $d(x^-,x^+)\geq c N - C_1$;
	here all constants $c,C_1,C_2,\ldots$ are independent of $N$.

	By the definition of deep components, $q^-$ and $q^+$ lie $C_2$-close to a peripheral
	set $P'\in\cP'$.
	Since $f$ coarsely respects peripherals, $x^-,x^+$ lie $C_3$-close to a peripheral
	set $P \in \cP$, and as $d(x^-,x^+)$ is large $P$ and $P'$ uniquely correspond to each other in Definition~\ref{def:coarse-respect-periph}.
	By Lemma~\ref{lem:subexp-proj} and Lemma~\ref{transprop}(3) we have that $d(f(\pi_P(x)),q^-)$ and $d(f(\pi_P(y)),q^+)$ are at most $C_4$,
	so the uniform properness of $f$ gives $d(\pi_P(x),x^-), d(\pi_P(y),x^+) \leq C_5$.
	Therefore $d(\pi_P(x),\pi_P(y)) \geq cN-C_1-2C_5$, so for sufficiently large $N$ we can apply
	Lemma~\ref{transprop}(4) to $[x^-,x^+]$ to find that $[x^-,x^+]$ is within
	Hausdorff distance $C_6$ of a (the) deep component of $[x,y]$ along $P$.

	By Lemma~\ref{transprop}(2), $[x^-,x^+]$ lies in the $C_7$-neighbourhood of $P$.
	Again, since $f$ coarsely respects peripherals, $f([x^-,x^+])$ lies in the
	$C_8$-neigh\-bour\-hood of $P'$.
	
	So, follow $z$ along $[f(x),f(y)]$, and consider $f^{-1}\circ \Pi(z) \in [x,y]$:
	as we go along components of $\trans(f(x),f(y))$ we have a coarsely Lipschitz map,
	while if we jump over a large deep component of $[f(x),f(y)]$ we jump over a
	corresponding deep component of $[x,y]$.  This shows that $f(\trans(x,y))$ is
	contained in a bounded neighbourhood of $\trans(f(x),f(y))$.
	
	It remains to check the converse inclusion.
	Suppose $[w^-,w^+]$ is a deep component in $[x,y]$ along $P \in \cP$ of length $\geq N'$,
	where $N'$ is sufficiently large to be determined, and $P$ has not already been
	considered above, i.e. $P$ does not correspond to a $P' \in \cP'$ with a long deep
	component in $[f(x),f(y)]$.
	Then since $[x,y]$ is connected, $[w^-,w^+]$ coarsely lies in
	the image of $f^{-1}\circ\Pi$, in particular in the image of $\trans(f(x),f(y))$.
	So we have points $y^\pm \in [f(x),f(y)]$ with $d(y^\pm, f(w^\pm))\leq E$.
	By Lemma~\ref{transprop}(3) we have $d(w^-,\pi_P(x))\leq C$ and $d(w^+,\pi_P(y)) \leq C$.
	Thus by Lemma~\ref{lem:subexp-proj} we have 
	\begin{align*}
		d(\pi_{P'}(f(x)),\pi_{P'}(f(y))) & \approx_{2D} d(f(\pi_P(x)),f(\pi_P(y)))
		\\ & \geq \tau(d(w^-,w^+)-2C) \geq \tau(N'-2C),
	\end{align*}
	where $\tau$ is the uniform properness function of $f$.
	Provided $N'$ is large enough that $\tau(N'-2C)-2D \geq N$, we have a contradiction.
	
	Thus, $\trans(f(x),f(y))$ is contained in a bounded neighbourhood of the image of
	$[x,y]$ under $f$, omitting large deep components of $[x,y]$, which in turn lies within
	finite Hausdorff distance from $f(\trans(x,y))$.	
\end{proof}

\subsection{Proof of Theorem \ref{sederh}}

We are now ready to prove Theorem \ref{sederh}.

It will be convenient to use the following equivalent characterisation of relative hyperbolicity. We note that it might be possible to use the cusped space characterisation of relative hyperbolicity directly, but this runs into issues with controlling geodesics in $\Bow(X,\cP)$ in terms of paths in $X$ \emph{without} knowing yet that $X$ is relatively hyperbolic.
Recall that being `asymptotically tree-graded' with respect to a collection of subsets $\cP$ is equivalent to being hyperbolic relative to $\cP$ (in the sense of Definition~\ref{def:rel-hyp}).

\begin{theorem}[{\cite[Theorem 1.7]{Dr-relhyp-qiinv}}]\label{thm:rel-hyp-quasi-triangles}
 A geodesic metric space $X$ is hyperbolic relative to a collection of subsets $\cP$ if and only if 

$\bullet$ for each $K$ there exists $B$ so that $diam(N_K(P)\cap N_K(Q))\leq B$ for each distinct $P,Q\in \cP$,

$\bullet$ there exists $\epsilon\in (0,1/2)$ and $M\geq 0$ so that for each $P\in\cP$ and $x,y\in X$ with $x,y\in N_{\epsilon d(x,y)}(P)$ any geodesic from $x$ to $y$ intersects $N_M(P)$,

$\bullet$ for every $c\geq 0$ there exist $\sigma,\delta$ so that every triangle with continuous $(1,c)$-quasi-geodesic edges $\gamma_1,\gamma_2,\gamma_3$ satisfies either

\ $(1)$ there exists a ball of radius $\delta$ intersecting all sides of the triangle, or

\ $(2)$ there exists $P\in\cP$ with $N_\sigma (P)$ intersecting all sides of the triangle and the entrance (resp.\ exit) points $x_i$ (resp.\ $y_i$) of the sides $\gamma_i$ in (from) $N_\sigma(P)$ satisfy $d(y_i,x_{i+1})\leq \delta$.
\end{theorem}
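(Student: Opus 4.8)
The plan is to derive this equivalence from the asymptotic cone picture of relative hyperbolicity. Combining the equivalence of Definition~\ref{def:rel-hyp} with the usual definitions (as discussed after Definition~\ref{def:rel-hyp}) with the work of Drutu--Sapir~\cite{DSp-05-asymp-cones}, a geodesic space $X$ is hyperbolic relative to $\cP$ exactly when every asymptotic cone $C=\mathrm{Cone}_\omega\big(X;(o_n),(d_n)\big)$ is \emph{tree-graded} with respect to the collection $\mathcal{A}$ of non-degenerate ultralimits $\lim_\omega(P_n)_n$ of sequences $P_n\in\cP$ lying at distance $O(d_n)$ from $o_n$. By \cite[Section 2]{DSp-05-asymp-cones}, to certify that a geodesic space is tree-graded with respect to a collection of closed geodesic subsets it suffices to check that (T1) any two distinct pieces meet in at most one point and (T2) every simple geodesic triangle is contained in a single piece. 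So the task reduces to proving that the three bulleted conditions are equivalent to (T1) and (T2) holding in all asymptotic cones of $X$, which I would establish in two directions.

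\emph{Relative hyperbolicity implies the three conditions.} Assume every asymptotic cone of $X$ is tree-graded. If the first bullet failed, some $K$ and sequences $P_n\neq Q_n$ in $\cP$ would have $\diam\big(N_K(P_n)\cap N_K(Q_n)\big)=d_n\to\infty$; rescaling by $d_n$ and passing to a cone gives two distinct pieces meeting in a subset of diameter $1$, contradicting (T1). The second bullet follows from convexity of pieces in a tree-graded space (a geodesic with both endpoints in a piece lies in that piece): a counterexample for every $\eps,M$ would yield $x_n,y_n\in N_{\eps_n d(x_n,y_n)}(P_n)$ with $\eps_n\to 0$, joined by geodesics leaving $N_{M_n}(P_n)$ with $M_n\to\infty$, and rescaling by $d(x_n,y_n)$ produces a geodesic of a cone with endpoints in a piece but not contained in it. For the third bullet, fix $c$ and suppose for contradiction that no pair $\sigma,\delta$ works; then taking $\sigma_n=\delta_n=:n\to\infty$ there are triangles $T_n$ of continuous $(1,c)$-quasi-geodesics such that no $n$-ball meets all three sides and no $P\in\cP$ has $N_n(P)$ meeting all three sides with $n$-matched exit/entrance points. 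Rescaling $X$ by $n$ and basing an asymptotic cone $C$ inside $T_n$, the three sides of $T_n$ converge to a geodesic triangle $\Delta$ in the tree-graded space $C$. The structure of geodesic triangles in tree-graded spaces (\cite[Section 2]{DSp-05-asymp-cones}) forces either a point common to the three sides of $\Delta$, or a piece $\lim_\omega(P_n)_n$ containing a (possibly trivial) sub-arc of each side with the endpoints of consecutive sub-arcs equal. In the first case, $d(p_n^{(i)},p_n^{(j)})=o(n)$ for suitable points $p_n^{(i)}\in\gamma_n^i$, so an $n$-ball meets all three sides of $T_n$ for $\omega$-large $n$, giving (1); in the second case, all three sides of $T_n$ come within $o(n)<n$ of a common $P_n$ with $o(n)$-matched transition points for its $N_n(P_n)$-neighbourhood, giving (2). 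Either way we contradict the choice of $T_n$, and tracking the constants through these limiting arguments yields the desired \emph{uniform} $\sigma,\delta$.

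\emph{The three conditions imply relative hyperbolicity.} Conversely, assume the three bullets and let $C=\mathrm{Cone}_\omega(X;(o_n),(d_n))$ be any cone, with pieces $\mathcal{A}$ as above. The pieces are closed by construction and are geodesic subsets: the second bullet, finitized over the sequence, shows that a geodesic of $X$ between two points deep in $N_{\eps d}(P)$ stays in $N_M(P)$, so an ultralimit of such geodesics — which is a geodesic of $C$ joining two points of a piece — lies in that piece. Property (T1) is immediate from the first bullet, since $\lim_\omega N_K(P_n)\cap\lim_\omega N_K(Q_n)$ has diameter at most $\lim_\omega B(K)/d_n=0$ when the two pieces are distinct. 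For (T2), take a simple geodesic triangle in $C$; its sides are ultralimits of paths in $X$ which may be taken to be uniform continuous $(1,c)$-quasi-geodesics (as in \cite{DSp-05-asymp-cones}), so for $\omega$-a.e. $n$ they form a $(1,c)$-quasi-geodesic triangle to which the third bullet applies with the $\sigma,\delta$ it produces for this $c$. In alternative (1) the three sides pass within the fixed distance $\delta$ of a common point of $X$, hence (as $\delta$ is fixed and $d_n\to\infty$) they share a point in $C$; but a simple geodesic triangle with all three sides through one point is degenerate, so this only yields trivial triangles. In alternative (2) all three sides enter $N_\sigma(P)$ for one $P\in\cP$ with the exit point of each side $\delta$-close to the entrance point of the next, which in $C$ says exactly that the whole triangle lies in the piece $\lim_\omega N_\sigma(P_n)=\lim_\omega(P_n)_n\in\mathcal{A}$. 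Thus (T2) holds, $C$ is tree-graded, and $X$ is hyperbolic relative to $\cP$.

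The routine parts are the rescaling-and-contradiction arguments for the first two bullets and for (T1). The substance lies in the third bullet and (T2). Here one must (a) use that geodesics of asymptotic cones are precisely ultralimits of continuous $(1,c)$-quasi-geodesics in $X$ with $c$ depending only on the scaling data — which is exactly why the hypothesis is phrased for $(1,c)$-quasi-geodesics rather than honest geodesics, and is needed for the limiting argument to close — and (b) match the two alternatives of the quasi-triangle condition to the two local pictures of a geodesic triangle in a tree-graded space (meeting at a single point versus traversing a common piece with matching transitions), carrying the additive constants uniformly so that one choice of $\sigma,\delta$ works for all triangles. This precise correspondence between the quasi-triangle dichotomy and the tree-graded triangle dichotomy, together with the necessary uniform control of constants, is the main obstacle and the technical heart of Drutu's theorem.
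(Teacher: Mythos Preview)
The paper does not prove this theorem at all: it is stated as a citation of \cite[Theorem~1.7]{Dr-relhyp-qiinv} and used as a black box in the proof of Theorem~\ref{sederh}. So there is nothing in the paper to compare your proposal to.

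That said, your outline is broadly faithful to how the result is actually established in Drutu's paper, namely via the asymptotic-cone characterisation of relative hyperbolicity (tree-gradedness from \cite{DSp-05-asymp-cones}) and a rescaling/contradiction argument matching the quasi-triangle dichotomy with the tree-graded triangle dichotomy. One genuine subtlety you gloss over: geodesics in an asymptotic cone need not arise as ultralimits of geodesics, or even of $(1,c)$-quasi-geodesics with a fixed $c$, in $X$. Your claim in (a) that ``geodesics of asymptotic cones are precisely ultralimits of continuous $(1,c)$-quasi-geodesics'' is not correct as stated, and this is exactly why the passage from (T2) in cones back to the third bullet in $X$ requires extra work in \cite{Dr-relhyp-qiinv} (involving saturations and careful approximation). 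Your sketch of the forward direction (the three bullets imply tree-gradedness) handles this correctly by starting from a triangle in the cone and representing its sides as limits; it is the reverse direction where your argument would need more care.
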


\begin{proof}[Proof of Theorem \ref{sederh}]
We have a subexponentially distorting embedding $f:X\to Y$ into a space $Y$ hyperbolic relative to a collection of subsets $\cP$.
We fix constants as in Lemma \ref{transprop} for $Y$.

	We will use a variation of the third property in Theorem~\ref{thm:rel-hyp-quasi-triangles} for $Y$.

\begin{lemma}\label{lem:relhyp-thin-or-deep}
 For a geodesic space $Y$ hyperbolic relative to a collection of subsets $\cP$, there exists $\delta_0$ so that the following holds. For any geodesic triangle in $Y$ either
 
\ $(1')$ there exists a ball of radius $\delta_0$ intersecting the transient sets of all sides, or 
 
\ $(2')$ there exists $P\in\cP$ so that all sides have deep components along $P$.
\end{lemma}

\begin{proof}
	Consider a geodesic triangle with sides $\gamma_i=[p_i,p_{i+1}]$, $i=0,1,2$ modulo $3$. By Lemma~\ref{transprop}(1), every transient point along $\gamma_0$ is $D$-close to a transient point on either $\gamma_1$ or $\gamma_2$, and hence one of the following holds. 

	(\emph{a}) There are two transient points $x_1,x_2$ on $\gamma_0$ within distance $10C$ and so that $d(x_i,\trans(\gamma_i))\leq D$ (here $C$ is given by Lemma~\ref{transprop}). In this case, $(1')$ holds.

	(\emph{b}) There is a deep component $[x_1,x_2]$ of $\gamma_0$, say along $P$, of length at least $10C$ so that $d(x_i,\trans(\gamma_i))\leq D$. By Lemma~\ref{transprop}(3) $x_1$ is $C$-close to $\pi_P(p_1)$ and $x_2$ is $C$-close to $\pi_P(p_0)$. Since $\pi_P(p_2)$ cannot be $C$-close to both $\pi_P(p_0)$ and $\pi_P(p_1)$, without loss of generality, $\gamma_1$ has a deep component along $P$ as well, by Lemma~\ref{transprop}(4). If $\gamma_2$ does as well, we are in case $(2')$, so suppose that it does not. Then $\pi_P(p_2)$ is $C$-close to $\pi_P(p_0)$. In particular, $x_2$ is close to $\pi_P(p_0)$, which is close to $\pi_P(p_2)$, which is close to a transient point of $\gamma_1$. So, $x_2$ is within uniformly bounded distance from the transient sets of both $\gamma_1$ and $\gamma_2$, and hence $(1')$ holds.
\end{proof}

	For $X$, we first prove the third item in the equivalent definition of relative hyperbolicity given by Theorem~\ref{thm:rel-hyp-quasi-triangles}, since this is the part of the proof where $\rho_0$ gets chosen.

	Suppose we have $p_0,p_1,p_2 \in X$ and $\gamma_i$ is a $(1,c)$-quasi-geodesic connecting $p_i$ to $p_{i+1}$ (modulo $3$) for $i=0,1,2$, and consider a geodesic triangle in $Y$ with vertices $f(p_1),f(p_2),f(p_3)$. If such triangle is as in Lemma~\ref{lem:relhyp-thin-or-deep}(1$'$) then by the lemma and Proposition \ref{bndhauss}, we see that each $f(\gamma_i)$ intersects a specified ball of radius $\delta_0+E$, which easily implies that a ball of some radius $\delta$ meets each $\gamma_i$.

	If the triangle is as in Lemma~\ref{lem:relhyp-thin-or-deep}(2$'$), we show that $(2)$ holds for the $\gamma_i$. Since all sides have deep components along $P$, and the endpoints of a deep component are transient points, in view of Proposition \ref{bndhauss} we see that each $\gamma_i$ intersects $Q=f^{-1}(N_\rho(P))$, for any $\rho\geq \rho_0$ where $\rho_0$ is chosen suitably large. For later purposes, we assume $\rho_0\geq C+E$ with $C$ from Lemma~\ref{transprop} and $E$ from Proposition~\ref{bndhauss}. Let now $x_i,y_i$ be the entrance/exit points with respect to $Q$ as in the statement of $(2)$ (setting $\sigma=0$). We will prove that $d(f(x_i),\pi_P(f(p_i)))$ can be uniformly bounded by, say, $K$, and similarly for $y_i$. This easily implies that $d(y_i,x_{i+1})$ is uniformly bounded, as required. Suppose that $d(f(x_i),\pi_P(f(p_i)))$ is large and take a subpath $\gamma'_i$ of $\gamma_i$ with final point $x_i$. Increasing $K$, we can assume $d(\pi_P(f(x_i)),\pi_P(f(p_i)))\geq C$ (if $f(x_i)$ is far from $\pi_P(f(p_i))$ then so is its projection, since $f(x_i)$ is close to $P$). In particular, $\pi_P(f(p_i))$ is $(C+E)$-close to some point in $f(\gamma'_i)$, necessarily different from $f(x_i)$ if $K>C+E$, because $\pi_P(f(p_i))$ is $C$-close to a transient point on $[f(p_i),f(x_i)]$ and such transient point is $E$-close to $f(\gamma'_i)$ by Proposition \ref{bndhauss}. This contradicts the fact that $x_i$ is the entrance point in $f^{-1}(N_\rho(P))$. This concludes the proof of the third property of Theorem~\ref{thm:rel-hyp-quasi-triangles}.

	The first item of Theorem~\ref{thm:rel-hyp-quasi-triangles} easily follows from the fact that the analogous property holds in $Y$ and the uniform properness of $f$.

	Let us now show the second property in Theorem~\ref{thm:rel-hyp-quasi-triangles}. First of all, we claim that there exists $K$ with the following property. Let $Q=f^{-1}(N_{\rho}(P))$ for some $P\in\cP$ and let $x\in X$. Then for any $\overline{x}\in Q$ that satisfies $d(x,\overline{x})\leq d(x,Q)+1$, we have $d(f(\overline{x}),\pi_P(f(x)))\leq K$.

	Let us first argue that the claim implies the second property, for $\epsilon=1/3$ and a suitable $M$. In fact, let $Q=f^{-1}(N_{\rho}(P))$ for some $P\in\cP$ and let $x,y$ satisfy $x,y\in N_{d(x,y)/3}(P)$. We can also assume that $d(x,y)$ is large enough so that 
	\[ d(f(\overline{x}), f(\overline{y})) \geq \tau(d(\overline{x},\overline{y})) \geq \tau\big(\tfrac{1}{3}d(x,y)-2\big) > 2K+C,\] 
	where $\tau$ is the uniform properness function of $f$. In this case by the claim we have $d(\pi_P(f(x)),\pi_P(f(y))) > C$, which in view of Proposition \ref{bndhauss} and Lemma \ref{transprop} implies that $f([x,y])$ passes $(C+E)$-close to $P$, whence we get the second property.

	Let us now prove the claim. If the claim did not hold, then any geodesic from $f(x)$ to $f(\overline{x})$ would have a transient point $C$-close to $\pi_P(f(x))$, and hence $f([x,\overline{x}])$ would contain a point $f(p)$ that is $(C+E)$-close to $P$. Thus $d(f(p),f(\overline{x})) \geq K-(C+E)$ so as $f$ is coarse Lipschitz $d(p,\overline{x})\geq \phi(K)$, for some diverging (linear) function $\phi(K)$. Also, $d(p,Q)\leq C'$, for some $C'$. But then
$$d(x,Q)\leq d(x,p)+C'\leq d(x,\overline{x}) +(C'-\phi(K)),$$
which for $K$ large enough contradicts the choice of $\overline{x}$.
\end{proof}

\subsection{Truncated hyperbolic spaces}
We now show our main application of Theorem \ref{sederh}. Namely, assuming Theorem~\ref{thm:boundary-to-inside}, we now prove Theorem~\ref{thm:truncated-hyp-poly}: a finitely generated group is hyperbolic relative to virtually nilpotent groups if and only if it admits a polynomially distorted embedding into some truncated real hyperbolic space, if and only if it admits a subexponentially distorted embedding into some such space.

\begin{proof}[Proof of Theorem~\ref{thm:truncated-hyp-poly}]
	If $G$ is a finitely generated group and $G$ admits a subexponentially distorted embedding into some truncated real hyperbolic space, then by Corollary~\ref{cor:polyembed-truncated-rel-hyp} $G$ is hyperbolic relative to some collection of virtually nilpotent subgroups.

	Conversely, suppose $G$ is hyperbolic relative to virtually nilpotent subgroups $H_1,\ldots,H_n$.  It remains to construct a polynomially distorted embedding into some truncated real hyperbolic space.
	Let $X$ be the Cayley graph of $G$ and $\cP = \{gH_i\}$ the collection of left cosets of each $H_i$,
	let $\Bow(X,\cP)$ be the corresponding cusped space with basepoint $o$, and endow $Z = \bdry \Bow(X,\cP)$ with a visual metric $\tilde{\rho}$ with parameter $2\epsilon>0$.
	
	By \cite[Proposition 4.5]{MS-12-relhypplane} $(Z,\tilde{\rho})$ is a doubling metric space (see the citation for details),
	so by Assouad's embedding theorem~\cite{Ass-83-snowflake} $(Z, \tilde{\rho}^{1/2})$ admits a bi-Lipschitz embedding into some $\R^N$, and hence (as it is bounded) into $\Sph^N$.

	So we may assume that there is a bi-Lipschitz embedding $h:(Z,\rho)\ra \Sph^N$, where $\rho=\tilde{\rho}^{1/2}$ is a visual metric on $Z = \bdry \Bow(X,\cP)$ with parameter $\epsilon$.
	Let $\cB=\{(a_P,r_P=e^{-\epsilon d(o,P)})\}_{P\in\cP}$ be the usual shadow decoration on $Z$.
	By Lemma~\ref{lem:separationhoroballs}, $\rho(a_P,a_Q) \succeq \sqrt{r_P r_Q}$ uniformly for all $P\neq Q \in \cP$.

	Recall that $\Sph^N = \bdry \HH^{N+1}$, and the Euclidean metric $\rho'$ on $\Sph^N$ is a visual metric with parameter $1$ and any fixed basepoint $o'$.
	For $P\in \cP$, let $a'_P=h(a_P)$.
	Since $h$ is bi-Lipschitz, there exists $C$ so that $\rho'(a'_P,a'_Q) \succeq_C \sqrt{r_P r_Q}$ for all $P\neq Q\in\cP$. 
	Apply Lemma~\ref{lem:separationhoroballs} to find $t_0$ so that the horoballs
	\[
		H_P = \left\{ y\in \HH^{N+1}: \beta_{a'_P}(y,o') \leq -t_0+\log r_P \right\}, P\in \cP
	\]
	are separated in $\HH^{N+1}$ by a distance greater than $1$.
	Let $X' = \HH^{N+1}\setminus \bigcup_{P\in\cP} int(H_P)$ (viewing $\cP$ now as an abstract index set), and consider $\{H_P\}_{P\in\cP}$ as a uniformly admissible set of horoballs for $X'$ (see Proposition~\ref{prop:real-horoballs-admissible}), so then $\Bow(X',\{H_P\})=\HH^{N+1}$.
	We give $\Sph^N=\bdry \Bow(X',\{H_P\})$ the standard shadow structure $\cB'=\{(a'_P,r'_P)\}_{P \in \cP}$ by setting $r'_P = e^{-d_{\HH^{N+1}}(o',H_P)}$.

	By the definition of $H_P$, $r'_P\asymp r_P$ uniformly.
	So if $\rho(a_P,b) \leq r_P$ for some $P \in \cP, b \in Z$, then
	\[
		\rho'(a'_P,h(b)) \asymp \rho(a_P,b) \leq r_P \asymp r'_P.
	\]
	Likewise if $\rho(a_P,b) \geq r$ then $\rho'(a'_P,h(b)) \succeq r'_P$, and $h$ maps $\{a_P\}$ bijectively to $\{a'_P\}$, so properties (1) and (2) of Definition~\ref{def:shadow-qs} hold.  Property (3) holds easily with each $\lambda_{a_P}=1$ as $h$ is bi-Lipschitz and each $r'_P \asymp r_P$.  Thus $h$ is shadow-respecting as a map from $\bdry (X,\cP) \to \bdry \Bow(X',\{H_P\})$.

	We have $(Z,\rho,\cB) = \bdry(X, \cP)$ is visually complete by Lemma~\ref{prop:bowditch-visually-complete}, as is $\Bow(X',\{H_P\})=\HH^{N+1}$.  
	Therefore Theorem~\ref{thm:boundary-to-inside} gives a polynomially distorted embedding from $X$ (the Cayley graph of $G$) to $X'$ (a truncated hyperbolic space).
	Theorem~\ref{thm:truncated-hyp-poly} is proved.
\end{proof}

\subsection{Relative quasiconvexity}
Finally, we apply the metric relative hyperbolicity result of Theorem~\ref{sederh} in the case of groups.
\begin{varthm}[Corollary~\ref{cor:subexpdistort-subgroup-rel-hyp}]
	Let $G$ be hyperbolic relative to $\{H_i\}$.
	If $H$ is a subexponentially distorted subgroup of $G$ then $H$ is relatively quasiconvex in $G$, and so $H$ is hyperbolic relative to subgroups $\{K_j\}$ each of which is the intersection of a conjugate (in $G$) of some $H_i$ with $H$.
\end{varthm}

\begin{proof}
	We will use the characterisation $(QC-3)$ of relative quasiconvexity from \cite{Hr-relqconv}, and more specifically we will show that there is a constant $C$ so that, given a geodesic $\gamma$ in $\Bow(G,\{H_i\})$ with endpoints in $H$, we have that $\gamma\cap G$ is contained in the $C$-neighbourhood of $H$.

	By Theorem \ref{thm:truncated-hyp-poly}, $H$ is (as a metric space) relatively hyperbolic, and it is readily seen that the inclusion map coarsely respects peripherals. Therefore, Proposition \ref{prop:transhauss} implies that, given a geodesic in $G$ with endpoints in $H$, its transient set lies in a controlled neighbourhood of $H$. Now, $\trans(x,y)$ coarsely coincides (in $\Bow(G,\{H_i\})$)
	with the intersection with $G$ of a geodesic $[x,y]$ from $x$ to $y$ in $\Bow(G,\{H_i\})$ by \cite[Propositions 7.9 and 8.13]{Hr-relqconv}. Therefore, we see that the aforementioned characterisation applies.

	Having shown $H$ is relatively quasiconvex, the corollary follows from \cite[Theorem 9.1]{Hr-relqconv}.
\end{proof}

\section{Extending quasi-isometries to the boundary}\label{sec:extend-qi-to-bdry}
In this section, we prove Theorem~\ref{thm:inside-to-boundary}. We fix relatively hyperbolic spaces $(X,\cP)$ and $(X',\cP')$, and a map
$f:X \ra X'$ which is a snowflake on peripherals.  As hypothesised in Theorem~\ref{thm:inside-to-boundary}, throughout this section we assume that each $P\in\cP$ is unbounded.

\subsection{Extending to cusped spaces}

Our first goal is to extend $f$ to a quasi-isometric embedding
$f_{\Bow}:\Bow(X,\cP) \ra \Bow(X',\cP')$ between their cusped spaces.

First we observe that a rough $\lambda$-snowflake map between graphs (i.e.\ a map with \eqref{eq:rough-snowflake} holding) extends to
a rough $\lambda$-similarity between the corresponding horoballs (i.e.\ a map with \eqref{eq:rough-similarity} holding).
\begin{lemma}\label{lem:snowflake-into-horoball}
	Suppose $P$ and $P'$ are coarsely connected metric spaces and $f:P \ra P'$ satisfies
	\begin{equation}\label{eq:rough-snowflake}
		\frac{1}{C} d_P(x,y)^\lambda -C \leq d_{P'}(f(x),f(y)) 
		\leq C d_P(x,y)^\lambda+C.
	\end{equation}
	Consider admissible horoballs $\cH(P)$ and $\cH(P')$ with corresponding geodesic rays $\{\gamma_x\}_{x\in P\cap\cH(P)}$ and $\{\gamma'_x\}_{x\in P'\cap\cH(P')}$.
	Define a map $\bar f: \cH(P) \ra \cH(P')$ by setting $\bar f(p) = \gamma'_{x'}(\lambda m)$ where 
	for each $p \in \cH(P)$ we choose $x \in P\cap\cH(P)$ and $m \geq 0$ with $d_{\cH(P)}(p,\gamma_{x}(m)) \leq C$, and $x'\in P'$ with $d_{P'}(f(x),x') \leq C$.
	Then $\bar f$ is a quasi-isometry, in fact it is a rough $\lambda$-similarity: there exists $C'$ so that for all $p,q \in \cH(P)$,
	\begin{equation}
		\label{eq:rough-similarity}
		d_{\cH(P')}(\bar f(p), \bar f(q)) \approx_{C'} \lambda\; d_{\cH(P)}(p,q)
	\end{equation}
\end{lemma}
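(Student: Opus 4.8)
The plan is to verify the rough-similarity estimate \eqref{eq:rough-similarity} directly; since $\lambda>0$, this in particular makes $\bar f$ a quasi-isometric embedding, as claimed. The only inputs are the horoball distance formula Definition~\ref{def:admissible-horoballs}(4), applied once in $\cH(P)$ and once in $\cH(P')$, the density property Definition~\ref{def:admissible-horoballs}(2), the snowflake hypothesis \eqref{eq:rough-snowflake}, and one elementary logarithm inequality.

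First I would reduce to points lying on the chosen rays. By Definition~\ref{def:admissible-horoballs}(2) every point of $\cH(P)$ is within $C$ of some $\gamma_x(m)$, and I would check that $\bar f$ is coarsely well defined: if $d_{\cH(P)}(p,\gamma_x(m))\le C$ and $d_{\cH(P)}(p,\gamma_y(n))\le C$, then $d_{\cH(P)}(\gamma_x(m),\gamma_y(n))\le 2C$, so Definition~\ref{def:admissible-horoballs}(4) forces both $|m-n|$ and $d_P(x,y)e^{-\max\{m,n\}}$ to be bounded, whence the computation below (which uses only \eqref{eq:rough-snowflake} and Definition~\ref{def:admissible-horoballs}(4) in $\cH(P')$) shows $d_{\cH(P')}(\gamma'_{x'}(\lambda m),\gamma'_{y'}(\lambda n))$ is bounded. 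Since each of $p,q$ lies within $C$ of its representative and $d_{\cH(P)}(p,q)\approx d_{\cH(P)}(\gamma_x(m),\gamma_y(n))$, it then suffices to prove \eqref{eq:rough-similarity} for $p=\gamma_x(m)$, $q=\gamma_y(n)$ with $x,y\in P\cap\cH(P)$, $m,n\ge 0$, and $x',y'\in P'\cap\cH(P')$ chosen with $d_{P'}(f(x),x'),d_{P'}(f(y),y')\le C$.

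Next I would carry out the computation. Applying Definition~\ref{def:admissible-horoballs}(4) in $\cH(P')$,
\[
 d_{\cH(P')}\!\bigl(\gamma'_{x'}(\lambda m),\gamma'_{y'}(\lambda n)\bigr) \approx_C 2\log\!\bigl(d_{P'}(x',y')\,e^{-\lambda\max\{m,n\}}+1\bigr)+\lambda|m-n|.
\]
Since $|d_{P'}(x',y')-d_{P'}(f(x),f(y))|\le 2C$ and $e^{-\lambda\max\{m,n\}}\le 1$, replacing $d_{P'}(x',y')$ by $d_{P'}(f(x),f(y))$ changes the argument of the logarithm by a bounded multiplicative factor. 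Writing $u=d_P(x,y)e^{-\max\{m,n\}}\ge 0$ and $v=d_{P'}(f(x),f(y))e^{-\lambda\max\{m,n\}}\ge 0$, the snowflake inequality \eqref{eq:rough-snowflake} gives $\tfrac1C u^{\lambda}-C\le v\le C u^{\lambda}+C$. So the right-hand side above is, up to a constant depending only on $C$ and $\lambda$, equal to $2\lambda\log(u+1)+\lambda|m-n| = \lambda\bigl(2\log(u+1)+|m-n|\bigr)$, which by Definition~\ref{def:admissible-horoballs}(4) in $\cH(P)$ is $\approx_{\lambda C}\lambda\, d_{\cH(P)}(\gamma_x(m),\gamma_y(n))$. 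Assembling these comparisons, with all additive constants depending only on $C$, $\lambda$ and the horoball data, yields \eqref{eq:rough-similarity}.

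The one point needing an explicit argument is that $\tfrac1C u^{\lambda}-C\le v\le Cu^{\lambda}+C$ implies $\log(v+1)\approx\lambda\log(u+1)$ with additive error depending only on $C,\lambda$: for $u$ below a threshold $u_0=u_0(C,\lambda)$ both sides are bounded, while for $u\ge u_0$ one has $v\ge\tfrac1{2C}u^{\lambda}$ and $v+1\le (2C+1)u^{\lambda}$, so $\log(v+1)=\lambda\log u+O(1)=\lambda\log(u+1)+O(1)$. Everything else is bookkeeping — tracking that the various $\approx$'s compose with uniform constants, and running the well-definedness step before the main estimate — so I do not expect a genuine obstacle; the content is entirely in combining the horoball distance formula with the snowflake estimate.
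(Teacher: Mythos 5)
Your argument is correct and is essentially the paper's own proof: both apply Definition~\ref{def:admissible-horoballs}(4) once in $\cH(P)$ and once in $\cH(P')$, feed in the snowflake bound \eqref{eq:rough-snowflake}, and use the elementary comparison $\log(d^{\lambda}e^{-\lambda M}+1)\approx_{\lambda,C}\lambda\log(de^{-M}+1)$; you simply spell out the well-definedness of $\bar f$, the replacement of $x',y'$ by $f(x),f(y)$, and the threshold argument for the logarithm, all of which the paper leaves implicit. The only item you do not reproduce is the paper's one-sentence remark that $\bar f$ is coarsely onto (relevant to the word ``quasi-isometry'' as opposed to ``quasi-isometric embedding''), but the rough-similarity estimate \eqref{eq:rough-similarity}, which is what you prove and what is used later, is the substantive content of the lemma.
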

\begin{proof}
	By hyperbolicity and Definition~\ref{def:admissible-horoballs}(2) the map $\bar f$ is coarsely onto.
	For $p,q \in \cH(P)$, suppose we have  $d_{\cH(P)}(p,\gamma_x(m))\leq C$ and $d_{\cH(P)}(q,\gamma_y(n)) \leq C$ with $\bar f(p)=\gamma'_{x'}(\lambda m), \bar f(q)=\gamma'_{y'}(\lambda n)$, $d_{P'}(f(x),x')\leq C$ and $d_{P'}(f(y),y')\leq C$.
	Then by Definition~\ref{def:admissible-horoballs}(4) we see that
	\begin{align*}
		 d_{\cH(P')}(\bar f(p),\bar f(q))
		 & \approx
		 2 \log \big(d_{P'}(x',y') e^{-\max \{\lambda m, \lambda n \}} +1\big) +
			\big|  \lambda m - \lambda n \big|
		\\ & \approx 2 \log \big(d_{P}(x,y)^\lambda e^{- \lambda \max \{m,n\}}
			+1\big) + \lambda | m - n |
		\\ & \approx \lambda \Big( 2 \log \big(d_{P}(x,y)e^{- \max \{m,n\}}
			+1\big) + | m - n | \Big)
		\\ & \approx \lambda\; d_{\cH(P)}\big(p,q\big).\qedhere
	\end{align*}
\end{proof}

We now build $f_\Bow$.
\begin{construction}\label{constr:bowditch-extension}
Since $f:X \ra X'$ coarsely respects peripherals, there exists $C >0$ so that
for all $P \in \cP$ there exists $P' \in \cP'$ with $f(P) \subset N_C(P')$.
(Since $f(P)$ is unbounded, the choice of $P'$ is unique.)
So after adjusting $f$ by a bounded distance, we have a rough snowflake map
from $P$ to $P'$.
We then use Lemma~\ref{lem:snowflake-into-horoball} to define 
a map $f_\Bow$ from $\cH(P) \subset \Bow(X,\cP)$ to $\cH(P') \subset \Bow(X',\cP')$.
These extensions combine to define $f_\Bow:\Bow(X,\cP) \ra \Bow(X',\cP')$.
\end{construction}

\begin{proposition}\label{prop:fbow:qi}
	Let $(X,\cP), (X',\cP'), f:X \to X'$ be given by Theorem~\ref{thm:inside-to-boundary}.
	Then $f_\Bow$ as given by Construction~\ref{constr:bowditch-extension} is a quasi-isometric embedding.
\end{proposition}

Towards proving the proposition, we start with the following.

\begin{lemma}\label{lem:fbow:unifprop}
	Under the assumptions of Proposition~\ref{prop:fbow:qi}, $f_\Bow$ is uniformly proper.
\end{lemma}
\begin{proof}
	We know that $f$ is uniformly proper, 
	and $f_\Bow$ restricted to the horoballs is uniformly proper,
	so $f_\Bow$ is also uniformly proper. 
	
	In slightly more detail, suppose $d_{\Bow(X',\cP')}(f_\Bow(x), f_\Bow(y)) \leq C$
	for some $x,y \in X$ and large $C$.
	
	From the definition of $f_\Bow$, if $f_\Bow$ maps points into a horoball
	$\cH(P'), P' \in \cP'$, then these points come from a horoball $\cH(P), P \in \cP$.
The choice of $P$ is unique: 
	for any such $P$, we have that $f(P)$ is unbounded and contained in a neighbourhood of
	$P'$, so $f^{-1}(N_{C'}(P'))$ is also unbounded for suitable $C'$.
	Since $f$ coarsely respects peripherals, $f^{-1}(N_{C'}(P'))$ is contained in a neighbourhood of $P$, and therefore the choice of $P$ is unique.
	
	If $f_\Bow(x)$ is in a horoball $P' \in \cP'$, 
	and $d(f_\Bow(x), \partial P') \geq 2C$,
	then $f_\Bow(x)$ and $f_\Bow(y)$ both lie within the image of a 
	single horoball $\cH(P), P \in \cP$.
	Since $f_\Bow$ restricted to $\cH(P)$ is uniformly proper by 
	Lemma~\ref{lem:snowflake-into-horoball}, $d_{\Bow(X,\cP)}(x,y)$ is bounded
	by a uniform constant depending on $C$.
	
	So we are reduced to the case that $f_\Bow(x), f_\Bow(y)$ lie within $2C$ of 
	$X' \subset \Bow(X',\cP')$, and so $x$ and $y$ both lie within $C'$ of $X \subset \Bow(X,\cP)$,
	for suitable $C'$.
	Since $X$ is uniformly distorted inside $\Bow(X,\cP)$ (Remark~\ref{rmk:bow-inclusion-unif-prop})
	we have that $f_\Bow$ restricted to $X$ is uniformly proper, and so
	$d_{\Bow(X,\cP)}(x,y)$ is uniformly bounded depending on $C$.
\end{proof}

\begin{proof}[Proof of Proposition~\ref{prop:fbow:qi}]
	Since $f$ is coarsely Lipschitz and each horoball extension is coarsely Lipschitz,
	$f_\Bow$ is coarsely Lipschitz.
	
	We begin by considering points $x,y \in X$, with the goal of showing that $f_\Bow$ is a quasi-isometry (with uniform constants) when restricted to any geodesic $[x,y] \subset \Bow(X,\cP)$.
	We will use that $\trans(x,y)$ coarsely coincides (in $\Bow(X,\cP)$)
	with the geodesic $[x,y]$ from $x$ to $y$ in $\Bow(X,\cP)$ intersected with $X \subset \Bow(X,\cP)$; for groups this follows from \cite[Propositions 7.9 and 8.13]{Hr-relqconv} and for general space from \cite[Proposition 4.9]{Si-metrrh}.
	
	By Proposition~\ref{prop:transhauss} the image of $\trans(x,y)$ under $f$ coarsely coincides with $\trans(f(x),f(y)) \subset X'$.
	So $f_\Bow(\trans(x,y))$ is contained in a neighbourhood of a geodesic 
	$[f(x),f(y)] \subset \Bow(X',\cP')$, again by the relation between geodesics in $X'$ and $\Bow(X',\cP')$ explained above.
	
	On the other hand, by Construction~\ref{constr:bowditch-extension} any maximal subgeodesic $\alpha$ of $[x,y]$ contained in a horoball
	is mapped to a quasi-geodesic in the corresponding horoball in $\Bow(X',\cP')$
	with endpoints close to $[f(x),f(y)]$. In particular $f(\alpha)$ lies close to $[f(x),f(y)]$.
	
	Combined, we get that $f([x,y])$ lies in a uniformly bounded neighbourhood of $[f(x),f(y)]$. 
	We can then think of a perturbation of $f_\Bow|_{[x,y]}$ as a map $g:[x,y] \ra [f(x),f(y)]$.
	This map $g$ is coarsely surjective, 
	since the endpoints of $[f(x),f(y)]$ are in the image and $f$ is coarsely Lipschitz.
	Because $f_\Bow$ is uniformly proper (Lemma~\ref{lem:fbow:unifprop}), $g$ is also
	uniformly proper.  So by Lemma~\ref{lem:unifprop-coarsesurj} $g$ is a quasi-isometric
	embedding, quantitatively.
	
	Thus $f_\Bow$ is a quasi-isometry when restricted to any $[x,y] \subset \Bow(X,\cP)$ 
	for arbitrary $x,y \in X \subset \Bow(X,\cP)$, with uniform constants.
	
	If we have, say, $x \in X$ and $y \in \Bow(X,\cP) \setminus X$, then $y \in \cH(P)$ for some
	$P \in \cP$.
	The geodesic $[x,y]$ passes within bounded distance of $\pi_P(x) \in P$ by Lemma~\ref{transprop}(3); let 
	$z \in \cH(P)$ be the last point of $[x,y] \cap X$.
	Again, $d_X(\pi_P(x),z)$ is uniformly bounded.
	
	Choose some $z' \in P\cap \cH(P)$ so that $d(y,\gamma_{z'})\leq C$ where $\{\gamma_x\}_{x\in P\cap\cH(P)}$ are the geodesic rays in the horoball (Definition~\ref{def:admissible-horoballs}).
	Let $\gam$ be the geodesic segment $[x,y] \cap \cH(P)$ from $z$ to $y$,
	and let $w \in \cH(P)$ be a quasi-centre of $z,y,a_P$.
	By the construction of the horoballs, up to bounded distances $\gamma$ follows $\gamma_{z}$ from $z$ to $w$, then follows $\gamma_{z'}$ from $w$ back to $y$.

	Thus $y$ lies close to a geodesic from $z$ to $z'$.
	By the hyperbolicity of $\Bow(X,\cP)$, there is a constant $C_1$ so that if $d(y,z)\geq C_1$ then both $z$ and $y$ are within bounded distance of a geodesic $[x,z']\subset \Bow(X,\cP)$. 
	As both $x$ and $z'$ are in $X \subset \Bow(X,\cP)$, the first case considered
	above shows that $f_\Bow$ restricted to $[x,z']$ is a quasi-isometry,
	and hence so is $f_\Bow$ restricted to $[x,y]$.
	
	If $y$ is within $C_1$ from $z$, then $f_\Bow$ restricted to $[x,y]$ is within bounded distance of $f_\Bow$ restricted to $[x,z]$ which again is a quasi-isometry by the argument of the first case.

	Similar arguments apply to show that $f_\Bow|_{[x,y]}$ is a quasi-isometry when
	$x$ and $y$ lie in different horoballs in $\Bow(X,\cP)$.  If $x$ and $y$
	lie in the same horoball, the conclusion follows from 
	Lemma~\ref{lem:snowflake-into-horoball}.
\end{proof}	

\subsection{Extending to boundaries}
Having extended $f$ to a quasi-isometric embedding $f_\Bow:\Bow(X,\cP)\ra \Bow(X',\cP')$,
we now show that the boundary map is controlled.

Recall that $Z=\bdry (X, \cP)$ comes with a shadow decoration:
$\rho$ is a visual metric on $\bdry(X, \cP)$, with visual parameter
$\eps$ and constant $C$, and
$\cB = \{ (a_P, r_P)\}_{P \in \cP}$ is the collection of balls with
for each $P \in \cP$, $a_P \in \bdry(X,\cP)$ is the boundary point corresponding to some (any) geodesic ray $\{\gamma_x\}_{x\in P\cap \cH(P)}$ in the horoball $\cH(P) \subset \Bow(X,\cP)$,
and we set $r_P = e^{-\eps d_{\Bow(X,\cP)}(o,P)} \in (0,\infty)$.

Likewise, $\bdry (X', \cP')$ comes with the analogous data 
$\rho', \eps', C', \cB' = \{ (a_{P'},r_{P'})\}_{P' \in \cP'}$.
It makes sense to ask whether $\bdry f_{\Bow}$
is shadow-respecting with respect to these shadow decorations,
and that is what we show in the following proposition.
\begin{proposition}\label{prop:bdry-map-control}
	Let $(X,\cP), (X',\cP'), f:X \to X'$ be given by Theorem~\ref{thm:inside-to-boundary}, and $f_\Bow$ by Construction~\ref{constr:bowditch-extension}.
	Then the induced map $\bdry f_\Bow:\bdry(X,\cP) \ra \bdry(X',\cP')$
	is a shadow-respecting quasisymmetric embedding, quantitatively.
	If $f$ is a quasi-isometry, then $\bdry f_\Bow$ asymptotically
	$\frac{\eps'}{\eps}$-snowflakes, where as above $\epsilon, \epsilon'$ are the visual parameters of the metrics on $\bdry(X,\cP), \bdry(X',\cP')$.
\end{proposition}
\begin{proof}
	Since $f_\Bow$ is a quasi-isometric embedding (Proposition~\ref{prop:fbow:qi})
	by \cite[Theorem 6.5(2)]{BS-00-gro-hyp-embed}, 
	$f_\Bow$ extends to a (power) quasisymmetric embedding
	$\bdry f_\Bow:\bdry(X,\cP) \ra \bdry(X',\cP')$.
	We may assume $f_\Bow(o)=o'$, by allowing our constants to depend on $d'(f_\Bow(o),o')$; changing the basepoint in the image space introduces controlled multiplicative errors in $\rho'$ and $r_{P'}$.
	We prove the properties of Definition~\ref{def:shadow-qs} in turn.

	\vspace{0.5ex}{\noindent \emph{Claim (1):}}

	First note that for any horoball $\cH(P) \subset \Bow(X,\cP)$,
	$f_\Bow$ is defined to send geodesic rays $\{\gamma_{x}\}$ in $\cH(P)$
	to geodesic rays $\{\gamma'_{x'}\}$ in a horoball $\cH(P') \subset \Bow(X',\cP')$,
	therefore $\bdry f_\Bow$ maps $a_P$ to $a_{P'}$.

	For simplicity, we write $d=d_{\Bow(X,\cP)}$, and likewise $d'=d_{\Bow(X',\cP')}$.

	Assume we have $(a_P,r_P)\in \cB$ and $b\in \bdry(X,\cP)=\bdry \Bow(X,\cP)$, with $\rho(a_P,b)\leq r_P$.
	Let $p$ be a quasi-centre of $o,a_P,b$ and let $x$ be a closest point of $P$ to $o$. 
	Let $b' = \bdry f_{\Bow}(b)$, $p'=f_\Bow(p)$ and $x'=f_\Bow(x)=f(x)$.

	Let $P'\in \cP'$ be the peripheral space corresponding to $f(P)$, with $a_{P'}\in \bdry(X,\cP)$ the limit point of $\cH(P')\subset \Bow(X',\cP')$.
	Since $f(P)$ is in a bounded neighbourhood of $P'$, $x'=f(x)$ is close to $P'$ and by the Morse lemma also close to $[o',a_{P'})$, thus as $\cH(P')$ is quasiconvex, $x'$ is bounded distance from a closest point in $P'$ to $o'$.

	Since $\rho(a_P,b) \leq r_P$, we have $d(o,p)\approx (a_P|b) \gtrsim d(o,x)$.
	Since $p,x$ are both close to a geodesic $[o,a_P)$, 
	which is mapped by $f_\Bow$ within bounded distance of $[o',a_{P'})$,
	we have $d'(o',p') \gtrsim d'(o',x')$.
	By Lemma~\ref{splittosplit} $p'$ is approximately a quasi-centre for $o',a_{P'},b'$.
	Thus $(a_{P'}|b')\approx d'(o',p') \gtrsim d'(o',x')\approx d'(o',P')$ so $\rho'(a_{P'},b') \preceq r_{P'}$ as desired.

	\vspace{0.5ex}{\noindent \emph{Claim (2):}} 

	Suppose $\rho(a_P,b) \geq r_P$ for some $(a_P,r_P) \in \cB, b\in \bdry(X,\cP)$.  
	Then the argument of Claim (1) applies, except this time the quasi-centre $p$ of $o,a_P,b$ has $d(o,p) \approx (a_P|b) \lesssim d(o,x)$ and so $(a_{P'}|b')\approx d'(o',p') \lesssim d'(o',x') \approx d'(o',P')$ and $\rho'(a_{P'},b') \succeq r_{P'}$.

	Before completing the proof of (2), we state a useful lemma.
	\begin{lemma}\label{lem:bdry-dist-to-parabolic}
		Suppose $X$ is a geodesic metric space hyperbolic relative to $\cP$ and $\bdry(X,\cP)$ has a metric with visual parameter $\epsilon$.
		Suppose we have $P \in \cP$ and $b \in \bdry \Bow(X,\cP)$ with $b\neq a_P$.
		Let $p$ denote a quasi-centre of $o,a_P,b$, 
		and let $x \in P \subset \Bow(X,\cP)$ denote the closest point in $P$ to $o$.

		There exists $C>0$ so that if $\rho(b,a_P)\leq r_P/C$,
		we have $d(o,p) \geq d(o,x)$, any geodesic $[o,b)$ must intersect $P$,
		and
		\[
			\rho(a_P,b) \asymp r_P \cdot e^{-\eps d(x,q)/2},
		\]
		where $q$ is the last point in $P$ on $[o,b)$.
	\end{lemma}
	\begin{proof}
		Since $\log \rho(b,a_P) \approx -\eps (b|a_P)$ and
		$\log r_P \approx -\eps d(o,P)=-\eps d(o,x)$,
		for any $C'$ any $C$ sufficiently large will have $(b|a_P) \geq d(o,x)+C'$.
		Since $(b|a_P) \approx d(o,p)$, choosing $C'$ large enough ensures that $d(o,p) \geq d(o,x)$.
		
		Denote by $\beta_{a_P}(\cdot, o)$ the Busemann function corresponding to $a_P$ and $o$. 
		As $P$ is approximately a level set of $\beta_{a_P}(\cdot, o)$,
		if $q$ is the last point in 
		a geodesic $[o,b)$ to meet $P$ then 
		we have $\beta_{a_P}(q,o) \approx \beta_{a_P}(x,o) \approx -d(o,x)$.
		On the other hand, using that $p$ is the quasi-centre of $o,a_P,b$,
		we have $\beta_{a_P}(q,o) \approx d(q,p)-d(p,x)-d(x,o)$.

		Combined, these show that $d(p,q) \approx d(p,x)$.
		Since $p$ approximately lies on a geodesic from $x$ to $q$, we have
		$d(p,x)\approx d(q,x)/2$.

		Applying this to $\log \rho(b,a_P) \approx -\eps (b|a_P)$,
		we have $(b|a_P) \approx d(o,p) \approx d(o,x)+d(x,p) \approx d(o,x)+d(q,x)/2$,
		and the conclusion follows since $-\eps d(o,x)\approx \log r_P$.
	\end{proof}

	Suppose now we have $b \in \bdry \Bow(X,\cP)$, and $P' \in \cP'$ with $a_{P'} \notin h(Z)$.
	Let $p'$ be a quasi-centre of $o',a_{P'}, b'=\bdry f_\Bow(b)$, and let $x'$ be a closest point in $P'$ to $o'$.
	Then, in view of Lemma~\ref{lem:bdry-dist-to-parabolic}, we have either $\rho'(a_{P'},b') \geq r_{P'}/C$, proving (2), or
	$\rho'(a_{P'},b') \asymp r_{P'} e^{-\epsilon' d'(x',q')/2}$,
	where $q'$ is the last point in a geodesic $[o',b')$ to meet $P'$.

	Suppose that the latter holds.
	Since $[o',b')$ and $f_\Bow([o,b))$ are at bounded Hausdorff distance,  there exist $x, q \in \Bow(X,\cP)$ with $d'(f_\Bow(x),x')$ and $d'(f_\Bow(q),q')$ uniformly bounded.
	Since $f_\Bow$ sends points far in horoballs in $\Bow(X,\cP)$ far into horoballs in $\Bow(X,\cP)$, and $x',q' \in X'\subset \Bow(X',\cP')$, we may assume that $x$ and $q$ lie in $X\subset\Bow(X,\cP)$ by moving them a bounded distance.

	Thus for a suitable $C'$, by Lemma~\ref{lem:bdry-dist-to-parabolic}, 
	\[ \diam \left( f(X) \cap N_{C'}(P')\right) \gtrsim d'(x',q') \approx \frac{2}{\epsilon'}\log \left( \frac{r_{P'}}{\rho'(a_{P'},b')}\right). \]

	If, by contradiction,  $r_{P'}/\rho'(a_{P'},b')>A$ for some large $A$ then by the uniform properness and coarsely respecting peripherals properties of $f_\Bow$ and $f$ there exists $P \in \cP$ with $x,q \in f^{-1}(N_{C'}(P')) \subset N_{C''}(P)$.
	In the definition of $f_\Bow$, we extend $f$ by mapping $\cH(P)$ to some $\cH(\hat{P}'), \hat{P}' \in \cP'$.  But as $x',q'$ are in a bounded neighbourhood of $\hat{P}'$ also, by hyperbolicity we have $\hat{P}' =P'$ assuming $A$ is large enough.
	This is a contradiction, so $\rho'(a_{P'},b')\geq r_{P'}/A$.
	
	\vspace{0.5ex}{\noindent \emph{Claim (3):}} 

	Suppose, as hypothesised, that we have $(a,r)=(a_P,r_P) \in \cB$
	and points $b,c \in \bdry \Bow(X,\cP)$ so that
	\[
		\left( \frac{\rho(a,b)}{r} \right) \rho(a,b) \leq 
		\rho(b,c) \leq \rho(a,b) \leq r.
	\]
	This implies that
	\begin{equation}\label{eq:two-point-control-inside}
		2(a|b)-d(o,P) \gtrsim (b|c) \gtrsim (a|b) \gtrsim d(o,P).
	\end{equation}
	Let $x \in \Bow(X,\cP)$ be the closest point to $o$ of $P$.
	Let $x_b, x_c$ be the last points of $[o,b),[o,c)$ to meet $P$.
	Let $p_{a,b}$ be a quasi-centre of $o,a,b$, and likewise for other pairs of points.

	By Lemma~\ref{lem:bdry-dist-to-parabolic} and \eqref{eq:two-point-control-inside} we have that
	\[ d(x_b,o) \gtrsim d(p_{b,c},o) \gtrsim d(p_{a,b},o) \gtrsim d(o,P).\]
	
	Let $a',b',c'$, etc., be the images under $f_\Bow$  of the corresponding points in $\Bow(X',\cP')\cup \bdry\Bow(X',\cP')$.
	By Lemma~\ref{splittosplit} we have that $p'_{a,b}$, the image of $p_{a,b}$, is within bounded distance of the quasi-centre
	of $o',a',b'$, and likewise for the other quasi-centres.

	Moreover, $x$ is characterised up to bounded distance as the point of intersection of $P$
	and $[o,a)$.
	Since $f_\Bow$ maps $[o,a)$ within bounded Hausdorff distance of $[o',a')$, and 
	also $P$ close to $P'$, we have that $x'=f_\Bow(x)$ is within bounded distance the intersection
	of $P'$ and $[o',a')$, and thus of the closest point to $o'$ in $P'$.
	Similarly, since $x_b$ is the final point of intersection of $P$ and $[o,b)$,
	it is sent to roughly the final point of intersection of $P'$ and $[o',b')$.

	Putting this together, we have two cases for the configuration of a tree approximating  $o,a,b,c$: in case (i) we have that $[o,c)$ breaks off from $[o,a)$ after $p_{a,b}$,
	while in case (ii) we have that $[o,c)$ breaks off from $[o,b)$ after $p_{a,b}$ but
	before $x_b$. See Figure~\ref{fig:extension1}.
\begin{figure}[h]
   \def\svgwidth{0.7\textwidth}
   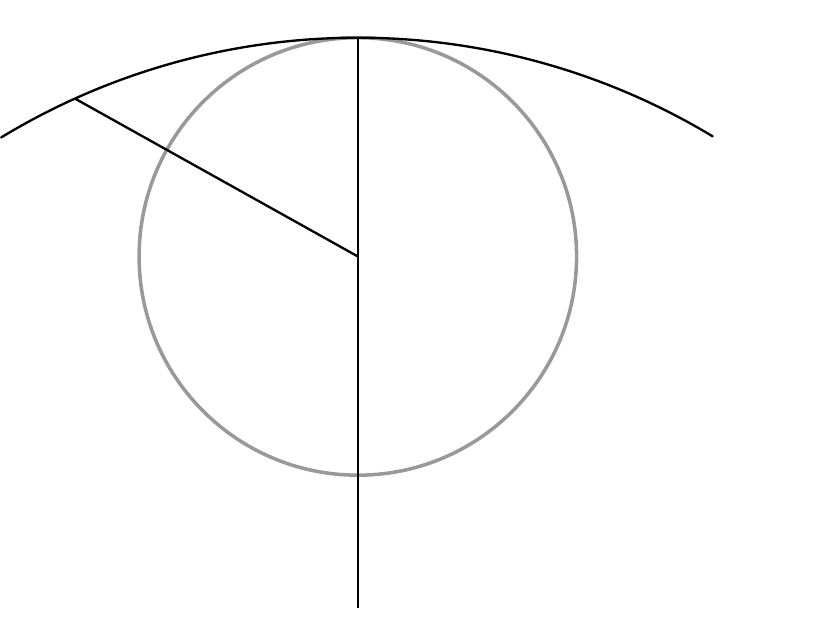
	\caption{Configurations of $a,b,c$}
	\label{fig:extension1}
\end{figure}

	In case (i) we have $p_{a,b} \approx p_{b,c}$, and so $(a|b) \approx (b|c)$,
	thus 
	\[
		\frac{1}{-\eps} \log \frac{\rho(b,c)}{r} \approx (b|c)-d(o,P) 
			\approx \frac12 d(x,x_b),
	\]
	while since this configuration is preserved in the image we also have
	\[
		\frac{1}{-\eps'} \log \frac{\rho'(b',c')}{r'} \approx (b'|c')-d(o',P')
			\approx \frac12 d'(x',x'_b).
	\]
	Now since $f$ is a snowflake on peripherals, there exists $\lambda_P$ so that
	$f$ restricted to $P$ is a rough $\lambda_P$-snowflake map.
	This means that 
	\[
		d_{X'}(x',x'_b) \qi d_X(x,x_b)^{\lambda_P}.
	\]
	(Recall that the notation $A\qi_{\lambda,\mu} B$ means $A/\lambda-\mu\leq B\leq \lambda A+\mu$.) 
	However, distances in $\Bow(X,\cP),\Bow(X',\cP')$ for points in the boundary of horoballs
	are approximately double the logarithm of the corresponding distances in $X,X'$.
	Thus
	\[
		d'(x',x'_b) \approx \lambda_P d(x,x_b).
	\]
	Combining these results, we have
	\begin{equation}\label{eq:two-point-control-outside}
		\frac{\rho'(b',c')}{r'} \asymp \left( \frac{\rho(b,c)}{r} \right)^{\lambda_P \eps'/\eps}.
	\end{equation}

	Case (ii) is similar, but here
	\[
		\frac{1}{-\eps} \log \frac{\rho(b,c)}{r} \approx (b|c)-d(o,P) 
			\approx d(x,x_b)-d(x_b,x_c)/2,
	\]
	and
	\[
		\frac{1}{-\eps'} \log \frac{\rho'(b',c')}{r'} \approx (b'|c')-d(o',P') 
			\approx d'(x',x'_b)-d'(x'_b,x'_c)/2.
	\]
	Since distances between pairs of points in a horoball are again roughly scaled by $\lambda_P$,
	we have $d'(x',x'_b)-d'(x'_b,x'_c)/2 \approx \lambda_P (d(x,x_b)-d(x_b,x_c)/2)$,
	and so \eqref{eq:two-point-control-outside} again holds.

	Finally, observe that if $\lambda_P=1$, e.g. if $f$ is a quasi-isometry, then by \eqref{eq:two-point-control-outside}
	$\bdry f_\Bow$ asymptotically $\frac{\eps'}{\eps}$-snowflakes.
\end{proof}
Before completing the proof of Theorem~\ref{thm:inside-to-boundary}, we note some properties of Definition~\ref{def:snowflake-on-peripherals}.
\begin{proposition}
	\label{prop:category-snowflake-periph}
	If $(X,\cP), (X',\cP'), (X'',\cP'')$ are relatively hyperbolic spaces, and $f:(X,\cP)\to (X',\cP')$ and $g:(X',\cP')\to (X'',\cP'')$ are snowflakes on peripherals then so is the composition $g \circ f: (X,\cP)\to (X'',\cP'')$.
	The identity map on a relatively hyperbolic space is a snowflake on peripherals.  Therefore we have a category where objects are relatively hyperbolic spaces, and morphisms are snowflake on peripheral maps, where we consider $f:X \to X'$ and $g:X\to X'$ equivalent if we have  $\sup_{x\in X} d_{X'}(f(x),g(x)) < \infty$.

	Moreover, if $f:(X,\cP)\to (X',\cP')$ is a snowflake on peripherals and is coarsely onto, then a (any) coarse inverse $g:(X',\cP')\to(X,\cP)$ is also a snowflake on peripherals.  
\end{proposition}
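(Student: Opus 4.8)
The plan is to verify Definitions~\ref{def:poly-subexp-distortion}--\ref{def:snowflake-on-peripherals} for $g\circ f$ directly, deduce the category statement formally, and handle the coarse-inverse claim through the extension to cusped spaces. For the composition: $g\circ f$ is coarsely Lipschitz as a composition of such maps, and uniformly proper since $d_{X''}(gf(x),gf(y))\geq\tau_g(\tau_f(d_X(x,y)))$; substituting $\tau_f(t)=t^{\alpha_f}/C-C$, $\tau_g(t)=t^{\alpha_g}/C-C$ (both monotone) and absorbing the behaviour at small $t$ into the additive constant, this is $\geq d_X(x,y)^{\alpha_f\alpha_g}/C'-C'$, so $g\circ f$ is polynomially distorted. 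It coarsely respects peripherals: condition~(1) of Definition~\ref{def:coarse-respect-periph} follows by composing the two inclusions of peripheral neighbourhoods (pushing $g(N_C(P'))$ into a neighbourhood of the appropriate $P''$ using $g$ coarsely Lipschitz), while~(2) follows because $(g\circ f)^{-1}(N_{C'}(P''))=f^{-1}\big(g^{-1}(N_{C'}(P''))\big)$, and $g^{-1}(N_{C'}(P''))$ is bounded or lies in some $N_{C_1}(P')$, whose $f$-preimage is in turn bounded (uniform properness of $f$) or lies in some $N_{C_2}(P)$. For the snowflake condition, fix an (unbounded, as in this section's standing assumption) $P\in\cP$, let $P'\in\cP'$ be the unique peripheral with $f(P)\subseteq N_C(P')$ and $P''\in\cP''$ the unique one with $g(P')\subseteq N_C(P'')$; replacing $f(x),f(y)$ (for $x,y\in P$) by points of $P'$ within bounded distance and combining the snowflake estimate for $f$ on $P$ with that for $g$ on $P'$ — using subadditivity of $t\mapsto t^\mu$ for $\mu\in(0,1]$ to propagate the $\qi_{C,C}$ errors through the $\lambda^g_{P'}$-th power — yields $d_{X''}(gf(x),gf(y))\ \qi_{C',C'}\ d_X(x,y)^{\lambda^f_P\lambda^g_{P'}}$ with $\lambda^f_P\lambda^g_{P'}\in(0,1]$.

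The identity map is a snowflake on peripherals with all parameters $1$, and if $f\approx f'$ and $g\approx g'$ then $g\circ f\approx g'\circ f'$ (using $g$ coarsely Lipschitz), so composition descends to the bounded-distance equivalence classes and we obtain a category.

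For the coarse-inverse claim, suppose $f$ is a snowflake on peripherals that is coarsely onto, and let $g$ be a coarse inverse (it exists since $f$ is uniformly proper and coarsely onto). First I would show that $f$ extends to a genuine quasi-isometry $f_\Bow:\Bow(X)\to\Bow(X')$: it is a quasi-isometric embedding by Proposition~\ref{prop:fbow:qi}, and it is coarsely onto because coarse surjectivity propagates — since $f$ cannot collapse an unbounded peripheral, every $P'\in\cP'$ receives a $P\in\cP$ with $f(P)\subseteq N_C(P')$, and in fact $f|_P$ is coarsely onto $P'$ (the same argument as in the verification of Definition~\ref{def:coarse-respect-periph}), so by the proof of Lemma~\ref{lem:snowflake-into-horoball} the extension is coarsely onto $\cH(P')$; together with $f$ being coarsely onto $X'$ this gives $f_\Bow$ coarsely onto. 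Then $f_\Bow$ is a quasi-isometry and has a quasi-isometry $g_\Bow:\Bow(X')\to\Bow(X)$ as coarse inverse; $g_\Bow$ restricts on each $\cH(P')$ to a rough $\lambda_P^{-1}$-similarity onto $\cH(P)$ and sends $X'$ into a bounded neighbourhood of $X$, so projecting $g_\Bow|_{X'}$ back to $X$ produces a coarse inverse of $f$ agreeing with $g$ up to bounded distance, and $g|_{P'}$ is a rough $\lambda_P^{-1}$-snowflake onto $P$.

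The main obstacle is then to check that $g$ is coarsely Lipschitz and that its peripheral exponents lie in $(0,1]$; both amount to showing that every $\lambda_P$ equals $1$, i.e.\ that $f$ was already a quasi-isometry. So the heart of the matter is the implication ``coarsely onto snowflake on peripherals $\Rightarrow$ quasi-isometry''. I would prove this by feeding the fact that $f_\Bow$ is a quasi-isometry into an analysis of the transient/deep decompositions of geodesics of $X'$ and their $g$-preimages: a deep component of $[a,b]$ along $P'$ corresponds (via Lemma~\ref{lem:subexp-proj} and Lemma~\ref{transprop}) to one of $[g(a),g(b)]$ along $P$, and its length in $X'$ is distorted by exactly the exponent $\lambda_P$ relative to the length in $X$, while coarse surjectivity pins these lengths to each other — which is possible only if $\lambda_P=1$; on transient parts $f$ is already bi-Lipschitz up to additive error. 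Once $f$ is known to be a quasi-isometry, so is $g$, hence $g$ is trivially a snowflake on peripherals.
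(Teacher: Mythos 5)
Your treatment of the composition, the identity map, and the category structure is correct and essentially fills in the details the paper dismisses as ``easily seen''; no issue there.

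The coarse-inverse part, however, has a genuine gap, and it stems from overlooking a tool you already half-use. You note that a coarse inverse $g$ exists ``since $f$ is uniformly proper and coarsely onto'', but you do not notice that exactly these hypotheses finish the argument: a snowflake on peripherals is polynomially distorted, hence coarsely Lipschitz and uniformly proper, $X'$ is geodesic, and $f$ is assumed coarsely onto, so Lemma~\ref{lem:unifprop-coarsesurj} says immediately that $f$ is a quasi-isometry. This is the paper's entire proof: once $f$ is a quasi-isometry, so is any coarse inverse $g$, and $g$ then satisfies the snowflake-on-peripherals conditions with $\lambda=1$ (the peripheral correspondence for $g$ following from Definition~\ref{def:coarse-respect-periph} for $f$ together with the quasi-inverse relation, much as in your middle paragraph). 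Instead you declare the implication ``coarsely onto snowflake on peripherals $\Rightarrow$ quasi-isometry'' to be the heart of the matter and propose to prove it via the cusped-space extension of Proposition~\ref{prop:fbow:qi} and a transient/deep-component analysis. The decisive step there --- ``coarse surjectivity pins these lengths to each other, which is possible only if $\lambda_P=1$'' --- is asserted, not argued. Global coarse surjectivity of $f$ does not by itself give a lower bound $d_{X'}(f(x),f(y))\succeq d_X(x,y)$ on deep components; to extract such a bound from surjectivity one needs the coarse inverse to be coarsely Lipschitz (or must redo the chaining argument that proves Lemma~\ref{lem:unifprop-coarsesurj}), and you explicitly list ``$g$ is coarsely Lipschitz'' as one of the things still to be checked, so as sketched the argument is circular. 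Similarly, the intermediate claim that $g_\Bow$ restricts on each $\cH(P')$ to a rough $\lambda_P^{-1}$-similarity presupposes control of $g$ that is only available after one knows $\lambda_P=1$. The statement you reduced to is true, but your proposed proof of it is not a proof; replacing that whole section by a single application of Lemma~\ref{lem:unifprop-coarsesurj} closes the gap.
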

\begin{proof}
	The properties of being polynomially distorted, coarsely respecting peripherals and being a snowflake on peripherals (which implies the former two properties), are all easily seen to be preserved under composition.

	The only subtle point is the coarse inverse: if $f:(X,\cP)\to (X',\cP')$ is a snowflake on peripheral map that is coarsely surjective, then by Lemma~\ref{lem:unifprop-coarsesurj} $f$ is a quasi-isometry, and so any coarse inverse will also satisfy the snowflake on peripheral conditions (with $\lambda =1$).
\end{proof}
We now conclude our study of the properties of $\bdry (\cdot)_\Bow$.
\begin{proof}[Proof of Theorem~\ref{thm:inside-to-boundary}]
	The extension of $f$ to a quasi-isometry $f_\Bow$ is given by Proposition~\ref{prop:fbow:qi}.  That $\bdry f_\Bow$ is a shadow-respecting quasisymmetric embedding, and asymptotically $\frac{\epsilon'}{\epsilon}$-snowflakes if $f$ is a quasi-isometry, follows from Proposition~\ref{prop:bdry-map-control}.

	It remains to check the functorial properties.
	For $\id_X :(X,\cP) \to (X,\cP)$, the extension into horoballs satisfies $(\id_X)_\Bow = \id_\Bow$ and so $\bdry (\id_X)_\Bow = \id_{\bdry (X,\cP)}$.
	Suppose $f:(X,\cP)\to (X',\cP')$ and $g:(X',\cP')\to (X'',\cP'')$ are snowflake on peripheral maps.
	The construction of the extensions into cusped spaces implies that $g_\Bow \circ f_\Bow$ and $(g\circ f)_\Bow$ are equal on $X \subset \Bow(X,\cP)$.
	Given a peripheral set $P \in \cP$, coarsely respecting peripherals implies that there is $P' \in \cP'$ with $f(P)$ coarsely in $P'$, and $P'' \in \cP''$ with $g(P')$ coarsely in $P''$.
	Suppose $f$ is a rough $\lambda$-snowflake on $P$, and $g$ is a rough $\lambda'$-snowflake on $P'$; consequently $g \circ f$ is a rough $\lambda\lambda'$-snowflake on $P$.
	For $x \in P$, let $\gamma_x:[0,\infty)\to \Bow(X,\cP)$, $\gamma_x':[0,\infty)\to\Bow(X',\cP')$ and $\gamma_x'':[0,\infty)\to\Bow(X'',\cP'')$ be unit-speed geodesic rays from $x$ to $a_P$, from $f(x)$ to $a_{P'}$, and from $g(f(x))$ to $a_{P''}$ respectively.
	Construction~\ref{constr:bowditch-extension} and hyperbolicity imply that for any $t\geq 0$, we have uniform bounds on $d_{\Bow(X',\cP')}(f_\Bow(\gamma_x(t)),\gamma_x'(\lambda t))$, on $d_{\Bow(X'',\cP'')}(g_\Bow(\gamma_x'(t)),\gamma_x''(\lambda' t))$, and on $d_{\Bow(X'',\cP'')}((g\circ f)_\Bow(\gamma_x(t)),\gamma_x''(\lambda\lambda' t))$.  Consequently, $(g\circ f)_\Bow(x)$ and $g_\Bow\circ f_\Bow(x)$ are at bounded distance from each other.

	Being at bounded distance, the boundary extensions $\bdry (g\circ f)_\Bow$ and $(\bdry g_\Bow)\circ (\bdry f_\Bow)$ are equal.
\end{proof}

\section{Characterisation of boundary extensions}\label{sec:extend-bdry-to-qi}

In this section we prove Theorem \ref{thm:boundary-to-inside}. Namely, given a shadow-respecting $\eta$-quasisymmetric embedding $h: \bdry(X,\cP) \ra \bdry(X',\cP')$ between boundaries of relatively hyperbolic spaces, we  construct a polynomially distorted embedding $\hat h:X \ra X'$ inducing $h$, and we show various further properties of this correspondence.

We fix the notation of Theorem \ref{thm:boundary-to-inside}.
As before, we choose basepoints $o,o'$ of $\Bow(X,\cP),\Bow(X',\cP')$, and denote the metrics on $\Bow(X,\cP), \Bow(X',\cP')$ by $d,d'$ respectively.  

\subsection{Quasi-centre estimates}
We now collect technical lemmas needed for the proof of Theorem~\ref{thm:boundary-to-inside}. 

\begin{lemma}\label{horobdistchar}
Suppose a space $X$ is hyperbolic relative to $\cP$ and is visually complete.  Let $o\in \Bow(X,\cP)$ be a basepoint.
	For any $C'\geq 0$ there exists $C$ with the following property. Let $P\in\cP$, $x\in\Bow(X,\cP)$, and $s$ be a quasi-centre of $a_P,x,o$.
\begin{enumerate}
	\item If $x\in N_{C'}(\cH(P))$, then
 $$d(o,x)-d(o,P)\lesssim_C  2(d(o,s)-d(o,P));$$
 if moreover $x\in N_{C'}(P)$ then we can replace $\lesssim_C$ by $\approx_C$.
 \item If $x\in \Bow(X,\cP)$ satisfies for some $K$ that 
 $$d(o,x)-d(o,P)\lesssim_K 2(d(o,s)-d(o,P)),$$
 then $x\in N_{C+K}(\cH(P))$.
		Moreover, if $d(o,x)-d(o,P)\approx_K 2(d(o,s)-d(o,P))$ then $x\in N_{C+K}(P)$.
\end{enumerate}
\end{lemma}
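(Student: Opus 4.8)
The statement to prove is Lemma~\ref{horobdistchar}, which gives a characterization of proximity to a horoball $\cH(P)$ (and to $P$ itself) in terms of how deep into the cusp the quasi-centre $s$ of $a_P, x, o$ sits. I would organize the proof around the basic picture of geodesic triangles in a hyperbolic space together with the rough-isometry description of horoballs from Definition~\ref{def:admissible-horoballs}(4), plus Remark~\ref{rmk:horoballs-hn} giving $\beta_{a_P}(x,o)\approx -d(o,P)$ for $x \in P$.

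**Part (1).** The plan is to work in a tree approximation of the geodesic triangle with vertices $a_P$, $x$, $o$. Let $s$ be the quasi-centre, and note $d(o,s)\approx (a_P \mid x)_o$. First consider $x \in N_{C'}(P)$. Moving $x$ a bounded amount, we may assume $x \in P \cap \cH(P)$. Since both $x$ and the closest point $p$ of $P$ to $o$ lie in $P$, and a geodesic ray $\gamma_x$ and $\gamma_p$ from each towards $a_P$ exist (Definition~\ref{def:admissible-horoballs}(1)), the quasi-centre $s$ of $a_P,x,o$ lies (coarsely) where the ray $[o,a_P)$, which passes near $p$, and a geodesic $[o,x]$ split apart. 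Applying Definition~\ref{def:admissible-horoballs}(4) to estimate $d(x, \gamma_p(m))$ for the relevant $m$, one reads off that $d(o,x) - d(o,P)$ and $2(d(o,s) - d(o,P))$ agree up to an additive constant — this is essentially the statement that going from $o$ down the cusp to depth $m$ and back out to $x$ costs roughly $2m$ plus the horizontal cost, and the horizontal cost is small exactly when $m$ is chosen optimally, which is what the quasi-centre does. For the one-sided statement with $x \in N_{C'}(\cH(P))$ but not near $P$, the point $x$ is deep in the horoball; then $d(o,x) - d(o,P)$ can only be \emph{larger} than (rather than comparable to) the horizontal contribution, giving the inequality $\lesssim_C$ rather than $\approx_C$. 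I would make this precise by comparing $d(o,x)$ with $d(o, \gamma_y(n))$ where $\gamma_y(n)$ is a nearby point on the family of rays, and using that $d(o, s)$ still coarsely records the branch point.

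**Part (2) and the main obstacle.** Part (2) is the converse and is where I expect the real work. Suppose $d(o,x) - d(o,P) \lesssim_K 2(d(o,s) - d(o,P))$ where $s$ is the quasi-centre of $a_P, x, o$; we want $x \in N_{C+K}(\cH(P))$. The natural approach: in the tree approximation, decompose $d(o,x) = d(o,s) + d(s,x)$ and $d(o,P) \approx d(o,s) + d(s,a_P\text{-ray})$-type quantities using that $[o,a_P)$ passes near $P$'s closest point; rearranging the hypothesis forces $d(s,x)$ to be controlled by the ``depth'' $d(o,s) - d(o,P)$ of $s$ below the horosphere through $P$. Since $s$ lies within $2\delta$ of a geodesic from $a_P$, it already lies close to the horoball $\cH(P)$ (here I'd invoke quasiconvexity of horoballs, as used in Lemma~\ref{lem:separationhoroballs}, and the Busemann-function estimate $\beta_{a_P}(s,o)\approx -d(o,s)$); then $x$, being close to $s$ \emph{relative to how far below the horosphere $s$ is}, cannot have travelled back out of the horoball, so $x \in N_{C+K}(\cH(P))$. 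For the ``moreover'' clause, if additionally $d(o,x) - d(o,P) \approx_K 2(d(o,s)-d(o,P))$, then the two-sided estimate pins $d(s,x)$ close to $d(o,s) - d(o,P)$ exactly, i.e. $x$ has climbed essentially all the way back to the depth of $P$'s horosphere, so $x \in N_{C+K}(P)$; I'd argue this by applying part (1) to $x$ (which I now know is near $\cH(P)$) and comparing. The main obstacle is bookkeeping: keeping the additive constants uniform and correctly tracking which estimates are two-sided versus one-sided, especially because $s$ is only a quasi-centre (defined up to $2\delta$ and with the uniqueness constant from Lemma~\ref{lem:quasicentres-exist}), so every ``$\approx$'' hides a constant that must not be allowed to depend on $x$ or $P$. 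I would handle this by fixing the tree-approximation constant once and for all and phrasing every intermediate inequality with explicit $\approx_C$ / $\lesssim_C$ notation, reducing at each stage to the horoball distance formula of Definition~\ref{def:admissible-horoballs}(4) and the visual-completeness hypothesis only insofar as it guarantees the relevant geodesic rays and bi-infinite geodesics exist.
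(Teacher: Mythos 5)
Your proposal follows essentially the same route as the paper: part (1) via the concatenation/tree-approximation picture together with the horoball distance estimate of Definition~\ref{def:admissible-horoballs}(4), and part (2) via the Busemann identity $\beta_{a_P}(x,o)\approx d(s,x)-d(o,s)\approx d(o,x)-2d(o,s)$ (using $\beta_{a_P}(s,o)\approx -d(o,s)$, valid since $s$ lies near $[o,a_P)$) combined with the fact that $\cH(P)$ is coarsely the sub-level set $\{\beta_{a_P}(\cdot,o)\lesssim -d(o,P)\}$; this is exactly the paper's argument, and your sketch is sound in substance.

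Two points need tightening, though neither is fatal. First, the unconditional claim that $s$ ``already lies close to the horoball $\cH(P)$'' is false as stated: $s$ lies near a geodesic to $a_P$, but if $d(o,s)<d(o,P)$ it sits on the portion of $[o,a_P)$ before the horoball, at distance roughly $d(o,P)-d(o,s)$ from $\cH(P)$. Under the hypothesis of (2) this deficit is at most about $K$ (since $d(o,s)\approx (a_P|x)_o\lesssim d(o,x)$), and in any case the Busemann chain you cite bypasses the claim entirely, so either add that one-line observation or argue purely with $\beta_{a_P}$. Second, for the ``moreover'' clause, ``applying part (1) to $x$ and comparing'' does not deliver the conclusion: part (1) is one-sided for points of $N_{C'}(\cH(P))$, so it cannot rule out that $x$ is deep in the horoball. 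What is needed is the two-sided version of the same computation, i.e.\ $\beta_{a_P}(x,o)\approx_K -d(o,P)$ together with the (implicitly used, also by the paper) fact that the level set $\{\beta_{a_P}(\cdot,o)\approx -d(o,P)\}$ is coarsely $N(P)$; equivalently, one can strengthen part (1) to record that for $x\in N_{C'}(\cH(P))$ the defect $2(d(o,s)-d(o,P))-(d(o,x)-d(o,P))$ is comparable to $d(x,P)$, which your preceding sentence about pinning down $d(s,x)$ essentially asserts but part (1) as stated does not.
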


\begin{proof}
	(1) Any geodesic from $o$ to $x$ is the concatenation of a geodesic of length approximately $d(o,P)$ and a geodesic coarsely contained in $\cH(P)$.  The quasiconvexity of $\cH(P)$ means that $s$ coarsely lies in $\cH(P)$, and considering tree approximations as in Figure~\ref{horobdistchar:fig} the conclusion follows.
\begin{figure}[h]
   \def\svgwidth{0.7\textwidth}
   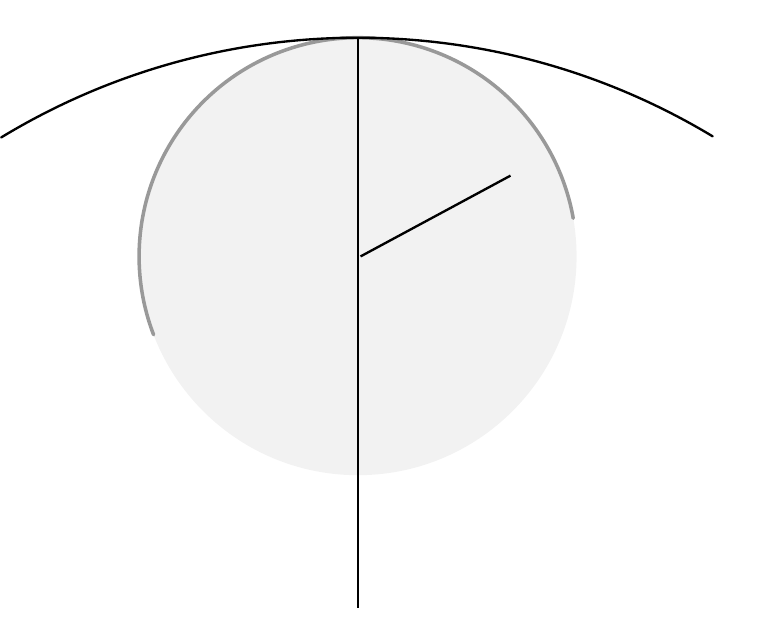
	\caption{Points in horoballs}
   \label{horobdistchar:fig}
\end{figure}

	(2) 
	The Busemann function $\beta_{a_P}(\cdot, o)$ satisfies $\beta_{a_P}(x,o)\approx d(s,x)-d(o,s)\approx d(o,x)-2d(o,s)$. We used that $s$ is at distance coarsely equal to $(a_P|x)_o$ from $o$ and coarsely equal to $(a_P|o)_x$ from $x$ in the first coarse equality, and that it is close to geodesics from $o$ to $x$ in the second one. Using the hypothesis with $\lesssim_K$ (the proof when we have $\approx_K$ being analogous), we get $\beta_{a_P}(x,o)\lesssim_K -d(o,P)$, which implies $x\in N_{C+K}(\cH(P))$, as required.
\end{proof}

\begin{lemma}\label{anypointissplit}
Suppose a space $X$ is hyperbolic relative to $\cP$ and is visually complete.  Let $o\in \Bow(X,\cP)$ be a basepoint.
 For any $C'\geq 0$ there exists $C$ so that for every $x\in \Bow(X,\cP)$, there exist $a,b\in\bdry \Bow(X,\cP)$ with the properties that
 \begin{enumerate}
  \item $\rho(a,b)\asymp_C e^{-\epsilon d(o,x)}$,
  \item $x$ is within distance $C$ from a quasi-centre of $a,b,o$,
\end{enumerate}
and if $x \in N_{C'}(\cH(P))$ for some $P\in\mathcal P$ then one can arrange that
\begin{enumerate}
	\item[(3)] $\rho(a,a_P)^2/r_P \leq \rho(a,b) \leq \rho(a,a_P) \leq r_P$, where $r_P =e^{-\epsilon d(o,P)}$ for $\epsilon$ the visual parameter of $\rho$.
 \end{enumerate}
\end{lemma}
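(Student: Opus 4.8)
The plan is to use visual completeness to produce the points $a,b$ directly, and then check the three properties. First I would invoke the definition of visual completeness at the basepoint $o$: for the given $x \in \Bow(X)$ there exist distinct $a,b \in \bdry\Bow(X)$ and a constant $C_0$ (uniform) so that $x$ lies within $C_0$ of geodesic rays $[o,a)$, $[o,b)$ and of some bi-infinite geodesic $(a,b)$. This immediately gives property (2): a point lying uniformly close to all three sides of the ideal triangle $o,a,b$ is, by Lemma~\ref{lem:quasicentres-exist}, within bounded distance of a quasi-centre of $a,b,o$. For property (1), since $x$ is close to the quasi-centre $s$ of $a,b,o$, Lemma~\ref{splitdist} gives $d(o,s) \approx_C \frac{-1}{\epsilon}\log\rho(a,b)$, and since $d(o,s)\approx d(o,x)$ we get $\rho(a,b)\asymp_C e^{-\epsilon d(o,x)}$, with $C$ depending only on the visual completeness constant $C_0$, the hyperbolicity constant, and the visual metric constant.

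The remaining work is property (3), which only needs to hold when $x \in N_{C'}(\cH(P))$ for some $P \in \cP$. Here I would want to choose $a$ (at least) to be "the ray going deeper into $\cH(P)$", so that the quasi-centre $s$ of $o,a,b$ still sits near $x$ but the geodesics $[o,a)$ and $[o,b)$ agree (up to bounded distance) with $[o,a_P)$ until at least $x$. Concretely: the ray $[o,a)$ passes near the closest point of $\cH(P)$ to $o$, then travels into $\cH(P)$ to near $x$; similarly for $[o,b)$. The branch point of $[o,a)$ and $[o,a_P)$ — equivalently the quasi-centre of $o,a,a_P$ — then lies at distance $\gtrsim d(o,P)$ from $o$, in fact at distance $\approx (a|a_P)_o$. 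Translating via Lemma~\ref{splitdist} and the definition $r_P = e^{-\epsilon d(o,P)}$, the inequality $(a|a_P)_o \gtrsim d(o,P)$ becomes $\rho(a,a_P)\preceq r_P$, i.e.\ the upper bound $\rho(a,a_P)\le r_P$ (after absorbing constants into $C$ by rescaling, or weakening to $\asymp_C$; one may also need to apply visual completeness slightly further out to get the clean inequality). The middle inequality $\rho(a,a_P)^2/r_P \le \rho(a,b)$ is, after taking $-\frac1\epsilon\log$, the statement $2(a|a_P)_o - d(o,P) \gtrsim (a|b)_o$, i.e.\ the quasi-centre of $o,a,b$ is no deeper into $\cH(P)$ than twice the depth of the branch point of $a,a_P$ minus $d(o,P)$ — this is exactly the kind of horoball-depth estimate recorded in Lemma~\ref{horobdistchar}(1), applied with the roles reversed, since $x$ (hence $s$) lies in $N_{C'}(\cH(P))$ and Lemma~\ref{horobdistchar}(1) bounds $d(o,s)$ above in terms of $d(o,x)$. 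And $\rho(a,b)\le\rho(a,a_P)$ follows since, in the tree approximation, the branch point of $[o,a)$ and $[o,a_P)$ lies no closer to $o$ than the branch point of $[o,a)$ and $[o,b)$ (as both $b$ and $a_P$ leave $[o,a)$ at or before $s$).

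The main obstacle, I expect, is the bookkeeping in the last paragraph: visual completeness as stated only hands us \emph{some} pair $a,b$ for a given $x$, and does not a priori guarantee that $a$ can be taken to lie past $\cH(P)$ "in the $a_P$ direction" — so I would need to either (i) argue that whenever $x \in N_{C'}(\cH(P))$ any valid pair $a,b$ must have at least one of the rays, say $[o,a)$, agreeing with $[o,a_P)$ up to the depth of $x$ (because a geodesic from $o$ through $x\in N_{C'}(\cH(P))$ and beyond must, by quasiconvexity of $\cH(P)$ and Lemma~\ref{transprop}, follow $[o,a_P)$ into the horoball), or (ii) modify the given $a$ to such a ray while keeping the quasi-centre near $x$, using hyperbolicity. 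Either route is routine but requires care with the order of constants ($C'$, then $C_0$, then $C$); the geometric content is just that near a horoball all long geodesics from $o$ look like $[o,a_P)$, and the inequalities in (3) are the algebraic shadow of that fact under $-\frac1\epsilon\log(\cdot)$ via Lemmas~\ref{splitdist} and~\ref{horobdistchar}.
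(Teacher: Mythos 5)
Your treatment of properties (1) and (2), and your dictionary $-\frac{1}{\epsilon}\log(\cdot)$ translating (3) into statements about Gromov products and horoball depths, do match the paper's. The gap is in how you actually achieve (3). First, the configuration you propose --- both rays $[o,a)$, $[o,b)$ tracking $[o,a_P)$ ``until at least $x$'' while the quasi-centre of $a,b,o$ sits near $x$ --- is impossible in general: if $x\in N_{C'}(\cH(P))$ lies on the horosphere far from the projection of $o$ to $P$, any geodesic from $o$ passing near $x$ separates from $[o,a_P)$ at roughly the quasi-centre $s$ of $x,a_P,o$, whose distance from $o$ is only about $d(o,P)+\tfrac12\bigl(d(o,x)-d(o,P)\bigr)$, well short of $d(o,x)$. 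In the correct configuration $[o,a)$ follows $[o,x]$, so $(a|a_P)\approx d(o,s)$ while $(a|b)\approx d(o,x)$; note that $\rho(a,b)\le\rho(a,a_P)$ requires $(a|b)\gtrsim (a|a_P)$, i.e.\ the branch point of $a$ from $b$ must be \emph{deeper} than the branch point of $a$ from $a_P$ --- your tree-approximation sentence asserts the reverse comparison, and in the configuration you set up (rays tracking $[o,a_P)$ to at least the depth of $x$) one gets $(a|a_P)\gtrsim(a|b)$, which is the wrong direction for this inequality, or at best a borderline comparability with uncontrolled constant.

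Second, the inequalities in (3) are exact, not up-to-constants, and this is not cosmetic: the lemma is used later (Lemma~\ref{bilipinhor}) to verify the exact hypothesis of Definition~\ref{def:shadow-qs}(3), so ``weakening to $\asymp_C$'' is not an option. The paper obtains exactness by a case analysis that your ``either route is routine'' glosses over. When $d(o,x)\ge d(o,s)+2K_1$ (with $s$ the quasi-centre of $x,a_P,o$), it applies visual completeness at a point $x'\in[o,x]$ at distance $K_1$ \emph{closer} to $o$, and then chooses $K_1$ large enough to beat all multiplicative constants: $\rho(a,b)$ gains a factor $e^{\epsilon K_1}$ (helping $\rho(a,a_P)^2/r_P\le\rho(a,b)$), while $(a|b)\approx d(o,x)-K_1$ still exceeds $(a|a_P)\approx d(o,s)$ by $K_1$ (giving $\rho(a,b)\le\rho(a,a_P)$ with room to spare). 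When instead $x$ lies essentially on $[o,a_P)$, so $s\approx x$, applying visual completeness ``slightly further out'' along $[o,x]$ cannot work --- there is no room between $s$ and $x$ --- and the paper instead goes \emph{deeper}: it moves $K_2$ further along $[o,a_P)$, applies visual completeness there, selects the boundary point $a'$ branching off at that depth, pushes a further $K_3$ along $[o,a')$, and checks the resulting point is still in $\cH(P)$ and within bounded distance of $x$, exploiting that item (2) only asks the quasi-centre to be within bounded distance of $x$. This second case, and the careful ordering of $K_1,K_2,K_3$, is the substantive content of the paper's proof and is missing from your sketch.
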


\begin{proof}
 If $x$ lies in no horoball, then as the first and second items follow directly from visual completeness we are done.

 Now suppose $x\in N_{C'}(\cH(P))$ for some $P \in \cP$.
	Since horoballs are coarsely sub-level sets of Busemann functions, we have $\beta_{a_P}(x,o) \lesssim -d(o,P)$.
	We have two cases.
	First, let $s$ be a quasi-centre for $x,a_P,o$ and suppose for a large constant $K_1$ to be determined we have
	\begin{gather}
		\label{eq:splitpt1}
		d(s,o) + 2K_1 \leq d(x,o).
	\end{gather}
	Let $x' \in [o,x]$ be chosen with $d(x',o)=d(x,o)-K_1 \geq d(s,o)+K_1$.
	Apply visual completeness to $x'$ to find $a,b \in \bdry \Bow(X,\cP)$ so that $x'$ is within bounded distance from a quasi-centre for $a,b,o$, so (1,2) are satisfied.
	By \eqref{eq:splitpt1} and Lemma~\ref{horobdistchar}(1) 
	\begin{equation}\label{eq:splitpt2}
		d(o,s)+2K_1-d(o,P) \leq d(o,x)-d(o,P) \lesssim 2(d(o,s)-d(o,P)),
	\end{equation}
	so $-d(o,P)\gtrsim 2K_1-d(o,s)$ and 
	$r_P =e^{-\epsilon d(o,P)} \succeq e^{-\epsilon d(o,s)} e^{\epsilon 2K_1}$.
	By the choice of $x'$ we have $\rho(a,b) \asymp_{M_1} e^{-\epsilon d(x',o)} = e^{-\epsilon d(x,o)} e^{\epsilon K_1}$ for some $M_1$.
	By Lemma~\ref{splitdist} $\rho(a,a_P) \asymp_{M_2} e^{-\epsilon d(o,s)}$ for some $M_2$.
	By \eqref{eq:splitpt2},
	\[
	\frac{\rho(a,a_P)^2}{r_P} \asymp \frac{e^{-\epsilon 2d(o,s)}}{e^{-\epsilon d(o,P)}}
		\preceq e^{-\epsilon d(o,x)}.
	\] 
	So for some $M_3$ we have
	\begin{gather*}
		\frac{\rho(a,a_P)^2}{r_P} \leq M_3 e^{-\epsilon d(o,x)}, 
		\\
		\frac{1}{M_1}e^{-\epsilon d(o,x)}e^{\epsilon K_1} \leq \rho(a,b)
		\leq M_1 e^{-\epsilon d(o,x)} e^{\epsilon K_1} 
		\leq M_1 e^{-\epsilon d(o,s)} e^{-\epsilon K_1},
		\\
		\frac{1}{M_2} e^{-\epsilon d(o,s)} \leq \rho(a,a_P)
		\leq M_2 e^{-\epsilon d(o,s)} \leq M_2 e^{-\epsilon d(o,P)} e^{-2\epsilon K_1}, \text{ and }
		\\
		e^{-\epsilon d(o,P)} \leq r_P.
	\end{gather*}
	Fixing $K_1$ so that $e^{\epsilon K_1} \geq \max\{ M_1 M_3, M_1 M_2, \sqrt{M_2}\}$ we have shown (3) holds.

	Second, if \eqref{eq:splitpt1} fails, then $x$ is within a bounded distance of $[o,a_P)$ (depending on $K_1$, which we now regard as a constant).
	For a large $K_2$ to be determined, let $x'\in[o,a_P)$ be chosen with $d(x',o)=d(x,o)+K_2$.
	Applying visual completeness to $x'$, we find $a',b' \in \bdry \Bow(X,\cP)$ so that $x'$ is within bounded distance from a quasi-centre of $a',b',o$.
	Since $d(x',o) \approx (a'|b')_o \gtrsim \min\{(a'|a_P)_o,(a_P|b')_o\}$, swapping $a'$ and $b'$ if necessary, we may assume $(a'|a_P)_o \lesssim d(x',o)$. 

	Now find $x''\in[o,a')$ with $d(x'',o)=d(x',o)+K_3 = d(x,o)+K_2+K_3$ for some large $K_3$ to be determined.
	Let $s''$ be a quasi-centre for $x'', a_P,o$; by construction it is coarsely $x'$, so $d(s'',o) \approx_{M_4} d(x',o)$ and $d(x'',o) \approx_{M_4} d(x',o)+K_3$ for some $M_4$, so fixing $K_3 \geq 2K_1+2M_4$ we have that $x''$ and $s''$ satisfy \eqref{eq:splitpt1}.
	Moreover, 
	\begin{align*}
		\beta_{a_P}(x'',o) & \approx -d(x',o)+d(x'',x') \approx -d(x,o)-K_2+K_3 
		\\ & \lesssim -d(o,P)-K_2+K_3,
	\end{align*}
	so provided we choose $K_2$ large enough depending on $K_3$ we can ensure that $x'' \in \cH(P)$.
	Thus $x''$, which is coarsely $x$, satisfies the hypotheses of the first case and so we can find the desired points $a,b$.
\end{proof}

\begin{lemma}\label{insidehor}
Suppose a space $X$ is hyperbolic relative to $\cP$ and is visually complete, and fix a basepoint $o\in \Bow(X,\cP)$.
 There exists $C$ with the following property. If the ray from $o$ to $a\in\bdry \Bow(X,\cP)$ contains a point $x\in \cH(P)$, for some $P\in\cP$, then
 $$\rho(a_P,a)\leq C r_P e^{-\epsilon d(x,X)}.$$
\end{lemma}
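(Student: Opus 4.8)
The plan is to pass to Gromov products based at $o$. Since $\rho$ is a visual metric with parameter $\epsilon$ we have $\rho(a_P,a)\asymp e^{-\epsilon(a_P|a)_o}$, and by definition $r_P=e^{-\epsilon d(o,P)}$, so the asserted bound is equivalent to the lower estimate
\[ (a_P|a)_o \gtrsim d(o,P)+d(x,X); \]
exponentiating this and absorbing the additive error into a multiplicative constant yields the lemma. I would obtain it from the standard hyperbolic inequality $(a_P|a)_o\geq \min\{(a_P|x)_o,(x|a)_o\}-\delta$ (valid for ideal points up to a uniform constant), together with the two one-sided estimates $(x|a)_o\gtrsim d(o,P)+d(x,X)$ and $(a_P|x)_o\gtrsim d(o,P)+d(x,X)$, crucially using the hypothesis that $x$ lies on the ray $[o,a)$.

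For the first estimate: since $x\in[o,a)$ one has $(x|a)_o\approx d(o,x)$. Writing $\hat P=\cH(P)\cap X$ for the gluing locus, any path from $o\in X$ to $x\in\cH(P)$ must cross $\hat P$, so a geodesic $[o,x]$ meets $\hat P$ at some point $w$ and $d(o,x)=d(o,w)+d(w,x)\geq d(o,\hat P)+d(x,\hat P)$. Here $d(x,\hat P)=d(x,X)$, since a path from $x\in\cH(P)$ to $X$ must also exit through $\hat P$, and $d(o,\hat P)\approx d(o,P)$ since $\hat P$ is uniformly dense in $P$; hence $(x|a)_o\gtrsim d(o,P)+d(x,X)$.

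For the second estimate I would use the horoball distance formula. Pick $z\in\hat P$ and $m\geq 0$ with $d_{\cH(P)}(x,\gamma_z(m))\leq C$, so by Definition~\ref{def:admissible-horoballs}(4) we have $m\approx d(x,X)$. The subray $\gamma_z|_{[m,\infty)}$ is a geodesic ray from (roughly) $x$ to $a_P$, so $(a_P|x)_o\approx d(o,\gamma_z([m,\infty)))=\min_{t\geq m}d(o,\gamma_z(t))$. For each fixed $t\geq m$, a geodesic from $o$ to $\gamma_z(t)$ crosses $\hat P$, so as in the previous paragraph $d(o,\gamma_z(t))\geq d(o,\hat P)+d(\gamma_z(t),\hat P)$, while $d(\gamma_z(t),\hat P)\gtrsim t$ by Definition~\ref{def:admissible-horoballs}(4) (the right-hand side there is at least $|m-n|$). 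Thus $d(o,\gamma_z(t))\gtrsim d(o,P)+t$ for every $t\geq m$, and taking the minimum gives $(a_P|x)_o\gtrsim d(o,P)+m\approx d(o,P)+d(x,X)$.

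The delicate point is this last estimate. One might worry that when $z$ is laterally far from $\pi_P(o)$ the point of $[o,a_P)$ nearest to $x$ sits much higher than level $m$; but because we bound $d(o,\gamma_z(t))$ from below for all $t\geq m$ simultaneously, that scenario only makes $(a_P|x)_o$ larger, and the bound survives. Beyond that, one only invokes standard facts — that the Gromov product of an ideal point and an interior point is comparable to the distance from $o$ to any connecting geodesic, and that two geodesic rays from nearby points to the same ideal point stay a bounded Hausdorff distance apart — all with constants depending only on the hyperbolicity constant of $\Bow(X)$, the uniform horoball constants, and the visual metric data, so the final constant $C$ depends only on the data of $(X,\cP)$.
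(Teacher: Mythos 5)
Your proof is correct, and it reaches the key reduction $(a_P|a)_o\gtrsim d(o,P)+d(x,X)$ by a genuinely different route from the paper. The paper's proof takes a quasi-centre $s$ of $o,a,a_P$, places $s$ at some level $m$ on a vertical ray over the entrance or exit point of the ray $[o,a)$ in $\cH(P)$, uses a tree approximation to put $x$ within bounded distance of that same vertical ray at a level $m'\leq m$ with $m'\approx d(x,X)$, and concludes $(a|a_P)\approx d(o,s)\approx d(o,y)+m\geq d(o,y)+m'\approx d(o,P)+d(x,X)$. You instead split at the point $x$ itself via $(a_P|a)_o\geq\min\{(a_P|x)_o,(x|a)_o\}-\delta$ and bound each term below by $d(o,P)+d(x,X)$, using only that geodesics from $o$ to points of $\cH(P)$ must cross the gluing locus $\hat P$, together with the distance estimate of Definition~\ref{def:admissible-horoballs}(4); your uniform-in-$t$ lower bound along $\gamma_z|_{[m,\infty)}$ correctly disposes of the worry that the point of $[x,a_P)$ nearest to $o$ could sit far above level $m$. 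The trade-off is that you invoke the standard comparisons $(x|a)_o\approx d(o,x)$ and $(a_P|x)_o\approx d(o,[x,a_P))$ in place of the paper's quasi-centre and tree-approximation step, which makes your argument somewhat more modular, while the paper's version runs as a single chain of approximate equalities and never needs the two-sided split. (Both arguments implicitly use that the ray enters $\cH(P)$ from outside, i.e.\ the intended setting $o\in X$, so that is not a point of difference.)
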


\begin{proof}
 Using the definition of visual metric, we see that we have to prove $(a|a_P)\gtrsim d(o,P)+d(x,X)$. 
	Let $y,z \in X\cap \cH(P)$ be the first and last points of the ray in $\cH(P)$.
	Let $s$ be a quasi-centre for $o,a,a_P$.
	There exists $m,n$ so that $s$ is bounded distance to both $\gamma_y(m)$ and $\gamma_z(n)$ where $\gamma_y,\gamma_z$ are the geodesic rays to $a_P$ in $\cH(P)$ given by Definition~\ref{def:admissible-horoballs}.
	(By Definition~\ref{def:admissible-horoballs}(4), $m \approx n$.)
	Using tree approximations, we see there exists $C$ so that either $d(x,\gamma_y(m')) \leq C$ for some $m' \leq m$ or $d(x,\gamma_z(n')) \leq C$ for some $n' \leq n$.
	Suppose the former; the latter case is analogous.
	Then by Definition~\ref{def:admissible-horoballs}(4) we have $m'+C \geq d(x,X) \gtrsim m'$ and so
	\[(a|a_P) \approx d(o,s) \approx d(o,y)+m \geq d(o,y) + m' \approx d(o,P)+d(x,X). \qedhere\]
\end{proof}

\subsection{Embedding of cusped spaces}

We return to the proof of Theorem~\ref{thm:boundary-to-inside}.
In the remainder of this section, we denote by $C_i,K_i$ constants that depend on the data only (including $h$). The index of the $K_i$'s will be reset to $1$ at the end of each proof. 

First of all, work of Bonk--Schramm allow us to extend $h$ to the cusped spaces.

\begin{theorem}[Bonk--Schramm]\label{thm:bonk-schramm}
	Suppose $(X,\cP), (X',\cP')$ are two visually complete relatively hyperbolic spaces, and that $h:\bdry (X, \cP)\to \bdry (X', \cP')$ is a quasisymmetric embedding.
	Then there exists a $(C_1,C_1)$-quasi-isometric embedding $\hat f:\Bow(X,\cP)\to \Bow(X',\cP')$ so that $\bdry \hat f=h$ and that $\hat f(o)=o'$.
\end{theorem}
\begin{proof}
	By Lemma~\ref{lem:bowditch-unif-pfct} $\bdry(X,\cP)=\bdry\Bow(X,\cP)$ (and $\bdry(X',\cP')=\bdry\Bow(X',\cP')$) are uniformly perfect, hence by \cite[Corollary 3.12]{TV-80-qs} $h$ is a ``power'' quasisymmetric embedding in the sense of \cite[\S6]{BS-00-gro-hyp-embed}.   
	Therefore there is a quasi-isometric embedding $\hat{f}:\Bow(X,\cP)\to\Bow(X',\cP')$ so that $\bdry \hat{f}=h$ by \cite[Theorems 7.4, 8.2]{BS-00-gro-hyp-embed}.
	We change $\hat{f}$ by defining $\hat{f}(o)=o'$; the resulting quasi-isometry constant $C_1$ depends only on $h$ and the data of the spaces.
\end{proof}

\subsection{Images of horoballs}
Recall that throughout this subsection, $h:\bdry(X,\cP)\to\bdry(X',\cP')$ is the shadow-respecting quasisymmetric embedding of Theorem~\ref{thm:boundary-to-inside}, with $\Bow(X,\cP),\Bow(X',\cP')$ having basepoints $o,o'$ and metrics $d,d'$.  Additionally, now we fix the quasi-isometric embedding extension $\hat{f}:\Bow(X,\cP)\to\Bow(X',\cP')$ of Theorem~\ref{thm:bonk-schramm}.

For $P\in \cP$, we denote by $h_\#(P)$ the element of $\cP'$ so that $h(a_P)=a_{h_\#(P)}$.

\begin{lemma}\label{bilipinhor}
	For each $C$ there exists $C_2$ so that
	for any $P\in \cP$ and $x\in N_C(\cH(P))$, with $\lambda_P=\lambda_{a_P}$ from Definition~\ref{def:shadow-qs}(3), we have
	$$d'(o',\hat f(x))\approx_{C_2} d'(o',{h_\#(P)})+\frac{\epsilon}{\epsilon'}\lambda_P(d(o,x)-d(o,P)).$$
\end{lemma}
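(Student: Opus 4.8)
The plan is to reduce the statement to a boundary computation: use Lemma~\ref{anypointissplit} to replace the interior point $x$ by a pair of boundary points whose visual distance records $d(o,x)$ and whose separation from $a_P$ is pinned down by the position of $x$ in the horoball, and then invoke the shadow-respecting hypothesis on $h$ — specifically the asymptotic-snowflake condition Definition~\ref{def:shadow-qs}(3) — to see exactly how that visual distance is distorted by $h$. A pleasant feature of this route is that we never need to locate $\hat f(x)$ relative to the image horoball $\cH(h_\#(P))$ in $\Bow(X')$: everything is extracted from quasi-centres and boundary distances via the Bonk--Schramm extension $\hat f$ of Theorem~\ref{thm:bonk-schramm}.

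In detail, fix $C$ and $x\in\cH_C(P)$, i.e.\ $x\in N_C(\cH(P))$. Since $(X,\cP)$ is visually complete, Lemma~\ref{anypointissplit} (with its $C'$ taken to be $C$) produces $a,b\in\bdry\Bow(X)$ with (i) $\rho(a,b)\asymp e^{-\epsilon d(o,x)}$, (ii) $x$ within bounded distance of a quasi-centre of $a,b,o$, and (iii) $\rho(a,a_P)^2/r_P\le\rho(a,b)\le\rho(a,a_P)\le r_P$. The chain (iii) is precisely the hypothesis of Definition~\ref{def:shadow-qs}(3) for the parabolic point $a_P$ applied to the pair $(a,b)$ in the roles of $(b,c)$ there. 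Since $h$ is shadow-respecting and $h(a_P)=a_{h_\#(P)}$, with associated radius $r'_{h_\#(P)}=e^{-\epsilon'd'(o',h_\#(P))}$, we obtain
\[
	\frac{\rho'(h(a),h(b))}{r'_{h_\#(P)}}\asymp\Bigl(\frac{\rho(a,b)}{r_P}\Bigr)^{\lambda_P}.
\]

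Next I would bring in $\hat f$ from Theorem~\ref{thm:bonk-schramm}, a quasi-isometric embedding with $\hat f(o)=o'$ and $\bdry\hat f=h$. By (ii) and Lemma~\ref{splittosplit}, $\hat f(x)$ lies within bounded distance of a quasi-centre of $h(a),h(b),o'$, so Lemma~\ref{splitdist} gives $d'(o',\hat f(x))\approx\frac{-1}{\epsilon'}\log\rho'(h(a),h(b))$, while (i) gives $d(o,x)\approx\frac{-1}{\epsilon}\log\rho(a,b)$. Taking logarithms in the displayed $\asymp$ and substituting these two relations together with $r_P=e^{-\epsilon d(o,P)}$ and $r'_{h_\#(P)}=e^{-\epsilon'd'(o',h_\#(P))}$, then rearranging, yields
\[
	d'(o',\hat f(x))\approx d'(o',h_\#(P))+\frac{\epsilon}{\epsilon'}\lambda_P\bigl(d(o,x)-d(o,P)\bigr),
\]
which is the claim. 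Tracking the additive constant through this chain of lemmas shows it depends only on $C$, the quasi-isometry constant $C_0$, the visual parameters $\epsilon,\epsilon'$, and the data of $h$, hence a uniform $C_1$ works.

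I expect the only real point of care — rather than a genuine obstacle — to be bookkeeping: one must check that the inequality chain of Lemma~\ref{anypointissplit}(3) is fed into Definition~\ref{def:shadow-qs}(3) in the correct order of arguments, and that the additive error produced when passing to logs in the display (which gets multiplied by $\lambda_P$) stays bounded; this is where one uses that $\lambda_P$ is uniformly bounded by Proposition~\ref{prop:bounded-snowflake-param}, valid because $\bdry\Bow(X)$ is uniformly perfect by visual completeness (Lemma~\ref{lem:bowditch-unif-pfct}). One should also confirm that every $\asymp$ involved is multiplicative with a uniform constant, hence additive with a uniform constant after taking logs, so that nothing in $C_1$ depends on $P$ or on $x$.
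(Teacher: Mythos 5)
Your proposal is correct and follows essentially the same route as the paper's own proof: apply Lemma~\ref{anypointissplit} to replace $x$ by boundary points $a,b$ satisfying the hypothesis of Definition~\ref{def:shadow-qs}(3) at $a_P$, then combine the resulting snowflake estimate with Lemmas~\ref{splittosplit} and~\ref{splitdist} applied to the Bonk--Schramm extension $\hat f$, and take logarithms. Your extra remarks about the ordering of arguments in Definition~\ref{def:shadow-qs}(3) and the uniform bound on $\lambda_P$ from Proposition~\ref{prop:bounded-snowflake-param} are accurate bookkeeping that the paper handles implicitly.
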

This lemma will only be applied with $C$ depending only on the quasiconvexity of horoballs, hence $C_2$ will depend only on the data.
\begin{proof}
 Let $\gamma_P$ be a ray from $o$ to $a_P$. By Lemma \ref{anypointissplit} there exist $a,b\in \bdry \Bow(X,\cP)$ so that $\rho(a,b)\asymp e^{-\epsilon d(o,x)}$, $x$ is $K_1$-close to a quasi-centre of $a,b,o$, and $\rho(a,a_P)^2/r_P \leq \rho(a,b) \leq \rho(a,a_P) \leq r_P$.
 Thus by Definition~\ref{def:shadow-qs}(3), we have
 $$\frac{\rho'(h(a),h(b))}{r_{h_\#(P)}}\asymp_{K_2} \left(\frac{\rho(a,b)}{r_P}\right)^{\lambda_P}.$$
 
 Hence, since quasi-centres are coarsely mapped to quasi-centres (see Lemma~\ref{splittosplit}) and the distance of the quasi-centre of two boundary points and the basepoint from the basepoint controls the visual distance (Lemma~\ref{splitdist}), we get
	\begin{align*}
		d'(o',\hat f(x)) & \approx_{K_3} \frac{-1}{\epsilon'} \log \rho'(h(a),h(b))
		\\ & \approx_{K_4} d'(o',{P'})+\frac{\epsilon\lambda_P}{\epsilon'}\left(d(o,x)-d(o,P)\right),
	\end{align*}
 as required.
\end{proof}

\begin{corollary}\label{imageshorob}
 There exists $C_3$ so that for each $P\in \cP$ we have
 \begin{enumerate}
  \item $\hat f(\cH(P)) \subseteq N_{C_3}(\cH(h_\#(P)))$,
  \item $\hat f(P)\subseteq N_{C_3}(h_\#(P))$,
  \item $\hat f(\cH(P)^c) \subseteq N_{C_3}(\cH(h_\#(P)^c)$.
 \end{enumerate}
\end{corollary}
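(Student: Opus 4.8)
The plan is to deduce all three inclusions from Lemma~\ref{bilipinhor}, the characterisation of points near a horoball in terms of quasi-centres (Lemma~\ref{horobdistchar}), and the fact that quasi-isometric embeddings send quasi-centres to quasi-centres (Lemma~\ref{splittosplit}). Write $Q=h_\#(P)$, and for $x\in\Bow(X)$ let $s$ denote a quasi-centre of $o,a_P,x$ in $\Bow(X)$; by Lemma~\ref{splittosplit}, $\hat f(s)$ lies within bounded distance of a quasi-centre $s'$ of $o',a_Q,\hat f(x)$ in $\Bow(X')$.

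For (1) and (2): if $x\in\cH(P)$ then, since $\cH(P)$ is quasiconvex with ideal point $a_P$, the geodesic $[x,a_P)$ stays in $N_C(\cH(P))$, so $s\in N_{C'}(\cH(P))$ for a uniform $C'$; hence both $x$ and $s$ lie in $\cH_{C'}(P)$ and Lemma~\ref{bilipinhor} applies to each, giving
\[ d'(o',\hat f(x))-d'(o',Q)\approx \tfrac{\epsilon}{\epsilon'}\lambda_P\bigl(d(o,x)-d(o,P)\bigr),\qquad d'(o',s')-d'(o',Q)\approx \tfrac{\epsilon}{\epsilon'}\lambda_P\bigl(d(o,s)-d(o,P)\bigr). \]
Lemma~\ref{horobdistchar}(1) in $\Bow(X)$ gives $d(o,x)-d(o,P)\lesssim 2(d(o,s)-d(o,P))$, so these combine to $d'(o',\hat f(x))-d'(o',Q)\lesssim 2(d'(o',s')-d'(o',Q))$, and Lemma~\ref{horobdistchar}(2) in $\Bow(X')$ then yields $\hat f(x)\in N_{C_3}(\cH(Q))$, proving (1). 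For (2) one takes $x\in P$ and uses the ``moreover'' clause of Lemma~\ref{horobdistchar}(1), getting $d(o,x)-d(o,P)\approx 2(d(o,s)-d(o,P))$; this upgrades the inequality above to $d'(o',\hat f(x))-d'(o',Q)\approx 2(d'(o',s')-d'(o',Q))$, and the ``moreover'' clause of Lemma~\ref{horobdistchar}(2) gives $\hat f(x)\in N_{C_3}(Q)$.

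For (3) I would first reduce to a single problematic case. If $\hat f(x)\notin\cH(Q)$ then $\hat f(x)\in\cH(Q)^c$ and there is nothing to prove; if $\hat f(x)$ lies deep in some horoball $\cH(R')$ with $R'\neq Q$ then, since distinct horoballs of $\Bow(X')$ are disjoint, $\cH(R')\subset\cH(Q)^c$ (and $R'\neq Q$ holds whenever $\hat f(x)$ is \emph{not} mapped into $\cH(Q)$ by a point of $\cH(P)$, using injectivity of $h_\#$ on parabolic points together with (1)); and if $\hat f(x)$ is shallow in every horoball then $\hat f(x)\in N_C(X')\subset N_C(\cH(Q)^c)$. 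Likewise, if $x\in\cH(P)^c$ lies within a bounded distance of $\cH(P)$ then it lies within bounded distance of $P=\partial\cH(P)$, so $\hat f(x)$ lies within bounded distance of $\hat f(P)\subset N_{C_3}(Q)\subset N_{C_3}(\cH(Q)^c)$ by (2). The remaining point is: if $d(x,\cH(P))$ is at least a suitable constant, then $\hat f(x)$ is not deep in $\cH(Q)$. I would argue this by contradiction: assuming $\hat f(x)\in\cH(Q)$ with large depth $D=d'(\hat f(x),\cH(Q)^c)$, its quasi-centre $s'$ with $o',a_Q$ is also deep in $\cH(Q)$, so $d'(o',s')-d'(o',Q)$ is large; comparing this with $d'(o',\hat f(x))-d'(o',Q)$ (controlled by $d(o,x)$ through the quasi-isometry constants of $\hat f$ from Theorem~\ref{thm:bonk-schramm}), and transferring the upstairs data via Lemma~\ref{bilipinhor} applied to $s$ when $s\in\cH_{C'}(P)$ (respectively arguing directly that $\hat f(s)$, hence $\hat f(x)$, stays outside $\cH(Q)$ when $s$ is far from $\cH(P)$), one should contradict $x\notin N_{C'}(\cH(P))$ via the contrapositive of Lemma~\ref{horobdistchar}.

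I expect (3) to be the main obstacle. The difficulty is that $\hat f$ is only a quasi-isometric \emph{embedding}, so it is not immediate that a point outside $\cH(P)$ cannot be pushed deep into $\cH(Q)$; ruling this out is precisely where the shadow-respecting hypothesis must be used (it is encoded in Lemma~\ref{bilipinhor} through the shadow radii $r_P,r_Q$), and it requires bookkeeping of Busemann-function/horoball-depth estimates alongside the multiplicative quasi-isometry constants, which interact awkwardly with the additive depth estimates. Parts (1) and (2), by contrast, should follow by a direct combination of Lemma~\ref{bilipinhor} and Lemma~\ref{horobdistchar} as sketched above.
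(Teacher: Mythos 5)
Your parts (1) and (2) are correct and are essentially the paper's own argument: apply Lemma~\ref{horobdistchar}(1) upstairs, transfer both $x$ and the quasi-centre $s$ via Lemma~\ref{bilipinhor} (using Lemma~\ref{splittosplit} to identify $\hat f(s)$ with a quasi-centre $s'$ downstairs), and conclude with Lemma~\ref{horobdistchar}(2); the ``moreover'' clauses give (2).

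For (3) there is a genuine gap in the main case, namely when the quasi-centre $s$ of $o,a_P,x$ lies in a neighbourhood of $\cH(P)$ while $x$ itself is far from $\cH(P)$. Writing $Q=h_\#(P)$, the depth of $\hat f(x)$ in $\cH(Q)$ is coarsely $2\bigl(d'(o',s')-d'(o',Q)\bigr)-\bigl(d'(o',\hat f(x))-d'(o',Q)\bigr)$; Lemma~\ref{bilipinhor} controls only the first bracket (it applies to $s$, not to $x$, which lies outside $\cH(P)$), and for the second bracket your only input is the multiplicative quasi-isometry bound $d'(o',\hat f(x))\succeq d(o,x)$ from Theorem~\ref{thm:bonk-schramm}. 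Chasing your inequalities gives something like $\frac{1}{C_0}d(o,x)-C_0\,d(o,P)\lesssim \frac{2\eps\lambda_P}{\eps'}\bigl(d(o,s)-d(o,P)\bigr)$, which is simply not contradicted by the upstairs information $d(o,x)-d(o,P)>2\bigl(d(o,s)-d(o,P)\bigr)+C^*$ once $d(o,x)$ and $d(o,P)$ are large: a bound with multiplicative loss $C_0$ cannot beat the additive, factor-of-exactly-two bookkeeping of Lemma~\ref{horobdistchar}, which is precisely the ``awkward interaction'' you flag. (Your fallback case, ``$\hat f(s)$, hence $\hat f(x)$, stays outside $\cH(Q)$'', also contains an unjustified ``hence'', and as phrased is an instance of (3) itself applied to the point $s$.) The missing idea is to stop measuring depth from the basepoint and anchor the argument at the parabolic point instead: take a geodesic ray from $x$ to $a_P$; since $x\in\cH(P)^c$ it crosses $P$, so by part (2) its image --- a quasi-geodesic ray from $\hat f(x)$ to $a_Q$, because $\bdry\hat f=h$ sends $a_P$ to $a_Q$ --- passes within bounded distance of $Q$; but by the Morse lemma a quasi-geodesic ray to $a_Q$ issuing from a point of depth $D$ in $\cH(Q)$ stays at depth $\geq D-O(1)$, so $D$ is bounded. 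This is the paper's short argument for (3), and some input of this kind (a ray to $a_P$ together with (2), rather than basepoint quasi-centre estimates and raw quasi-isometry constants) seems unavoidable.
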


\begin{proof}
	If $x\in \cH(P)$, then by Lemma \ref{horobdistchar} with $C'=0$, we have $d(o,x)-d(o,P)\lesssim_{K_1} 2(d(o,s)-d(o,P))$, where $s$ is a quasi-centre of $a_P, x, o$. By Lemma \ref{bilipinhor} with $C=0$, we have
	$$d'(o',\hat f(x))\approx_{K_2} d'(o',{h_\#(P)})+\frac{\epsilon\lambda_P}{\epsilon'} (d(o,x)-d(o,P)).$$
	In view of Lemma \ref{splittosplit}, if $s'$ is a quasi-centre of $\hat f(x), a_{h_\#(P)}, o'$ then $s'$ coarsely coincides with $\hat f (s)$, so that again by Lemma \ref{bilipinhor} (with $C$ chosen uniformly so that $s\in N_C(\cH(P))$) we have
	$$d'(o',s')\approx_{K_3} d'(o',{h_\#(P)})+\frac{\epsilon\lambda_P}{\epsilon'} (d(o,s)-d(o,P)).$$
 So,
 \begin{align*}
	 d'(o',\hat f(x))-d'(o',{h_\#(P)})
	 & \approx_{K_2} \frac{\epsilon\lambda_P}{\epsilon'} (d(o,x)-d(o,P))
	 \\ & \lesssim_{K_4} \frac{2\epsilon\lambda_P}{\epsilon'} (d(o,s)-d(o,P))
	 \\ & \approx_{2K_3} 2( d'(o',s')-d'(o',{h_\#(P)})).
 \end{align*}
	By Lemma \ref{horobdistchar}(2) with $K=K_2+K_4+2K_3$, we have that $\hat f(x)$ lies within a distance $C_3$ of $\cH(h_\#(P))$.
 
 The second part can be proven in the same way replacing $\lesssim$ by $\approx$.
 
 Finally, if the third part did not hold (for a much larger $C_3$ than in (2)), then there would be $x \in \cH(P)^c$ whose image is ``well-inside'' $\cH(h_\#(P))$.  A geodesic ray from $x$ towards $a_P$ has image a quasi-geodesic ray with controlled constants, that starts ``well-inside'' $\cH(h_\#(P))$, then enters $N_{C_3}(h_\#(P))$ (by (2)) and then goes to $a_{h_\#(P)}$. Such a quasi-geodesic ray does not exist.
\end{proof}

In a similar spirit to parts (1) and (2) of the previous corollary, one also has:

\begin{corollary}\label{roughsimonO}
	For each $P\in \cP$, $\hat f|_{\cH(P)}$ is a $C_4$-rough $({\epsilon\lambda_P}/{\epsilon'})$-similarity.
\end{corollary}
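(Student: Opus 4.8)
The plan is to deduce the rough similarity purely from the ``radial'' estimate of Lemma~\ref{bilipinhor}, using the standard fact that in a hyperbolic space distances are controlled by distances to the basepoint together with the location of the relevant quasi-centre, and that $\hat f$ (a $(C_0,C_0)$-quasi-isometric embedding with $\hat f(o)=o'$, by Theorem~\ref{thm:bonk-schramm}) carries quasi-centres to quasi-centres. Fix $P\in\cP$ and two points $p,q\in\cH(P)$, and write $P'=h_\#(P)$, $\lambda_P=\lambda_{a_P}$. Let $m$ be a quasi-centre of $o,p,q$ in $\Bow(X)$ (if $o,p,q$ are not distinct the estimate is trivial). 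Since $m$ lies $2\delta$-close to a geodesic $[p,q]$ and the (admissible) horoballs are uniformly quasiconvex, $m\in N_{C'}(\cH(P))$ for a uniform $C'$; hence $p$, $q$ and $m$ all lie in the neighbourhood $\cH_{C'}(P)$ of Lemma~\ref{bilipinhor}, so that lemma applies to each of them with the \emph{same} additive term $d'(o',P')$ and the \emph{same} exponent $\lambda_P$.

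First I would record the elementary identity, valid in any hyperbolic space, that $d(o,m)\approx (p|q)_o=\tfrac12\big(d(o,p)+d(o,q)-d(p,q)\big)$ (this is the content of Lemma~\ref{splitdist} applied to points of $\Bow(X)$ rather than its boundary, and of Lemma~\ref{lem:quasicentres-exist}), whence
\[
  d(p,q)\approx d(o,p)+d(o,q)-2\,d(o,m).
\]
By Lemma~\ref{splittosplit}, $\hat f(m)$ is within bounded distance of a quasi-centre of $o'=\hat f(o)$, $\hat f(p)$, $\hat f(q)$ in $\Bow(X')$, so the same identity in $\Bow(X')$ gives
\[
  d'(\hat f(p),\hat f(q))\approx d'(o',\hat f(p))+d'(o',\hat f(q))-2\,d'(o',\hat f(m)).
\]
Now substitute the three instances of Lemma~\ref{bilipinhor} for $p$, $q$, $m$: the $d'(o',P')$ terms cancel since their coefficients are $1+1-2=0$, and there remains
\[
  d'(\hat f(p),\hat f(q))\approx \frac{\epsilon\lambda_P}{\epsilon'}\Big(d(o,p)+d(o,q)-2\,d(o,m)\Big)\approx \frac{\epsilon\lambda_P}{\epsilon'}\,d(p,q),
\]
where in the last step we use the first identity above together with $d_{\cH(P)}(p,q)\approx d_{\Bow(X)}(p,q)=d(p,q)$ (again by quasiconvexity of $\cH(P)$). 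This is exactly the asserted $C_4$-rough $(\epsilon\lambda_P/\epsilon')$-similarity of $\hat f|_{\cH(P)}$, in the sense of \eqref{eq:rough-similarity}, with $C_4$ depending only on the data ($C_0$, the constant of Lemma~\ref{bilipinhor}, the hyperbolicity constants, and the quasiconvexity/quasi-centre constants).

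I do not expect a serious geometric obstacle here: all the horoball-specific geometry has already been packaged into Lemma~\ref{bilipinhor}, and what remains is the general interplay between distances, Gromov products and quasi-centres in hyperbolic spaces. The one point that must be handled with care is the very first step — ensuring that the quasi-centre $m$ of $o,p,q$ lands inside the controlled neighbourhood $\cH_{C'}(P)$ on which Lemma~\ref{bilipinhor} is valid, which is precisely where uniform quasiconvexity of the admissible horoballs enters; everything else is bookkeeping of additive constants. (This also gives, going the other way, the converse direction of Lemma~\ref{lem:snowflake-into-horoball} that one might expect, but we will only need the statement as recorded in Corollary~\ref{roughsimonO}.)
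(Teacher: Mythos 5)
Your proposal is correct and follows essentially the same route as the paper's proof: both use quasiconvexity to place the quasi-centre of $o,p,q$ in a controlled neighbourhood of $\cH(P)$, express $d(p,q)$ (and $d'(\hat f(p),\hat f(q))$) via distances to the basepoint and the quasi-centre, and then apply Lemma~\ref{bilipinhor} three times together with Lemma~\ref{splittosplit}, letting the $d'(o',h_\#(P))$ and $d(o,P)$ terms cancel. No gaps; the bookkeeping matches the paper's argument.
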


\begin{proof}
	For $x,y\in \cH(P)$, the quasiconvexity of horoballs implies that a quasi-centre $s$ of $x,y,o$ is also contained in (a controlled neighbourhood of) $\cH(P)$. 
By hyperbolicity,
$$d(x,y)\approx_{K_1} d(o,x)+d(o,y)-2d(o,s),$$
and the same holds for pairs of points in $\Bow(X',\cP')$.

	If $s'$ is a quasi-centre of $\hat f(x),\hat f(y), o'$, then bearing in mind Lemma \ref{splittosplit}, we get from Lemma \ref{bilipinhor} (with $C$ chosen so that $s,s'$ are in the $C$-neighbourhoods of $\cH(P), \cH(P')$):
\begin{align*}
	d'(\hat f(x),\hat f(y))
	& \approx_{K_1} \Big(d'(o',\hat f(x)) -d'(o',{h_\#(P)})\Big)+\Big(d'(o',\hat f(y)) 
	\\ & \qquad 
	-d'(o',{h_\#(P)})\Big)
	 -2\Big(d'(o',s') -d'(o',{h_\#(P)})\Big)
	\\ & \approx_{K_2} \frac{\epsilon\lambda_P}{\epsilon'}(d(o,x)-d(o,P))+\frac{\epsilon\lambda_P}{\epsilon'}(d(o,y)-d(o,P)) 
	\\ & \qquad -2\frac{\epsilon\lambda_P}{\epsilon'}(d(o,s)-d(o,P))
	\\ & \approx_{{\epsilon\lambda_P K_1}/{\epsilon'}} \frac{\epsilon\lambda_P}{\epsilon'} d(x,y)
\end{align*}
as required.
\end{proof}

Finally, Corollary \ref{imageshorob}(3) can be strengthened to:

\begin{corollary}\label{GtoG'}
  If $P'\in\cP'$ is so that $a_{P'}\notin h(\bigcup_{\cP} a_{P})$, then $f(X)\cap \cH(P')\subseteq N_{C_5}(X')$. Moreover, $f(X)\subseteq N_{C_5}(X')$.
\end{corollary}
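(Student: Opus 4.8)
The plan is to prove both assertions for $\hat f|_X$, where $\hat f\colon\Bow(X)\to\Bow(X')$ is the $(C_0,C_0)$-quasi-isometric embedding of Theorem~\ref{thm:bonk-schramm} (so $\hat f(o)=o'$ and $\bdry\hat f=h$); here $f=\hat f|_X$ (up to the projection to $X'$ that this very corollary makes possible). The guiding principle is that a horoball around a parabolic point of $\Bow(X')$ which is \emph{not} of the form $h(a_P)$ is ``invisible'' from the image of $h$, by Definition~\ref{def:shadow-qs}(2), hence cannot be penetrated deeply by the $\hat f$-image of any geodesic ray of $\Bow(X)$; Lemma~\ref{insidehor} converts depth of penetration into smallness of a boundary distance, which is where the contradiction comes from.

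\emph{First assertion.} I would argue by contradiction: suppose $P'\in\cP'$ has $a_{P'}\notin h(\bigcup_{\cP}a_P)$ and there is $x\in X$ with $\hat f(x)\in\cH(P')$ and $m:=d'(\hat f(x),X')$ as large as we wish. Since $(X,\cP)$ is visually complete, $\Bow(X)$ is visual, so $x$ lies within a uniform distance of some geodesic ray $[o,a)$, $a\in\bdry\Bow(X)$. As $\hat f$ is coarsely Lipschitz and $\hat f([o,a))$ is a quasigeodesic ray from $o'$ to $h(a)$, the Morse lemma puts the geodesic ray $[o',h(a))$ within a uniform distance $C_2$ of $\hat f(x)$. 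Let $y'$ be the nearest point of $[o',h(a))$ to $\hat f(x)$; for $m>C_2$ the point $y'$ must lie in $\cH(P')$ (a path from a point outside $\cH(P')$ to $\hat f(x)$ meets $\hat P'\subseteq X'$, so would have length $\geq m$), and $d'(y',X')\geq m-C_2$. Applying Lemma~\ref{insidehor} inside $\Bow(X')$ to the ray $[o',h(a))$ and the point $y'\in\cH(P')$ gives a constant $C_3$ with
\[
 \rho'(a_{P'},h(a))\leq C_3\,r_{P'}\,e^{-\epsilon'(m-C_2)}.
\]
On the other hand $a_{P'}$ is not $h(a_Q)$ for any $Q\in\cP$, so Definition~\ref{def:shadow-qs}(2) provides a constant $C$ with $\rho'(a_{P'},h(z))\geq r_{P'}/C$ for all $z\in\bdry\Bow(X)$, in particular for $z=a$. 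The two inequalities force $e^{\epsilon'(m-C_2)}\leq CC_3$, so $m$ is bounded by a constant $C_5$ depending only on the data (and uniform in $P'$). This contradicts the choice of $x$ once $m>C_5$, proving $\hat f(X)\cap\cH(P')\subseteq N_{C_5}(X')$.

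\emph{Second assertion.} I would fix $x\in X$ and split into cases. If $\hat f(x)$ lies in no horoball of $\Bow(X')$ then $\hat f(x)\in X'$. Otherwise $\hat f(x)\in\cH(P')$ for a (necessarily unique) $P'$. If $a_{P'}\notin h(\bigcup_{\cP}a_Q)$ we are done by the first assertion. If instead $P'=h_\#(P)$ for some $P\in\cP$, then either $x\in\hat P$, in which case Corollary~\ref{imageshorob}(2) gives $\hat f(x)\in\hat f(P)\subseteq N_{C_3}(P')\subseteq N_{C_3}(X')$; or $x\notin\hat P$, so $x\in\cH(P)^c$ and Corollary~\ref{imageshorob}(3) gives $\hat f(x)\in N_{C_3}(\cH(P')^c)$, which combined with $\hat f(x)\in\cH(P')$ forces $\hat f(x)\in N_{C_3}(\hat P')\subseteq N_{C_3}(X')$ (a point of $\cH(P')^c$ near $\hat f(x)$ is joined to it by a path through $\hat P'\subseteq X'$). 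Enlarging $C_5$ if necessary, $\hat f(X)\subseteq N_{C_5}(X')$.

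The step I expect to require most care is the first assertion: one must confirm that the ray $[o',h(a))$ really does descend into $\cH(P')$ to depth $\approx m$ (so that Lemma~\ref{insidehor} applies with essentially the right exponent), and that the ray $[o,a)$ supplied by visual completeness can be pushed forward through $\hat f$ — controlled only up to the coarse-Lipschitz and Morse constants — while still passing within bounded distance of $\hat f(x)$. Once those constants are tracked, the clash between Lemma~\ref{insidehor} and Definition~\ref{def:shadow-qs}(2) is immediate. Note that the hypothesis $a_{P'}\notin h(\bigcup_{\cP}a_P)$ is used exactly once, to invoke Definition~\ref{def:shadow-qs}(2); for $P'$ of the form $h_\#(P)$ that estimate fails and one genuinely needs Corollary~\ref{imageshorob}, which is why the ``moreover'' is treated by cases.
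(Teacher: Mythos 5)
Your proof is correct and follows essentially the same route as the paper: the first assertion comes from pushing a ray through $x$ forward, applying Lemma~\ref{insidehor} in $\Bow(X')$, and contradicting the estimate $\rho'(a_{P'},h(Z))\geq r_{P'}/C$ from Definition~\ref{def:shadow-qs}(2), while the ``moreover'' part reduces to the first assertion plus Corollary~\ref{imageshorob}. You merely spell out details the paper leaves implicit (the Morse-lemma tracking of the ray and the case split $x\in\hat P$ versus $x\in\cH(P)^c$), and your exponent $\epsilon'$ is the correct one for $\Bow(X')$.
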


\begin{proof}
 Consider any $x\in \Bow(X,\cP)$. Then $x$ lies within uniformly bounded distance from a geodesic ray from $o$ to, say, $a\in\bdry \Bow(X,\cP)$. Suppose that $\hat f (x)$ lies inside the horoball $\cH(P')$ in $\Bow(X',\cP')$ and at distance $R$ from $X'\subseteq \Bow(X',\cP')$.

 By Lemma \ref{insidehor}, we have $\rho(a_{P'},h(a))\leq K_1r_{P'}e^{-\epsilon R}$. For $R$ large enough, this contradicts Definition~\ref{def:shadow-qs}(2) which gives $\rho(a_{P'},h(Z)) \geq r_{P'}/C$, proving the first part of the corollary.
 
Suppose now $x\in X$ and let us show that its image lies in a controlled neighbourhood of $X'$. If we had that $\hat f (x)$ lies well inside a horoball, say $\cH(P')$, then the argument above would provide $P\in\cP$ so that $h(a_P)=a_{P'}$. But then Corollary \ref{imageshorob}(3) implies that $x$ cannot lie in $\cH(P)^c$, contradicting $x\in X$. This shows the ``moreover'' part.
\end{proof}

 {\bf Announcement:} We perturb $\hat{f}$ up to bounded distance and assume from now on that it maps $X$ into $X'$.

\subsection{Projection terms of the distance formula}

Our goal is now to show that $\hat f|_X:X\to X'$ has the required metric properties. In order to do so, we will need a version of the distance formula for relatively hyperbolic spaces, as described below. Since we will have to be careful about distinguishing between distances in a relatively hyperbolic space and in the corresponding cusped space, we start using the notation $d_X,d_{\Bow(X,\cP)}$ and similar to emphasise the difference.

\smallskip
We require some notation to describe the distance formula.
Suppose a space $Y$ is hyperbolic relative to $\cQ$, with a fixed cusped space $\Bow(Y,\cQ)$. For $Q\in\cQ$ and $x,y\in Y$, we set $\theta_Q(x,y)$ to be the following value. Choose any geodesic $[x,y]$ in $\Bow(Y,\cQ)$ from $x$ to $y$. If $[x,y]$ does not intersect $\cH(Q)$, then set $\theta_Q(x,y)=0$. If it does, then consider the entrance and exit points $x',y'\in Q$, and let $\theta_Q(x,y)=d_Y(x,y)$. We emphasize that the distance is taken in $Y$, not in $\Bow(Y,\cQ)$.

Let $\big\{\big\{A\big\}\big\}_L$ denote $A$ if $A>L$, and $0$ otherwise. 
\begin{theorem}[{Distance formula, \cite[Theorem 0.1]{Si-projrelhyp}}]\label{distform}
In the notation above, there exists $L_0$ so that for each $L\geq L_0$ there exist $\lambda,\mu$ so that the following holds. If $x,y\in Y$ then
\begin{equation}\label{eqn-distform}
	d_Y(x,y)\qi_{\lambda,\mu} \sum_{Q\in\cQ} \big\{\big\{\theta_Q(x,y)\big\}\big\}_L+d_{\Bow(Y,\cQ)}(x,y).
\end{equation}
\end{theorem}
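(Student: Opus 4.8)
The plan is to establish the two coarse inequalities in~\eqref{eqn-distform} separately, using throughout the transient/deep dichotomy of Lemma~\ref{transprop} together with the identification (already invoked above, cf.\ \cite[Propositions 7.9 and 8.13]{Hr-relqconv} and \cite[Proposition 4.9]{Si-metrrh}) of $\trans(x,y)\subset Y$ with $[x,y]_{\Bow}\cap Y$, where $[x,y]_{\Bow}$ is a $\Bow(Y)$-geodesic. First I would record the consequences of this identification that I need: the maximal subsegments of $[x,y]_{\Bow}$ lying in horoballs $\cH(Q)$ correspond, up to bounded Hausdorff error, to the deep components of a $Y$-geodesic $[x,y]_Y$; each $Q$ is visited by at most one such excursion; and by Lemma~\ref{transprop}(3) the entrance and exit points of that excursion lie within bounded distance of $\pi_Q(x)$ and $\pi_Q(y)$. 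Fixing the threshold $L$ large enough then makes $\theta_Q(x,y)\asymp d_Y(\pi_Q(x),\pi_Q(y))$ whenever $\theta_Q(x,y)>L$, and in particular shows the right-hand side of~\eqref{eqn-distform} is well defined up to the constants $\lambda,\mu$, independently of the geodesic chosen in the definition of $\theta_Q$.

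For the inequality $d_Y(x,y)\preceq \sum_Q\{\{\theta_Q(x,y)\}\}_L+d_{\Bow(Y)}(x,y)$ I would start from $\gamma=[x,y]_{\Bow}$, list its horoball excursions $\gamma_1,\dots,\gamma_k$ into $\cH(Q_1),\dots,\cH(Q_k)$ with endpoints $x_i,y_i\in\hat Q_i$, and apply the horoball distance estimate of Definition~\ref{def:admissible-horoballs}(4) (see also Lemma~\ref{distestim}) to get $\length(\gamma_i)\approx 2\log(\theta_{Q_i}(x,y)+1)+|m_i-n_i|$; thus an excursion with $\theta_{Q_i}(x,y)\le L$ has length $\le 2\log(L+1)+O(1)$, while there are at most $d_{\Bow(Y)}(x,y)$ excursions since each has length at least one. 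Replacing each $\gamma_i$ by a $Y$-geodesic of length $\theta_{Q_i}(x,y)$ between its (pushed-to-$Y$) endpoints, using Definition~\ref{def:admissible-horoballs}(3) to convert short horoball paths near $Y$ into $Y$-paths, and replacing the rest of $\gamma$ — which lies within bounded distance of $Y$ — by a $Y$-path of comparable length as in Remark~\ref{rmk:bow-inclusion-unif-prop}, yields a path in $Y$ of length $\preceq d_{\Bow(Y)}(x,y)+\sum_i\theta_{Q_i}(x,y)$. Splitting the sum at $L$ and absorbing the at-most-$d_{\Bow(Y)}(x,y)$ small terms (each $\le L$) into a multiplicative constant gives the bound, since every $Q$ with $\theta_Q(x,y)>0$ occurs among the $Q_i$.

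For the reverse inequality I would note $d_{\Bow(Y)}(x,y)\le d_Y(x,y)$ since $Y\hookrightarrow\Bow(Y)$ is $1$-Lipschitz, and then bound $\sum_Q\{\{\theta_Q(x,y)\}\}_L$ along a $Y$-geodesic $[x,y]_Y$: for each $Q$ with $\theta_Q(x,y)>L$ we have $d_Y(\pi_Q(x),\pi_Q(y))\gtrsim\theta_Q(x,y)$ large, so by Lemma~\ref{transprop}(4) $[x,y]_Y$ has a deep component along $Q$ of length $\ge d_Y(\pi_Q(x),\pi_Q(y))-C\gtrsim\theta_Q(x,y)$; by Lemma~\ref{transprop}(2), distinct $Q$ give deep components lying in neighbourhoods of distinct peripheral sets, which have bounded pairwise intersection, so these are essentially disjoint subsegments of $[x,y]_Y$ and the sum of their lengths is $\le\length([x,y]_Y)=d_Y(x,y)$ (the additive error per component absorbed since there are $\lesssim d_Y(x,y)/L$ of them). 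Combining, $d_{\Bow(Y)}(x,y)+\sum_Q\{\{\theta_Q(x,y)\}\}_L\preceq d_Y(x,y)$, which together with the previous paragraph gives~\eqref{eqn-distform}.

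The step I expect to be the main obstacle is the bookkeeping that keeps every error \emph{multiplicative} rather than additive: controlling the a priori unboundedly many excursions (resp.\ deep components) with $\theta_Q$ just above or below the threshold, and choosing $L$ uniformly so that simultaneously the entrance/exit points of $\Bow(Y)$-geodesics genuinely track $\pi_Q(x),\pi_Q(y)$ (so $\theta_Q$ is geodesic-independent) and "deep components along distinct $Q$" really are essentially disjoint. The translation between the two pictures of a horoball excursion — as a subsegment of a $\Bow(Y)$-geodesic versus as a deep component of a $Y$-geodesic — is where one must be most careful, and it is exactly what Lemma~\ref{transprop} and the cited identification of $\trans(x,y)$ with $[x,y]_{\Bow}\cap Y$ are there to handle; this recovers \cite[Theorem 0.1]{Si-projrelhyp}.
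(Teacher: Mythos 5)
First, a remark on the comparison you were asked for: the paper does not prove Theorem~\ref{distform} at all — it is imported verbatim from \cite[Theorem 0.1]{Si-projrelhyp} — so your proposal is in effect an attempt to reprove that external result from the toolkit assembled in Section 3. Your overall strategy (compare horoball excursions of a $\Bow(Y)$-geodesic with the transient/deep decomposition of a $Y$-geodesic) is in the same spirit as the source, and your upper bound is fine as sketched: it is the same surgery as in the proof of Proposition~\ref{prop:change-admissible-horoballs}, replacing each excursion of length at least $1$ into $\cH(Q)$ by a $Y$-geodesic of length $\theta_Q(x,y)$, using Definition~\ref{def:admissible-horoballs}(3) for the short horoball subpaths, and absorbing the at most $d_{\Bow(Y)}(x,y)$ sub-threshold terms into the multiplicative constant; none of this needs any tracking of projections.

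The genuine gap is in the lower bound, at the step ``by Lemma~\ref{transprop}(3) the entrance and exit points of that excursion lie within bounded distance of $\pi_Q(x)$ and $\pi_Q(y)$,'' from which you deduce $d_Y(\pi_Q(x),\pi_Q(y))\gtrsim\theta_Q(x,y)$ and then invoke Lemma~\ref{transprop}(4). Lemma~\ref{transprop}(3) is a statement about deep components of geodesics \emph{in $Y$}, whereas the entrance/exit points in the definition of $\theta_Q$ lie on a geodesic \emph{in $\Bow(Y)$}; nothing quoted in this paper says these track $\pi_Q(x),\pi_Q(y)$, and establishing that bridge (which is also what makes $\theta_Q$ coarsely independent of the chosen $\Bow(Y)$-geodesic) is precisely the non-trivial content here — it cannot be delegated to bookkeeping, and the identification of $\trans(x,y)$ with $[x,y]_{\Bow}\cap Y$ alone does not deliver it. One way to repair the lower bound without projections: by that identification together with the uniform properness of the inclusion $Y\hookrightarrow\Bow(Y)$ (Remark~\ref{rmk:bow-inclusion-unif-prop}), the entrance and exit points $x'_Q,y'_Q\in Q$ of a long excursion are $Y$-close to points $p^-_Q,p^+_Q$ of $[x,y]_Y$ lying in a uniform neighbourhood of $Q$ with $d_Y(p^-_Q,p^+_Q)\geq\theta_Q(x,y)-O(1)$; the subsegment of $[x,y]_Y$ between them is contained in $N_{tC}(Q)$ by the second part of Lemma~\ref{transprop}(2), and since neighbourhoods of distinct peripherals have uniformly bounded coarse intersection (first bullet of Theorem~\ref{thm:rel-hyp-quasi-triangles}), these subsegments have pairwise overlaps of bounded diameter, so for $L$ large their lengths sum to $\preceq d_Y(x,y)$. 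Either this argument, or a direct appeal to the entrance-point lemmas of \cite{Si-projrelhyp} (the source of Lemma~\ref{transprop}(3),(4)), needs to be written out; as it stands your lower bound rests on a lemma applied outside its scope.
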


\smallskip
Returning to our study of $\hat f|_X:X\to X'$,
we will make a term-by-term comparison of the distance formulas in $X$ and $X'$. The following lemma will be used to compare the $\theta$ terms.

\begin{lemma}\label{termscomp}
 Under the assumptions of Theorem~\ref{thm:boundary-to-inside} with extension $\hat{f}|_X:X\to X'$ as above, there exists $C_6$ with the following property. Let $P\in\cP$. Then for each $x,y\in X$ we have
 $$\theta_{h_\#(P)}(\hat{f}(x),\hat{f}(y))\qi_{C_6,C_6} \theta_P(x,y)^{{\epsilon\lambda_P}/{\epsilon'}}.$$
 Moreover, if $P'\in\cP'$ is so that $a_{P'}\notin h(\bigcup_{\cP} a_{P})$, then for each $x,y\in X$ we have $\theta_{P'}(\hat{f}(x),\hat{f}(y))\leq C_6$.
\end{lemma}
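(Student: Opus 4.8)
The plan is to relate each $\theta$-term to distances inside horoballs, using the fact that $\hat f$ maps horoballs coarsely to horoballs (Corollary~\ref{imageshorob}) and is a rough similarity on each horoball (Corollary~\ref{roughsimonO}). First I would unpack the definition: $\theta_P(x,y)\neq 0$ exactly when a geodesic $[x,y]\subset\Bow(X)$ enters $\cH(P)$, and in that case $\theta_P(x,y)=d_X(x',y')$ where $x',y'\in P$ are the entry/exit points. By Lemma~\ref{transprop}(3) these are coarsely $\pi_P(x),\pi_P(y)$, and by Definition~\ref{def:admissible-horoballs}(4) (applied to a quasi-centre $w$ of $x',y',a_P$, which lies coarsely in $\cH(P)$ by quasiconvexity) we have $d_{\Bow(X)}(x',y') \approx 2\log(\theta_P(x,y)+1)$, i.e. the cusped distance between the entry/exit points is essentially $2\log$ of the $X$-distance between them (up to the bounded-diameter case, where both sides are bounded and the conclusion is trivial).

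For the main claim, suppose $\theta_P(x,y)$ is large. Then $[x,y]$ enters $\cH(P)$ deeply; taking a point $z$ on $[x,y]$ well inside $\cH(P)$, the image $\hat f(z)$ lies coarsely in $\cH(h_\#(P))$ by Corollary~\ref{imageshorob}(1), and since $\hat f([x,y])$ is a quasi-geodesic with controlled constants (Theorem~\ref{thm:bonk-schramm}), the Morse lemma forces a geodesic $[\hat f(x),\hat f(y)]\subset\Bow(X')$ to also enter $\cH(h_\#(P))$, with entry/exit points coarsely $\pi_{h_\#(P)}(\hat f(x)),\pi_{h_\#(P)}(\hat f(y))$, so $\theta_{h_\#(P)}(\hat f(x),\hat f(y))\neq 0$. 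These image entry/exit points are (up to bounded distance in $\Bow(X')$, hence via Corollary~\ref{roughsimonO} up to bounded distance after accounting for the $\epsilon\lambda_P/\epsilon'$-scaling) the $\hat f$-images of the entry/exit points $x',y'$ of $[x,y]$ in $P$. Chaining the estimates: $2\log(\theta_{h_\#(P)}(\hat f(x),\hat f(y))+1) \approx d_{\Bow(X')}(\hat f(x'),\hat f(y')) \approx \tfrac{\epsilon\lambda_P}{\epsilon'} d_{\Bow(X)}(x',y') \approx \tfrac{\epsilon\lambda_P}{\epsilon'} \cdot 2\log(\theta_P(x,y)+1)$, which on exponentiating gives $\theta_{h_\#(P)}(\hat f(x),\hat f(y)) \qi_{C,C} \theta_P(x,y)^{\epsilon\lambda_P/\epsilon'}$, as required; one should also check that if $[x,y]$ does not enter $\cH(P)$ (or only shallowly, so $\theta_P\leq L$) then $\theta_{h_\#(P)}(\hat f(x),\hat f(y))$ is bounded, which follows the same way since $\hat f([x,y])$ then stays coarsely outside $\cH(h_\#(P))$ by Corollary~\ref{imageshorob}(3).

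For the ``moreover'' part, suppose $P'\in\cP'$ with $a_{P'}\notin h(\bigcup_\cP a_P)$ and suppose for contradiction $\theta_{P'}(\hat f(x),\hat f(y))$ is large. Then a geodesic $[\hat f(x),\hat f(y)]$ enters $\cH(P')$ deeply, so there is a point well inside $\cH(P')$ coarsely on $\hat f([x,y])$; but this point is $\hat f(z)$ for some $z\in X$ lying coarsely on $[x,y]$, contradicting Corollary~\ref{GtoG'} which says $\hat f(X)\subseteq N_{C_5}(X')$ stays out of the deep part of $\cH(P')$. Hence $\theta_{P'}(\hat f(x),\hat f(y))\leq C$.

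The main obstacle I anticipate is the careful bookkeeping in matching the entry/exit points of $[\hat f(x),\hat f(y)]$ in $\cH(h_\#(P))$ with the $\hat f$-images of the entry/exit points of $[x,y]$ in $\cH(P)$: one has a quasi-geodesic $\hat f([x,y])$ rather than a geodesic, so one must invoke the Morse lemma and then argue that the two quasi-geodesics run parallel through the horoball, using hyperbolicity of $\Bow(X')$ together with Corollaries~\ref{imageshorob} and~\ref{roughsimonO}, to conclude that the genuine entry/exit points are coarsely where expected. Once that identification is in place, the logarithmic distance estimate in horoballs does the rest.
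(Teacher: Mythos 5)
Your proposal is correct and takes essentially the same route as the paper's proof: you match the entrance/exit points of $[x,y]$ in $\cH(P)$ with those of $[\hat f(x),\hat f(y)]$ in $\cH(h_\#(P))$ via Corollary~\ref{imageshorob} and the Morse lemma applied to the quasi-geodesic $\hat f([x,y])$, then convert distances using the rough $(\epsilon\lambda_P/\epsilon')$-similarity of Corollary~\ref{roughsimonO} together with the logarithmic horoball distance estimate of Definition~\ref{def:admissible-horoballs}(4), and handle the ``moreover'' part by Corollary~\ref{GtoG'}. The extra bookkeeping you carry out (the $2\log(\cdot+1)$ conversion and the shallow-penetration case) is exactly what the paper leaves implicit.
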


\begin{proof}
Consider $x,y\in X$ and suppose that a geodesic $[x,y]$ goes deep into a horoball $\cH(P)$ for $P\in\cP$. Then, Corollary \ref{imageshorob} (together with the fact that $\hat f([x,y])$ is a quasi-geodesic and hence stays close to a geodesic with the same endpoints) implies that the entrance and exit points of $[x,y]$ get mapped close to the entrance and exit point of $[\hat{f}(x),\hat{f}(y)]$ in $h_\#(P)$. Moreover, if $[x,y]$ does not go deep into $\cH(P)$ then $[\hat{f}(x),\hat{f}(y)]$ does not go deep into $h_\#(P)$ either.

	Hence, the conclusion follows using that $\hat{f}$ is a rough similarity on $\cH(P)$ (Corollary \ref{roughsimonO}) and Definition~\ref{def:admissible-horoballs}(4).
 
 To show the ``moreover'' part, notice that the geodesic from $\hat{f}(x)$ to $\hat{f}(y)$ cannot go deep into $\cH(P')$ because of Corollary \ref{GtoG'}.
\end{proof}
\begin{corollary}
	\label{cor:snowflake-at-most-one}
	Under the assumptions of Lemma~\ref{termscomp}, for any $P\in \cP$ we have $\epsilon\lambda_P/\epsilon' \leq 1$.
\end{corollary}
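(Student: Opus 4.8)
The plan is to prove $\epsilon\lambda_P/\epsilon'\le 1$ by pitting two properties of $\hat{f}$ against one another: that $\hat{f}|_X\colon X\to X'$ is coarsely Lipschitz, and that $\hat{f}|_X$ expands the peripheral set $P$ like an $\tfrac{\epsilon\lambda_P}{\epsilon'}$-snowflake (essentially the content of Lemma~\ref{termscomp}). If $\tfrac{\epsilon\lambda_P}{\epsilon'}>1$, these two facts are incompatible on an unbounded peripheral set.

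First I would check that $\hat{f}|_X$ is coarsely Lipschitz. Given $x,y\in X$, take a geodesic $[x,y]\subseteq X$ and sample it at points $x=z_0,z_1,\dots,z_n=y$ with $d_X(z_{i-1},z_i)\le1$ and $n\le d_X(x,y)+1$. Then $d_{\Bow(X)}(z_{i-1},z_i)\le1$, so since $\hat{f}$ is a $(C_0,C_0)$-quasi-isometric embedding (Theorem~\ref{thm:bonk-schramm}) we get $d_{\Bow(X')}(\hat{f}(z_{i-1}),\hat{f}(z_i))\le 2C_0$; as $\hat{f}$ maps $X$ into $X'$ (as arranged above) and the inclusion $X'\hookrightarrow\Bow(X')$ is $\tau'$-uniformly proper with $\tau'(s)=2\log(s+1)-C$ (Remark~\ref{rmk:bow-inclusion-unif-prop}), this forces $d_{X'}(\hat{f}(z_{i-1}),\hat{f}(z_i))\le M$ for a uniform constant $M$. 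Summing along the path, $d_{X'}(\hat{f}(x),\hat{f}(y))\le M(d_X(x,y)+1)$. The key point is that consecutive image points are \emph{uniformly} close in $\Bow(X')$, so the exponential distortion of $X'\hookrightarrow\Bow(X')$ only costs a constant at each step rather than a power.

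Now fix $P\in\cP$, write $\kappa=\tfrac{\epsilon\lambda_P}{\epsilon'}$ and $P'=h_\#(P)$, and choose $x,y\in P$ with $t:=d_X(x,y)$ as large as we wish, which is possible because $P$ is unbounded and coarsely connected. For $t$ large a geodesic of $\Bow(X)$ from $x$ to $y$ enters $\cH(P)$ near $x$ and leaves near $y$ (passing over the top of $\cH(P)$ costs only $\approx 2\log(1+t)$), so $\theta_P(x,y)\ge t-C_1$; by Lemma~\ref{termscomp}, $\theta_{P'}(\hat{f}(x),\hat{f}(y))\qi_{C,C}\theta_P(x,y)^{\kappa}$, so $\theta_{P'}(\hat{f}(x),\hat{f}(y))\ge\tfrac1C t^{\kappa}-C'$, in particular exceeding $L'$ once $t$ is large. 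Plugging this into the distance formula of Theorem~\ref{distform} applied to $\hat{f}(x),\hat{f}(y)\in X'$ gives
\[
d_{X'}(\hat{f}(x),\hat{f}(y))\ \ge\ \tfrac1{\lambda'}\,\big\{\big\{\theta_{P'}(\hat{f}(x),\hat{f}(y))\big\}\big\}_{L'}-\mu'\ \succeq\ t^{\kappa}
\]
for all sufficiently large $t$. (One could equally well use Corollary~\ref{roughsimonO}, that $\hat{f}|_{\cH(P)}$ is a rough $\kappa$-similarity, together with Corollary~\ref{imageshorob}(2) and the comparison $d_{\Bow(X)}(u,v)\approx 2\log(1+d_X(u,v))$ for $u,v$ in a common peripheral set, to reach the same bound.) Combining this with the previous paragraph, $t^{\kappa}\preceq d_{X'}(\hat{f}(x),\hat{f}(y))\le M(t+1)$ for all large $t$, which is impossible unless $\kappa\le1$; this is the claim.

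I do not anticipate a real obstacle: the whole point is the tension between the linear growth bound of the first step and the power growth produced by the snowflake behaviour along $P$. The only slightly fiddly points are the bookkeeping of the thresholds and the additive/multiplicative constants coming from the distance formula and Lemma~\ref{termscomp}, and the elementary geometric facts that $\theta_P(x,y)\approx d_X(x,y)$ and $d_{\Bow(X)}(x,y)\approx 2\log(1+d_X(x,y))$ whenever $x,y$ lie in a common peripheral set; neither of these requires a new idea.
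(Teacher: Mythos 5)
Your proof is correct, but it takes a genuinely different route from the paper's. The paper never invokes the coarse Lipschitz property of $\hat f|_X$ here: it takes a quasi-geodesic ray $\gamma$ in $P$ (justifying that $P$ is unbounded from visual completeness, since a bounded $P$ would make $a_P$ an isolated point of the uniformly perfect boundary --- a point you assert without comment), converts Lemma~\ref{termscomp} into the two-sided estimate $d_{X'}(\hat f(u),\hat f(v))\asymp d_X(u,v)^{\epsilon\lambda_P/\epsilon'}$ for $u,v\in P$, and then plays this against subadditivity: chaining $N$ segments of length $t$ along $\gamma$ and using the triangle inequality in $X'$ gives $(Nt)^{\epsilon\lambda_P/\epsilon'}\preceq N\,t^{\epsilon\lambda_P/\epsilon'}$, and letting $N\to\infty$ forces the exponent to be at most $1$. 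You instead first prove that $\hat f|_X$ is coarsely Lipschitz --- via Theorem~\ref{thm:bonk-schramm} together with Remark~\ref{rmk:bow-inclusion-unif-prop}, which is exactly the justification of the paper's later unproved remark to this effect, so there is no circularity --- and pit that linear upper bound against the one-sided lower bound $d_{X'}(\hat f(x),\hat f(y))\succeq d_X(x,y)^{\epsilon\lambda_P/\epsilon'}$ for far-apart $x,y\in P$, obtained from Lemma~\ref{termscomp} plus the easy direction of the distance formula (Theorem~\ref{distform}). Both arguments are sound. Yours uses a bit more machinery (one projection term of the distance formula, and the coarse Lipschitz step, which is however a useful by-product since the paper needs it later anyway), but it only requires the lower snowflake bound and unboundedness of $P$, not a quasi-geodesic ray; the paper's is lighter, being purely a triangle-inequality trick inside a single peripheral set. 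Do add one line deriving unboundedness of $P$ from visual completeness, as it is not a hypothesis of Theorem~\ref{thm:boundary-to-inside}.
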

\begin{proof}
	Each $P \in \cP$ is uniformly coarsely connected, and is unbounded since $(X,\cP)$ is visually complete: were $P\in \cP$ bounded $a_P$ would be an isolated point.
	Take a quasi-geodesic ray $\gamma:[0,\infty)\to P$.  Then using the bounds of Lemma~\ref{termscomp}, taking large $t > 2C_6$ and large $N$, we have
	\begin{align*}
		d_{X}(\gamma(0),\gamma(Nt))^{\epsilon\lambda_P/\epsilon'}
		& \asymp d_{X'}(\hat{f}(\gamma(0)),\hat{f}(\gamma(Nt)))
		\\ & \leq \sum_{i=1}^{N} d_{X'}(\hat{f}(\gamma((i-1)t)),\hat{f}(\gamma(it)))
		\\ & \asymp \sum_{i=1}^{N} d_{X}(\gamma((i-1)t),\gamma(it))^{\epsilon\lambda_P/\epsilon'}.
	\end{align*}
	The left-hand side is $\asymp (Nt)^{\epsilon\lambda_P/\epsilon'}$, and the right-hand side is $\asymp N t^{\epsilon\lambda_P/\epsilon'}$, so fixing $t$ and letting $N\to\infty$ we have $\epsilon\lambda_P/\epsilon' \leq 1$.
\end{proof}

\begin{proof}[Proof of Theorem \ref{thm:boundary-to-inside}]
	It is easily seen that $\hat f|_X$ is coarsely Lipschitz. 
	By Proposition~\ref{prop:bounded-snowflake-param} we have $\lambda_P \geq \alpha >0$ for all $P \in \cP$, and some $\alpha \in (0,1]$.
	In the following we use that if $r_i \geq 1$ and $\lambda_i \geq \alpha$ for $i = 1,2,\ldots$ then 
	\begin{equation}
		\label{eq:boundary-to-inside1}
		\sum_i r_i^{\lambda_i} \geq \sum_i r_i^\alpha
		\geq \left(\sum_i r_i\right)^\alpha.
	\end{equation}
	Using the distance formula Theorem~\ref{distform}, Lemma~\ref{termscomp}, and choosing $T$ large enough we get

\begin{align*}
	d_{X'}(\hat{f}(x),\hat{f}(y))
	& \qi_{K_1,K_1} \sum_{P'\in\cP'} \{\{\theta_{P'}(\hat{f}(x),\hat{f}(y))\}\}_T +d_{\Bow(X',\cP')}(\hat{f}(x),\hat{f}(y)) 
	\\ & = \sum_{P\in\cP} \{\{ \theta_{h_\#(P)}(\hat{f}(x),\hat{f}(y))\}\}_T +d_{\Bow(X',\cP')}(\hat{f}(x),\hat{f}(y)) 
	\\ & \grqi_{K_2,K_2} \sum_{P\in\cP} \{\{\theta_{P}(x,y)^{\epsilon\lambda_P/\epsilon'}\}\}_{T'}+d_{\Bow(X,\cP)}(x,y)
	\\ & \geq \left(\sum_{P\in\cP} \{\{\theta_{P}(x,y)\}\}_{T'}+d_{\Bow(X,\cP)}(x,y)\right)^{(\epsilon/\epsilon')\inf_{P\in\cP}\lambda_P}
	\\ & \qi_{K_2,K_2} d_X(x,y)^{(\epsilon/\epsilon')\inf_{P\in\cP}\lambda_P},
\end{align*}
	where the fourth inequality follows from \eqref{eq:boundary-to-inside1}.
	Notice that we changed the threshold when passing from $X$ to $X'$.
	Thus $\hat{h}=\hat{f}|_X$ is polynomially distorting, and coarsely respects peripherals by Corollary~\ref{imageshorob}.
	That $\hat{h}$ is a snowflake on peripherals follows from Lemma~\ref{termscomp} and Corollary~\ref{cor:snowflake-at-most-one}. 
	
	We have $h=\bdry (\hat{h})_\Bow$: by construction $h=\bdry \hat{f}$ and $\hat{f}=\hat{h}$ on $X$.  For any geodesic ray $[o,a) \subset \Bow(X,\cP)$, either $a=a_P$ for some $P \in \cP$, in which case $\bdry (\hat{h})_\Bow(a)=h(a)$ by the construction of $(\hat{h})_\Bow$, or $a$ is a limit of points in $X$, and so again $\bdry (\hat{h})_\Bow(a) = \bdry \hat{f}(a) = h(a)$.
	
	Moreover, if $h$ asymptotically $\frac{\epsilon'}{\epsilon}$-snowflakes then for all $P\in \cP$, $\lambda_P =\epsilon'/\epsilon$, so the bound above shows $f$ has at most linear distortion, i.e., $f$ is a quasi-isometric embedding.

	Finally, we verify the uniqueness of $\hat{h}$ and its functorial properties.
	Since $(X,\cP)$ is visually complete, any $x \in \Bow(X,\cP)$ is bounded distance to a quasi-centre for some $a,b \in \bdry(X,\cP)$ and $o$.  By Lemma~\ref{splittosplit} any quasi-isometry $\Bow(X,\cP)\to \Bow(X',\cP')$ which sends $a,b$ to $h(a),h(b)$ must coarsely send a quasi-centre for $a,b,o$ to a quasi-centre for $h(a),h(b),o'$; in other words, $\hat{f}(x)$ is uniquely defined up to bounded ($\Bow(X',\cP')$) distance.  Since $X' \subset \Bow(X',\cP')$ is uniformly embedded, this means that $\hat{h} = \hat{f}|_{X}$ is uniquely defined up to bounded distance too.

	Likewise, if $j:\bdry(X',\cP')\to\bdry(X'',\cP'')$ is another shadow-respecting quasisymmetric embedding, then by this same uniqueness argument $\widehat{j \circ h}$ and $\hat{j}\circ \hat{h}$ are at bounded distance from each other.
	Finally, for $\id:\bdry(X,\cP)\to\bdry(X,\cP)$, by uniqueness again, $\hat{\id}$ and $\id_X$ are at bounded distance from each other.
\end{proof}

\bibliographystyle{alpha}
\bibliography{biblio}

\end{document}